\newcommand{\ket}[1]{{| #1 \rangle}}      
\newcommand{\br}[1]{{\langle #1 \rangle}}  
\newcommand{\ch}{{\rm ch}}
\newcommand{\bR}{\bar{R}}
\newcommand{\bea}{\begin{eqnarray}}
\newcommand{\ena}{\end{eqnarray}}
\newcommand{\be}{\begin{eqnarray*}}
\newcommand{\en}{\end{eqnarray*}}
\def\bel{\begin{eqnarray}}
\def\enl{\end{eqnarray}}
\newcommand{\C}{{\mathbb C}}
\newcommand{\Z}{{\mathbb Z}}
\newcommand{\al}{{\alpha}}
\newcommand{\la}{{\lambda}}
\newcommand{\mc}{\mathcal}
\def\P{\mathcal P}
\numberwithin{equation}{section}
\numberwithin{equation}{section}
\newtheorem{thm}{Theorem}[section]
\newtheorem{prop}[thm]{Proposition}
\newtheorem{lem}[thm]{Lemma}
\newtheorem{cor}[thm]{Corollary}
\theoremstyle{remark}
\newtheorem{rem}[thm]{Remark}
\newtheorem{definition}[thm]{Definition}
\newtheorem{dfn}[thm]{Definition}
\newcommand{\ah}{\widehat{\mathfrak a}}
\newcommand{\h}{\mathfrak{h}}
\newcommand{\na}{\mathfrak{n}}
\newcommand{\ve}{\varepsilon}
\newcommand{\n}{\mathfrak{n}}
\newcommand{\T}{\otimes}
\newcommand{\nh}{\widehat{\mathfrak{n}}}
\newcommand{\A}{\nh}
\renewcommand{\deg}{\mathop{\rm deg}}
\newcommand{\Uv}{U_v(\mathfrak{sl}_3)}
\newcommand{\Uvv}{U_{v^{-1}}(\mathfrak{sl}_3)}
\newcommand{\V}{\mathcal V}
\begin{document}

\title[Principal subspaces and quantum Toda Hamiltonian]
{Principal $\widehat{\mathfrak{sl}_3}$ subspaces and quantum Toda Hamiltonian}
\author{B. Feigin, E. Feigin, M. Jimbo, T. Miwa and E. Mukhin}
\address{BF: Landau institute for Theoretical Physics,
Chernogolovka,
142432, Russia and \newline
Independent University of Moscow, Russia, Moscow, 119002,
Bol'shoi Vlas'evski per., 11}
\email{feigin@mccme.ru}
\address{EF:
Tamm Theory Division, Lebedev Physics Institute, Russia, Moscow, 119991,
Leninski pr., 53 and\newline
Independent University of Moscow, Russia, Moscow, 119002,
Bol'shoi Vlas'evski per., 11}
\email{evgfeig@gmail.com}
\address{MJ: Graduate School of Mathematical Sciences,
The University of Tokyo, Tokyo 153-8914, Japan}
\email{jimbomic@ms.u-tokyo.ac.jp}
\address{TM: Department of Mathematics,
Graduate School of Science,
Kyoto University, Kyoto 606-8502,
Japan}\email{tetsuji@math.kyoto-u.ac.jp}
\address{EM: Department of Mathematics,
Indiana University-Purdue University-Indianapolis,
402 N.Blackford St., LD 270,
Indianapolis, IN 46202}\email{mukhin@math.iupui.edu}


\begin{abstract}
We study a class of representations of the Lie algebra
$\na\T\C[t,t^{-1}]$, where $\na$ is a nilpotent subalgebra of
$\mathfrak{sl}_3$. We derive Weyl-type (bosonic) character formulas
for these representations. We establish a connection between the
bosonic formulas and the Whittaker vector in the Verma module for
the quantum group $U_v(\mathfrak{sl}_3)$. We also obtain a fermionic
formula for an eigenfunction of the $\mathfrak{sl}_3$ quantum Toda
Hamiltonian.
\end{abstract}
\maketitle


\section{Introduction}
Let $\na=\C e_{21}\oplus \C e_{32}\oplus\C e_{31}$ be the nilpotent subalgebra of
the complex Lie algebra $\mathfrak{sl}_3$,
and let $\A=\na\T\C[t,t^{-1}]$ be the corresponding current algebra.
In this paper we study a class of $\A$-modules.
The simplest example of the modules in question is the
principal subspace $V^k$ of the level $k$ vacuum  representation of
$\widehat{\mathfrak{sl}_3}$ (see \cite{FS}). Namely, let $M^k$ be the
level $k$ vacuum representation of the affine Lie algebra $\widehat{\mathfrak{sl}_3}$.
{}Fix a highest weight vector $v^k\in M^k$. Then
$$V^k=U(\A)\cdot v^k.$$
The principal subspaces are studied in \cite{AKS}, \cite{C}, \cite{CLM}, \cite{FS}, \cite{G},
\cite{LP}, \cite{P}.
In particular, the following fermionic formula is available for the
character of $V^k$:
\begin{equation}\label{fV}
\mathrm{ch} V^k=
\sum_{\genfrac{}{}{0pt}{}{n_1,\dots,n_k\ge 0}{m_1,\dots,m_k\ge 0}}
\frac{z_1^{\sum_{i=1}^k in_i}z_2^{\sum_{i=1}^k im_i}
q^{\sum_{i,j=1}^k \min(i,j) (n_in_j - m_in_j + m_im_j)}}
{(q)_{n_1}\dots (q)_{n_k} (q)_{m_1}\dots (q)_{m_k}},
\end{equation}
where $(a)_n=\prod_{i=0}^{n-1} (1-aq^i)$.
One of our results
is the following new formula for $\ch V^k$.
{}For non-negative integers $d_1$, $d_2$, set
\be
&&I_{d_1,d_2}(z_1,z_2)=
\frac{
 (q z_1^{-1} z_2^{-1})_{d_1+d_2}}
{(q)_{d_1} (q)_{d_2}
(qz_1^{-1})_{d_1} (qz_2^{-1})_{d_2}
(q z_1^{-1} z_2^{-1})_{d_1}(q z_1^{-1} z_2^{-1})_{d_2}},
\\
&&J_{d_1,d_2}(z_1,z_2)
=\frac{I_{d_1,d_2}(z_1,z_2)}{(qz_1)_\infty (qz_2)_\infty (qz_1z_2)_\infty}.
\en
We show that
\begin{equation}
\label{VK}
\ch V^k= \sum_{d_1,d_2\ge 0}
z_1^{kd_1} z_2^{kd_2} q^{k(d_1^2+d_2^2-d_1d_2)}
J_{d_1,d_2}(z_1q^{2d_1-d_2}, z_2q^{2d_2-d_1}).
\end{equation}
In the right hand side, each summand is understood as 
a power series expansion in $z_1,z_2$. 
{}Formula (\ref{VK}) was conjectured in \cite{FS1}.

The functions $I_{d_1,d_2}(z_1,z_2)$ are known to be the coefficients of the expansion of
an eigenfunction of  the $\mathfrak{sl}_3$ quantum Toda Hamiltonian (see \cite{GL}).
Namely, define the generating function
$$I(Q_1,Q_2,z_1,z_2)=\sum_{d_1,d_2\ge 0} Q_1^{d_1}Q_2^{d_2} I_{d_1,d_2}(z_1,z_2).$$
The $\mathfrak{sl}_3$
quantum Toda Hamiltonian $\widehat H$ is an operator of the form
\be
\widehat H=q^{\partial/\partial t_0}+ q^{\partial/\partial t_1}(1-Q_1)+
q^{\partial/\partial t_2}(1-Q_2)
\en
acting on the space of functions in variables $Q_1$, $Q_2$.
The variables $t_i$ are introduced by $Q_i=e^{t_{i-1}-t_i}$
and
$$q^{\partial/\partial t_j}: t_i\mapsto t_i+\delta_{ij} \mathrm{ln} q.$$
Let the variables $p_1$, $p_2$ be such that $z_1=p_1^{-2}p_2$, $z_2=p_1p_2^{-2}$.
Then
\begin{equation}\label{Toda}
\widehat{H}  \left(p_1^{\frac{t_0-t_1}{\ln q}}
p_2^{\frac{t_1-t_2}{\ln q}}I(Q_1,Q_2,z_1,z_2)\right)=(p_1+p_1^{-1}p_2+p_2^{-1})I(Q_1,Q_2,z_1,z_2)
\end{equation}
(see \cite{GL}, \cite{E}, \cite{BF}).
Equation (\ref{Toda}) can be rewritten as a set of recurrent relations
\begin{multline}\label{Todarec}
(p_1 (q^{d_1}-1)+p_1^{-1}p_2 (q^{d_2-d_1}-1)+
p_2^{-1} (q^{-d_2}-1))I_{d_1,d_2}(z_1,z_2)
\\
=
p_2 p_1^{-1}q^{d_2-d_1}I_{d_1-1,d_2}(z_1,z_2)+
p_2^{-1} q^{-d_2}I_{d_1,d_2-1}(z_1,z_2).
\end{multline}
In this paper we call (\ref{Todarec}) the Toda recursion.

One of the consequences of the formulas (\ref{fV}) and (\ref{VK})
is the following recurrence relations for the rational functions
$I_{d_1,d_2}(z_1,z_2)$:
\begin{equation}
I_{d_1,d_2}(z_1,z_2)=\sum_{n_1=0}^{d_1} \sum_{n_2=0}^{d_2}
\frac{z_1^{-n_1}z_2^{-n_2}q^{n_1^2+n_2^2-n_1n_2}}{(q)_{d_1-n_1}(q)_{d_2-n_2}}
I_{n_1,n_2}(z_1,z_2),
\end{equation}
which leads to the fermionic formula
\begin{equation}\label{rectferm}
I_{d_1,d_2}(z_1,z_2)=
\sum_{\{n_i\}_{i>0}, \{m_i\}_{i>0}}
\frac{z_1^{-\sum_{i>0} n_i}z_2^{-\sum_{i>0} m_i}
q^{\sum_{i>0}(n_i^2+m_i^2-n_im_i)}}
{(q)_{d_1-n_1}(q)_{n_1-n_2}\dots (q)_{d_2-m_1}(q)_{m_1-m_2}\dots}\,,
\end{equation}
where the sum is over all sequences $\{n_i\}$, $\{m_i\}$ such that
$$
n_i,m_i\in\Z_{\ge 0},\quad d_1\ge n_1\ge n_2\ge\dots,\quad
d_2\ge m_1\ge m_2\ge \dots,
$$
and $n_i,m_i$ vanish for almost all $i$.
We conjecture that the obvious generalization of
(\ref{rectferm}) to the case of $\mathfrak{sl}_n$
gives the coefficients of an
eigenfunction for the corresponding quantum Toda Hamiltonian.

Let us briefly explain our approach to the computation of the
character formulas.
Recall (see \cite{K1}) the Weyl-Kac formula for the
character of an integrable
irreducible representation $M_\la$ of a Kac-Moody Lie algebra. It is written
as a sum over the set of extremal vectors in $M_\la$.
We call the summands the contributions of the extremal vectors.
There are two different ways to compute these contributions.
The first one is algebraic (see \cite{K1}, \cite{Kum})
and uses the BGG resolution.
The second uses
the realization of $M_\la$ as a dual space of sections of
a certain line bundle
on the generalized flag manifold and the Lefschetz fixed point formula (see \cite{Kum}).
We want to obtain a formula of the same structure by a
combinatorial method.
Let us explain our method on the example of $M_\la$.
In this case the extremal vectors are
labeled by elements of the affine Weyl group $W$,
and the character formula can be written as
\begin{equation}\label{WK}
\ch M_\la=\sum_{w\in W} \exp(w\la)\lim_{n\to\infty} (\exp(-w(n\la))\ch M_{n\la}).
\end{equation}
Roughly speaking, we compute
the character of $M_{n\la}$ in the ``vicinity" of the extremal vectors
and sum up the results.
We apply the same approach to the characters of $\A$-modules.
We use combinatorial tools to compute the terms corresponding to
$\lim_{n\to\infty} (\exp(-w(n\la))\ch M_{n\la})$ in (\ref{WK}).
Thus to obtain
a {\it bosonic}
formula for the character of an $\A$-module $M$ we follow the three steps:
\begin{itemize}
\item[(i)\ ]
find (guess) the set of extremal vectors of $M$;
\item[(ii)\ ]
find (guess) the contribution of each vector;
\item[(iii)\ ]
prove that the sum of all contributions equals to the character of $M$.
\end{itemize}
Step (i) is more or less easy, while steps (ii) and (iii) are subtler.
{}For example, for the principal subspace
$V^k$, the extremal vectors  are labeled by $\Z^2_{\ge 0}$
and the corresponding bosonic formula
is given by (\ref{VK}).
In particular, the contributions of the extremal vectors are
given by $J_{d_1,d_2}(z_1q^{2d_1-d_2}, z_2q^{2d_2-d_1})$.
In order to complete step (iii) for $V^k$,
we introduce a set of $\A$-modules which contains,
in particular, all principal subspaces in integrable
$\widehat{\mathfrak{sl}_3}$-modules. We describe these $\A$-modules below.

Let
$$e_{ij}[n]=e_{ij}\T t^n\in\A,\quad e_{ij}(z)=\sum_{n\in\Z} e_{ij}[n]\T z^{-n-1}.$$
The module $V^k$ can be described as a cyclic
$\A$-module with a cyclic vector $v$ such that 
the relations
$$
e_{21}(z)^{k+1}=0,\quad e_{32}(z)^{k+1}=0
$$
hold on $V^k$,
and that the cyclic vector satisfies
$$
e_{ij}[n]v=0
\quad
(n\ge 0).
$$
Let $k_1, k_2, l_1,l_2,l_3$ be non-negative
integers satisfying $k_1\le k_2$.
The module $U^{k_1,k_2}_{l_1,l_2,l_3}$
is a cyclic $\A$-module with a cyclic vector $v$ such that
the relations
$$e_{21}(z)^{k_1+1}=0,\ e_{32}(z)^{k_2+1}=0$$
hold on $U^{k_1,k_2}_{l_1,l_2,l_3}$, and that the cyclic vector satisfies
\be
&&e_{21}[n]v=0,\quad e_{32}[n]v=0
\quad (n>0),
\\
&&e_{21}[0]^{l_1+1}v=0,\quad e_{31}[1]^{l_3+1}v=0,\quad e_{32}[0]^{l_2+1}v=0.
\en
Similarly, the module $V^{k_1,k_2}_{l_1,l_2,l_3}$
is a cyclic $\A$-module with the cyclic vector $v$
such that
the relations
$$
e_{21}(z)^{k_1+1}=0,\ e_{32}(z)^{k_2+1}=0
$$
hold on $V^{k_1,k_2}_{l_1,l_2,l_3}$, and that the cyclic vector satisfies
\be
&&e_{21}[n]v=0,\quad e_{32}[n]v=0,\quad e_{31}[n]v=0
\quad (n>0),
\\
&&e_{21}[0]^{l_1+1}v=0,\quad e_{32}[0]^{l_2+1}v=0,
\\
&&e_{21}[0]^\alpha e_{31}[0]^{l_3+1-\alpha}v=0\quad (0\le \alpha\le l_3+1).
\en
The structure of the set
of extremal vectors is more complicated for
the modules $U^{k_1,k_2}_{l_1,l_2,l_3}$ and $V^{k_1,k_2}_{l_1,l_2,l_3}$
than for $V^k$.
The extremal vectors of the modules
$U^{k_1,k_2}_{l_1,l_2,l_3}$ and $V^{k_1,k_2}_{l_1,l_2,l_3}$
are labeled by $({\bf d},\sigma)$, where
${\bf d}\in\Z_{\ge 0}^3$ and $\sigma$ is an element of the Weyl group of
$\mathfrak{sl}_3$.
On the other hand,
the computation of the contributions of extremal vectors
for $U^{k_1,k_2}_{l_1,l_2,l_3}$
and  $V^{k_1,k_2}_{l_1,l_2,l_3}$ is simpler than for $V^k$.
We write these contributions  explicitly and
prove that they sum up
to the characters of the modules $U^{k_1,k_2}_{l_1,l_2,l_3}$
and $V^{k_1,k_2}_{l_1,l_2,l_3}$
when the parameters $l_1,l_2,l_3$ belong to a certain region.
To do that, we show that the characters
and the sums of the contributions
satisfy the same set of recurrent relations. We also show that
the solution of these recursion relations is unique.

The principal space $V^k$ is isomorphic to $U^{k,k}_{0,0,0}$.
Equating the  bosonic formula and (\ref{VK}), we arrive at the
identity
\begin{equation}\label{IJ}
I_{d_1,d_2}(z_1,z_2)=\sum_{n=0}^{\min(d_1,d_2)} I_{d_1,d_2,n}(z_1,z_2),
\end{equation}
where
\begin{multline*}
I_{d_1,d_2,n}(z_1,z_2)\\
=\frac{1}{(q)_{d_1-n}(q)_{d_2-n}(q)_n}
\frac{(qz_2)_\infty}{(qz_1^{-1})_{d_1-n}
(qz_1^{-1}z_2^{-1})_n (q^{d_1-2n+1}z_2)_\infty
(q^{-d_1+2n+1}z_2^{-1})_{d_2-n} (qz_2)_{d_1-n}
 (qz_2^{-1})_n}.
\end{multline*}
The functions $I_{d_1,d_2}(z_1,z_2)$ in the limit $q\to 1$
appear in the study of the Whittaker functions (see \cite{IS}).
{}For generic $q$, they are also closely related to the Whittaker vectors
in Verma modules of quantum groups
(see \cite{Kos}, \cite{E}, \cite{S}).
We interpret (\ref{IJ}) in terms of the representation theory.
Namely, let $\omega$ and $\bar\omega$ be the Whittaker vectors
in the Verma modules of the quantum groups
$U_{q^{1/2}}(\mathfrak{sl}_3)$ and
$U_{q^{-1/2}}(\mathfrak{sl}_3)$.
We fix the decompositions of $\omega$ and $\bar\omega$
in the Gelfand-Tsetlin bases:
\be
\omega=\sum_{d_1,d_2\ge 0} \sum_{n=0}^{\min(d_1,d_2)}
\omega_{d_1,d_2,n},\quad
\bar\omega=\sum_{d_1,d_2\ge 0} \sum_{n=0}^{\min(d_1,d_2)}
\bar\omega_{d_1,d_2,n}.
\en
We prove that
$$
I_{d_1,d_2,n}=(\omega_{d_1,d_2,n},\bar\omega_{d_1,d_2,n}),\quad
I_{d_1,d_2}=(\sum_{n=0}^{\min(d_1,d_2)} \omega_{d_1,d_2,n},
\sum_{n=0}^{\min(d_1,d_2)} \bar\omega_{d_1,d_2,n}),
$$
where $(~,~)$ denotes the dual pairing.
{}For the connection of the Whittaker vectors and Toda equations see also
\cite{GL}, \cite{BF}, \cite{GKLO}.

Our paper is organized as follows.

In Section 2, we recall known bosonic and fermionic formulas for the simplest
case of $\mathfrak{sl}_2$. This section is meant to be an illustration of
the discussions which follow.
In Section 3, we introduce the family of $\nh$-modules
\bea
\{U^{k_1,k_2}_{l_1,l_2,l_3} \mid (l_1,l_2,l_3)\in P^{k_1,k_2}_U\},
\quad
\{V^{k_1,k_2}_{l_1,l_2,l_3} \mid (l_1,l_2,l_3)\in P^{k_1,k_2}_V\},
\label{UV}
\ena
where the index sets
$P^{k_1,k_2}_U$, $P^{k_1,k_2}_V$ are
defined in the text (see \eqref{Pphi}, \eqref{Ppsi}).
Studying the structure of some of their subquotients, we derive a
recurrent upper estimate for their characters.
In Section 4, we introduce another family of $\nh$-modules
\bea
\{(U_{VO})^{k_1,k_2}_{l_1,l_2,l_3} \mid (l_1,l_2,l_3)\in
\bR^{k_1,k_2}_U\},
\quad
\{(V_{VO})^{k_1,k_2}_{l_1,l_2,l_3} \mid (l_1,l_2,l_3)\in R^{k_1,k_2}_V\}
\label{UVVO}
\ena
using the vertex operator construction. These modules are parametrized by
subsets $\bR^{k_1,k_2}_U\subset P^{k_1,k_2}_U$,
$R^{k_1,k_2}_V\subset P^{k_1,k_2}_V$
(see \eqref{Rphib},\eqref{Rpsi}),
and are quotients of the corresponding modules \eqref{UV}.
{}For these modules we derive a recurrent lower estimate for the characters.
In Section 5, we show
the uniqueness of solutions for the recurrent estimates
(see Proposition \ref{loop} for the precise statement),
and prove that in the parameter regions
$\bR^{k_1,k_2}_U$ and $R^{k_1,k_2}_V$,
the modules \eqref{UV} and \eqref{UVVO} are isomorphic
(Theorem \ref{thm:equalities}).
In Section 6, we proceed to write bosonic formulas for these modules
utilizing an inductive structure with respect to
the rank of the algebra.
We start by recalling previous results on bosonic formulas for modules over
the abelian subalgebra $\widehat{\mathfrak{a}}\subset\nh$ spanned by
$e_{21}[n]$, $e_{31}[n]$ ($n\in\Z$).
To make distinction we call the latter modules
$\mathfrak{sl}_3$ {\it small principal subspaces}.
Combining the characters for the $\mathfrak{sl}_2$ principal subspaces and
those of the $\mathfrak{sl}_3$ small principal subspaces,
we present a family of formal series. Then we
prove that these formal series coincide with the characters of
\eqref{UV} in appropriate regions (Theorem \ref{chsp};
the region of validity
for $V^{k_1,k_2}_{l_1,l_2,l_3}$ is $\bR^{k_1,k_2}_V$, while
for $U^{k_1,k_2}_{l_1,l_2,l_3}$ it
is a subset $\tilde{R}^{k_1,k_2}_U\subset \bR^{k_1,k_2}_U$,
see \eqref{Rtilde}).
In Section 7, we consider the special case $k_1=k_2$. In this case
some of the terms in the bosonic formula
can be combined to give a simpler result
of the type discussed in \cite{GL}. We obtain such a formula
(Theorem \ref{g-l}), which includes as a particular case
the character formula
for the principal subspace $V^k$ (Corollary \ref{CorVI}).
In the final Section 8, we discuss the connection to Whittaker vectors
in the Verma modules of quantum groups.

Throughout the text, $e_{ab}$ denotes the matrix unit with $1$ at the
($a,b$)-th place and $0$ elsewhere.
{}For a graded vector space $M=\oplus_{m_1,\cdots,m_l,d\in\Z}M_{m_1,\cdots,m_l,d}$
with finite dimensional homogeneous components $M_{m_1,\cdots,m_l,d}$,
we call the formal Laurent series
\be
{\rm ch}_{z_1,\cdots,z_l,q} M=\sum_{m_1,\cdots,m_l,d\in\Z}
(\dim M_{m_1,\cdots,m_l,d})z_1^{m_1}\cdots z_l^{m_l}q^d
\en
the {\it character} of $M$.
In the text we deal with the case $l=1$ or $2$.
We often suppress $q$ from the notation as it does not change throughout
the paper.
{}For two formal series with integer coefficients
$$
f^{(i)}=\sum_{m_1,\cdots,m_l,d}f^{(i)}_{m_1,\cdots,m_l,d}z_1^{m_1}\cdots z_l^{m_l}q^d
\quad (i=1,2),
$$
we write $f^{(1)}\le f^{(2)}$ to mean
$f^{(1)}_{m_1,\cdots,m_l,d}\le f^{(2)}_{m_1,\cdots,m_l,d}$ for all $m_1,\cdots,m_l,d$.


\section{Bosonic formula for the case of $\widehat{\mathfrak{sl}}_2$.}
In this section, we study the characters of the principal subspaces
of integrable modules for $\widehat{\mathfrak{sl}}_2$.
We present fermionic and bosonic formulas for these characters.
The contributions of the extremal vectors are calculated in
two different ways,
one from the combinatorial set which labels a monomial basis, and another
from the fermionic formula.
\subsection{Principal spaces for $\widehat{\mathfrak{sl}}_2$}
Consider the Lie algebra
$\mathfrak{sl}_2=\C e_{12}\oplus\C e_{21}\oplus\C(e_{11}-e_{22})$
where $e_{ab}$ are the $2\times2$ matrix units. We also
consider the affine Lie algebra $\widehat{\mathfrak{sl}}_2$
spanned by the central element $c$ and
\be
e[n]=e_{12}\otimes t^n,\ f[n]=e_{21}\otimes t^n,\
h[n]=(e_{11}-e_{22})\otimes t^n
\ (n\in\Z).
\en
Let $M^k_l$ be the level $k$ irreducible highest weight module
of the affine Lie algebra $\widehat{\mathfrak{sl}}_2$
with the highest weight $l$ $(0\leq l\leq k)$.  
On $M^k_l$, $c$ acts as a scalar $k$.
The module  $M^k_l$ has a highest weight vector $v^k_l$
characterized up to scalar multiple by 
\be
&&x[n]v^k_l=0\quad(x=e,f,h;n>0),\\
&&e[0]v^k_l=0,\quad f[0]^{l+1}v^k_l=0,\quad h[0]v^k_l=lv^k_l\,.
\en
Let $\hat{\mathfrak n}$ be the subalgebra of $\widehat{\mathfrak{sl}}_2$
generated by $f[n]$ $(n\in\Z)$.
Let $V^k_l$ be the subspace of $M^k_l$
generated by $f[-n]$ $(n\geq0)$ from $v^k_l$.
The space $V^k_l$ is called the principal subspace of $M^k_l$.
It is known \cite{FS} that
$V^k_l$ is isomorphic to the cyclic module $U(\hat{\mathfrak n})v^k_l$
with the defining relations
\be
f(z)^{k+1}=0,\quad f[0]^{l+1}v^k_l=0, \quad f[n]v^k_l=0\ (n>0),
\en
where $f(z)=\sum_{n\in\Z}f[n]z^{-n-1}$.
It is graded by weight $m$ and degree $d$. Namely, we have the decomposition
$V^k_l=\bigoplus_{m,d=0}^\infty\bigl(V^k_l\bigr)_{m,d}$,
where $\left(V^k_l\right)_{m,d}$ is spanned by the monomial vectors
\bea\label{MON}
f[0]^{a_0}f[-1]^{a_1}f[-2]^{a_2}\cdots v^k_l
\ena
with
\bea\label{DEG}
\sum_{j=0}^\infty a_j=m,\quad\sum_{j=0}^\infty ja_j=d.
\ena
A basis of the principal subspace $V^k_l$ is described by the set
of integer points $\P^k_l={^{\mathbb R}\P^k_l}\cap\Z^\infty$
where
\be
\qquad{^{\mathbb R}\P^k_l}=\{(a_n)_{n\geq0}\mid
a_n\in{\mathbb R}_{\geq0},\ a_n=0
\hbox{ for almost all $n$},\
a_j+a_{j+1}\leq k\ (j\geq0),\ a_0\leq l\}.
\en
\begin{prop}\cite{FS}
The set of monomial vectors \eqref{MON} with the exponents $(a_n)_{n\geq0}$
taken from the set $\P^k_l$ constitutes a basis of $V^k_l$.
\end{prop}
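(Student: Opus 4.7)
The plan is to prove the proposition in two parts: that the monomials indexed by $\P^k_l$ span $V^k_l$, and that they are linearly independent. For spanning, I would set up a total order on monomial exponent sequences $(a_0, a_1, \ldots)$ (reverse lex is convenient) and use the three defining relations to straighten arbitrary monomials. The relation $f[n]v^k_l = 0$ for $n > 0$ eliminates positive modes, restricting attention to monomials in $f[0], f[-1], f[-2], \ldots$ applied to $v^k_l$. The relation $f[0]^{l+1}v^k_l = 0$ forces $a_0 \le l$. For the two-step bound $a_j + a_{j+1} \le k$, extract Fourier coefficients from $f(z)^{k+1} = 0$: for each $N \in \Z$,
\be
\sum_{n_1 + \cdots + n_{k+1} = N} f[n_1] \cdots f[n_{k+1}] = 0.
\en
By selecting $N$ so that the leading term of this sum (in the chosen order) has the form $f[-j-1]^{a} f[-j]^{k+1-a}$, one can express any monomial featuring such a forbidden block as a linear combination of monomials strictly smaller in the order, and induction yields spanning by the admissible monomials indexed by $\P^k_l$.

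For linear independence, I would compare graded dimensions by matching two parallel recursions. Let $\chi^k_l(z,q) = \sum_{(a_n) \in \P^k_l} z^{\sum_n a_n} q^{\sum_n n a_n}$ denote the combinatorial generating function. The spanning result gives $\ch V^k_l \le \chi^k_l$ term by term. To obtain the reverse inequality, construct short exact sequences linking $V^k_l$ to $V^k_{l-1}$ or to suitably shifted copies of other $V^k_{l'}$, for instance via the action of $e[0]$ from the ambient $\widehat{\mathfrak{sl}}_2$ or via direct comparison of defining relations with truncated modes. These produce a recursion for $\ch V^k_l$ that mirrors a Durfee-rectangle style recursion satisfied by $\chi^k_l$ (obtained, for instance, by splitting on the value of $a_0$). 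Matching initial values then forces $\ch V^k_l = \chi^k_l$, whence the admissible monomials form a basis.

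The main obstacle is the linear-independence half. The spanning reduction is essentially mechanical once the order and the relations are arranged correctly, but constructing genuine short exact sequences within the class of principal subspaces is delicate because the defining relations are of degree $k+1$ and are not obviously preserved by naive truncations. An alternative that bypasses this is the vertex-operator route: embed $V^k_l$ into a tensor product of $k$ level-one principal subspaces, where the level-one case is governed by the Pauli exclusion principle $a_j + a_{j+1} \le 1$ and linear independence is transparent from known monomial bases of free-field Fock spaces. One then verifies that admissible monomials of $V^k_l$ map to linearly independent vectors under the embedding. Either path requires care; the character-recursion approach is the most consonant with the inductive strategy used throughout the paper.
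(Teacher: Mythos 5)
The paper does not prove this proposition; it is quoted from \cite{FS}, so the only fair comparison is with the standard argument of that source and with the strategy this paper itself uses later for $\widehat{\mathfrak{sl}}_3$ (upper estimate from subquotient surjections, lower estimate from a vertex--operator realization, then uniqueness of the solution of the recursion). Your spanning half is fine: extracting Fourier coefficients of $f(z)^{k+1}=0$ and straightening with respect to a suitable order, together with $f[0]^{l+1}v^k_l=0$ and $f[n]v^k_l=0$ for $n>0$, does give $\ch V^k_l\le\chi^k_l$ with $\chi^k_l$ the generating function of $\P^k_l$.

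The gap is in your primary route for linear independence. The short exact sequences you invoke cannot be established by ``direct comparison of defining relations'': what comes for free is only that $V^k_l/\br{f[0]^l}\simeq V^k_{l-1}$ and that there is a \emph{surjection} $V^k_{k-l}[-1]\to\br{f[0]^l}$, which yields $\ch V^k_l\le \ch V^k_{l-1}+z^l\ch V^k_{k-l}(qz)$ --- an inequality in the same direction as the spanning estimate, not the reverse one. The injectivity needed to turn this into an exact sequence is essentially equivalent to the linear independence you are trying to prove; indeed, the paper's own Proposition in Section 2.2 deduces that inclusion \emph{from} the already known character identity, so using the exact sequence to prove the basis theorem is circular as you have set it up. Consequently ``matching initial values then forces $\ch V^k_l=\chi^k_l$'' does not follow: a one-sided system of inequalities does not pin down the characters. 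The reverse inequality must come from an independent construction of sufficiently many vectors --- exactly what your ``alternative'' vertex--operator route provides (level-one Frenkel--Kac Fock spaces tensored $k$ times, as in Georgiev's quasi-particle bases, or the dual functional realization of Feigin--Stoyanovsky). So the vertex--operator embedding is not an optional bypass but the indispensable half of the argument, and within it the substantive step --- showing that the images of the $\P^k_l$-monomials in the tensor product of level-one principal subspaces remain linearly independent (via leading-term/projection arguments) --- is precisely what your sketch leaves unaddressed. With that step supplied, your two-sided scheme is the standard proof and is consonant with the $\le$/$\ge$-plus-uniqueness machinery of Sections 3--5 of the paper.
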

The condition $a_j+a_{j+1}\leq k$ comes from the integrability condition
$f(z)^{k+1}=0$.

The character of $V^k_l$ is the formal power series
\be
\chi^k_l(z)=\sum_{m,d=0}^\infty\dim\bigl(V^k_l\bigr)_{m,d}z^mq^d.
\en
Two different formulas are known for this quantity.
\begin{prop}\cite{FS}
The character $\chi^k_l(z)$ is given by
\bea\label{FF}
\chi^k_l(z)=\sum_{m_1,\ldots,m_k=0}^\infty
\frac{q^{\sum_{i,j=1}^km_im_j\min(i,j)-\sum_{i=1}^k\min(i,l)m_i}
z^{\sum_{i=1}^kim_i}}{(q)_{m_1}\cdots(q)_{m_k}}.
\ena
\end{prop}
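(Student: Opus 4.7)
The plan is to prove the fermionic formula (\ref{FF}) by showing that both sides satisfy a common recursion in $l$ with a matching base case: on the character side the recursion comes from partitioning the monomial basis $\P^k_l$ by the value of $a_0$, and on the fermionic side it amounts to a $q$-series identity.

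First, I would use the monomial basis from the preceding proposition to write
$$\chi^k_l(z)=\sum_{(a_n)\in\P^k_l}z^{\sum_n a_n}q^{\sum_n na_n}.$$
I then split $\P^k_l$ according to $a_0\in\{0,1,\ldots,l\}$. Sequences with $a_0\leq l-1$ constitute $\P^k_{l-1}$. Sequences with $a_0=l$ force $a_1\leq k-l$, so the shift $(a_0,a_1,a_2,\ldots)\mapsto(a_1,a_2,\ldots)$ is a bijection onto $\P^k_{k-l}$. Tracking weights (the $z^l$ from $a_0=l$ and the index shift $n\mapsto n+1$ replacing $z$ by $zq$) gives
\begin{equation}\label{sl2rec}
\chi^k_l(z)=\chi^k_{l-1}(z)+z^l\chi^k_{k-l}(zq),\qquad 1\leq l\leq k.
\end{equation}

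Next, denoting the right-hand side of (\ref{FF}) by $R^k_l(z)$, I would verify that $R^k_l$ satisfies the same recursion (\ref{sl2rec}). Using $\min(i,l)-\min(i,l-1)=\mathbf{1}_{i\geq l}$ one has
$$R^k_l(z)-R^k_{l-1}(z)=\sum_{m_1,\ldots,m_k\geq 0}\frac{q^{Q(m)-L(m,l-1)}\bigl(q^{-\sum_{i\geq l}m_i}-1\bigr)z^{\sum_i im_i}}{(q)_{m_1}\cdots(q)_{m_k}},$$
with $Q(m)=\sum_{i,j}\min(i,j)m_im_j$ and $L(m,l)=\sum_i\min(i,l)m_i$. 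I would reorganize this sum by a change of summation variables modeling the creation of one additional charge-$l$ particle; after absorbing the $z^l$ factor and the resulting quadratic reshuffling into the substitution, the expression is identified with $z^l R^k_{k-l}(zq)$. Combined with the base case $l=0$, induction on $l$ closes the argument.

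The hard step is the $q$-series verification on the fermionic side: the identity $R^k_l(z)-R^k_{l-1}(z)=z^l R^k_{k-l}(zq)$ is not transparent from the form of $R^k_l$ and requires careful bookkeeping of a shifted summation variable, plus an appeal to the telescoping produced by the factor $q^{-\sum_{i\geq l}m_i}-1$. A more conceptual but technically heavier alternative is the quasi-particle approach of \cite{FS}: introduce charge-$i$ operators $b^{(i)}(z)$ as suitable coincidence limits of $f(z_1)\cdots f(z_i)$, prove that ordered monomials in the modes $b^{(i)}[-n]$ (with explicit difference conditions reflecting $f(z)^{k+1}=0$ and the boundary relation at $v^k_l$) form a basis of $V^k_l$, and then read (\ref{FF}) directly off the parametrization with $m_i$ counting charge-$i$ quasi-particles. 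There the difficulty shifts to proving linear independence of the quasi-particle monomials, which is usually done by a triangular comparison with the $\P^k_l$-basis or via intertwining operators.
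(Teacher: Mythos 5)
The paper itself does not prove this proposition --- it is quoted from \cite{FS} --- so there is no internal proof to compare with; note, though, that the first half of your plan (splitting $\mathcal{P}^k_l$ by the value of $a_0$ to get $\chi^k_l(z)=\chi^k_{l-1}(z)+z^l\chi^k_{k-l}(qz)$) is exactly how the paper derives \eqref{REC} in Section 2.2, and that part is correct. The problem is that, as a proof of \eqref{FF}, the two steps that carry all the weight are missing. First, the entire content of the statement is the claim that the fermionic side satisfies the same recursion, $R^k_l(z)-R^k_{l-1}(z)=z^lR^k_{k-l}(qz)$, and you only assert it. The hinted device of ``creating one charge-$l$ particle,'' i.e.\ the shift $m=n+e_l$, does not produce it: one computes $Q(m)-L(m,l-1)=Q(n)-L(n,l-1)+2\sum_i\min(i,l)n_i+1$, while the exponent in the corresponding summand of $z^lR^k_{k-l}(qz)$ is $Q(n)-\sum_i\min(i,k-l)n_i+\sum_i i\,n_i$; already for charges $i\le\min(l-1,k-l)$ the per-particle contributions are $i$ versus $0$, and the prefactor $q^{-\sum_{i\ge l}m_i}-1$ is not accounted for by any single substitution. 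This identity is true, but it is essentially the $l$-recursion for Gordon-type fermionic forms (proved, e.g., along the lines of \cite{FL}, \cite{FKLMM}) and needs a genuine argument (telescoping over all charges $\ge l$, or induction on $k$), not bookkeeping; your proposal does not contain one. The quasi-particle alternative you mention is indeed the route of \cite{FS} and \cite{G}, but there the substance is the linear-independence of quasi-particle monomials, which you again only name.

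Second, ``induction on $l$ with base case $l=0$'' does not close the argument even granting the identity: the recursion expresses $\chi^k_l$ through $\chi^k_{k-l}(qz)$, an index generally larger than $l$, and $\chi^k_0(z)=\chi^k_k(qz)$ is itself one of the coupled relations rather than an independently verifiable base case. What is actually needed is a uniqueness statement for the whole coupled family: the relations $\chi^k_l(z)=\sum_{a=0}^{l}z^a\chi^k_{k-a}(qz)$, $0\le l\le k$, together with the normalization that each series lies in $1+z\C[[z]]$ with $q$-exponents bounded below, admit a unique solution. This is proved by looking at a minimal monomial $z^Nq^M$ in a would-be nonzero difference (the $a=0$ term lowers the $q$-degree, the $a\ge1$ terms lower the $z$-degree), which is precisely the device the paper uses for $g_{n,k}(z)$ in Section 2.3 and, in the rank-two setting, in Section 5. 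With that uniqueness lemma and a real proof of the fermionic recursion your strategy goes through, but in its present form both of these crucial steps are gaps.
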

This formula is called fermionic.
\begin{prop}\cite{FL}
The character $\chi^k_l(z)$ is given by
\bea
\chi^k_l(z)=\sum_{n=0}^\infty\frac{z^{nk}q^{n^2k-nl}}
{(q^{2n}z)_\infty(q)_n(q^{-2n+1}z^{-1})_n}
+\sum_{n=0}^\infty\frac{z^{nk+l}q^{n^2k+nl}}
{(q^{2n+1}z)_\infty(q)_n(q^{-2n}z^{-1})_{n+1}}.\label{BF}
\ena
\end{prop}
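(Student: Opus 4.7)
I would follow the three-step bosonic strategy emphasized in the introduction: identify extremal vectors, compute their contributions, and sum them up. For step one, within the monomial basis of $V^k_l$ parametrized by $\P^k_l$, at each fixed $z$-weight $m$ there is a unique configuration $(a_j)_{j\ge 0}$ of minimum $q$-degree, obtained by packing the $f$'s as tightly against the cyclic vector as the constraints $a_0\leq l$ and $a_j+a_{j+1}\leq k$ allow. These saturated alternating configurations fall into two families indexed by $n\geq 0$. The type-I configuration $(l,k-l,l,k-l,\ldots,l,k-l,0,\ldots)$ of length $2n$ has $z$-weight $nk$ and $q$-degree $n^2k-nl$, matching the numerator $z^{nk}q^{n^2k-nl}$ of the first sum in \eqref{BF}; the type-II configuration $(l,k-l,\ldots,l,k-l,l,0,\ldots)$ of length $2n+1$ has $z$-weight $nk+l$ and $q$-degree $n^2k+nl$, matching the second sum. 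One checks on small cases (e.g.\ $k=l=1$) that these exponents agree with the explicit minimum-degree computation.

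For step two, I would compute the contribution of each extremal configuration ${\bf a}^{\rm ex}$ by parametrizing an arbitrary admissible sequence as a perturbation of ${\bf a}^{\rm ex}$ and factoring the generating function into a local and a global part. Slots beyond the support of ${\bf a}^{\rm ex}$ are unbounded and produce the infinite product $(q^{2n}z)_\infty^{-1}$ or $(q^{2n+1}z)_\infty^{-1}$, while the finitely many slots inside the support yield, after resummation, the remaining $q$-Pochhammer factors. The negative powers of $z$ appearing in $(q^{-2n+1}z^{-1})_n$ and $(q^{-2n}z^{-1})_{n+1}$ arise from the convention of expanding each summand in positive powers of $z$ via $(1-az^{-1})^{-1}=-\sum_{m\geq 1}a^{-m}z^m$, and produce signed contributions that are expected to cancel term by term upon summation over $n$.

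Step three---showing that the sum of contributions equals $\chi^k_l(z)$---is the main obstacle, because the individual summands are rational functions of $z$ with apparent poles (for instance at $z=q^{2n-1}$) that must cancel to yield the polynomial character coefficients, and because each summand expands to a formal power series with both positive and negative terms. Mimicking the strategy developed later in Sections 3--5 for the $\widehat{\mathfrak{sl}_3}$ case, I would set up a short exact sequence of $\hat{\mathfrak n}$-modules linking $V^k_l$, $V^k_{l-1}$, and an appropriate shifted module (arising from the quotient by $f[0]^lv^k_l$), derive from it a recursion for $\chi^k_l(z)$ in $l$, and verify that the right-hand side of \eqref{BF} obeys the same recursion by term-by-term $q$-Pochhammer manipulations, taking care that each summand's spurious singularities cancel between the type-I and type-II families when the recursion is applied. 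With the base case $l=0$ in hand and a uniqueness argument of the type formulated in Proposition \ref{loop}, induction closes the proof. Alternatively, one could recognize the equivalence of \eqref{FF} and \eqref{BF} as an Andrews--Gordon identity and invoke Bailey-pair techniques, but the representation-theoretic route is closer in spirit to the paper's own methods.
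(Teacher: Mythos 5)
Your proposal is sound, but it takes a different route from the paper, which in fact offers no proof of this statement at all: Proposition \ref{BF} is quoted from \cite{FL}, and Section 2 only shows how to \emph{guess} the formula by computing the contributions of the extremal configurations \eqref{extw}, \eqref{extw2} in two ways (directly from the cone structure of ${}^{\mathbb R}\P^k_l$ near each extremal point, and by resumming the fermionic formula \eqref{FF} via the lemma on $g_{n,k}$). What you propose instead is a genuine proof: derive the recursion \eqref{REC} from the short exact sequence of $U(\hat{\mathfrak n})$-modules, check by elementary $q$-Pochhammer manipulations that the right-hand side of \eqref{BF} satisfies the same recursion (this does work: the differences of consecutive summands in $l$ telescope exactly onto $z^l$ times the shifted series, and the closing relation $\chi^k_0(z)=\chi^k_k(qz)$ also holds for the bosonic expression), and conclude by uniqueness. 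This is precisely the strategy the paper systematizes for the harder $\widehat{\mathfrak{sl}}_3$ modules in Sections 3--6, so your plan buys a self-contained argument in the $\widehat{\mathfrak{sl}}_2$ case at the cost of the explicit verification deferred to ``Pochhammer manipulations,'' while the paper's Section 2 buys intuition (extremal points and their cones) but relies on \cite{FL} for the actual identity. One caveat: the recursion \eqref{REC} alone does not support a straightforward induction on $l$ with an independent ``base case $l=0$,'' since the second term involves $\chi^k_{k-l}(qz)$; you must close the loop with $\chi^k_0(z)=\chi^k_k(qz)$ (or the full relation $\chi^k_l(z)=\sum_{a=0}^l z^a\chi^k_{k-a}(qz)$) and a degree-shift uniqueness argument of the kind in Lemma \ref{loop}, which you do invoke, so this is a matter of phrasing rather than a gap.
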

This formula is called bosonic.
\subsection{Recursion relation}
The recursion relation for the characters
\be
\chi^k_l(z)=\sum_{a=0}^lz^a\chi^k_{k-a}(qz)
\en
follows immediately from the definition of $\P^k_l$.
It can be rewritten as
\bea\label{REC}
\chi^k_l(z)=\chi^k_{l-1}(z)+z^l\chi^k_{k-l}(qz).
\ena
In this form, the recursion can be explained by
the representation theory as follows.
Let $V^k_l[i]$ be the $U(\mathfrak n)$-module identical with $V^k_l$
as a vector space, where $f[n]$ acts as
$f[n+i]$. We denote the vector $v^k_l$
considered as the cyclic vector of the module $V^k_l[i]$ by $v^k_l[i]$.
The identity \eqref{REC} for the characters
corresponds to a short exact sequence of $U(\hat{\mathfrak n})$-modules.
\begin{prop}
There is an exact sequence of $U(\hat{\mathfrak n})$ modules
\be
0\rightarrow V^k_{k-l}[-1]
\buildrel\iota\over\rightarrow V^k_l[0]
\rightarrow V^k_{l-1}[0]
\rightarrow0,
\en
where the homomorphism $\iota$ is defined by
\be
\iota(v^k_{k-l}[-1])=f[0]^lv^k_l[0].
\en
\end{prop}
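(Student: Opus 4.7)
The plan is to define both maps explicitly and then verify exactness by matching characters with the recursion \eqref{REC}.

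The right-hand surjection $V^k_l\to V^k_{l-1}$ is defined by $v^k_l\mapsto v^k_{l-1}$, using the universal property of $V^k_l$: the vector $v^k_{l-1}$ satisfies all the defining relations of $v^k_l$ because $f[0]^{l+1}v^k_{l-1}=f[0]\cdot f[0]^lv^k_{l-1}=0$. Its kernel coincides with the submodule $U(\hat{\mathfrak n})\cdot f[0]^lv^k_l$, since $V^k_l/U(\hat{\mathfrak n})\cdot f[0]^lv^k_l$ is cyclic with a cyclic vector satisfying exactly the defining relations of $v^k_{l-1}$, hence is canonically isomorphic to $V^k_{l-1}$.

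For $\iota$ to be well-defined, I need to check that $u:=f[0]^lv^k_l$ satisfies the defining relations of the cyclic vector of $V^k_{k-l}[-1]$. The identity $f(z)^{k+1}=0$ holds on the whole of $V^k_l$, and the annihilation conditions for the appropriate higher modes (as dictated by the shift $[-1]$) follow from the abelianness of $\hat{\mathfrak n}$ together with $f[n]v^k_l=0$ for $n>0$, so that $f[n]u=f[0]^l f[n]v^k_l=0$. The only nontrivial condition is one of the form $f[-1]^{k-l+1}u=0$; the plan is to extract it from the coefficient of $z^{-l}$ in the identity $f(z)^{k+1}v^k_l=0$. After discarding all terms that contain a positive mode $f[n]$ with $n>0$ (which vanish on $v^k_l$ by commutativity) and those containing more than $l$ copies of $f[0]$ (which vanish by $f[0]^{l+1}v^k_l=0$), a straightforward counting of multi-indices $(n_0,\dots,n_k)$ with $n_j\le0$ and $\sum n_j = l-k-1$ leaves only the tuple with $k-l+1$ copies of $-1$ and $l$ copies of $0$, giving the required identity up to a nonzero multinomial coefficient. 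I expect this mode-counting step to be the main technical obstacle, although it is elementary.

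Once $\iota$ is well-defined, the composition $V^k_{k-l}[-1]\to V^k_l\to V^k_{l-1}$ sends the cyclic vector to $f[0]^lv^k_{l-1}=0$, so $\iota$ factors through $\ker(\pi)$. Since both $\iota(V^k_{k-l}[-1])$ and $\ker(\pi)$ are cyclically generated over $U(\hat{\mathfrak n})$ by $u$, $\iota$ is surjective onto $\ker(\pi)$. For injectivity I would compare graded characters: the recursion \eqref{REC} gives $\chi(\ker(\pi))=\chi^k_l(z)-\chi^k_{l-1}(z)=z^l\chi^k_{k-l}(qz)$, and the shift convention for $V^k_{k-l}[-1]$ together with the weight-$l$ offset coming from the weight of $\iota(v^k_{k-l}[-1])=u$ yields precisely the same formal series. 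Thus $\iota$ is a surjection of graded modules with equal graded dimensions, hence an isomorphism, concluding the proof.
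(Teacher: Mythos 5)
Your proof is correct and follows essentially the same route as the paper: identify $V^k_l/\langle f[0]^l\rangle\simeq V^k_{l-1}$, obtain the relation needed for $\iota$ to be well defined from the integrability condition $f(z)^{k+1}=0$ (your extraction of the coefficient of $z^{-l}$, giving $f[-1]^{k-l+1}f[0]^l v^k_l=0$, spells out the step the paper only asserts, and with the correct exponent), and deduce injectivity from the character identity \eqref{REC}. No gaps beyond the level of detail the paper itself leaves implicit.
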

\begin{proof}
Let $\br{f[0]^l}$ be the submodule of $V^k_l[0]$ generated by
the vector $f[0]^lv^k_l[0]$. By the definition, there is an isomorphism
\be
V^k_l[0]/\br{f[0]^l}\simeq V^k_{l-1}[0].
\en
On the other hand, the integrability condition $f(z)^{k+1}=0$
implies
\be
f[-1]^{k-l}f[0]^lv^k_l[0]=0.
\en
Therefore, the mapping $\iota$ is well-defined.
The character identity implies
that it is an inclusion.
\end{proof}
\subsection{Contributions of extremal vectors}
Let $\P$ be a convex subset of a real vector space.
A point in $\P$ is called extremal if and only if it is not
a linear combination $\theta P_1+(1-\theta)P_2$ $(0<\theta<1)$
of two distinct points $P_1,P_2\in\P$.
A general principle in ``counting'' the number of integer points
in convex polygons
is to write it as the sum of contributions from the extremal points.
In this subsection, we show how we can guess the bosonic formula
\eqref{BF} by using this principle.

Weyl's character
formula is of this kind: the simplest case is
\bea\label{EXA}
\frac{1-z^{l+1}}{1-z}=\frac1{1-z}+\frac{z^l}{1-z^{-1}}.
\ena
The polygon in this example is the interval $[0,l]$.
{}For the character, instead of counting the number of integer points,
we count $z^n$ for each integer point $n$ in $[0,l]$.
The extremal points are
$0$ and $l$. The contribution from an extremal point
is counted as the sum of $z^n$ over the integer points near that point
in the limit $l\rightarrow\infty$. For $0$ this is $1+z+z^2+\cdots=1/(1-z)$,
and for $l$ this is $z^l+z^{l-1}+\cdots=z^l/(1-z^{-1})$. These are the two
terms in the right hand side of \eqref{EXA}. To obtain
the left hand side of the formula,
we write the second term as $-z^{l+1}/(1-z)$.
Because of rewriting $z^l+z^{l-1}+\cdots$ to $-z^{l+1}-z^{l+2}-\cdots$,
the obtained formula contains both positive and negative coefficient terms.
The formula \eqref{BF} should be understood as an equality of
non-negative power series
in $z$. The $n$-th summand of the first sum (respectively, of the second sum)
in the right hand side contains negative coefficient terms
if and only if $n$ is odd (respectively, even).
This is the difference between the bosonic formula \eqref{BF}
and the fermionic one \eqref{FF}.
In the latter, each term corresponds to an integer point, and therefore,
the formula consists of positive coefficient terms only.

There is an important point in counting contributions of extremal points.
{}For the characters of the principal subspaces, we count monomials
$z^mq^d$ using two linear functions \eqref{DEG}. It means that we
count not the integer points in the infinite dimensional polygon
${^{\mathbb R}\P^k_l}$ but rather the integer points with multiplicities in
a polygon in $\mathbb R^2$, which is the image of ${^{\mathbb R}\P^k_l}$
by the mapping $(m,d)$. Not all extremal points remain to be extremal
when they are projected to $\mathbb R^2$. We guess that the contributions
from such extremal points in ${^{\mathbb R}\P^k_l}$ that are projected
to a non-extremal point in the image cancel out. Thus we count only
the contributions from the extremal points in the image.

In the case of the Weyl-Kac character formula for integrable modules,
the relevant extremal points in the weight space
are given by the Weyl group orbit of the highest weight.
In the case of the principal subspaces,
we also take the extremal points in 
${^{\mathbb R}\P^k_l}$ whose weights belong to
the orbit of the highest weight. They are
\bea
&&w_{2n}:a_0=l,a_1=k-l,\ldots,a_{2n-2}=l,a_{2n-1}=k-l,a_j=0\ (j\geq2n),
\label{extw}\\
&&w_{2n+1}:a_0=l,a_1=k-l,\ldots,a_{2n-1}=k-l,a_{2n}=l,
a_j=0\ (j\geq2n+1),
\label{extw2}
\ena
where $n\geq0$. The monomials $z^{m(w)}q^{d(w)}$ at these points are given by
\be
z^{m(w)}q^{d(w)}=
\begin{cases}
z^{nk}q^{n^2k-nl}&\hbox{ for }w=w_{2n};\\
z^{nk+l}q^{n^2k+nl}&\hbox{ for }w=w_{2n+1}.
\end{cases}
\en

The contribution from an extremal point is defined to be
the formal series obtained in the limit
\be
z^{m(w)}q^{d(w)}\times\lim_{l,k-l\rightarrow\infty}
z^{-m(w)}q^{-d(w)}\chi^k_l(z)
\en
The contribution can be calculated in several different ways.
Here we present two such calculations.

A direct calculation using $\P^k_l$ is possible for the present case
of $\widehat{\mathfrak{sl}}_2$.
In the limit the shape of ${^{\mathbb R}\P^k_l}$ in the vicinity of each
extremal point $w$ becomes a cone of the form $w+{^{\mathbb R}\mathcal C}$.
{}For the extremal point
\be
w=w_{2n}=(l,k-l,l,k-l,\ldots,l,k-l,0,0,0,\ldots),
\en
the cone ${^{\mathbb R}\mathcal C}$ is generated by the following vectors:
\newcommand{\zero}{\phantom{-}0}
\newcommand{\one}{\phantom{-}1}
\be
&(\zero,\zero,\zero,\zero,\ldots,-1,\one,\zero,\zero,\zero,\ldots),&q,\\
&\qquad\cdots&\\
&(\zero,\zero,-1,\one,\ldots,-1,\one,\zero,\zero,\zero,\ldots),&q^{n-1},\\
&(-1,\one,-1,\one,\ldots,-1,\one,\zero,\zero,\zero,\ldots),&q^n,\\
&
(\zero,\zero,\zero,\zero,\ldots,\zero,-1,\zero,\zero,\zero,\ldots),&q^{-2n+1}z^{-1},\\
&\qquad\cdots&\\
&
(\zero,\zero,\zero,-1,\ldots,\one,-1,\zero,\zero,\zero,\ldots),&q^{-n-1}z^{-1},\\
&
(\zero,-1,\one,-1,\ldots,\one,-1,\zero,\zero,\zero,\ldots),&q^{-n}z^{-1},\\
&(\zero,\zero,\zero,\zero,\ldots,\zero,\zero,\one,\zero,\zero,\ldots),&q^{2n}z,\\
&
(\zero,\zero,\zero,\zero,\ldots,\zero,\zero,\zero,\one,\zero,\ldots),&q^{2n+1}z,\\
&\qquad\cdots,&
\en
where in the first line $-1$ stands at the $(2n-1)$-th position.
After each vector we wrote the corresponding monomial.
Note that all these vectors are linearly independent and
the integer points in the cone, $\mathcal C=
{^{\mathbb R}\mathcal C}\cap \Z^\infty$, are linear combinations of the
generating vectors with non-negative integer coefficients.
Therefore, the contribution of the extremal point $w_{2n}$ is the sum of
products of powers of these monomials.
Thus we get the summands in the first term
in the right hand side of \eqref{BF}. The case of $w=w_{2n+1}$ is similar.

The second calculation of the contribution
uses the fermionic formula. In the fermionic formula, terms
are labeled by $(m_1,\ldots,m_k)$. In this case, the term corresponding to
each point is not a monomial but a formal power series.
It is written explicitly in \eqref{FF}. It is known that
there is a mapping $\Phi^k:\P^k_k\rightarrow\Z_{\geq0}^k$ such that
the sum of monomials over $(\Phi^k)^{-1}(m_1,m_2,\ldots,m_k)\cap\P^k_l$
is the series corresponding to $(m_1,m_2,\ldots,m_k)$.
The mapping is defined inductively as follows.
Let $a=(a_n)_{n\geq0}\in\P^k_k$.
If $a_i+a_{i+1}<k$ for all $i$ we have $a\in\P^{k-1}_{k-1}$,
and we define $\Phi^k(a)=(\Phi^{k-1}(a),0)$.
If $a_i+a_{i+1}=k$ for some $i$, define $b\in\P^k_k$ by
\be
b_j=\begin{cases}a_j&\quad(j<i);\\
a_{j+2}&\quad(j\geq i).\end{cases}
\en
Then, we define $\Phi^k(a)=\Phi^k(b)+(0,0,\cdots,0,1)$.
{}For example, consider the extremal points 
$w_{2n+\epsilon}\in {^{\mathbb R}\P^k_l}$
given by \eqref{extw}, \eqref{extw2}. We have 
\be
\Phi^k(w_{2n+\epsilon})
=(0,0,\ldots,0,\buildrel{l-{\rm th}}\over\epsilon,0,\ldots,
\buildrel{k-{\rm th}}\over n).
\en
The points of the cone ${^{\mathbb R}\mathcal{C}}$ 
are linear combinations of the generating vectors. 
{}For $M\in \Z>0$, let $w_{2n+\epsilon}+\mathcal C(M)$ be the 
subset of $w_{2n+\epsilon}+\mathcal C$ such that 
the sum of the coefficients in front of the generating vectors 
is bounded by $M$.
Set
\be
&&\mathcal M^n_\epsilon(N)=\{(m_1,m_2,m_3,\cdots,m_{l-2},m_{l-1},
m_l,m_{l+1},m_{l+2},\cdots,m_{k-2},m_{k-1},m_k)\\
&&\qquad|\sum_{0\leq i\leq N}m_{k-i}=n,\sum_{|i|\leq N}m_{l+i}=\epsilon\}.
\en
{}For any $M$, if $k-l-N,l-N,N$ are large enough,
the points in $w_{2n+\epsilon}+\mathcal C(M)$ are mapped to 
$\mathcal M^n_\epsilon(N)$. Thus, the contribution
of the extremal point $w_{2n+\epsilon}$ is equal to the sum of series
corresponding to the points in $\mathcal M^n_\epsilon(N)$ in
the limit $k-l-N,l-N,N\rightarrow\infty$.

Let us check this statement by direct calculation.
In the limit, the sum splits into the product of
sums over the following three sets of indices.

\smallskip
(i)\ ${\mathcal M}_0=\{(m_1,m_2,\ldots)\mid m_j\geq0\}$;

\smallskip
(ii)\
${\mathcal M}_1=\Bigl\{(\ldots,m_{l-1},m_l,m_{l+1},\ldots)\,\Big|\,\sum_{i\in\Z}m_{l+i}
=\epsilon\Bigr\}$,
\quad($\epsilon=0$ for $w_{2n}$ and $\epsilon=1$ for $w_{2n+1}$);

\smallskip
(iii)\
${\mathcal M}_2=\left\{(\ldots,m_{k-1},m_k)\,\Big|\,\sum_{i\geq0}m_{k-i}=n\right\}$.

\smallskip\noindent
Consider the exponent of $q$ in \eqref{FF}. By using the equalities
$\sum_{i\in\Z}m_{l+i}=\epsilon$ or $\sum_{i\geq0}m_{k-i}=n$,
one can rewrite it so that the sums over the three sets become independent.

The sum over (i) is
\be
&&\sum_{m\in {\mathcal M}_0}q^{\sum_{i,j=1}^\infty m_im_j\min(i,j)
+\{2(\sum_{i\in\Z}m_{l+i}+\sum_{i\geq0}m_{k-i})-1\}\sum_{j=1}^\infty jm_j}/
\prod_{i\geq1}(q)_{m_i}\\
&&=\sum_{m\in {\mathcal M}_0}q^{\sum_{i,j=1}^\infty m_im_j\min(i,j)
+\{2(n+\epsilon)-1\}\sum_{j=1}^\infty jm_j}/
\prod_{i\geq1}(q)_{m_i}\\
&&=\frac1{(q^{2(n+\epsilon)}z)_\infty}.
\en

The sum over (ii) with $\epsilon=1$ is split into two parts:
\be
{\mathcal M}_1=\{(\ldots,0,\buildrel\hbox{$\scriptstyle(l-i)$th}\over1,0\ldots,0)\mid i\geq0\}
\sqcup
\{(0,\ldots,0,\buildrel\hbox{$\scriptstyle(l+i)$th}\over1,0,\ldots)\mid i\geq1\}.
\en
It is evaluated as follows.
\be
&&\frac1{(q)_1}\sum_{i=0}^\infty q^{(l-i)(2n+1)-(l-i)}z^{l-i}
+\frac1{(q)_1}\sum_{i=1}^\infty q^{(l+i)(2n+1)-l}z^{l+i}\\
&&\qquad=\frac{q^{2nl}z^l}{(q)_1(q^{-2n}z^{-1})_1}
+\frac{q^{2nl+2n+1}z^{l+1}}{(q)_1(q^{2n+1}z)_1}\\
&&\qquad=\frac{q^{2nl}z^l}{(q^{-2n}z^{-1})_1(q^{2n+1}z)_1}.
\en
{}For $\epsilon=0$ the sum is over one point and is equal to $1$.

The sum over (iii) is more complicated. We state the result.
\begin{lem}
Set
\be
g_{n,k}(z)=\sum_{(n_0,n_1,\ldots),\ \sum_{j=0}^\infty n_j=n}
\frac{q^{\sum_{i,j=0}^\infty n_in_j\min(k-i,k-j)}z^{\sum_{i=0}^\infty(k-i)n_i}}
{\prod_{i=0}^\infty(q)_{n_i}}.
\en
Then we have the equality
\bea\label{FNK}
g_{n,k}(z)=\frac{q^{n^2k}z^{nk}}{(q)_n(q^{-2n+1}z^{-1})_n}.
\ena
\end{lem}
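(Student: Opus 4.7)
The plan is to prove \eqref{FNK} by induction on $n$, the base case $n=0$ being trivial (both sides equal $1$). The key ingredient will be a recursion for $g_{n,k}(z)$ obtained by peeling off the variable $n_0$ from the summation.

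Concretely, isolate the $n_0$ contribution in the quadratic form
\[
\sum_{i,j\geq 0}n_in_j\min(k-i,k-j)=n_0^2 k+2n_0\sum_{j\geq 1}(k-j)n_j+\sum_{i,j\geq 1}n_in_j\min(k-i,k-j),
\]
and shift the remaining indices by $n'_i=n_{i+1}$, using $\min(k-1-i,k-1-j)=\min(k-i,k-j)-1$ on the last sum. Collecting terms produces the recursion
\[
g_{n,k}(z)=\sum_{n_0=0}^n\frac{q^{n_0^2 k}z^{n_0 k}}{(q)_{n_0}}\,g_{n-n_0,k-1}(zq^{2n_0}).
\]
A direct (and purely algebraic) computation shows that the conjectured right-hand side of \eqref{FNK} satisfies this same recursion: both sides transform homogeneously under $k\mapsto k-1$, and the factor $zq^{2n_0}$ is absorbed cleanly by the shift $q^{-2(n-n_0)+1}(zq^{2n_0})^{-1}=q^{-2n+1}z^{-1}$ in the Pochhammer symbol.

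Substituting the inductive hypothesis on the right-hand side of the recursion and clearing denominators, \eqref{FNK} reduces to the polynomial identity
\[
\sum_{m=0}^n\bin{n}{m}q^{m(m-2n)}z^{-m}(q^{-2n+m+1}z^{-1})_{n-m}=1,
\]
where $m=n-n_0$. With the substitution $u=q^{-2n}z^{-1}$ this becomes the symmetric form
\[
\sum_{m=0}^n\bin{n}{m}q^{m^2}u^m(q^{m+1}u)_{n-m}=1,
\]
which is a terminating case of the $q$-Chu--Vandermonde summation. It can be proved directly: expand $(q^{m+1}u)_{n-m}$ by the $q$-binomial theorem, set $s=m+j$, and use the standard identity $\bin{n}{m}\bin{n-m}{j}=\bin{n}{s}\bin{s}{m}$. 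After this rearrangement the coefficient of $u^s$ equals $\bin{n}{s}q^{s^2}(q^{1-s})_s$, which vanishes for $s\geq 1$ because $(q^{1-s})_s$ contains the factor $1-q^0=0$; only the $s=0$ term survives and contributes $1$.

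I expect the main obstacle to be the bookkeeping of $q$-exponents when passing through the recursion and through the substitution $u=q^{-2n}z^{-1}$; conceptually, however, once the reduction to the terminating $q$-Chu--Vandermonde identity is in place, no further input is needed.
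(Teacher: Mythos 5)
Your route is essentially the paper's: you peel off $n_0$ to obtain the same recursion $g_{n,k}(z)=\sum_{i=0}^n\frac{q^{i^2k}z^{ik}}{(q)_i}\,g_{n-i,k-1}(q^{2i}z)$ and then verify that the conjectured closed form satisfies it. Your terminal identity $\sum_{m=0}^n\bin{n}{m}q^{m^2}u^m(q^{m+1}u)_{n-m}=1$ (with $u=q^{-2n}z^{-1}$) is a correct reduction and is an equivalent form of the identity recorded at the end of the paper's proof; your evaluation of it is also right, since the coefficient of $u^s$ comes out as $\bin{n}{s}(-1)^sq^{s(s+1)/2}\,(1-1)(1-q)\cdots(1-q^{s-1})$, which is exactly your $\bin{n}{s}q^{s^2}(q^{1-s})_s$ and vanishes for $s\ge 1$.

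The one genuine gap is the logical wrapper. You announce induction on $n$, but the recursion does not lower $n$ in its $n_0=0$ term: that term is $g_{n,k-1}(z)$, with the same $n$ and $k$ lowered by one, and your final identity (its $m=n$ summand) substitutes the closed form for precisely this term, which an inductive hypothesis on $n$ does not provide. What your computation actually establishes is that the closed form satisfies the recursion; to conclude that $g_{n,k}$ equals it one still needs a uniqueness statement for solutions of the recursion, which is exactly the step the paper supplies (``this recursion has a unique solution of the form $g_{n,k}(z)\in 1+z\C[[z]]$''). If you prefer to avoid invoking uniqueness, the induction can be repaired: since $\min(k-i,k-j)=k-\max(i,j)$, every summand of $g_{n,k}$ has $z$-exponent $nk-\sum_i in_i$ and $q$-exponent $n^2k-\sum_{i,j}n_in_j\max(i,j)$, so $g_{n,k}(z)=q^{n^2k}z^{nk}h_n$ with $h_n$ independent of $k$, and the closed form $G_{n,k}$ has the same shape. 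Move the $n_0=0$ term to the left; the remaining terms involve only $g_{m,k-1}$ with $m<n$, so induction on $n$ together with your identity gives $g_{n,k}-G_{n,k}=g_{n,k-1}-G_{n,k-1}$ for all $k$, and writing both sides as $q^{n^2k}z^{nk}c_n$ and $q^{n^2(k-1)}z^{n(k-1)}c_n$ with one and the same $k$-independent $c_n$ forces $c_n=0$ for $n\ge1$.
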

\begin{proof}
Splitting the sum in $g_{n,k}(z)$ into $n+1$ subsums corresponding to
$n_0=0,1,\ldots,n$,
we obtain
\be
g_{n,k}(z)=\sum_{i=0}^n\frac{q^{i^2k}z^{ik}}{(q)_i}g_{n-i,k-1}(q^{2i}z).
\en
This recursion has a unique solution of the form $g_{n,k}(z)\in 1+z\C[[z]]$.
One can check the recursion is satisfied by \eqref{FNK}.
The check reduces to the equality
\be
\sum_{i=0}^n(-1)^i\frac{(q)_n}{(q)_i(q)_{n-i}}(q^nz)_iq^{i(i+1)/2-in}
=q^{n^2}z^n.
\en
\end{proof}
Combining (i), (ii), (iii), we obtain the summands in \eqref{BF}.


\section{Highest weight $\nh$-modules}
In this section, we introduce a family of modules which
generalize the principal subspaces of integrable
$\widehat{\mathfrak{sl}}_3$-modules.

We fix the  notation as follows.
Let  $\nh=\n\otimes \C[t,t^{-1}]$ denote
the current algebra over the nilpotent subalgebra
$\n=\C e_{21}\oplus \C e_{31}\oplus \C e_{32}$
of $\mathfrak{sl}_3$.
The basis elements
$e_{ab}[n]=e_{ab}\otimes t^n$ ($1\le b<a\le 3$, $n\in\Z$)
of $\nh$ satisfy the relations
\be
[e_{32}[m],e_{21}[n]]=e_{31}[m+n],\quad
[e_{21}[m],e_{31}[n]]=0,\quad
[e_{32}[m],e_{31}[n]]=0.
\en
We set
\be
e_{ab}(z)=\sum_{n\in\Z}e_{ab}[n]z^{-n-1}.
\en
With the degree assignment
\be
\deg e_{21}[n]=(1,0,-n), \quad
\deg e_{32}[n]=(0,1,-n),
\en
$\nh$ is a $\Z_{\ge0}^2\times\Z$-graded Lie algebra.
All $\nh$-modules considered in this paper are
graded $\nh$-modules. 

\subsection{Principal subspaces for $\widehat{\mathfrak{sl}_3}$}

{}For non-negative integers $k,l_1,l_2$ satisfying $l_1+l_2\le k$,
let $M^{k}_{l_1,l_2}$ be the level $k$
integrable highest weight $\widehat{\mathfrak{sl}}_3$-module
of highest weight $(l_1,l_2)$.
The highest weight vector $v^k_{l_1,l_2}$ is characterized 
up to scalar multiple by 
\be
&&x[n]v^k_{l_1,l_2}=0\quad (x\in {\mathfrak{sl}}_3;n>0),
\\
&&e_{ab}[0]v^k_{l_1,l_2}=0\quad (a<b),
\\
&&(e_{11}[0]-e_{22}[0])v^k_{l_1,l_2}=l_1 v^k_{l_1,l_2},
\quad 
(e_{22}[0]-e_{33}[0])v^k_{l_1,l_2}=l_2 v^k_{l_1,l_2},
\\
&&e_{21}[0]^{l_1+1}v^k_{l_1,l_2}=0,
\quad
e_{32}[0]^{l_2+1}v^k_{l_1,l_2}=0.
\en
We call the $U(\nh)$-submodule 
\be
V^k_{l_1,l_2}=U(\nh)\cdot v^k_{l_1,l_2}\subset M^{k}_{l_1,l_2}
\en
the {\it principal subspace} of $M^{k}_{l_1,l_2}$.
The following relations for $v=v^k_{l_1,l_2}$ 
take place in $V^k_{l_1,l_2}$:
\bea
&&e_{21}[n]v=0\quad (n>0), \label{BW1}\\
&&e_{31}[n]v=0\quad (n>0), \label{BW2}\\
&&e_{32}[n]v=0\quad (n>0), \label{BW3}\\
&&e_{21}[0]^{l_1+1}v=0, \label{BW4}\\
&&e_{32}[0]^{l_2+1}v=0, \label{BW5}\\
&&e_{21}(z)^{k+1}=0,\quad e_{32}(z)^{k+1}=0\,.
\label{BW6}
\ena
We remark that also
\bea
e_{21}[0]^\alpha e_{31}[0]^{l_1+l_2-\alpha+1}v=0
\qquad
(0\le\alpha\le l_1+l_2+1)
\label{BW7}
\ena
holds, due to the following lemma.

\begin{lem}\label{lem:xyz}
Let $w$ be a vector in an $\mathfrak{n}$-module such that
$e_{21}^{l_1+1}w=0$ and $e_{32}^{l_2+1}w=0$
for some non-negative integers $l_1,l_2$.
Then $e_{21}^\alpha e_{31}^{\beta}w=0$ holds for all
$\alpha,\beta\ge0$ with $\alpha+\beta=l_1+l_2+1$.
\end{lem}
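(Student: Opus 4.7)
The plan is to reduce the vanishing $e_{21}^\alpha e_{31}^\beta w=0$ to the two given annihilation hypotheses by means of a single PBW identity in $U(\n)$ that rewrites $e_{21}^\alpha e_{31}^\beta$ as a power of $\mathrm{ad}\,e_{32}$ applied to a pure power of $e_{21}$. The point is that this trades the factors $e_{31}^\beta$, for which no relation is assumed, for $e_{32}$'s and $e_{21}$'s, for which both hypotheses apply.

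First I would dispose of the trivial range $\alpha\ge l_1+1$: since $[e_{21},e_{31}]=0$, one has
\[
e_{21}^\alpha e_{31}^\beta w=e_{31}^\beta e_{21}^{\alpha-l_1-1}\bigl(e_{21}^{l_1+1}w\bigr)=0,
\]
so it remains to treat the range $\alpha\le l_1$, equivalently $\beta\ge l_2+1$.

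The key identity, proved by induction on $k$ using $[e_{32},e_{21}]=e_{31}$ together with the centrality of $e_{31}$ in $\n$, is
\[
(\mathrm{ad}\,e_{32})^k(e_{21}^N)=\frac{N!}{(N-k)!}\,e_{31}^k e_{21}^{N-k},
\]
which for $N=\alpha+\beta$, $k=\beta$ becomes
\[
e_{21}^\alpha e_{31}^\beta=\frac{\alpha!}{(\alpha+\beta)!}\,(\mathrm{ad}\,e_{32})^\beta(e_{21}^{\alpha+\beta}).
\]
Expanding the adjoint power as an alternating sum and using $\alpha+\beta=l_1+l_2+1$, the task reduces to showing
\[
\sum_{k=0}^\beta(-1)^k\binom{\beta}{k}\,e_{32}^{\beta-k}\,e_{21}^{l_1+l_2+1}\,e_{32}^k w=0.
\]
Terms with $k\ge l_2+1$ vanish because $e_{32}^{l_2+1}w=0$. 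For $k\le l_2$ I would use the Heisenberg-type commutation
\[
e_{21}^M e_{32}^k=\sum_{i=0}^{\min(M,k)}\binom{M}{i}\binom{k}{i}i!\,(-e_{31})^i\,e_{32}^{k-i}\,e_{21}^{M-i}
\]
with $M=l_1+l_2+1$ to move $e_{21}^{l_1+l_2+1}$ past $e_{32}^k$; each resulting term ends in $e_{21}^{l_1+l_2+1-i}w$ with $0\le i\le k\le l_2$, so $l_1+l_2+1-i\ge l_1+1$ and the hypothesis $e_{21}^{l_1+1}w=0$ kills it.

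The only non-routine step is spotting the identity expressing $e_{21}^\alpha e_{31}^\beta$ as an $(\mathrm{ad}\,e_{32})$-power of $e_{21}^{\alpha+\beta}$; after that the remaining argument is a short two-step commutation, and since we work over $\C$ the factor $\alpha!/(\alpha+\beta)!$ causes no trouble.
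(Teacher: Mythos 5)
Your proof is correct. Each ingredient checks out: since $e_{31}$ is central in $\mathfrak{n}$ and $[e_{32},e_{21}]=e_{31}$, induction indeed gives $(\mathrm{ad}\,e_{32})^{k}(e_{21}^{N})=\tfrac{N!}{(N-k)!}\,e_{31}^{k}e_{21}^{N-k}$; the alternating expansion of $(\mathrm{ad}\,e_{32})^{\beta}$ and the Heisenberg reordering of $e_{21}^{l_1+l_2+1}$ past $e_{32}^{k}$ (with central $[e_{21},e_{32}]=-e_{31}$) are the standard identities, and every resulting term is killed either by $e_{32}^{l_2+1}w=0$ (when $k\ge l_2+1$) or by $e_{21}^{l_1+1}w=0$ (when $i\le k\le l_2$, so the rightmost power of $e_{21}$ is at least $l_1+1$). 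However, your route is genuinely different from the paper's. The paper argues representation-theoretically: $U(\mathfrak{n})\cdot w$ is a quotient of the finite-dimensional irreducible $\mathfrak{sl}_3$-module $W$ of highest weight $(l_1,l_2)$, because $W$ is presented, as a cyclic $U(\mathfrak{n})$-module, as $U(\mathfrak{n})$ modulo the left ideal generated by $e_{21}^{l_1+1}$ and $e_{32}^{l_2+1}$; one then only needs to check the asserted relations on the highest weight vector of $W$, which is immediate from weight considerations. Your computation stays entirely inside $U(\mathfrak{n})$ and is elementary and self-contained — it does not invoke the (standard but nontrivial) presentation of the irreducible module — at the cost of being longer; the paper's argument is shorter and in effect yields more, since any relation satisfied by the highest weight vector of $W$ in $U(\mathfrak{n})$ descends to $w$. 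A minor remark: your preliminary case split $\alpha\ge l_1+1$ versus $\alpha\le l_1$ is unnecessary, as the main computation already covers all $\alpha,\beta\ge0$ with $\alpha+\beta=l_1+l_2+1$, including $\beta=0$.
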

\begin{proof}
Let $W$ be the irreducible $\mathfrak{sl}_3$-module
with highest weight $(l_1,l_2)$.
It is simple to check that
the lemma holds for the highest weight vector of $W$.
On the other hand,
$W$ is isomorphic to
the quotient of the free left $U(\mathfrak{n})$-module
$U(\mathfrak{n})$ by the submodule generated by
$e_{21}[0]^{l_1+1}$, $e_{32}[0]^{l_2+1}$.
Hence $U(\mathfrak{n})\cdot w$ is a quotient of $W$, and the assertion follows.
\end{proof}
Our goal is to find a
formula for the character of $V^k_{l_1,l_2}$.

\subsection{Modules $U^{k_1,k_2}_{l_1,l_2,l_3}$, $V^{k_1,k_2}_{l_1,l_2,l_3}$}

We shall introduce a family of cyclic $\nh$-modules
$U^{k_1,k_2}_{l_1,l_2,l_3}$, $V^{k_1,k_2}_{l_1,l_2,l_3}$
parametrized by non-negative integers $k_1,k_2$ and $l_1,l_2,l_3$.
The parameters $k_1,k_2$ play a role similar to that
of the level of representations,
while $l_1,l_2,l_3$ correspond to the highest weight.
Throughout the paper we assume that
\be
k_1\le k_2.
\en

\begin{dfn}
We define
$V^{k_1,k_2}_{l_1,l_2,l_3}$
to be the $\nh$-module
generated by a non-zero vector $v$ under the following defining relations:
\bea
&&e_{21}[n]v=0\quad (n>0),\label{BV1}\\
&&e_{31}[n]v=0\quad (n>0),\label{BV2}\\
&&e_{32}[n]v=0\quad (n>0),\label{BV3}\\
&&e_{21}[0]^{l_1+1}v=0,\label{V1}\\
&&e_{21}[0]^\alpha e_{31}[0]^{l_3+1-\alpha}v=0
\quad (0\le\alpha\le l_3+1),
\label{V2}\\
&&e_{32}[0]^{l_2+1}v=0,\label{V3}
\\
&&e_{21}(z)^{k_1+1}=0,\quad
e_{32}(z)^{k_2+1}=0\,.
\label{V4}
\ena
\end{dfn}

\begin{dfn}
We define
$U^{k_1,k_2}_{l_1,l_2,l_3}$
to be the $\nh$-module generated by a non-zero vector
$v$ with the following defining relations:
\bea
&&e_{21}[n]v=0\quad (n>0),\label{N1}\\
&&e_{31}[n]v=0\quad (n>1),\label{N2}\\
&&e_{32}[n]v=0\quad (n>0),\label{N3}\\
&&e_{31}[1]^{l_3+1}v=0,\label{U1}\\
&&e_{21}[0]^\alpha e_{31}[1]^{l_1+1-\alpha}v=0
\quad (0\le \alpha\le l_1+1),\label{U2}\\
&&e_{32}[0]^{l_2+1}v=0,\label{U3}\\
&&e_{21}(z)^{k_1+1}=0,\quad
e_{32}(z)^{k_2+1}=0\,.
\label{U4}
\ena
\end{dfn}
Taking commutators of $e_{32}[0]$ and \eqref{V4} or \eqref{U4},
we obtain also the relations
\bea
e_{21}(z)^\alpha e_{31}(z)^{k_1-\alpha+1}=0
\qquad (0\le \alpha\le k_1+1).
\label{k3}
\ena
We use the following notation for the characters of these modules:
\be
&&
\psi^{k_1,k_2}_{l_1,l_2,l_3}(z_1,z_2)
={\rm ch}_{z_1,z_2,q} V^{k_1,k_2}_{l_1,l_2,l_3},
\\
&&
\varphi^{k_1,k_2}_{l_1,l_2,l_3}(z_1,z_2)
={\rm ch}_{z_1,z_2,q} U^{k_1,k_2}_{l_1,l_2,l_3}.
\en
Note that our characters are normalized in such a way that the
degree of the cyclic vectors is $(0,0,0)$.

\begin{rem}\label{remV=V}
{}From the definition it readily follows that
$V^{k,k}_{l_1,l_2,l_1+l_2}\simeq
U^{k,k}_{l_1,l_2,0}$.
The principal subspace $ V^k_{l_1,l_2}$ is its quotient.
Indeed, comparing \eqref{BV1}--\eqref{V3} with
\eqref{BW1}--\eqref{BW6} and the remark after that,
we see that there is a surjection of $\nh$-modules
\be
V^{k,k}_{l_1,l_2,l_1+l_2}\to V^k_{l_1,l_2}\to 0.
\en
Later it will turn out to be an isomorphism
(see Corollary \ref{V=Vpr}).
\end{rem}

Some of the modules are the same which
follows immediately from the definition.

\begin{lem}\label{lem:V=V}
We have
\be
V^{k_1,k_2}_{l_1,l_2,l_3}
&=&V^{k_1,k_2}_{k_1,l_2,l_3} \quad (l_1>k_1), \\
V^{k_1,k_2}_{l_1,l_2,l_3}&=&V^{k_1,k_2}_{l_1,k_2,l_3} \quad (l_2>k_2), \\
V^{k_1,k_2}_{l_1,l_2,l_3}&=&V^{k_1,k_2}_{l_1,l_2,k_1} \quad (l_3>k_1), \\
V^{k_1,k_2}_{l_1,l_2,l_3}&=&V^{k_1,k_2}_{l_3,l_2,l_3} \quad (l_1>l_3), \\
V^{k_1,k_2}_{l_1,l_2,l_3}&=&V^{k_1,k_2}_{l_1,l_2,l_1+l_2}\quad (l_3>l_1+l_2)\,.
\en
\end{lem}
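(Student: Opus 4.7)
The strategy for all five identities is the same: each module involved is defined as a cyclic $\nh$-module by a list of relations on its cyclic vector, so the equality of two such modules reduces to showing that each list of defining relations is a consequence of the other. No character argument is required.

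The main technical tool is a leading-coefficient principle. Because the $e_{21}[n]$'s commute among themselves and $e_{21}[n]v=0$ for $n>0$, one has $e_{21}(z)^{k_1+1}v = \bigl(\sum_{n\le 0}e_{21}[n]z^{-n-1}\bigr)^{k_1+1}v$, whose coefficient of $z^{-k_1-1}$ is $e_{21}[0]^{k_1+1}v$. Hence \eqref{V4} forces $e_{21}[0]^{k_1+1}v=0$, and analogously $e_{32}[0]^{k_2+1}v=0$. Applied to the derived family \eqref{k3}, the same principle yields the full set of relations \eqref{V2} at level $l_3=k_1$.

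With these facts, and using that $e_{21}[0]$ and $e_{31}[0]$ commute in $\mathfrak{sl}_3$, each item reduces to a short check of mutual implication. For (i), the relation $e_{21}[0]^{k_1+1}v=0$ just derived holds in $V^{k_1,k_2}_{l_1,l_2,l_3}$; conversely, $e_{21}[0]^{l_1+1}v=0$ at level $l_1>k_1$ follows from the one at $k_1$ by left multiplication by $e_{21}[0]^{l_1-k_1}$. Case (ii) is identical after exchanging $e_{21}\leftrightarrow e_{32}$ and $k_1\leftrightarrow k_2$. For (iii), \eqref{V2} at $l_3=k_1$ holds automatically in $V^{k_1,k_2}_{l_1,l_2,l_3}$, while multiplication of the level-$k_1$ relations by $e_{31}[0]^{l_3-k_1}$ (for $\alpha\le k_1+1$), and by a suitable power of $e_{21}[0]$ after moving $e_{21}[0]^{k_1+1}$ to the right (for $\alpha>k_1+1$), recovers \eqref{V2} at any $l_3>k_1$. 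For (iv), the $\alpha=l_3+1$ instance of \eqref{V2} reads $e_{21}[0]^{l_3+1}v=0$, which already implies $e_{21}[0]^{l_1+1}v=0$ for $l_1>l_3$. For (v), Lemma \ref{lem:xyz} applied inside $U(\n)\cdot v$ yields $e_{21}[0]^\alpha e_{31}[0]^{l_1+l_2+1-\alpha}v=0$ for all $0\le\alpha\le l_1+l_2+1$; multiplication by $e_{31}[0]^{l_3-l_1-l_2}$, together with \eqref{V1} to cover $\alpha>l_1$, then produces \eqref{V2} at any $l_3>l_1+l_2$.

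No genuine obstacle is anticipated. The mildly delicate point is (iii), where one must use the derived family \eqref{k3} in addition to \eqref{V4} and exploit commutativity of the zero modes; the remaining cases amount to pure bookkeeping of defining relations.
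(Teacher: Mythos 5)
Your proof is correct and follows essentially the same route as the paper: case (v) via Lemma \ref{lem:xyz}, and the remaining cases by extracting the extremal ($z^{-k_1-1}$, resp. $z^{-k_2-1}$) coefficients of \eqref{V4} and \eqref{k3} acting on $v$ together with multiplying relations by extra powers of the commuting zero modes. The paper merely leaves these routine verifications implicit ("the other cases can be verified similarly using the definition, \eqref{V4} or \eqref{k3}"), so your write-up just supplies the omitted details.
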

\begin{proof}
{}For example,
\eqref{V1}, \eqref{V3} and Lemma \ref{lem:xyz} imply that
$e_{21}[0]^\alpha e_{31}[0]^{l_1+l_2-\alpha+1}v=0$
for all $0\le\alpha\le l_1+l_2+1$.
This proves the last relation. Other cases can be verified
similarly using either the definition,
\eqref{V4} or \eqref{k3}.
\end{proof}

\begin{lem}\label{lem:U=U}
We have
\be
U^{k_1,k_2}_{l_1,l_2,l_3}
&=&U^{k_1,k_2}_{k_1,l_2,l_3} \quad (l_1>k_1), \\
U^{k_1,k_2}_{l_1,l_2,l_3}&=&U^{k_1,k_2}_{l_1,k_2,l_3} \quad (l_2>k_2), \\
U^{k_1,k_2}_{l_1,l_2,l_3}&=&U^{k_1,k_2}_{l_1,l_2,\min(l_1,l_2)} \quad (l_3>\min(l_1,l_2))\,.
\en
\end{lem}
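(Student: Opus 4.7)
The plan is to prove each equality by showing that the two universal cyclic modules are defined by the same ideal in $U(\nh)$. In each case the two sets of defining relations differ only by weakening a single exponent, so one inclusion of ideals is trivial (multiplying the stronger relation by a monomial in $e_{21}[0]$, $e_{31}[1]$, or $e_{32}[0]$ produces the weaker one). The substance lies in the reverse inclusion: the stronger relation must be derived from the defining relations of the module in which only the weaker one is imposed.

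For the equality with $l_2>k_2$ I would use the integrability $e_{32}(z)^{k_2+1}=0$ from \eqref{U4}. Since $e_{32}[n]v=0$ for $n>0$ by \eqref{N3}, the only mode of $e_{32}[n]$ that survives on $v$ is at $n=0$, and the coefficient of $z^{-k_2-1}$ in $e_{32}(z)^{k_2+1}v$ equals $e_{32}[0]^{k_2+1}v$; hence this vanishes, yielding the $l_2=k_2$ version of \eqref{U3}. For the equality with $l_1>k_1$ I would first derive \eqref{k3} by iteratively bracketing $e_{21}(z)^{k_1+1}=0$ with $e_{32}[0]$, using $[e_{32}[0],e_{21}(z)]=e_{31}(z)$ and $[e_{32}[0],e_{31}(z)]=0$. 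Applying \eqref{k3} to $v$ and invoking \eqref{N1} and \eqref{N2}, the series $e_{21}(z)v$ has powers $z^n$ with $n\ge -1$ and leading coefficient $e_{21}[0]v$, while $e_{31}(z)v$ has powers $n\ge -2$ with leading coefficient $e_{31}[1]v$. Since all $e_{ab}[m]$ commute, the coefficient of the lowest power $z^{\alpha-2k_1-2}$ of $e_{21}(z)^\alpha e_{31}(z)^{k_1+1-\alpha}v$ is precisely $e_{21}[0]^\alpha e_{31}[1]^{k_1+1-\alpha}v$, which therefore vanishes for every $0\le\alpha\le k_1+1$; this is the $l_1=k_1$ version of \eqref{U2}.

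For the third equality, set $l=\min(l_1,l_2)$; I must show $e_{31}[1]^{l+1}v=0$ in $U^{k_1,k_2}_{l_1,l_2,l_3}$. If $l=l_1$, this is the $\alpha=0$ case of \eqref{U2}. If $l=l_2$, I would invoke Lemma~\ref{lem:xyz} applied to the subalgebra of $\nh$ generated by $e_{21}[1]$, $e_{32}[0]$, $e_{31}[1]$, which is isomorphic to $\n$ via the obvious correspondence because $[e_{32}[0],e_{21}[1]]=e_{31}[1]$ while the remaining brackets vanish. Taking the ``$l_1$'' of the lemma to be $0$ (from $e_{21}[1]v=0$, i.e.\ \eqref{N1}) and the ``$l_2$'' to be $l_2$ (from \eqref{U3}), the lemma yields $e_{21}[1]^\alpha e_{31}[1]^\beta v=0$ for every $\alpha+\beta=l_2+1$; the $\alpha=0$ case is the desired relation.

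I expect the main obstacle to be the coefficient extraction in the $l_1>k_1$ case: after using the commutativity of $\n$ to move every $e_{21}[n]$ with $n>0$ and every $e_{31}[n]$ with $n>1$ to the right of the other factors so that they act on $v$ and annihilate it, one must check carefully that the coefficient of the claimed lowest power of $z$ in $e_{21}(z)^\alpha e_{31}(z)^{k_1+1-\alpha}v$ is indeed the asserted monomial. This reordering is legitimate because $\n$ is two-step nilpotent with $[e_{21},e_{31}]=[e_{32},e_{31}]=0$, but the power counting still has to be tracked carefully.
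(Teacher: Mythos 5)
Your proof is correct and follows essentially the same route as the paper: the third equality is obtained from $e_{31}[1]^{l_1+1}v=0$ (the $\alpha=0$ case of \eqref{U2}) together with $e_{31}[1]^{l_2+1}v=0$, which the paper gets by commuting $e_{21}[1]$ against $e_{32}[0]^{l_2+1}$ and which your application of Lemma \ref{lem:xyz} to the Heisenberg subalgebra spanned by $e_{21}[1],e_{32}[0],e_{31}[1]$ merely packages differently. The first two equalities, which the paper dismisses as ``proved similarly,'' are exactly your coefficient extractions from \eqref{U4} and \eqref{k3}, and your power counting there (lowest surviving power $z^{\alpha-2k_1-2}$ with coefficient $e_{21}[0]^{\alpha}e_{31}[1]^{k_1+1-\alpha}v$) is accurate.
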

\begin{proof}
By \eqref{U2} we have $e_{31}[1]^{l_1+1}v=0$.
Taking
the commutator of $e_{21}[1]$ with $e_{32}[0]^{l_2+1}$
and using $e_{21}[1]v=0$ and \eqref{U3},
we obtain  $e_{31}[1]^{l_2+1}v=0$. This proves the third relation.
The other relations are proved similarly.
\end{proof}

In view of Lemmas \ref{lem:V=V}--\ref{lem:U=U},
we may restrict our attention to the modules
\be
\{U^{k_1,k_2}_{l_1,l_2,l_3}\mid
(l_1,l_2,l_3)\in P_U^{k_1,k_2} \},
\quad
\{V^{k_1,k_2}_{l_1,l_2,l_3}\mid
(l_1,l_2,l_3)\in P_V^{k_1,k_2}\},
\en
where the parameter regions are the following subsets of
$\Z^3_{\ge 0}$:
\bea
&&P_U^{k_1,k_2}=\{(l_1,l_2,l_3)\mid
0\leq l_1\leq k_1,\ 0\leq l_2\leq k_2,\ 0\leq l_3\leq
\min(l_1,l_2)\},
\label{Pphi}
\\
&&P_V^{k_1,k_2}=\{(l_1,l_2,l_3)\mid
0\leq l_1\leq k_1,\ 0\leq l_2\leq k_2,
\ l_1\leq l_3\leq\min(l_1+l_2,k_1)\}.
\label{Ppsi}
\ena

\subsection{Subquotient modules}

In this subsection we study a recurrent
structure for some subquotients of
$U^{k_1,k_2}_{l_1,l_2,l_3}$, $V^{k_1,k_2}_{l_1,l_2,l_3}$.
Denote by $T_{m,n}$ the automorphism of $\nh$ given by
\be
T_{m,n} e_{21}[i]=e_{21}[i-m],
\quad
T_{m,n} e_{31}[i]=e_{31}[i-m-n],
\quad
T_{m,n} e_{32}[i]=e_{32}[i-n].
\en
{}For an $\nh$-module $M$, we denote by $M[m,n]$ the module
with the same underlying vector space
on which $x\in\nh$ acts as $T_{m,n}x$.
{}For a cyclic $\nh$-module $M$ with a cyclic vector $v$ and
$f\in U(\nh)$, we use the notation
$$
\langle f\rangle=U(\nh)\cdot fv.
$$
In what follows, we set
$U^{k_1,k_2}_{l_1,l_2,l_3}=0$,
$V^{k_1,k_2}_{l_1,l_2,l_3}=0$
if one of $l_i$'s is negative.

\begin{lem}\label{lem:exact1}
We have an exact sequence of $\nh$-modules
\be
&&V^{k_1,k_2}_{l_3,k_2-l_2,l_1}[1,-1]\rightarrow
U^{k_1,k_2}_{l_1,l_2,l_3}\rightarrow
U^{k_1,k_2}_{l_1,l_2-1,l_3}\rightarrow0.
\en
\end{lem}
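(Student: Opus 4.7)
My plan is to establish exactness in two stages: first produce the surjection on the right and identify its kernel as a cyclic submodule $\langle w\rangle$ with $w=e_{32}[0]^{l_2}v$; second, realize $\langle w\rangle$ as the image of a homomorphism from $V^{k_1,k_2}_{l_3,k_2-l_2,l_1}[1,-1]$.

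For the right map, I would compare the defining relations \eqref{N1}--\eqref{U4} of $U^{k_1,k_2}_{l_1,l_2,l_3}$ and $U^{k_1,k_2}_{l_1,l_2-1,l_3}$. The only difference is that \eqref{U3} is strengthened from $e_{32}[0]^{l_2+1}v=0$ to $e_{32}[0]^{l_2}v'=0$. Hence sending cyclic vector to cyclic vector extends to a well-defined surjection $\pi$, and by the universal property of $U^{k_1,k_2}_{l_1,l_2-1,l_3}$ applied to $U^{k_1,k_2}_{l_1,l_2,l_3}/\langle w\rangle$, we have $\ker\pi=\langle w\rangle:=U(\nh)\cdot w$.

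For the left map, I would define $\iota\colon V^{k_1,k_2}_{l_3,k_2-l_2,l_1}[1,-1]\to U^{k_1,k_2}_{l_1,l_2,l_3}$ by sending the source's cyclic vector $v''$ to $w$. Under the twist $[1,-1]$, the operators $e_{21}[i]$, $e_{31}[i]$, $e_{32}[i]$ on $M[1,-1]$ act via $e_{21}[i-1]$, $e_{31}[i]$, $e_{32}[i+1]$ on $M$. Well-definedness of $\iota$ thus amounts to checking that $w$ satisfies the corresponding translated relations in $U^{k_1,k_2}_{l_1,l_2,l_3}$. The annihilations $e_{21}[n]w=0$ ($n\ge 2$), $e_{31}[n]w=0$ ($n\ge 2$), and $e_{32}[n]w=0$ ($n\ge 0$) are immediate from \eqref{N1}--\eqref{N3}, \eqref{U3}, and the commutator $[e_{21}[n],e_{32}[0]]=-e_{31}[n]$; the remaining case $e_{31}[1]w=0$ uses $e_{31}[1]=[e_{32}[0],e_{21}[1]]$ together with $e_{32}[0]^{l_2+1}v=0$, producing $(l_2+1)e_{31}[1]w=0$. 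The relation $e_{21}[1]^{l_3+1}w=0$ reduces, via the Heisenberg-type identity
\[
e_{21}[1]^\alpha e_{32}[0]^m v=\binom{m}{\alpha}\alpha!(-1)^\alpha e_{31}[1]^\alpha e_{32}[0]^{m-\alpha}v\qquad(\alpha\le m),
\]
to $e_{31}[1]^{l_3+1}v=0$, which is \eqref{U1}. The relation $e_{32}[-1]^{k_2-l_2+1}w=0$ follows because inside the $\widehat{\mathfrak{sl}}_2$-principal subspace $U(\mathbb C[e_{32}[n]])\cdot v\subset U^{k_1,k_2}_{l_1,l_2,l_3}$, the monomial in question has exponent pattern $(l_2,k_2-l_2+1,0,\dots)\notin\mathcal{P}^{k_2}_{l_2}$ since $l_2+(k_2-l_2+1)>k_2$. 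The twisted Serre identities reduce to \eqref{U4} up to a monomial factor in $z$.

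The main obstacle is the mixed relation $e_{21}[1]^\alpha e_{31}[0]^{l_1+1-\alpha}w=0$ for $0\le\alpha\le l_1+1$. Using the Heisenberg identity above and the commutativity of $e_{31}[0]$ with $e_{21}[1]$, $e_{31}[1]$, $e_{32}[0]$, the statement reduces to
\[
e_{31}[0]^{l_1+1-\alpha}e_{31}[1]^\alpha e_{32}[0]^{l_2-\alpha}v=0\qquad(0\le\alpha\le\min(l_1,l_2)).
\]
For $\alpha=0$ this follows from Lemma \ref{lem:xyz}: inside the $\mathfrak{sl}_3$-Weyl-module quotient $U(\mathfrak{n}_0)\cdot v\subset U^{k_1,k_2}_{l_1,l_2,l_3}$, the weight of $e_{32}[0]^{l_2}e_{31}[0]^{l_1+1}v$ lies outside the convex hull of the Weyl orbit of $(l_1,l_2)$. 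For $\alpha\ge 1$, one applies Lemma \ref{lem:xyz} to the subvector $e_{31}[1]^\alpha v$---whose annihilation conditions $e_{21}[0]^{l_1+1-\alpha}\cdot=0$ (from \eqref{U2}) and $e_{32}[0]^{l_2+1}\cdot=0$ (by commutativity) place it in a suitable Weyl-module quotient---and combines this with the Serre identity \eqref{k3} to propagate the vanishing through products of $e_{31}[0]$'s and $e_{31}[1]$'s. Once $\iota$ is established, $\mathrm{im}\,\iota=\langle w\rangle=\ker\pi$, and exactness follows.
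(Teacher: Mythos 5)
Your overall route coincides with the paper's: identify the kernel of $U^{k_1,k_2}_{l_1,l_2,l_3}\to U^{k_1,k_2}_{l_1,l_2-1,l_3}$ with $\langle e_{32}[0]^{l_2}\rangle$, then map $V^{k_1,k_2}_{l_3,k_2-l_2,l_1}[1,-1]$ onto this submodule by checking that $w=e_{32}[0]^{l_2}v$ satisfies the translated defining relations; your treatment of the annihilation relations, of $e_{31}[1]w=0$, of $e_{21}[1]^{l_3+1}w=0$ and of $e_{32}[-1]^{k_2-l_2+1}w=0$ is essentially the paper's. The gap is in the relation you yourself single out as the main obstacle, $e_{21}[1]^{\alpha}e_{31}[0]^{l_1+1-\alpha}w=0$, for $\alpha\ge1$. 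After your (correct) reduction to $e_{31}[0]^{l_1+1-\alpha}e_{32}[0]^{l_2-\alpha}e_{31}[1]^{\alpha}v=0$, you feed $w'=e_{31}[1]^{\alpha}v$ into Lemma \ref{lem:xyz} equipped only with $e_{21}[0]^{l_1+1-\alpha}w'=0$ (from \eqref{U2}) and $e_{32}[0]^{l_2+1}w'=0$ (by commutativity). That only places $U(\n)\cdot w'$ in a quotient of the irreducible of highest weight $(l_1-\alpha,l_2)$, where the target vector has $\alpha_2$-height $l_1+l_2+1-2\alpha$, exceeding the maximal height $l_1+l_2-\alpha$ only when $\alpha=0$; so for $\alpha\ge1$ no vanishing follows. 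Concretely, for $l_1=l_2=\alpha=1$ your hypotheses are $e_{21}[0]w'=0$ and $e_{32}[0]^2w'=0$, and the desired conclusion $e_{31}[0]w'=0$ fails already for the highest weight vector of the three-dimensional $\mathfrak{sl}_3$-module of highest weight $(0,1)$. The appeal to \eqref{k3} cannot close this: \eqref{k3} encodes the integrability parameters $k_1,k_2$ and is irrelevant to these highest-weight constraints, and no actual "propagation" argument is supplied.

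What is missing --- and what the paper's proof supplies at exactly this point --- is the sharper relation $e_{32}[0]^{l_2+1-\alpha}e_{31}[1]^{\alpha}v=0$, which you can obtain with the same commutator identity you already use for $e_{31}[1]w=0$ and for $e_{21}[1]^{l_3+1}w=0$: apply $e_{21}[1]^{\alpha}$ to $e_{32}[0]^{l_2+1}v=0$ and use $e_{21}[1]v=0$. With this, $w'=e_{31}[1]^{\alpha}v$ satisfies the hypotheses of (the $e_{32}$--$e_{31}$ variant of) Lemma \ref{lem:xyz} with parameters $(l_1-\alpha,l_2-\alpha)$, and since $(l_2-\alpha)+(l_1+1-\alpha)=(l_1-\alpha)+(l_2-\alpha)+1$ one gets $e_{32}[0]^{l_2-\alpha}e_{31}[0]^{l_1+1-\alpha}w'=0$, which is the required vanishing. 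With that repair your argument becomes the paper's proof; the remaining steps of your proposal are sound (note also that injectivity of $\iota$ is not needed, since the sequence has no zero on the left).
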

\begin{proof}
Consider the submodule $\langle e_{32}[0]^{l_2}\rangle$ of
$U^{k_1,k_2}_{l_1,l_2,l_3}$.
By the definition we have an exact sequence
\be
0\rightarrow
\langle e_{32}[0]^{l_2}\rangle
\rightarrow
U^{k_1,k_2}_{l_1,l_2,l_3}\rightarrow
U^{k_1,k_2}_{l_1,l_2-1,l_3}\rightarrow0.
\label{short1}
\en
We show that there is a surjection
\be
V^{k_1,k_2}_{l_3,k_2-l_2,l_1}[1,-1]\rightarrow
\langle e_{32}[0]^{l_2}\rangle 
\rightarrow0.
\label{short2}
\en
It suffices to verify the following relations for
$v_1=e_{32}[0]^{l_2}v$ where $v$ denotes the generator of
$U^{k_1,k_2}_{l_1,l_2,l_3}$:
\bea
&&e_{21}[n]v_1=0\quad (n>1),\label{P1}\\
&&e_{31}[n]v_1=0\quad (n>0),\label{P2}\\
&&e_{32}[n]v_1=0\quad (n>-1),\label{P3}\\
&&e_{21}[1]^{l_3+1}v_1=0,\label{P4}\\
&&e_{21}[1]^\alpha e_{31}[0]^{l_1+1-\alpha}v_1=0,\label{P5}\\
&&e_{32}[-1]^{k_2-l_2+1}v_1=0.\label{P6}
\ena
Equation \eqref{P1} follows from
$e_{21}[n]v=0$, $[e_{32}[0],e_{21}[n]]=e_{31}[n]$,
$e_{31}[n]v=0$ and $[e_{31}[n],e_{32}[0]]=0$
with $n>1$.
Equation \eqref{P2} follows from \eqref{N2} for $n\geq2$.
{}For $n=1$ it follows from $e_{21}[1]v=0$ and \eqref{U3}
by using $[e_{32}[0],e_{21}[1]]=e_{31}[1]$.
Equation \eqref{P3} follows from \eqref{N3} and \eqref{U3}.
Equation \eqref{P4} follows from \eqref{U1}, $e_{21}[1]v=0$
and $[e_{32}[0],e_{21}[1]]=e_{31}[1]$.

Let us prove \eqref{P5}. From \eqref{U2} we have
\be
e_{21}[0]^{l_1+1-\alpha}e_{31}[1]^\alpha v=0.
\en
{}From \eqref{U3}, using $e_{21}[1]v=0$, we obtain
\be
e_{32}[0]^{l_2+1-\alpha}e_{31}[1]^\alpha v=0.
\en
A variant of Lemma \ref{lem:xyz}
and the last two equations above
lead to
\be
e_{31}[0]^{l_1-\alpha}e_{32}[0]^{l_2-\alpha}e_{31}[1]^\alpha v=0.
\en
It follows that
\be
e_{21}[1]^\alpha e_{31}[0]^{l_1+1-\alpha}e_{32}[0]^{l_2}v=
e_{31}[0]^{l_1-\alpha}e_{31}[1]^\alpha e_{32}[0]^{l_2-\alpha}v=0.
\en
{}Finally, \eqref{P6} is a consequence of $e_{32}(z)^{k_2+1}=0$.
\end{proof}

\begin{lem}\label{lem:exact2}
We have an exact sequence
\be
&&
U^{k_1,k_2}_{l_3,k_2-l_2,l_1}[0,-1]\rightarrow
V^{k_1,k_2}_{l_1,l_2,l_3}\rightarrow
V^{k_1,k_2}_{l_1,l_2-1,l_3}\rightarrow0.
\en
\end{lem}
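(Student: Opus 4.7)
My plan is to mimic the proof of Lemma \ref{lem:exact1}, swapping the roles of the $U$- and $V$-type modules. First, I consider the submodule $\langle e_{32}[0]^{l_2}\rangle\subset V^{k_1,k_2}_{l_1,l_2,l_3}$. Comparing defining relations, $V^{k_1,k_2}_{l_1,l_2-1,l_3}$ is the quotient of $V^{k_1,k_2}_{l_1,l_2,l_3}$ by this submodule, yielding the tautological short exact sequence
\[
0 \to \langle e_{32}[0]^{l_2}\rangle \to V^{k_1,k_2}_{l_1,l_2,l_3} \to V^{k_1,k_2}_{l_1,l_2-1,l_3} \to 0.
\]
It therefore suffices to produce a surjection $U^{k_1,k_2}_{l_3,k_2-l_2,l_1}[0,-1]\twoheadrightarrow\langle e_{32}[0]^{l_2}\rangle$ sending the cyclic vector to $v_1:=e_{32}[0]^{l_2}v$, where $v$ denotes the generator of $V^{k_1,k_2}_{l_1,l_2,l_3}$; this in turn reduces to checking that $v_1$ satisfies the defining relations of $U^{k_1,k_2}_{l_3,k_2-l_2,l_1}[0,-1]$.

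Since the twist $T_{0,-1}$ shifts $e_{31}[i]\mapsto e_{31}[i+1]$ and $e_{32}[i]\mapsto e_{32}[i+1]$ while fixing $e_{21}[i]$, the conditions to establish on $v_1$ are:
(a) $e_{21}[n]v_1=0$ and $e_{31}[n]v_1=0$ for $n>0$, and $e_{32}[n]v_1=0$ for $n>-1$;
(b) $(e_{31}[0])^{l_1+1}v_1=0$;
(c) $(e_{21}[0])^\alpha(e_{31}[0])^{l_3+1-\alpha}v_1=0$ for $0\le\alpha\le l_3+1$;
(d) $(e_{32}[-1])^{k_2-l_2+1}v_1=0$;
the integrability $e_{21}(z)^{k_1+1}=0$ and $e_{32}(z)^{k_2+1}=0$ is inherited automatically. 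Family (a) is a routine consequence of $[e_{32}[m],e_{21}[n]]=e_{31}[m+n]$, the abelian nature of $\{e_{31}[n]\}$ and $\{e_{32}[n]\}$, and relations \eqref{BV1}--\eqref{BV3} and \eqref{V3}. For (c), I use the commutation identity
\[
(e_{21}[0])^\alpha e_{32}[0]^{l_2}=\sum_{k=0}^{\min(\alpha,l_2)}(-1)^k\binom{\alpha}{k}\binom{l_2}{k}k!\,e_{32}[0]^{l_2-k}(e_{31}[0])^k(e_{21}[0])^{\alpha-k}
\]
to reduce each term of $(e_{21}[0])^\alpha(e_{31}[0])^{l_3+1-\alpha}v_1$ to a multiple of $(e_{21}[0])^{\alpha-k}(e_{31}[0])^{l_3+1-(\alpha-k)}v$, which vanishes by \eqref{V2}. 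For (d), I extract the coefficient of $z^{-l_2}$ from $e_{32}(z)^{k_2+1}v=0$, use \eqref{V3} to kill every monomial whose $e_{32}[0]$-exponent exceeds $l_2$, and observe that only $(e_{32}[0])^{l_2}(e_{32}[-1])^{k_2-l_2+1}v$ survives, exactly as in the $\widehat{\mathfrak{sl}}_2$-type calculation of Section 2 and in the proof of \eqref{P6} for Lemma \ref{lem:exact1}.

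The hard part is (b). Since $e_{31}[0]$ and $e_{32}[0]$ commute, (b) amounts to $e_{32}[0]^{l_2}(e_{31}[0])^{l_1+1}v=0$, which is not a direct consequence of the listed defining relations of $V^{k_1,k_2}_{l_1,l_2,l_3}$. I would derive it by the following variant of Lemma \ref{lem:xyz}: given a vector $w'$ in an $\mathfrak{n}$-module with $e_{21}^{l_1+1}w'=0$ and $e_{32}^{l_2+1}w'=0$, one has $e_{32}^{l_2}e_{31}^{l_1+1}w'=0$. The proof is the same as for Lemma \ref{lem:xyz}: it suffices to verify the identity on the highest weight vector $w$ of the irreducible $\mathfrak{sl}_3$-module $W$ with highest weight $(l_1,l_2)$, because $W$ is the quotient of $U(\mathfrak{n})$ by the left ideal generated by $e_{21}^{l_1+1}$ and $e_{32}^{l_2+1}$. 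The vanishing in $W$ is immediate from weights: $e_{32}^{l_2}e_{31}^{l_1+1}w$ has $(h_1,h_2)$-weight $(l_2-1,-l_1-l_2-1)$, and $h_1+2h_2=-2l_1-l_2-3$ violates the lower bound $h_1+2h_2\ge-(2l_1+l_2)$ of the weight polytope of $W$. Combining (a)--(d) yields the required surjection, completing the proof.
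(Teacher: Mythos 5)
Your proposal is correct and follows exactly the paper's route: quotient by the submodule $\langle e_{32}[0]^{l_2}\rangle$ to get the tautological short exact sequence, then define a surjection from $U^{k_1,k_2}_{l_3,k_2-l_2,l_1}[0,-1]$ onto that submodule by verifying the (twisted) defining relations on $v_1=e_{32}[0]^{l_2}v$ — the paper simply omits these checks, saying they are analogous to Lemma \ref{lem:exact1}. Your filled-in verifications (the commutation identity for (c), the coefficient extraction from $e_{32}(z)^{k_2+1}=0$ for (d), and the Lemma \ref{lem:xyz}-type weight argument for (b)) are all sound and match the style of the paper's proof of Lemma \ref{lem:exact1}.
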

\begin{proof}
We consider the submodule $\langle e_{32}[0]^{l_2}\rangle$
of $V^{k_1,k_2}_{l_1,l_2,l_3}$, so that
\be
0\rightarrow
\langle e_{32}[0]^{l_2}\rangle
\rightarrow
V^{k_1,k_2}_{l_1,l_2,l_3}\rightarrow
V^{k_1,k_2}_{l_1,l_2-1,l_3}\rightarrow0
\en
is exact. We then show that there is a surjection
\be
&&
U^{k_1,k_2}_{l_3,k_2-l_2,l_1}[0,-1]
\to
\langle e_{32}[0]^{l_2}\rangle
\to 0
\en
by checking the defining relations.
The rest of the
proof is similar to that of Lemma \ref{lem:exact1}, we omit
the details.
%

\end{proof}

\begin{lem}\label{lem:exact3}
We have exact sequences
\bea
&&
V^{k_1,k_2}_{l_1-l_3,l_2-l_3,k_1-l_3}\rightarrow
U^{k_1,k_2}_{l_1,l_2,l_3}\rightarrow
U^{k_1,k_2}_{l_1,l_2,l_3-1}\rightarrow0\,,
\label{exact3}
\\
&&
U^{k_1,k_2}_{k_1-l_1,l_1+l_2,l_3-l_1}[-1,0]\rightarrow
V^{k_1,k_2}_{l_1,l_2,l_3}\rightarrow
V^{k_1,k_2}_{l_1-1,l_2,l_3}\rightarrow0\,.
\label{exact4}
\ena
\end{lem}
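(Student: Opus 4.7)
The plan is to prove both exact sequences by the same general strategy used in Lemmas \ref{lem:exact1} and \ref{lem:exact2}: identify a canonical submodule of the middle term whose cokernel is the rightmost module, then exhibit the leftmost module as surjecting onto that submodule by verifying its defining relations on a specific generator. For the first sequence, the relation distinguishing $U^{k_1,k_2}_{l_1,l_2,l_3-1}$ from $U^{k_1,k_2}_{l_1,l_2,l_3}$ is $e_{31}[1]^{l_3}v=0$, giving the short exact sequence $0\to \langle e_{31}[1]^{l_3}\rangle \to U^{k_1,k_2}_{l_1,l_2,l_3}\to U^{k_1,k_2}_{l_1,l_2,l_3-1}\to 0$. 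Setting $v_1=e_{31}[1]^{l_3}v$, I would verify the defining relations \eqref{BV1}--\eqref{V4} of $V^{k_1,k_2}_{l_1-l_3,l_2-l_3,k_1-l_3}$ on $v_1$: relations \eqref{BV1}--\eqref{BV3} follow from the commutativity of $e_{21}[n]$ and $e_{32}[n]$ with $e_{31}[1]$ combined with \eqref{N1}--\eqref{N3} and \eqref{U1}; \eqref{V1} is the specialization $\alpha=l_1-l_3+1$ of \eqref{U2}; \eqref{V3} is obtained by commuting $e_{32}[0]$ past $e_{31}[1]^{l_3}$ and using \eqref{U3}; and \eqref{V4} is inherited.

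The delicate step is the mixed relation \eqref{V2}, namely $e_{21}[0]^\alpha e_{31}[0]^{k_1-l_3+1-\alpha}v_1=0$ for $0\le\alpha\le k_1-l_3+1$. My approach is to extract the coefficient of $z^{-(k_1+1)-l_3}$ from the operator identity $e_{21}(z)^\alpha e_{31}(z)^{k_1+1-\alpha}v=0$ provided by \eqref{k3}. Since $e_{21}[n]v=0$ for $n>0$ and $e_{31}[n]v=0$ for $n>1$, the contributing terms are indexed by tuples $(n_i)_{i=1}^\alpha$, $(m_j)_{j=1}^{k_1+1-\alpha}$ with $n_i\le 0$, $m_j\le 1$ and $\sum n_i+\sum m_j=l_3$. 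Letting $p$ denote the number of $m_j$ equal to $1$, the non-positivity of the remaining entries forces $p\ge l_3$, while any term with $p\ge l_3+1$ contains a factor $e_{31}[1]^{l_3+1}v=0$ by \eqref{U1} and therefore vanishes. Only $p=l_3$ with all other indices zero survives, reducing the coefficient to $\binom{k_1+1-\alpha}{l_3}\,e_{21}[0]^\alpha e_{31}[0]^{k_1-l_3+1-\alpha}e_{31}[1]^{l_3}v=0$, and the binomial coefficient is positive in the claimed range of $\alpha$, yielding \eqref{V2}.

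The second exact sequence is established by a parallel argument. The relation distinguishing $V^{k_1,k_2}_{l_1-1,l_2,l_3}$ from $V^{k_1,k_2}_{l_1,l_2,l_3}$ is $e_{21}[0]^{l_1}v=0$, so one considers $\langle e_{21}[0]^{l_1}\rangle\subset V^{k_1,k_2}_{l_1,l_2,l_3}$, sets $v_1=e_{21}[0]^{l_1}v$, and verifies on $v_1$ the shifted defining relations of $U^{k_1,k_2}_{k_1-l_1,l_1+l_2,l_3-l_1}[-1,0]$. Relations involving $e_{31}[n]$ are automatic from $e_{31}[n]v=0$ for $n\ge 1$ and $[e_{21}[0],e_{31}[n]]=0$; those involving $e_{21}[n]$ with $n\ge 2$ and $e_{32}[n]$ with $n\ge 1$ follow by the same kind of commutator manipulation used for the first sequence; and the shifted integrability relations reduce to \eqref{V4}. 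The one non-routine relation is $e_{32}[0]^{l_1+l_2+1}v_1=0$: iteratively applying $[e_{32}[0],e_{21}[0]]=e_{31}[0]$ and $[e_{31}[0],e_{21}[0]]=0$ expresses $e_{32}[0]^{l_1+l_2+1}e_{21}[0]^{l_1}v$ as a finite sum of terms $c_p\,e_{21}[0]^{l_1-p}e_{31}[0]^p\,e_{32}[0]^{l_1+l_2+1-p}v$ with $0\le p\le l_1$, and each such term vanishes because $l_1+l_2+1-p\ge l_2+1$ forces $e_{32}[0]^{l_1+l_2+1-p}v=0$ by \eqref{V3}. The main obstacle I anticipate is the coefficient-extraction step for \eqref{V2}: a naive Lemma \ref{lem:xyz}-style argument applied only to the verified \eqref{V1} and \eqref{V3} on $v_1$ yields annihilation in total degree $l_1+l_2-2l_3+1$, which can exceed the required $k_1-l_3+1$, so the precise combinatorial analysis exploiting \eqref{U1} is essential.
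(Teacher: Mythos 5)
Your strategy coincides with the paper's: take the submodule $\langle e_{31}[1]^{l_3}\rangle$ (resp. $\langle e_{21}[0]^{l_1}\rangle$), identify the quotient with $U^{k_1,k_2}_{l_1,l_2,l_3-1}$ (resp. $V^{k_1,k_2}_{l_1-1,l_2,l_3}$) directly from the presentations, and verify the defining relations of the left-hand module on the new cyclic vector $v_1$. Your coefficient extraction from \eqref{k3} for the mixed relation \eqref{V2} is correct, and it is in fact the step that justifies the third index $k_1-l_3$ appearing in the statement: the paper's own proof only displays a surjection from $V^{k_1,k_2}_{l_1-l_3,l_2-l_3,l_1+l_2-2l_3}$ and leaves this sharpening implicit, so here you are more careful than the printed argument. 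Your treatment of the second sequence, in particular of $e_{32}[0]^{l_1+l_2+1}e_{21}[0]^{l_1}v=0$, is also fine.

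There is, however, one step whose justification fails as written: the verification of \eqref{V3} on $v_1=e_{31}[1]^{l_3}v$. The target module $V^{k_1,k_2}_{l_1-l_3,l_2-l_3,k_1-l_3}$ has middle index $l_2-l_3$, so you need $e_{32}[0]^{l_2-l_3+1}e_{31}[1]^{l_3}v=0$; commuting $e_{32}[0]$ past $e_{31}[1]^{l_3}$ and invoking \eqref{U3} only gives $e_{32}[0]^{l_2+1}v_1=0$, which is weaker by $l_3$. The missing input is $e_{21}[1]v=0$: applying $e_{21}[1]$ repeatedly to \eqref{U3}, using $[e_{21}[1],e_{32}[0]]=-e_{31}[1]$ and the fact that $e_{31}[1]$ commutes with $e_{32}[0]$, yields $e_{31}[1]^{\alpha}e_{32}[0]^{l_2+1-\alpha}v=0$ for all $\alpha\ge 0$, and $\alpha=l_3$ is the relation you need. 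Equivalently, apply the variant of Lemma \ref{lem:xyz} to the copy of $\mathfrak{n}$ spanned by $e_{21}[1]$, $e_{32}[0]$, $e_{31}[1]$, on which $v$ satisfies $e_{21}[1]v=0$ and $e_{32}[0]^{l_2+1}v=0$. This is exactly the manipulation used in the paper's proof of Lemma \ref{lem:exact1} for \eqref{P5}; once it is inserted, your argument is complete and otherwise follows the paper's route.
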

\begin{proof}
We repeat the argument of
Lemmas \ref{lem:exact1} and \ref{lem:exact2}.

To show \eqref{exact3},
we take the submodule generated by
$e_{31}[1]^{l_3}v\in U^{k_1,k_2}_{l_1,l_2,l_3}$
and check the exact sequence
\be
&&0\to
\langle e_{31}[1]^{l_3}\rangle
\to
U^{k_1,k_2}_{l_1,l_2,l_3}\rightarrow
U^{k_1,k_2}_{l_1,l_2,l_3-1}\rightarrow0,
\\
&&
V^{k_1,k_2}_{l_1-l_3,l_2-l_3,l_1+l_2-2l_3}\rightarrow
\langle e_{31}[1]^{l_3}\rangle
\to 0.
\en

{}For the proof of \eqref{exact4},
we take $e_{21}[0]^{l_1}v\in V^{k_1,k_2}_{l_1,l_2,l_3}$
and verify
\be
&&0\to
\langle e_{21}[0]^{l_1} \rangle
\to
V^{k_1,k_2}_{l_1,l_2,l_3}\rightarrow
V^{k_1,k_2}_{l_1-1,l_2,l_3}\rightarrow0,
\\
&&
U^{k_1,k_2}_{k_1-l_1,l_1+l_2,l_3-l_1}[-1,0]
\rightarrow
\langle e_{21}[0]^{l_1} \rangle
\to 0.
\en
\end{proof}

{}From Lemmas \ref{lem:exact1}--\ref{lem:exact3},
we obtain
the following upper estimate for the characters
of all modules in the parameter regions
\eqref{Pphi},\eqref{Ppsi}  :
\bea
&&\varphi^{k_1,k_2}_{l_1,l_2,l_3}(z_1,z_2)
\le
\varphi^{k_1,k_2}_{l_1,l_2-1,l_3}(z_1,z_2)+
z_2^{l_2}\psi^{k_1,k_2}_{l_3,k_2-l_2,l_1}(q^{-1}z_1,qz_2),
\label{Ra}\\
&&\psi^{k_1,k_2}_{l_1,l_2,l_3}(z_1,z_2)
\le\psi^{k_1,k_2}_{l_1,l_2-1,l_3}(z_1,z_2)
+
z_2^{l_2}\varphi^{k_1,k_2}_{l_3,k_2-l_2,l_1}(z_1,qz_2),
\label{Rb}\\
&&\varphi^{k_1,k_2}_{l_1,l_2,l_3}(z_1,z_2)
\le\varphi^{k_1,k_2}_{l_1,l_2,l_3-1}(z_1,z_2)
+(q^{-1}z_1z_2)^{l_3}
\psi^{k_1,k_2}_{l_1-l_3,l_2-l_3,k_1-l_3}(z_1,z_2),
\label{Rc}\\
&&\psi^{k_1,k_2}_{l_1,l_2,l_3}(z_1,z_2)
\le\psi^{k_1,k_2}_{l_1-1,l_2,l_3}(z_1,z_2)+
z_1^{l_1}\varphi^{k_1,k_2}_{k_1-l_1,l_1+l_2,l_3-l_1}(qz_1,z_2).
\label{Rd}
\ena
In general, the equality does not hold, however,
we will see that it does hold in a restricted range of the parameters.
Define the following sets:
\bea
&&R_U^{k_1,k_2}=\{(l_1,l_2,l_3)\mid
k_1\leq l_1+l_2-l_3\leq k_2\}\cap
P_U^{k_1,k_2},
\label{Rphi}
\\
&&R_V^{k_1,k_2}=\{(l_1,l_2,l_3)\mid
0\leq l_1+l_2-l_3\leq k_2-k_1\}
\cap P_V^{k_1,k_2},
\label{Rpsi}
\\
&&\bR_U^{k_1,k_2}=\{(l_1,l_2,l_3)\mid
0\leq l_1+l_2-l_3\leq k_2\}\cap P_U^{k_1,k_2}.
\label{Rphib}
\ena

\begin{thm}\label{thm:chrec}
The following recurrence relations hold.

If $(l_1,l_2,l_3)\in R_U^{k_1,k_2}$,  then
\bea
&&\varphi^{k_1,k_2}_{l_1,l_2,l_3}(z_1,z_2)
=\varphi^{k_1,k_2}_{l_1,l_2-1,\min(l_3,l_2-1)}(z_1,z_2)
+z_2^{l_2}\psi^{k_1,k_2}_{l_3,k_2-l_2,l_1}(q^{-1}z_1,qz_2)\,.
\label{TRa}
\ena
If $(l_1,l_2,l_3)\in R_V^{k_1,k_2}$, then
\bea
&&\psi^{k_1,k_2}_{l_1,l_2,l_3}(z_1,z_2)
=\psi^{k_1,k_2}_{l_1,l_2-1,\min(l_3,l_1+l_2-1)}(z_1,z_2)
+z_2^{l_2}\varphi^{k_1,k_2}_{l_3,k_2-l_2,l_1}(z_1,qz_2)\,.
\label{TRb}
\ena
If $(l_1,l_2,l_3)\in \bR_U^{k_1,k_2}$
and either $l_1+l_2-l_3\neq k_2$ or $l_3=0$, then
\bea
&&\varphi^{k_1,k_2}_{l_1,l_2,l_3}(z_1,z_2)
=\varphi^{k_1,k_2}_{l_1,l_2,l_3-1}(z_1,z_2)
\label{TRc}\\
&&\quad
+(q^{-1}z_1z_2)^{l_3}
\psi^{k_1,k_2}_{l_1-l_3,l_2-l_3,\min(k_1-l_3,l_1+l_2-2l_3)}(z_1,z_2),
\nonumber
\ena
If $(l_1,l_2,l_3)\in R_V^{k_1,k_2}$, then
\bea
&&\psi^{k_1,k_2}_{l_1,l_2,l_3}(z_1,z_2)
=\psi^{k_1,k_2}_{l_1-1,l_2,\min(l_3,l_1+l_2-1)}(z_1,z_2)
+
z_1^{l_1}\varphi^{k_1,k_2}_{k_1-l_1,l_1+l_2,l_3-l_1}(qz_1,z_2).
\label{TRd}
\ena
\end{thm}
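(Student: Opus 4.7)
The four recurrences have a common structure, so I will focus on \eqref{TRa}; the arguments for \eqref{TRb}, \eqref{TRc}, \eqref{TRd} are parallel, using Lemmas \ref{lem:exact2} and \ref{lem:exact3} in place of Lemma \ref{lem:exact1}, together with Lemmas \ref{lem:V=V}--\ref{lem:U=U} to reduce labels. The inequality $\le$ in \eqref{TRa} is immediate: \eqref{Ra} combined with the equality $\varphi^{k_1,k_2}_{l_1,l_2-1,l_3}=\varphi^{k_1,k_2}_{l_1,l_2-1,\min(l_3,l_2-1)}$ from Lemma \ref{lem:U=U} supplies it. The task therefore reduces to the reverse inequality, which, in view of the exact sequence of Lemma \ref{lem:exact1}, is equivalent to showing that in the parameter region $R_U^{k_1,k_2}$ the natural surjection
\[
V^{k_1,k_2}_{l_3,k_2-l_2,l_1}[1,-1] \twoheadrightarrow \langle e_{32}[0]^{l_2}v\rangle
\]
is injective, where $v$ denotes the cyclic generator of $U^{k_1,k_2}_{l_1,l_2,l_3}$.

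My plan is to establish this injectivity by a combined strategy. First, I iterate the four inequalities \eqref{Ra}--\eqref{Rd}, using Lemmas \ref{lem:V=V}--\ref{lem:U=U} to keep all labels inside the regions $R_U^{k_1,k_2}, R_V^{k_1,k_2}, \bR_U^{k_1,k_2}$, until every character $\varphi^{k_1,k_2}_{l_1,l_2,l_3}$ and $\psi^{k_1,k_2}_{l_1,l_2,l_3}$ is bounded above by an explicit sum over base-case characters (e.g.\ those with some $l_i$ equal to $0$). Next, to match this upper bound from below, I would construct explicit realizations of the modules $V^{k_1,k_2}_{l_3,k_2-l_2,l_1}[1,-1]$ and their analogues inside $U^{k_1,k_2}_{l_1,l_2,l_3}$ --- for instance, by vertex operators acting on an appropriate tensor product of Fock spaces, where all defining relations can be verified directly and the character can be computed combinatorially. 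The character of any such realization gives a lower bound for the corresponding submodule; equating it with the upper bound forces the surjection above to be an isomorphism and turns all the inequalities \eqref{Ra}--\eqref{Rd} into equalities in the stated parameter regions.

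The principal obstacle is precisely this lower-bound / injectivity step. Translated to relations, it amounts to the assertion that the integrability conditions \eqref{U4} and the highest-weight conditions \eqref{N1}--\eqref{U3} of $U^{k_1,k_2}_{l_1,l_2,l_3}$, once moved past the factor $e_{32}[0]^{l_2}$ by commutators, produce \emph{only} the relations already implicit in the definition of $V^{k_1,k_2}_{l_3,k_2-l_2,l_1}[1,-1]$. The condition $k_1\le l_1+l_2-l_3\le k_2$ defining $R_U^{k_1,k_2}$ is exactly the numerical constraint under which no ``extra'' relations emerge from these commutations; outside this region unexpected cancellations occur, the surjection acquires a non-trivial kernel, and the upper bound becomes strict. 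Verifying rigorously that no extra relations arise --- equivalently, exhibiting enough linearly independent vectors in the submodule to saturate the upper bound --- is the delicate heart of the argument, and I expect it to require a concrete realization (of the vertex-operator type) rather than pure manipulation of the defining relations.
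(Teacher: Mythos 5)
Your reduction of \eqref{TRa} to injectivity of the surjection $V^{k_1,k_2}_{l_3,k_2-l_2,l_1}[1,-1]\to\langle e_{32}[0]^{l_2}\rangle$, and your instinct that the missing input is a vertex-operator construction, both point in the right direction: this is indeed the architecture of the paper (Sections 4 and 5, culminating in Theorem \ref{thm:equalities}). But the mechanism you propose for closing the gap does not work as stated. The inequalities \eqref{Ra}--\eqref{Rd} cannot be iterated ``until every character is bounded above by base-case characters with some $l_i=0$'': the recursion does not terminate. For example \eqref{Rb} sends $(l_1,l_2,l_3)\mapsto(l_3,k_2-l_2,l_1)$ with $z_2\mapsto qz_2$, so the labels merely cycle around the finite regions while the arguments accumulate powers of $q$, and no character with some $l_i=0$ is known independently, so there is nothing to bootstrap against. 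The paper's substitute for your ``base cases'' is a rigidity statement (Lemma \ref{loop}, Corollary \ref{cor}): any family of nonnegative series satisfying the $\le$-recursions, with $\bar\psi^{k_1,k_2}_{0,0,0}$ lacking a constant term, must vanish identically, because a full cycle of the recursion returns to $\bar\psi^{k_1,k_2}_{0,0,0}(q^{m_1}z_1,q^{m_2}z_2)$ and kills any putative lowest-degree coefficient. This loop argument is the actual engine of the proof and is absent from your plan.

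The second missing ingredient is how the lower bound is produced. You propose to ``compute the character of a VO realization combinatorially'' and equate it with the upper bound; but such a direct computation is carried out in the paper only in the special Gordon case $l_3=\min(l_1,l_2)$ (Proposition \ref{GORDON}), and in general no closed form is available at this stage. What the paper proves instead is that the VO quotients satisfy the \emph{reverse} inequalities \eqref{a}--\eqref{d}, by constructing complexes among the VO modules with explicit embeddings $\iota_1,\dots,\iota_4$ and surjections (Theorem \ref{mainbos}); several of these surjections themselves require auxiliary identifications such as $(U_{VO})^{1,2}_{1,1,0}\simeq U^{1,2}_{1,1,0}$. Then the differences $\varphi^{k_1,k_2}_{l_1,l_2,l_3}-(\varphi_{VO})^{k_1,k_2}_{l_1,l_2,l_3}$ and $\psi^{k_1,k_2}_{l_1,l_2,l_3}-(\psi_{VO})^{k_1,k_2}_{l_1,l_2,l_3}$ are nonnegative (by Lemmas \ref{surj1}, \ref{surj2}) and satisfy the $\le$-recursions, so the rigidity lemma forces them to vanish, which simultaneously yields Theorem \ref{thm:chrec} and the isomorphisms $U\simeq U_{VO}$, $V\simeq V_{VO}$. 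So the ``delicate heart'' you flagged is real, but bridging it requires both the reverse recursion for the VO family and the uniqueness-of-solution argument, neither of which your proposal supplies.
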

The proof of Theorem \ref{thm:chrec}
will be completed in Section \ref{sec:5}.

We remark that, in \eqref{TRc}, the condition
$l_1+l_2-l_3\neq k_2$ is imposed so that
the parameters of the first term in the
right hand side stay within the region $\bR^{k_1,k_2}_U$.
In all other cases, the parameters
appearing in the right hand side belong to
the proper region ($\bR_U^{k_1,k_2}$ for
$\varphi^{k_1,k_2}_{l_1,l_2,l_3}$ and
$R_V^{k_1,k_2}$ for $\psi^{k_1,k_2}_{l_1,l_2,l_3}$).
In what follows we refer to \eqref{TRa}--\eqref{TRd} as
{\it short exact sequence (SES) recursion}.

\section{Vertex operators}

In this section we construct another family of $\nh$-modules
\be
\{(U_{VO})^{k_1,k_2}_{l_1,l_2,l_3} \mid (l_1,l_2,l_3)\in
\bR^{k_1,k_2}_U\},
\quad
\{(V_{VO})^{k_1,k_2}_{l_1,l_2,l_3} \mid (l_1,l_2,l_3)\in R^{k_1,k_2}_V\},
\en
as subspaces of tensor products of certain Fock modules.
These modules have the following properties.
\begin{enumerate}
\item $(U_{VO})^{k_1,k_2}_{l_1,l_2,l_3}$ is a quotient of
$U^{k_1,k_2}_{l_1,l_2,l_3}$, and
$(V_{VO})^{k_1,k_2}_{l_1,l_2,l_3}$ is a quotient of
$V^{k_1,k_2}_{l_1,l_2,l_3}$.
\item The characters
\be
&&(\varphi_{VO})^{k_1,k_2}_{l_1,l_2,l_3}(z_1,z_2)
=\ch_{z_1,z_2,q}(U_{VO})^{k_1,k_2}_{l_1,l_2,l_3},
\\
&&(\psi_{VO})^{k_1,k_2}_{l_1,l_2,l_3}(z_1,z_2)
=\ch_{z_1,z_2,q}(V_{VO})^{k_1,k_2}_{l_1,l_2,l_3}
\en
satisfy the following inequalities.
\end{enumerate}
If $(l_1,l_2,l_3)\in R^{k_1,k_2}_U$, then
\bea
(\varphi_{VO})^{k_1,k_2}_{l_1,l_2,l_3}(z_1,z_2)
\ge(\varphi_{VO})^{k_1,k_2}_{l_1,l_2-1,\min(l_3,l_2-1)}(z_1,z_2)+
z_2^{l_2}(\psi_{VO})^{k_1,k_2}_{l_3,k_2-l_2,l_1}(q^{-1}z_1,qz_2).
\label{a}
\ena
If $(l_1,l_2,l_3)\in R^{k_1,k_2}_V$, then
\bea
(\psi_{VO})^{k_1,k_2}_{l_1,l_2,l_3}(z_1,z_2)
\ge(\psi_{VO})^{k_1,k_2}_{l_1,l_2-1,\min(l_3,l_1+l_2-1)}(z_1,z_2)
+
z_2^{l_2}(\varphi_{VO})^{k_1,k_2}_{l_3,k_2-l_2,l_1}(z_1,qz_2).
\label{b}
\ena
If $(l_1,l_2,l_3)\in \bR^{k_1,k_2}_U$
and $l_1+l_2-l_3\neq0$ or $l_3=0$, then
\begin{multline}\label{c}
(\varphi_{VO})^{k_1,k_2}_{l_1,l_2,l_3}(z_1,z_2)
\ge(\varphi_{VO})^{k_1,k_2}_{l_1,l_2,l_3-1}(z_1,z_2)\\+
(q^{-1}z_1z_2)^{l_3}
(\psi_{VO})^{k_1,k_2}_{l_1-l_3,l_2-l_3,\min(k_1-l_3,l_1+l_2-2l_3)}(z_1,z_2).
\end{multline}
If $(l_1,l_2,l_3)\in R^{k_1,k_2}_V$, then
\begin{multline}\label{d}
(\psi_{VO})^{k_1,k_2}_{l_1,l_2,l_3}(z_1,z_2)
\ge(\psi_{VO})^{k_1,k_2}_{l_1-1,l_2,\min(l_3,l_1+l_2-1)}(z_1,z_2)
\\+
z_1^{l_1}(\varphi_{VO})^{k_1,k_2}_{k_1-l_1,l_1+l_2,l_3-l_1}(qz_1,z_2).
\end{multline}
These inequalities differ from \eqref{TRa}--\eqref{TRd}
by the change of the sign $=$ to the sign $\ge$.

Let us recall some constructions from the theory of
lattice vertex operator algebras (see \cite{D}, \cite{K2}).
Let $\mathfrak{h}$ be a two-dimensional complex vector space with a basis
$a$,$b$ and an inner product defined by the
$\mathfrak{sl}_3$ Cartan matrix:
\be
(a,a)=2,\ (b,b)=2,\ (a,b)=-1.
\en
Let
$$
\widehat\h=\h\T\C[t,t^{-1}]\oplus \C 1
$$
be the corresponding Heisenberg Lie algebra with the bracket
$$
[\al[i], \beta[j]]=i\delta_{i,-j}(\al,\beta)
\quad(\al,\beta\in\h),
$$
where $\alpha[i]=\alpha\otimes t^i$.
{}For $\al\in\h$ define the
Fock representation $\mathcal F_\al$ generated by a vector $\ket{\al}$ such that
$$
\beta[n] \ket{\al}=0,\ n>0, \qquad \beta[0] \ket{\al}=(\beta,\al) \ket{\al}.
$$
Set $L=\Z (2a+b)/3\oplus \Z (a+2b)/3$.
{}For $\al\in L$ we consider the corresponding vertex operators
acting on the direct sum of Fock spaces
$\bigoplus_{\al\in L} {\mathcal F}_\al$:
\be
\Gamma_\al(z)=S_\al z^{\al[0]} \exp(-\sum_{n<0} \frac{\al[n]}{n} z^{-n})
\exp(-\sum_{n>0} \frac{\al[n]}{n} z^{-n}),
\en
where $z^{\al[0]}$ acts on $\mathcal{F}_\beta$ by $z^{(\al,\beta)}$ and
the operator $S_\al$ is defined by
$$
S_\al \ket{\beta}=\ket{\al+\beta},
\quad
[S_\al,\beta[n]]=0
\quad (n\ne 0, \al,\beta\in\h).
$$
The Fourier decomposition is given by
$$\Gamma_\al(z)=\sum_{n\in\Z} \Gamma_\al[n] z^{-n-(\al,\al)/2}.$$
In particular,
\be
\Gamma_\al[-(\al,\al)/2-(\al,\beta)]\ket{\beta}=\ket{\al+\beta}.
\en

We need three vertex operators corresponding to the vectors $a$, $b$ and $c=a+b$.
We fix the notation
\begin{gather*}
a(z)=\Gamma_a(z),\quad b(z)=\Gamma_b(z),\quad c(z)=\Gamma_c(z),\\
a[n]=\Gamma_a[n], \quad b[n]=\Gamma_b[n],\quad c[n]=\Gamma_c[n].
\end{gather*}
The Frenkel-Kac construction for level $1$ modules (see \cite{FK})
defines the action of $\nh$ on $\oplus_{\alpha\in L}\mathcal F_\alpha$
via the homomorphism
$$e_{21}[n]\mapsto a[n], \quad e_{32}[n]\mapsto b[n],\quad e_{31}[n]\mapsto c[n].$$

Let $v_{m,n}$ be a  vacuum vector of the Fock module
$
\mathcal F_{-(\frac{2m+n}{3}+1)a-(\frac{2n+m}{3}+1)b}.
$
\begin{lem}
\label{comp}
We have
\be
&&e_{21}[i]v_{m,n}=0
\quad (i>m),\\
&&e_{32}[i]v_{m,n}=0
\quad (i>n),\\
&&e_{31}[i]v_{m,n}=0
\quad (i>m+n+1),\\
&&e_{21}[m]v_{m,n}=v_{m-2,n+1},\\
&&e_{32}[n]v_{m,n}=v_{m+1,n-2},\\
&&e_{31}[m+n+1]v_{m,n}=v_{m-1,n-1}.
\en
\end{lem}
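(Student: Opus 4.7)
The plan is to unfold the definition of the vertex operators acting on a vacuum vector and match up the exponents of $z$. Writing $\beta=\beta_{m,n}=-(\tfrac{2m+n}{3}+1)a-(\tfrac{2n+m}{3}+1)b$ for the weight of $v_{m,n}$, I would first observe that on a vacuum vector,
\[
\exp\!\Bigl(-\sum_{n>0}\tfrac{\alpha[n]}{n}z^{-n}\Bigr)\ket{\beta}=\ket{\beta},
\]
since $\alpha[n]\ket{\beta}=0$ for $n>0$. Hence
\[
\Gamma_\alpha(z)\ket{\beta}=z^{(\alpha,\beta)}\exp\!\Bigl(\sum_{n>0}\tfrac{\alpha[-n]}{n}z^n\Bigr)\ket{\beta+\alpha},
\]
which is a power series in $z$ with lowest degree $z^{(\alpha,\beta)}$ and leading coefficient $\ket{\beta+\alpha}$.

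Next I would translate between mode indices and powers of $z$ using $\Gamma_\alpha(z)=\sum_i\Gamma_\alpha[i]\,z^{-i-(\alpha,\alpha)/2}$. This gives the vanishing criterion: $\Gamma_\alpha[i]\ket{\beta}=0$ whenever $-i-(\alpha,\alpha)/2<(\alpha,\beta)$, i.e.\ $i>-(\alpha,\alpha)/2-(\alpha,\beta)$, and equality at the boundary yields $\Gamma_\alpha[\,-(\alpha,\alpha)/2-(\alpha,\beta)\,]\ket{\beta}=\ket{\beta+\alpha}$.

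It now remains a bookkeeping computation to plug in $\alpha\in\{a,b,c\}$, with $(a,a)=(b,b)=(c,c)=2$ (the last because $c=a+b$ with $(a,b)=-1$), and to compute
\[
(a,\beta_{m,n})=-m-1,\qquad (b,\beta_{m,n})=-n-1,\qquad (c,\beta_{m,n})=-m-n-2.
\]
These give the vanishing bounds $e_{21}[i]v_{m,n}=0$ for $i>m$, $e_{32}[i]v_{m,n}=0$ for $i>n$, $e_{31}[i]v_{m,n}=0$ for $i>m+n+1$. For the three equalities at the top modes, the leading coefficient of $\Gamma_\alpha(z)\ket{\beta}$ is $\ket{\beta+\alpha}$, and a direct check that
\[
\beta_{m,n}+a=\beta_{m-2,n+1},\quad \beta_{m,n}+b=\beta_{m+1,n-2},\quad \beta_{m,n}+a+b=\beta_{m-1,n-1}
\]
(verified by substituting into the formula for $\beta_{m,n}$) finishes the proof.

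There is no genuine obstacle; the argument is a mechanical unwinding of the vertex-operator definition. The only place where one must be careful is the arithmetic of the weight shifts, in order to confirm that the shifted weight $\beta+\alpha$ is exactly the one labelling the target vector $v_{m',n'}$ predicted by the lemma.
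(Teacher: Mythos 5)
Your proof is correct and follows exactly the route the paper intends: the lemma is stated there without proof precisely because it is an immediate consequence of the displayed identity $\Gamma_\al[-(\al,\al)/2-(\al,\beta)]\ket{\beta}=\ket{\al+\beta}$ (plus the vanishing of modes below the lowest $z$-power), which is what you unwind. Your computations of $(\al,\beta_{m,n})$ and of the shifted weights $\beta_{m,n}+a=\beta_{m-2,n+1}$, etc., check out, so nothing is missing.
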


We denote by $W_{m,n}\hookrightarrow \oplus_{\alpha\in L}\mathcal F_\alpha$
the cyclic $\nh$-module with the cyclic vector $v_{m,n}$.
The shift automorphism $T_{m,n}$ induces an isomorphism between
$W_{0,0}[m,n]$ and $W_{m,n}$.

We also need one-parameter analogues of the modules $W_{m,n}$.
{}Fix a one-dimensional space with a basis vector ${\bar b}$
and an inner product defined by $({\bar b},{\bar b})=2$.
The vertex operator $\bar b(z)=\Gamma_{\bar b}(z)$
acts in the direct sum of Fock
modules over the Heisenberg algebra $\C{\bar b}\otimes\C[t,t^{-1}]\oplus\C\cdot1$.
We set $W_n=\C[\{{\bar b}[i]\}_{i\in\Z}]\cdot v_n$ where $v_n=\ket{-(n+1)b/2}$,
and make it an $\nh$-module by letting $e_{32}[i]$ act by ${\bar b}[i]$ and
$e_{21}[i],e_{31}[i]$ by $0$.
Then we have
\be
e_{32}[i]v_n=0
\quad(i>n),\
e_{32}[n]v_n=v_{n-2}.
\en
Now we define the VO (vertex operator) versions
of the spaces $U_{l_1,l_2,l_3}^{k_1,k_2}$ and $V_{l_1,l_2,l_3}^{k_1,k_2}$,
utilizing the modules
\be
W_{0,0},~~W_{0,-1},~~W_{-1,0},~~W_{-1,-1},~~W_0,~~W_{-1}
\en
as building blocks. 

\begin{definition}\label{one}
Let $(l_1,l_2,l_3)\in \bR^{k_1,k_2}_U$.

If $(l_1,l_2,l_3)\in R^{k_1,k_2}_U$, then we define
\be
(U_{VO})^{k_1,k_2}_{l_1,l_2,l_3}\subset
W_{0,0}^{\otimes l_3}\otimes
W_{0,-1}^{\otimes (l_1-l_3)}\otimes
W_{-1,0}^{\otimes (k_1-l_1)}\otimes
W_0^{\otimes (l_1+l_2-l_3-k_1)} \otimes
W_{-1}^{\otimes (k_2-l_1-l_2+l_3)}
\en
to be the cyclic $\nh$-module with the cyclic vector
\be
w_1(l_1,l_2,l_3)=v_{0,0}^{\otimes l_3}\otimes
v_{0,-1}^{\otimes (l_1-l_3)}\otimes
v_{-1,0}^{\otimes (k_1-l_1)}\otimes
v_0^{\otimes (l_1+l_2-l_3-k_1)}\otimes
v_{-1}^{\otimes (k_2-l_1-l_2+l_3)}.
\en

If $l_1+l_2-l_3< k_1$, then we define
\be
(U_{VO})^{k_1,k_2}_{l_1,l_2,l_3}\subset
W_{0,0}^{\otimes l_3}\otimes
W_{0,-1}^{\otimes (l_1-l_3)}\otimes
W_{-1,0}^{\otimes (l_2-l_3)}\otimes
W_{-1,-1}^{\otimes (k_1-l_1-l_2+l_3)}\otimes
W_{-1}^{\otimes (k_2-k_1)}
\en
to be the cyclic $\nh$-module with the cyclic vector
\be
w_2(l_1,l_2,l_3)=
v_{0,0}^{\otimes l_3}\otimes
v_{0,-1}^{\otimes (l_1-l_3)}\otimes
v_{-1,0}^{\otimes (l_2-l_3)}\otimes
v_{-1,-1}^{\otimes (k_1-l_1-l_2+l_3)}\otimes
v_{-1}^{\otimes (k_2-k_1)}.
\en
\end{definition}

\begin{lem}\label{surj1}
Let $(l_1,l_2,l_3)\in \bR^{k_1,k_2}_U$.
Then there exists a surjective homomorphism
of $\nh$-modules
$$U^{k_1,k_2}_{l_1,l_2,l_3}\to (U_{VO})^{k_1,k_2}_{l_1,l_2,l_3}.$$
\end{lem}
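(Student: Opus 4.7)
The plan is to apply the universal property of $U^{k_1,k_2}_{l_1,l_2,l_3}$: since this module is cyclic on $v$ with defining relations \eqref{N1}--\eqref{U4}, it suffices to verify that the cyclic vector $w_i(l_1,l_2,l_3)$ (with $i=1$ if $(l_1,l_2,l_3)\in R^{k_1,k_2}_U$ and $i=2$ if $l_1+l_2-l_3<k_1$) satisfies these same relations. The resulting $\nh$-homomorphism sending $v\mapsto w_i(l_1,l_2,l_3)$ is then automatically surjective, since by definition its image contains the cyclic generator of $(U_{VO})^{k_1,k_2}_{l_1,l_2,l_3}$. Throughout, elements $x[n]\in\nh$ act on the tensor product by the Lie algebra coproduct $\Delta(x[n])=x[n]\otimes 1+1\otimes x[n]$.

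The positive-mode relations \eqref{N1}--\eqref{N3} follow immediately from Lemma~\ref{comp}, since on each of the building blocks $v_{0,0},v_{0,-1},v_{-1,0},v_{-1,-1},v_0,v_{-1}$ the operators $e_{21}[n]$ ($n>0$), $e_{31}[n]$ ($n>1$), $e_{32}[n]$ ($n>0$) act as zero. The integrability relations \eqref{U4} reduce to a level count: each $W_{m,n}$ is a submodule of a level-$1$ Frenkel--Kac module, hence satisfies $e_{21}(z)^2=e_{32}(z)^2=0$, and each $W_n$ trivially has $e_{21}(z)=0$ together with $e_{32}(z)^2=\bar b(z)^2=0$. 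Since the coproduct contributions in distinct tensor slots commute, a multinomial expansion gives $x(z)^{N+1}=0$ on any $N$-fold tensor product of factors each satisfying $x(z)^2=0$. A direct tally of the tensor factors in $w_1$ (respectively $w_2$) shows that the total $e_{21}$-level equals $k_1$ and the total $e_{32}$-level equals $k_2$.

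For the remaining relations \eqref{U1}--\eqref{U3} the plan is to perform a per-factor budget count based on Lemma~\ref{comp}: for each relevant operator, determine the largest integer $N$ such that applying the operator $N$ times to a given building block can yield a nonzero vector, and call this the block's budget. The relevant facts are that $e_{31}[1]$ has budget $1$ on $v_{0,0}$ and $0$ on every other block; $e_{32}[0]$ has budget $1$ on each of $v_{0,0},v_{-1,0},v_0$ and $0$ elsewhere; and since $[e_{21}[0],e_{31}[1]]=0$, the joint budget $\alpha_i+\beta_i$ in the Leibniz expansion of $e_{21}[0]^\alpha e_{31}[1]^{l_1+1-\alpha}$ is at most $1$ on each of $v_{0,0}$ and $v_{0,-1}$, and zero on every other block. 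Summing these budgets weighted by the multiplicities prescribed in Definition~\ref{one}, one obtains totals $l_3$, $l_2$, and $l_1$ respectively, for both $w_1$ and $w_2$; in each case the total is exactly one less than the exponent appearing in the corresponding relation, so that relation kills $w_i$.

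The hard part is really just the arithmetic bookkeeping needed to check that in both regions---namely $R^{k_1,k_2}_U$ and $\{(l_1,l_2,l_3)\in P^{k_1,k_2}_U\mid l_1+l_2-l_3<k_1\}$---the multiplicities in Definition~\ref{one} (which were calibrated precisely for this purpose) produce exactly the required totals $k_1,k_2,l_1,l_2,l_3$. Once these tallies are verified, the universal property argument concludes the proof.
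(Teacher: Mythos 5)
Your proposal is correct and follows the same route as the paper: the paper's proof simply observes that it suffices to check the defining relations of $U^{k_1,k_2}_{l_1,l_2,l_3}$ on the cyclic vector of $(U_{VO})^{k_1,k_2}_{l_1,l_2,l_3}$, which follows from Lemma \ref{comp}. Your level count for \eqref{U4} and the per-factor budget/pigeonhole bookkeeping for \eqref{U1}--\eqref{U3} are exactly the details the paper leaves implicit, and they check out.
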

\begin{proof}
It is sufficient to check that the defining relations of $U^{k_1,k_2}_{l_1,l_2,l_3}$
are satisfied in $(U_{VO})^{k_1,k_2}_{l_1,l_2,l_3}$. That follows from
Lemma $\ref{comp}$.
\end{proof}

\begin{definition}
If $(l_1,l_2,l_3)\in R^{k_1,k_2}_V$, then we define
$$
(V_{VO})^{k_1,k_2}_{l_1,l_2,l_3}\subset
W_{0,-1}^{\otimes l_1}\otimes
W_{-1,0}^{\otimes (l_3-l_1)}\otimes
W_{-1,-1}^{\otimes (k_1-l_3)}\otimes
W_0^{\otimes (l_1+l_2-l_3)}\otimes
W_{-1}^{\otimes (k_2-k_1-l_1-l_2+l_3)}
$$
to be the cyclic $\nh$-module  with the cyclic vector
\be
w_3(l_1,l_2,l_3)=v_{0,-1}^{\otimes l_1}\otimes
v_{-1,0}^{\otimes (l_3-l_1)}\otimes
v_{-1,-1}^{\otimes (k_1-l_3)}\otimes
v_0^{\otimes (l_1+l_2-l_3)}\otimes
v_{-1}^{\otimes (k_2-k_1-l_1-l_2+l_3)}.
\en
\end{definition}

\begin{lem}\label{surj2}
Let $(l_1,l_2,l_3)\in R^{k_1,k_2}_V$.
Then there exists a surjective homomorphism
of $\nh$-modules
$$V^{k_1,k_2}_{l_1,l_2,l_3}\to (V_{VO})^{k_1,k_2}_{l_1,l_2,l_3}.$$
\end{lem}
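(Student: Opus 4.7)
The plan is to verify that the cyclic vector $w_3(l_1,l_2,l_3)$ satisfies each defining relation of $V^{k_1,k_2}_{l_1,l_2,l_3}$; this, together with cyclicity, extends the assignment $v\mapsto w_3(l_1,l_2,l_3)$ to the desired surjective $\nh$-homomorphism. The strategy parallels that of Lemma \ref{surj1}: the action of $\nh$ on $(V_{VO})^{k_1,k_2}_{l_1,l_2,l_3}$ is diagonal across the tensor factors (via the Lie algebra coproduct), and every verification reduces, through Lemma \ref{comp}, to bookkeeping of how many operators each tensor factor can absorb before being annihilated.

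The positive-mode relations \eqref{BV1}--\eqref{BV3} are immediate. Every constituent $v_{m,n}$ or $v_n$ appearing in $w_3$ satisfies $m\le 0$, $n\le 0$ and $m+n+1\le 0$, so Lemma \ref{comp} forces $e_{21}[n], e_{31}[n], e_{32}[n]$ ($n>0$) to annihilate each factor individually. The integrability relations \eqref{V4} reduce to the level-one vanishings $a(z)^2=0=b(z)^2$ valid on each $W_{m,n}$: operators acting on distinct tensor factors commute and square to zero on a single factor, so when the diagonal power $e_{21}(z)^{k_1+1}$ (resp.\ $e_{32}(z)^{k_2+1}$) is expanded, every term contains a repeated single-factor action by pigeonhole. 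Direct counting, using the non-negativity assumption $(l_1,l_2,l_3)\in R^{k_1,k_2}_V$, gives $l_1+(l_3-l_1)+(k_1-l_3)=k_1$ factors on which $e_{21}(z)$ is non-trivial, and $k_1+(l_1+l_2-l_3)+(k_2-k_1-l_1-l_2+l_3)=k_2$ factors on which $e_{32}(z)$ is non-trivial, matching the required bounds exactly.

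The main obstacle is the cluster of zero-mode relations \eqref{V1}--\eqref{V3}, which I would handle by a capacity/pigeonhole argument based on Lemma \ref{comp}. One classifies each tensor factor by its absorption capacity for zero modes: $v_{0,-1}$ accepts at most one operator from $\{e_{21}[0], e_{31}[0]\}$ (sending it to $v_{-2,0}$ or $v_{-1,-2}$, each of which is then killed by every zero mode), $v_{-1,0}$ accepts at most one operator from $\{e_{31}[0], e_{32}[0]\}$, $v_0$ accepts at most one $e_{32}[0]$, while $v_{-1,-1}$ and $v_{-1}$ are killed by the relevant zero modes outright. Summing over the factors of $w_3$, its capacity for $e_{21}[0]$ alone is $l_1$, for the mixed family $\{e_{21}[0], e_{31}[0]\}$ it is $l_1+(l_3-l_1)=l_3$, and for $e_{32}[0]$ it is $(l_3-l_1)+(l_1+l_2-l_3)=l_2$. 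Therefore applying $l_1+1$ copies of $e_{21}[0]$, any product $e_{21}[0]^\alpha e_{31}[0]^{l_3+1-\alpha}$ of total degree $l_3+1$, or $l_2+1$ copies of $e_{32}[0]$, overflows the respective capacity and vanishes term by term in the diagonal expansion. This establishes \eqref{V1}--\eqref{V3} and completes the verification, so the universal property of $V^{k_1,k_2}_{l_1,l_2,l_3}$ produces the required surjection.
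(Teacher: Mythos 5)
Your proof is correct and takes essentially the same approach as the paper, whose own proof of this lemma simply observes that the defining relations of $V^{k_1,k_2}_{l_1,l_2,l_3}$ hold on $w_3(l_1,l_2,l_3)$ by Lemma \ref{comp} (together with the level-one identities $a_i(z)^2=b_j(z)^2=0$ for the integrability relations); you have merely made the componentwise bookkeeping explicit. One cosmetic slip: $v_{-2,0}$ is not killed by $e_{32}[0]$ (it maps to $v_{-1,-2}$), but this is harmless because no defining relation of $V^{k_1,k_2}_{l_1,l_2,l_3}$ mixes $e_{32}[0]$ with the $e_{21}[0]$, $e_{31}[0]$ family.
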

\begin{proof}
The lemma follows from Lemma $\ref{comp}$.
\end{proof}


\begin{thm}\label{mainbos}
There exist the following complexes
of $\nh$-modules 
which are exact in the first and third terms.

{}For $(l_1,l_2,l_3)\in R^{k_1,k_2}_U$, we have
\bea
0\to (V_{VO})^{k_1,k_2}_{l_3,k_2-l_2,l_1}[1,-1]
\buildrel{\iota_1}  
\over\longrightarrow (U_{VO})^{k_1,k_2}_{l_1,l_2,l_3}\to
(U_{VO})^{k_1,k_2}_{l_1,l_2-1,\min(l_3,l_2-1)}\to 0,
\label{am}
\ena
such that $\iota_1(T_{1,-1}(w_3(l_3,k_2-l_2,l_1)))=e_{32}^{l_2}[0]w_1(l_1,l_2,l_3)$.

{}For $(l_1,l_2,l_3)\in R^{k_1,k_2}_V$, we have
\bea
0\to (U_{VO})^{k_1,k_2}_{l_3,k_2-l_2,l_1}[0,-1]
\buildrel{\iota_2}
\over\longrightarrow
(V_{VO})^{k_1,k_2}_{l_1,l_2,l_3}\to
(V_{VO})^{k_1,k_2}_{l_1,l_2-1,\min(l_3,l_1+l_2-1)}\to 0,
\label{bm}
\ena
such that $\iota_2(T_{0,-1}(w_1(l_3,k_2-l_2,l_1)))=
e_{32}^{l_2}[0]w_3(l_1,l_2,l_3)$.

{}For $(l_1,l_2,l_3)\in \overline R^{k_1,k_2}_U$ with $l_1+l_2-l_3\neq k_2$ or
$l_3=0$,
\bea
0\to (V_{VO})^{k_1,k_2}_{l_1-l_3,l_2-l_3,\min(k_1-l_3,l_1+l_2-2l_3)}
\buildrel{\iota_3}
\over\longrightarrow
  (U_{VO})^{k_1,k_2}_{l_1,l_2,l_3}\to
(U_{VO})^{k_1,k_2}_{l_1,l_2,l_3-1}\to 0,
\label{cm}
\ena
such that $\iota_3(w_3(l_1-l_3,l_2-l_3,k_1-l_3))=
e_{31}^{l_3}[1]w_1(l_1,l_2,l_3)$ in the case of
$(l_1,l_2,l_3)\in R^{k_1,k_2}_U$ and
$\iota_3(w_3(l_1-l_3,l_2-l_3,l_1+l_2-2l_3))=
e_{31}^{l_3}[1]w_2(l_1,l_2,l_3)$
 otherwise.

{}For $(l_1,l_2,l_3)\in R^{k_1,k_2}_V$,
\bea \qquad
0\to (U_{VO})^{k_1,k_2}_{k_1-l_1,l_1+l_2,l_3-l_1}[-1,0]
\buildrel{\iota_4}
\over\longrightarrow
(V_{VO})^{k_1,k_2}_{l_1,l_2,l_3}\to
(V_{VO})^{k_1,k_2}_{l_1-1,l_2,\min(l_3,l_1+l_2-1)}\to 0,
\label{dm}
\ena
such that $\iota_4(T_{-1,0}(w_1(k_1-l_1,l_1+l_2,l_3-l_1)))=
e_{21}[0]^{l_1} w_3(l_1,l_2,l_3)$.

In these formulas, if one of the indices is negative, then
the corresponding term is understood as zero.
\end{thm}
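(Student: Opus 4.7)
The four exact sequences are established by the same three-step strategy; I illustrate it for \eqref{am}, the others being parallel. For a sequence $0\to A\xrightarrow{\iota} B\to C\to 0$, the three steps are: (a) construct the surjection $B\to C$; (b) construct $\iota$ and verify the complex condition; (c) prove that $\iota$ is injective. Step (c) is the main obstacle.

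Step (a) follows from Lemma \ref{surj1} combined with the observation that, when the cyclic vector $w_1(l_1,l_2,l_3)$ is passed into the Fock realization of $(U_{VO})^{k_1,k_2}_{l_1,l_2-1,\min(l_3,l_2-1)}$, it satisfies the additional defining relation $e_{32}[0]^{l_2}v=0$ required by $C$; this is a direct computation via Lemma \ref{comp}. Step (b) mirrors the argument in Lemma \ref{lem:exact1}: the vector $e_{32}[0]^{l_2}w_1(l_1,l_2,l_3)$ satisfies the defining relations \eqref{BV1}--\eqref{V4} of $V^{k_1,k_2}_{l_3,k_2-l_2,l_1}[1,-1]$ by the same verifications as \eqref{P1}--\eqref{P6}, producing a surjection from that $V$-module onto the cyclic submodule $\langle e_{32}[0]^{l_2}\rangle$ of $B$. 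This submodule is contained in the kernel of the quotient map, giving the complex property.

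For step (c), the plan is to compute $e_{32}[0]^{l_2}w_1(l_1,l_2,l_3)$ explicitly inside the ambient Fock tensor product. By Lemma \ref{comp}, $e_{32}[0]$ annihilates the blocks $v_{0,-1}$ and $v_{-1}$ and acts non-trivially on each factor of the remaining three blocks, whose total size is $l_3+(k_1-l_1)+(l_1+l_2-l_3-k_1)=l_2$; moreover, $e_{32}[0]$ also annihilates each of the resulting raised vacua $v_{1,-2}$, $v_{0,-2}$, $v_{-2}$. Consequently the Leibniz expansion of $e_{32}[0]^{l_2}$ forces exactly one application per active tensor factor, producing a nonzero scalar multiple of
\[
v_{1,-2}^{\otimes l_3}\otimes v_{0,-1}^{\otimes(l_1-l_3)}\otimes v_{0,-2}^{\otimes(k_1-l_1)}\otimes v_{-2}^{\otimes(l_1+l_2-l_3-k_1)}\otimes v_{-1}^{\otimes(k_2-l_1-l_2+l_3)}.
\]
Using the canonical isomorphism $W_{m,n}[m_0,n_0]\cong W_{m+m_0,n+n_0}$ (and its one-variable analog), this vector is identified with $T_{1,-1}(w_3(l_3,k_2-l_2,l_1))$, the cyclic vector of the source $(V_{VO})^{k_1,k_2}_{l_3,k_2-l_2,l_1}[1,-1]$; the ordering between the abelian blocks $W_{-1}$ and $W_{-2}$ is immaterial because only $e_{32}[n]$ acts on them, so the flip is an $\nh$-module isomorphism. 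Hence the cyclic submodule it generates is isomorphic to the source, and $\iota_1$ is injective.

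For \eqref{bm}, \eqref{cm}, \eqref{dm} the same three-step pattern applies with obvious modifications: swap $w_1$ and $w_3$ for \eqref{bm}; replace $e_{32}[0]^{l_2}$ by $e_{31}[1]^{l_3}$ in \eqref{cm} and treat the two subregions of Definition \ref{one} separately (which forces the exclusion $l_1+l_2-l_3\neq k_2$ or $l_3=0$, as the two Fock realizations disagree at the boundary); in \eqref{dm}, use $e_{21}[0]^{l_1}$ together with Lemma \ref{lem:xyz}. In each case the explicit Fock computation identifies the image of the cyclic vector with a nonzero scalar multiple of the appropriate shifted cyclic vector, establishing injectivity.
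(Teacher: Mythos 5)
Your step (c) --- computing $e_{32}[0]^{l_2}w_1(l_1,l_2,l_3)$ factor by factor via Lemma \ref{comp} and identifying the result with the shifted cyclic vector $T_{1,-1}(w_3(l_3,k_2-l_2,l_1))$, so that the cyclic submodule it generates is literally the source module --- is exactly how the paper proves injectivity of the maps $\iota_1,\dots,\iota_4$, and that part of your plan is sound. The genuine gap is in step (a), the surjection $(U_{VO})^{k_1,k_2}_{l_1,l_2,l_3}\to(U_{VO})^{k_1,k_2}_{l_1,l_2-1,\min(l_3,l_2-1)}$ (and its analogues in the other three sequences). The VO modules are not presented by generators and relations: $(U_{VO})^{k_1,k_2}_{l_1,l_2,l_3}$ is a concrete cyclic submodule of a tensor product of Fock modules, and a priori it is only a quotient of the abstract module $U^{k_1,k_2}_{l_1,l_2,l_3}$ (Lemma \ref{surj1}), possibly a proper one. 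So verifying that the cyclic vector of the target satisfies the defining relations of $U^{k_1,k_2}_{l_1,l_2,l_3}$ together with $e_{32}[0]^{l_2}v=0$ only produces a homomorphism out of the abstract module (or its abstract quotient); it does not produce a well-defined map out of $(U_{VO})^{k_1,k_2}_{l_1,l_2,l_3}$, whose cyclic vector may satisfy additional relations that you would then have to check on the target vector as well. This is precisely where the paper invests its real work: the quotient maps are built inside the Fock realization by replacing tensor factors, using surjections between the building blocks (e.g.\ $W_{-1,0}\to W_{-1,-1}$) and, where a one-factor replacement is impossible, two-factor maps such as $(U_{VO})^{1,2}_{1,1,1}\to(U_{VO})^{1,2}_{1,1,0}$, $v_{0,0}\otimes v_{-1}\mapsto v_{0,-1}\otimes v_0$. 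Those two-factor maps require the nontrivial identifications $(U_{VO})^{1,2}_{1,1,1}\simeq U^{1,2}_{1,1,1}$ and $(U_{VO})^{2,2}_{1,1,1}\simeq U^{2,2}_{1,1,1}$ provided by the Gordon-filtration/fermionic-formula result (Proposition \ref{GORDON}), and for the sequence \eqref{dm} the paper must additionally prove $(U_{VO})^{1,2}_{1,1,0}\simeq U^{1,2}_{1,1,0}$ by sandwiching characters between the upper estimate \eqref{Ra} and the lower estimates already obtained. Nothing in your plan substitutes for these inputs, so exactness at the third term is not established; and since your complex property (image of $\iota$ inside the kernel) is argued through the very quotient map that is missing, that step is left hanging too.

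Two smaller remarks: Lemma \ref{surj1} goes in the wrong direction for your purposes (it maps the abstract module onto the VO module, not one VO module onto another), so citing it for step (a) does not help; and the exclusion ``$l_1+l_2-l_3\neq k_2$ or $l_3=0$'' in \eqref{cm} is there to keep the indices of the third term inside $\bR^{k_1,k_2}_U$, not because the two Fock realizations of Definition \ref{one} disagree at the boundary.
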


\begin{cor}
Inequalities \eqref{a}, \eqref{b}, \eqref{c}
and \eqref{d} are satisfied.
\end{cor}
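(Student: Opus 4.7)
The corollary follows formally from Theorem \ref{mainbos} by taking characters. Recall that for a complex of graded $\nh$-modules
\[
0 \to A \xrightarrow{\iota} B \xrightarrow{\pi} C \to 0
\]
with $\iota$ injective and $\pi$ surjective (the precise content of exactness at the first and third terms), one has $\ch B = \ch\ker\pi + \ch C \ge \ch\iota(A) + \ch C = \ch A + \ch C$, the middle inequality using $\iota(A)\subseteq\ker\pi$ (which is the vanishing of the composition) and the final equality using injectivity of $\iota$. Theorem \ref{mainbos} provides exactly such a complex for each of \eqref{am}--\eqref{dm}: the prescribed image $\iota_i(\text{cyclic vector of }A)$ equals $e_{32}[0]^{l_2}w_1$, $e_{32}[0]^{l_2}w_3$, $e_{31}[1]^{l_3}w_1$ (or $w_2$), or $e_{21}[0]^{l_1}w_3$ respectively, each of which sits inside the corresponding ``killing'' submodule whose quotient yields $C$, so the composition vanishes automatically.

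\textbf{Identifying the shifted characters.} The only genuine step is to translate the character of a twisted module $M[m,n]$ into a substitution. With the standard convention that $M[m,n]$ re-grades a homogeneous component of weight $(m_1,m_2)$ and original $q$-degree $d$ to have $q$-degree $d-mm_1-nm_2$ (so that the twisted $\nh$-action preserves the grading), one obtains
\[
\ch_{z_1,z_2,q}M[m,n] = (\ch_{z_1,z_2,q}M)\big|_{z_1\mapsto q^{-m}z_1,\,z_2\mapsto q^{-n}z_2}.
\]
Combining this with the weight and $q$-degree of the cyclic-vector prefactor yields the right-hand side of each inequality. For \eqref{am}, $T_{1,-1}$ gives the substitution $(z_1,z_2)\mapsto(q^{-1}z_1,qz_2)$ and $e_{32}[0]^{l_2}$ contributes $z_2^{l_2}$, producing \eqref{a}. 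For \eqref{bm}, $T_{0,-1}$ gives $(z_1,z_2)\mapsto(z_1,qz_2)$ with prefactor $z_2^{l_2}$, producing \eqref{b}. For \eqref{cm}, there is no twist, but $e_{31}[1]^{l_3}$ contributes $(q^{-1}z_1z_2)^{l_3}$ (weight $l_3$ in each of $z_1,z_2$ and $q$-degree $-l_3$ coming from the ``$[1]$''), producing \eqref{c}. For \eqref{dm}, $T_{-1,0}$ gives $(z_1,z_2)\mapsto(qz_1,z_2)$ with prefactor $z_1^{l_1}$, producing \eqref{d}.

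\textbf{Main point of care.} The only place where one can go wrong is the sign in the substitution under $T_{m,n}$ and the $q$-degree reading of the operator $e_{31}[1]$ (which contributes $q^{-1}$, not $q$, per the grading $\deg e_{31}[n]=(1,1,-n)$ inherited from the relations in Section 3.2). Once these conventions are correctly pinned down, each inequality falls out by a one-line substitution from the corresponding complex of Theorem \ref{mainbos}; no further case analysis is required.
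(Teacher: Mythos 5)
Your argument is correct and is exactly the (implicit) proof intended by the paper: the corollary is stated as an immediate consequence of Theorem \ref{mainbos}, obtained by taking characters of the complexes, using injectivity of $\iota_i$ and surjectivity onto the third term. Your bookkeeping of the twist $M[m,n]\mapsto (z_1,z_2)\mapsto(q^{-m}z_1,q^{-n}z_2)$ and of the prefactors $z_2^{l_2}$, $(q^{-1}z_1z_2)^{l_3}$, $z_1^{l_1}$ agrees with the conventions the paper itself uses in deriving \eqref{Ra}--\eqref{Rd}, so nothing further is needed.
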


The rest of the section is devoted to the proof of Theorem \ref{mainbos}.
We start with the proof of the existence of the embeddings.

\begin{prop}
Let $(l_1,l_2,l_3)\in R^{k_1,k_2}_U$. Then
we have embeddings of $\nh$-modules
\be
&&(V_{VO})^{k_1,k_2}_{l_3,k_2-l_2,l_1}[1,-1]
\buildrel{\iota_1}
\over\longrightarrow(U_{VO})^{k_1,k_2}_{l_1,l_2,l_3},\\
&&(V_{VO})^{k_1,k_2}_{l_1-l_3,l_2-l_3,k_1-l_3}\buildrel{\iota_3}
\over\longrightarrow(U_{VO})^{k_1,k_2}_{l_1,l_2,l_3},
\en
such that $\iota_1(T_{1,-1}(w_3(l_3,k_2-l_2,l_1)))=e_{32}^{l_2}[0]w_1(l_1,l_2,l_3)$ and $\iota_3(w_3(l_1-l_3,l_2-l_3,k_1-l_3))=
e_{31}^{l_3}[1]w_1(l_1,l_2,l_3)$.

Let $(l_1,l_2,l_3)\in \bR^{k_1,k_2}_U$ and $l_1+l_2-l_3< k_1$.
Then we have an embedding of $\nh$-modules
\be
&&(V_{VO})^{k_1,k_2}_{l_1-l_3,l_2-l_3,l_1+l_2-2l_3}\buildrel{\iota_3}
\over\longrightarrow(U_{VO})^{k_1,k_2}_{l_1,l_2,l_3},
\en
such that $\iota_3(w_3(l_1-l_3,l_2-l_3,l_1+l_2-2l_3))=
e_{31}^{l_3}[1]w_2(l_1,l_2,l_3)$.

Let $(l_1,l_2,l_3)\in R^{k_1,k_2}_V$. Then
we have embeddings of $\nh$-modules
\be
&&(U_{VO})^{k_1,k_2}_{l_3,k_2-l_2,l_1}[0,-1]\buildrel{\iota_2}
\over\longrightarrow(V_{VO})^{k_1,k_2}_{l_1,l_2,l_3},\\
&&(U_{VO})^{k_1,k_2}_{k_1-l_1,l_1+l_2,l_3-l_1}[-1,0]
\buildrel{\iota_4}
\over\longrightarrow(V_{VO})^{k_1,k_2}_{l_1,l_2,l_3},
\en
such that $\iota_2(T_{0,-1}(w_1(l_3,k_2-l_2,l_1)))=
e_{32}^{l_2}[0]w_3(l_1,l_2,l_3)$ and
$\iota_4(T_{-1,0}(w_1(k_1-l_1,l_1+l_2,l_3-l_1)))=
e_{21}[0]^{l_1} w_3(l_1,l_2,l_3)$.
\end{prop}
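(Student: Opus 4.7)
The plan is to verify each of the four embeddings by the same two-step argument: compute the image of the cyclic vector in the ambient tensor product of Fock modules using Lemma~\ref{comp}, then recognize this image as (a nonzero scalar times) the cyclic vector of the shifted source module, using the natural shift isomorphisms $W_{a,b}[m,n]\cong W_{a+m,b+n}$ and $W_n[m,n']\cong W_{n+n'}$ (both of which follow directly from the definitions of the shift automorphism $T_{m,n}$ and of the modules $W_{a,b}$, $W_n$). Once this matching is done, both the source (after the shift) and the cyclic $\nh$-submodule generated by the image become the same cyclic submodule of a common Fock-space tensor product (up to a permutation of tensor factors), so the embedding is automatic.

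For concreteness, consider $\iota_1$. By Lemma~\ref{comp}, $e_{32}[0]$ annihilates $v_{0,-1}$ and $v_{-1}$, while it sends $v_{0,0}\mapsto v_{1,-2}$, $v_{-1,0}\mapsto v_{0,-2}$, $v_0\mapsto v_{-2}$; the bound $e_{32}[i]v_{a,b}=0$ for $i>b$ also forces $e_{32}[0]$ to annihilate each of these images (since $b=-2<0$). Hence, writing $e_{32}[0]$ as a sum over tensor positions, $e_{32}[0]^{l_2}w_1(l_1,l_2,l_3)$ collapses to a multinomial sum in which each of the $l_3+(k_1-l_1)+(l_1+l_2-l_3-k_1)=l_2$ active tensor positions must be hit exactly once. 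The result is $l_2!$ times the tensor product of $v_{1,-2}^{\otimes l_3}$, $v_{0,-1}^{\otimes(l_1-l_3)}$, $v_{0,-2}^{\otimes(k_1-l_1)}$, $v_{-2}^{\otimes(l_1+l_2-l_3-k_1)}$ and $v_{-1}^{\otimes(k_2-l_1-l_2+l_3)}$. On the other hand, the shift $[1,-1]$ converts the factors $W_{0,-1},W_{-1,0},W_{-1,-1},W_0,W_{-1}$ of the ambient Fock space of $(V_{VO})^{k_1,k_2}_{l_3,k_2-l_2,l_1}$ into $W_{1,-2},W_{0,-1},W_{0,-2},W_{-1},W_{-2}$, and $T_{1,-1}w_3(l_3,k_2-l_2,l_1)$ is identified with exactly the same tensor product of vacuum vectors (after a reordering of tensor factors). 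Thus the two cyclic submodules coincide, yielding $\iota_1$.

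The cases $\iota_2,\iota_3,\iota_4$ proceed in parallel: one computes $e_{32}[0]^{l_2}w_3(l_1,l_2,l_3)$, $e_{31}[1]^{l_3}w_1(l_1,l_2,l_3)$ (or $e_{31}[1]^{l_3}w_2(l_1,l_2,l_3)$ in the subcase $l_1+l_2-l_3<k_1$), and $e_{21}[0]^{l_1}w_3(l_1,l_2,l_3)$ respectively. In each case the annihilation properties from Lemma~\ref{comp} force at most one non-trivial hit per tensor factor, so a single multinomial term survives whose vacuum-vector multiplicities coincide with those of the shifted source cyclic vector. The main obstacle is the combinatorial bookkeeping: verifying in each of the four cases that the number of active positions equals the exponent on the generator, and that the vacuum multiplicities match after the shift. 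This is where the parameter restrictions defining $R^{k_1,k_2}_U$, $R^{k_1,k_2}_V$, $\bR^{k_1,k_2}_U$ enter, since they are precisely what make each relevant tensor factor count nonnegative and ensure that the surviving multinomial term is nonzero.
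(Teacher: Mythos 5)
Your proposal is correct and follows essentially the same route as the paper: the paper likewise proves the first embedding by applying Lemma \ref{comp} to compute $e_{32}[0]^{l_2}w_1(l_1,l_2,l_3)$, observes that the resulting tensor of vacuum vectors coincides (up to the ordering of factors and a nonzero scalar) with the cyclic vector of $(V_{VO})^{k_1,k_2}_{l_3,k_2-l_2,l_1}[1,-1]$, and declares the remaining embeddings analogous. Your extra bookkeeping (the $l_2!$ multinomial factor, the annihilation argument forcing each active factor to be hit exactly once, and the matching of vacuum multiplicities under the shift isomorphisms) only makes explicit what the paper leaves implicit.
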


\begin{proof}
We prove the first embedding. The proof of the other embeddings
is similar.
By Lemma \ref{comp} we have
\be
&&e_{32}[0]^{l_2}(
v_{0,0}^{\otimes l_3}\otimes
v_{0,-1}^{\otimes (l_1-l_3)}\otimes
v_{-1,0}^{\otimes (k_1-l_1)}\otimes
v_0^{\otimes (l_1+l_2-l_3-k_1)}\otimes
v_{-1}^{\otimes (k_2-l_1-l_2+l_3)})\\=
&&
v_{1,-2}^{\otimes l_3}\otimes
v_{0,-1}^{\otimes (l_1-l_3)}\otimes
v_{0,-2}^{\otimes (k_1-l_1)}\otimes
v_{-2}^{\otimes (l_1+l_2-l_3-k_1)}\otimes
v_{-1}^{\otimes (k_2-l_1-l_2+l_3)}.
\en
The vector in the second line 
coincides with the cyclic vector of $(V_{VO})^{k_1,k_2}_{l_3,k_2-l_2,l_1}[1,-1]$.
\end{proof}

We prepare some facts about the isomorphisms between
the modules
$U^{k_1,k_2}_{l_1,l_2,l_3}$
and
$(U_{VO})^{k_1,k_2}_{l_1,l_2,l_3}$.

\begin{prop}\label{GORDON}
Let $l_3=\min(l_1,l_2)$. Then we have the isomorphism
\be
U^{k_1,k_2}_{l_1,l_2,l_3}\simeq
(U_{VO})^{k_1,k_2}_{l_1,l_2,l_3},
\en
and the corresponding character is  given by the fermionic formula
\begin{equation}\label{fermform}
\phi^{k_1,k_2}_{l_1,l_2,l_3}(z_1,z_2)=
(\phi_{VO})^{k_1,k_2}_{l_1,l_2,l_3}(z_1,z_2)=
\sum_{\genfrac{}{}{0pt}{}{m_1,\ldots,m_{k_1}\ge 0}{n_1,\ldots,n_{k_2}\ge 0}}
\frac{q^{Q(m,n)-\sum_{i=1}^{k_1}\min(l_1,i)m_i-\sum_{i=1}^{k_2}\min(l_2,i)n_i}}
{(q)_{m_1}\cdots(q)_{m_{k_1}}(q)_{n_1}\cdots(q)_{n_{k_2}}},
\end{equation}
where
$$
Q(m,n)=
\sum_{i,j=1}^{k_1}\min(i,j)m_im_j
+\sum_{i,j=1}^{k_2}\min(i,j)n_in_j-\sum_{i=1}^{k_1}
\sum_{j=1}^{k_2}\min(i,j)m_in_j.
$$
\end{prop}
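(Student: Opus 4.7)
The plan is to prove the isomorphism by showing that both $U^{k_1,k_2}_{l_1,l_2,l_3}$ and $(U_{VO})^{k_1,k_2}_{l_1,l_2,l_3}$ have character equal to the fermionic sum \eqref{fermform}, and then invoking the surjection from Lemma \ref{surj1}. The condition $l_3=\min(l_1,l_2)$ is special: Lemma \ref{lem:xyz} together with \eqref{U2}, \eqref{U3} shows that in this case the relations on $e_{31}[1]$ are ``as tight as possible,'' so the module looks like a genuine principal subspace of $\mathfrak{sl}_3$-type, for which fermionic formulas in the style of \cite{FS}, \cite{CLM} are standard.

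\textbf{Step 1 (upper bound for $\ch U^{k_1,k_2}_{l_1,l_2,l_3}$).} Using $e_{31}[m+n]=[e_{32}[m],e_{21}[n]]$, I would reduce to ordered PBW monomials in $e_{21}[-i]$ and $e_{32}[-j]$ acting on $v$. The integrability relations $e_{21}(z)^{k_1+1}=0$ and $e_{32}(z)^{k_2+1}=0$ plus the consequences \eqref{k3} translate, in the standard Feigin--Stoyanovsky way, into admissibility restrictions on the exponent sequences $(a_n)_{n\ge 0}$ (for $e_{21}$) and $(b_n)_{n\ge 0}$ (for $e_{32}$): $a_i+a_{i+1}\le k_1$, $b_i+b_{i+1}\le k_2$, with initial conditions coming from \eqref{U1}--\eqref{U3}. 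Passing from $(a_n,b_n)$ to the dual variables $m_i=\#\{n\mid a_n\ge i\}$ and $n_j=\#\{n\mid b_n\ge j\}$ converts the generating series into the sum in \eqref{fermform}; the diagonal quadratic pieces of $Q(m,n)$ are the usual $\mathfrak{sl}_2$-type contributions, while the mixed term $-\sum_{i,j}\min(i,j)m_in_j$ encodes precisely the cross-relations forced by Lemma \ref{lem:xyz}.

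\textbf{Step 2 (lower bound for $\ch (U_{VO})^{k_1,k_2}_{l_1,l_2,l_3}$).} Here I would use the explicit Fock space realization in Definition \ref{one}. Each building block $W_{m,n}$ or $W_n$ is, up to a zero-mode twist, a polynomial algebra in the negative Heisenberg modes by Frenkel--Kac; distinct monomials in $a[{-i}], b[{-j}], c[{-k}]$ applied to the tensor product cyclic vector produce linearly independent vectors, and their degrees can be counted directly. The combinatorics of this count matches exactly the admissible configurations of Step 1, so the character of $(U_{VO})^{k_1,k_2}_{l_1,l_2,l_3}$ is bounded below by \eqref{fermform}.

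\textbf{Step 3 (combination).} Lemma \ref{surj1} gives a surjection $U^{k_1,k_2}_{l_1,l_2,l_3}\twoheadrightarrow (U_{VO})^{k_1,k_2}_{l_1,l_2,l_3}$, hence $\ch U^{k_1,k_2}_{l_1,l_2,l_3}\ge \ch (U_{VO})^{k_1,k_2}_{l_1,l_2,l_3}$. Combined with the bounds in Steps 1 and 2, both characters must equal \eqref{fermform}, and the surjection is forced to be an isomorphism.

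The main obstacle is Step 1: carefully showing that the admissibility conditions induced by $e_{21}(z)^{k_1+1}=0$, $e_{32}(z)^{k_2+1}=0$, Lemma \ref{lem:xyz}, and the boundary data at $v$ \emph{exhaust} the reduction, so that the resulting spanning set has no further relations beyond those absorbed into the fermionic exponent. The $e_{31}$-modes appearing through commutators must be controlled so that no overcounting occurs; this is the technical heart of the argument and is where the hypothesis $l_3=\min(l_1,l_2)$ is essential, since any smaller $l_3$ would impose additional restrictions not captured by the symmetric quadratic form $Q(m,n)$. Step 2, by contrast, is a routine lattice VOA computation once the tensor factor decomposition is fixed.
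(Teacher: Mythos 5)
Your overall skeleton (fermionic upper bound for the abstract module, lower bound for the vertex-operator module, then squeeze via the surjection of Lemma \ref{surj1}) is exactly the paper's strategy, but both of your bounds are asserted rather than proved, and the sketches you give would not deliver them. In Step 1 the proposed reduction is wrong as stated: straightening to monomials in $e_{21}[-i]$ and $e_{32}[-j]$ with two \emph{independent} admissibility conditions $a_i+a_{i+1}\le k_1$, $b_i+b_{i+1}\le k_2$ plus boundary data, followed by dualization, produces the \emph{product} of two $\widehat{\mathfrak{sl}}_2$-type fermionic sums, i.e.\ \eqref{fermform} without the cross term $-\sum_{i,j}\min(i,j)m_in_j$. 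That cross term is not "forced by Lemma \ref{lem:xyz}" (a condition at the cyclic vector); it encodes the bulk interaction between the two colors coming from $[e_{32}[m],e_{21}[n]]=e_{31}[m+n]$ together with the mixed relations \eqref{k3}. To get an upper bound equal to $F^{k_1,k_2}_{l_1,l_2}$ one needs either the Gordon filtration on the dual functional realization (this is what the paper does, after first observing that for $l_3=\min(l_1,l_2)$ the defining relations collapse to $e_{21}(z)^{k_1+1}=0$, $e_{32}(z)^{k_2+1}=0$, $e_{21}[0]^{l_1+1}v=0$, $e_{32}[0]^{l_2+1}v=0$, and then repeating the argument of \cite{AKS} for $k_1\le k_2$), or a genuine two-color quasi-particle analysis in the style of \cite{G}. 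You explicitly flag this as the "technical heart" but leave it unfilled, so the proof is incomplete at its central point.

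Step 2 is also not the "routine computation" you claim. Distinct monomials in $a[-i]$, $b[-j]$, $c[-k]$ applied to the cyclic vector of the tensor product are certainly \emph{not} linearly independent (the relations $e_{21}(z)^{k_1+1}=0$, $e_{32}(z)^{k_2+1}=0$ already kill many of them), and "matching the admissible configurations of Step 1" cannot work since your Step 1 configurations are not the right ones. The paper's actual argument is a degeneration: write $e_{21}(z)=\sum_i a_i(z)$, $e_{32}(z)=\sum_j b_j(z)$, rescale by $\varepsilon$, and let $\varepsilon\to 0$ so that powers of the currents degenerate to the commuting products $\mathbf{a}_i(z)=a_1(z)\cdots a_i(z)$, $\mathbf{b}_j(z)=b_1(z)\cdots b_j(z)$; the subspace generated by the modes of these products has character computable exactly from the Gram matrix $(\mathbf{a}_{i_1},\mathbf{a}_{i_2})=2\min(i_1,i_2)$, $(\mathbf{b}_{j_1},\mathbf{b}_{j_2})=2\min(j_1,j_2)$, $(\mathbf{a}_i,\mathbf{b}_j)=-\min(i,j)$ (the standard technique of \cite{FJMMT}), and semicontinuity gives $(\varphi_{VO})^{k_1,k_2}_{l_1,l_2,l_3}\ge F^{k_1,k_2}_{l_1,l_2}$. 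Without some substitute for this degeneration (or for Georgiev-type linear independence of a concrete quasi-particle basis), your lower bound has no justification. Step 3 is fine and identical to the paper's conclusion.
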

\begin{proof}
Because of the existence of the surjection of $\nh$-modules
\begin{equation}\label{surj}
U^{k_1,k_2}_{l_1,l_2,l_3}\to
(U_{VO})^{k_1,k_2}_{l_1,l_2,l_3},
\end{equation}
it is sufficient to prove $(\ref{fermform})$.

{}For the rest of the proof, we assume $l_3=\min(l_1,l_2)$.
In this case the defining relations of
$U_{l_1,l_2,l_3}^{k_1,k_2}$ reduce to
$$
e_{21}(z)^{k_1+1}=0, \quad
e_{32}(z)^{k_2+1}=0, \quad
e_{21}[0]^{l_1+1}v=0,\quad
e_{32}[0]^{l_2+1}v=0,
$$
where $v$ is a cyclic vector of $U_{l_1,l_2,l_3}^{k_1,k_2}$.
Denote the fermionic formula in the right hand side of \eqref{fermform}
by $F^{k_1,k_2}_{l_1,l_2}(z_1,z_2)$.
By using the Gordon filtration technique and
repeating the proof in \cite{AKS} in the case $k_1<k_2$,
we obtain an upper estimate
\be
F^{k_1,k_2}_{l_1,l_2}(z_1,z_2)
\ge
\varphi^{k_1,k_2}_{l_1,l_2,l_3}(z_1,z_2).
\en
{}From surjection \eqref{surj} we know that
\be
\varphi^{k_1,k_2}_{l_1,l_2,l_3}(z_1,z_2)
\ge (\varphi_{VO})^{k_1,k_2}_{l_1,l_2,l_3}(z_1,z_2).
\en
To finish the proof it remains to show the inequality
\bea
(\varphi_{VO})^{k_1,k_2}_{l_1,l_2,l_3}(z_1,z_2)\ge
F^{k_1,k_2}_{l_1,l_2}(z_1,z_2).
\label{es2}
\ena

The space $(U_{VO})^{k_1,k_2}_{l_1,l_2,l_3}$
is generated by the modes of
$$
e_{21}(z)=\sum_{i=1}^{k_1} a_i(z)
\quad \text{ and }\quad
e_{32}(z)=\sum_{j=1}^{k_2} b_j(z)
$$
from the vector $w_1(l_1,l_2,l_3)$ or $w_2(l_1,l_2,l_3)$
(see Definition \ref{one}).
The vertex operators $a_i(z)$, $b_j(z)$ correspond
to the vectors
$a_i$, $1\le i\le k_1$, and $b_j$, $1\le j\le k_2$, respectively, with
the scalar products given by
$$
(a_{i_1},a_{i_2})=2\delta_{i_1,i_2},
\quad(b_{j_1},b_{j_2})=2\delta_{j_1,j_2}, \quad
(a_i, b_j)=-\delta_{i,j}.
$$
{}For a non-zero complex number $\varepsilon$, set
$$
e^\ve_{21}(z)=\sum_{i=1}^{k_1} \ve^i a_i(z),\quad
e^\ve_{32}(z)=\sum_{j=1}^{k_2} \ve^j b_j(z).
$$
The currents $a_i(z)$ are mutually commutative
and $a_i(z)^2=0$. The same holds for the currents $b_j(z)$.
Hence we have
\begin{gather*}
\lim_{\ve\to 0} \ve^{-i(i+1)/2} (e^\ve_{21}(z))^i=a_1(z)\dots a_i(z),\
\lim_{\ve\to 0} \ve^{-i(i+1)/2} (e^\ve_{32}(z))^i=b_1(z)\dots b_i(z).
\end{gather*}
Consider the subspace
generated from the vector $w_1(l_1,l_2,l_3)$ or $w_2(l_1,l_2,l_3)$ by the
modes of the operators
$$
\mathbf{a}_i(z)=a_1(z)\dots a_i(z)\quad (1\le i\le k_1),
\qquad
\mathbf{b}_j(z)=b_1(z)\dots b_j(z)\quad (1\le j\le k_2),
$$
and let $(\overline{\varphi}_{VO})^{k_1,k_2}_{l_1,l_2,l_3}(z_1,z_2)$ be its
character.
Clearly, we have
\be
(\varphi_{VO})^{k_1,k_2}_{l_1,l_2,l_3}(z_1,z_2)\ge
(\overline{\varphi}_{VO})^{k_1,k_2}_{l_1,l_2,l_3}(z_1,z_2).
\en
The operator $\mathbf{a}_i(z)$ is a vertex operator corresponding to the vector
$\mathbf{a}_i=a_1+\dots +a_i$, and
$\mathbf{b}_j(z)$ is a vertex operator corresponding to the vector
$\mathbf{b}_j=b_1+\dots + b_j$.
The scalar products of these vectors are given by
$$
(\mathbf{a}_{i_1},\mathbf{a}_{i_2})=2\min(i_1, i_2),\ 
(\mathbf{b}_{j_1},\mathbf{b}_{j_2})=2\min(j_1, j_2),\
(\mathbf{a}_i,\mathbf{b}_j)=-\min(i, j).
$$
Using the  standard technique (see for example \cite{FJMMT}),
we obtain
\be
(\overline{\varphi}_{VO})^{k_1,k_2}_{l_1,l_2,l_3}(z_1,z_2)
=F^{k_1,k_2}_{l_1,l_2}(z_1,z_2).
\en
Inequality \eqref{es2} follows.

The proposition is proved.
\end{proof}

\begin{cor}\label{int}
We have the isomorphisms
\begin{gather}
W_0\simeq U^{0,1}_{0,1,0},\ W_{-1}\simeq U^{0,1}_{0,0,0},\notag
\\
W_{0,0}\simeq U^{1,1}_{1,1,1},\
W_{0,-1}\simeq U^{1,1}_{1,0,0},\
W_{-1,0}\simeq U^{1,1}_{0,1,0},\
W_{-1,-1}\simeq U^{1,1}_{0,0,0}.\notag 
\end{gather}
\end{cor}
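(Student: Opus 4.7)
The strategy is to reduce Corollary \ref{int} directly to Proposition \ref{GORDON}. In each of the six cases one checks trivially that $l_3 = \min(l_1,l_2)$: the triples $(0,1,0)$, $(0,0,0)$, $(1,1,1)$, $(1,0,0)$, $(0,1,0)$, $(0,0,0)$ all satisfy this condition. Proposition \ref{GORDON} then provides the isomorphism
$U^{k_1,k_2}_{l_1,l_2,l_3} \simeq (U_{VO})^{k_1,k_2}_{l_1,l_2,l_3}$
in every case, and what remains is to identify the right-hand side with the Fock-type module $W_*$ claimed in the statement.

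For this identification, I would unpack Definition \ref{one}, choosing the appropriate branch. In five of the six cases one has $(l_1,l_2,l_3) \in R^{k_1,k_2}_U$, so the first formula applies; only for $(U_{VO})^{1,1}_{0,0,0}$ does one have $l_1+l_2-l_3 = 0 < 1 = k_1$, forcing the second formula. A direct computation of the five exponents $l_3$, $l_1-l_3$, $k_1-l_1$, $l_1+l_2-l_3-k_1$, $k_2-l_1-l_2+l_3$ (respectively their analogues in the second formula) shows that in each case exactly one exponent equals $1$ and the other four vanish. Hence the ambient tensor product collapses to a single $W_*$, and the cyclic vector $w_1(l_1,l_2,l_3)$ or $w_2(l_1,l_2,l_3)$ collapses to the generator $v_*$ of that $W_*$.

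Finally, since $W_*$ is by construction the cyclic $\nh$-submodule of its ambient Fock space generated by $v_*$, the cyclic $\nh$-submodule $(U_{VO})^{k_1,k_2}_{l_1,l_2,l_3}\subset W_*$ generated by the same $v_*$ coincides with all of $W_*$. Combined with Proposition \ref{GORDON}, this yields all six isomorphisms. There is no genuine obstacle here — the argument is a routine inspection of definitions — the only point requiring attention is correct branch selection in Definition \ref{one} for the exceptional case $(U_{VO})^{1,1}_{0,0,0}$, where the second clause must be invoked.
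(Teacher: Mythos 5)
Your proposal is correct and is essentially the paper's (implicit) argument: the corollary is stated immediately after Proposition \ref{GORDON} precisely because in all six cases $l_3=\min(l_1,l_2)$, so $U^{k_1,k_2}_{l_1,l_2,l_3}\simeq(U_{VO})^{k_1,k_2}_{l_1,l_2,l_3}$, and unwinding Definition \ref{one} (first clause for the five triples in $R^{k_1,k_2}_U$, second clause for $(U_{VO})^{1,1}_{0,0,0}$) shows the ambient tensor product degenerates to a single cyclic module $W_*$ with cyclic vector $v_*$, forcing $(U_{VO})^{k_1,k_2}_{l_1,l_2,l_3}=W_*$. Your branch bookkeeping and the observation that each $W_*$ is by construction cyclic over $\nh$ on $v_*$ are exactly the points needed, so nothing is missing.
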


Next, we prove the surjections in Theorem \ref{mainbos}
and show that all sequences are in fact complexes.

\begin{prop}\label{SURJ1}
Let $(l_1,l_2,l_3)\in R^{k_1,k_2}_U$ and $l_2>0$.
Then there exists a surjective homomorphism of
$\nh$-modules
\be
(U_{VO})^{k_1,k_2}_{l_1,l_2,l_3}/
\langle e_{32}[0]^{l_2}\rangle\rightarrow
(U_{VO})^{k_1,k_2}_{l_1,l_2-1,\min(l_3,l_2-1)}
\en
such that, if $l_1+l_2-1-\min(l_3,l_2-1)\ge k_1$ then
$$
w_1(l_1,l_2,l_3)\mapsto w_1(l_1,l_2-1,\min(l_3,l_2-1)),
$$ and if $l_1+l_2-1-\min(l_3,l_2-1)= k_1-1$ then
$$
w_1(l_1,l_2,l_3)\mapsto w_2(l_1,l_2-1,\min(l_3,l_2-1)).
$$
\end{prop}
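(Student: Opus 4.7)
The plan is to produce $\phi$ as follows. I first construct a surjection $U^{k_1,k_2}_{l_1,l_2,l_3}/\langle e_{32}[0]^{l_2}\rangle \twoheadrightarrow U^{k_1,k_2}_{l_1,l_2-1,l_3'}$, where $l_3'=\min(l_3,l_2-1)$, then compose with the surjection $U^{k_1,k_2}_{l_1,l_2-1,l_3'}\twoheadrightarrow (U_{VO})^{k_1,k_2}_{l_1,l_2-1,l_3'}$ of Lemma~\ref{surj1}, and finally descend the resulting map across the surjection $U^{k_1,k_2}_{l_1,l_2,l_3}/\langle e_{32}[0]^{l_2}\rangle \twoheadrightarrow (U_{VO})^{k_1,k_2}_{l_1,l_2,l_3}/\langle e_{32}[0]^{l_2}\rangle$ of the same lemma.

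For the first surjection, after imposing $e_{32}[0]^{l_2}v=0$ the defining relations \eqref{N1}--\eqref{N3}, \eqref{U2}, \eqref{U4} of the source are unchanged, relation \eqref{U3} at level $l_2-1$ is exactly $e_{32}[0]^{l_2}v=0$, and relation \eqref{U1} of the target at level $l_3'$ coincides with its counterpart in the source when $l_3'=l_3$ (Case~A: $l_3\le l_2-1$). The only delicate check is Case~B ($l_3=l_2$, which by $(l_1,l_2,l_3)\in R^{k_1,k_2}_U$ forces $l_1=k_1$, so $l_3'=l_2-1$), where I must show $e_{31}[1]^{l_2}v\in\langle e_{32}[0]^{l_2}\rangle$. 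With $x=e_{21}[1]$, $y=e_{32}[0]$, $z=e_{31}[1]$ one has $[x,y]=-z$ with $z$ central in the subalgebra they generate, and $xv=0$. A short induction using $[x,y^m]=-m\, z\, y^{m-1}$ proves the Heisenberg identity $x^m y^m v = (-1)^m m!\, z^m v$, which rearranges to
\[
e_{31}[1]^{l_2}v \;=\; \frac{(-1)^{l_2}}{l_2!}\,e_{21}[1]^{l_2}\,e_{32}[0]^{l_2}\,v
\;\in\;\langle e_{32}[0]^{l_2}\rangle,
\]
as required. Tracing the cyclic vector through the composition shows it is mapped to $w_1(l_1,l_2-1,l_3')$ when $l_1+l_2-1-l_3'\ge k_1$ (Cases A1 and B) and to $w_2(l_1,l_2-1,l_3')$ when $l_1+l_2-1-l_3'=k_1-1$ (Case A2), matching the statement.

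The main obstacle is the final descent: one must show that the composite $U^{k_1,k_2}_{l_1,l_2,l_3}/\langle e_{32}[0]^{l_2}\rangle\to (U_{VO})^{k_1,k_2}_{l_1,l_2-1,l_3'}$ factors through $(U_{VO})^{k_1,k_2}_{l_1,l_2,l_3}/\langle e_{32}[0]^{l_2}\rangle$, i.e., that any VO-specific relations on $w_1(l_1,l_2,l_3)$ beyond those of $U^{k_1,k_2}_{l_1,l_2,l_3}$ are sent to zero. I would handle this by constructing $\phi$ directly at the level of the ambient Fock tensor products of Definition~\ref{one}: using Corollary~\ref{int} and the natural quotient $W_0\twoheadrightarrow W_{-1}\cong W_0/\langle e_{32}[0]v_0\rangle$ together with its analogues $W_{0,0}\twoheadrightarrow W_{0,-1}$ and $W_{-1,0}\twoheadrightarrow W_{-1,-1}$, a quotient map of ambient tensor products can be built in each of Cases A1, A2, B that sends $w_1(l_1,l_2,l_3)$ to the designated target cyclic vector and annihilates $e_{32}[0]^{l_2}w_1(l_1,l_2,l_3)$. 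Restricting to the cyclic submodules produces the required surjection $\phi$ on the VO quotient.
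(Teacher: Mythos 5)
Your operative argument---the final paragraph, where $\phi$ is built directly on the ambient tensor products by replacing a single factor via the building-block quotients $W_0\to W_{-1}$, $W_{0,0}\to W_{0,-1}$, $W_{-1,0}\to W_{-1,-1}$ of Corollary~\ref{int} and then observing that $e_{32}[0]^{l_2}$ annihilates the target cyclic vector---is exactly the paper's proof, so the proposal is correct and takes essentially the same route. The preliminary detour through the abstract modules $U^{k_1,k_2}_{l_1,l_2,l_3}$ adds nothing (its ``descent'' step is indeed impassable here, since $U\simeq U_{VO}$ is only established later using this very proposition), and note that your Case~B Heisenberg computation verifies the target relation $e_{31}[1]^{l_2}v=0$ in the quotient, i.e.\ a map in the opposite direction; for the surjection you actually state, no such delicate check is required.
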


\begin{proof}
We consider the case $l_2>l_3$ and $l_1+l_2-1-l_3<k_1$.
The other cases are similar.
Replacing a factor $v_{-1,0}$ in $w_1(l_1,l_2,l_3)$
by the factor $v_{-1,-1}$,
we obtain $w_2(l_1,l_2-1,l_3)$. We have an obvious surjective
homomorphism of $\nh$-modules (see Lemma \ref{int})
$$
W_{-1,0}\to W_{-1,-1},\quad v_{-1,0}\mapsto v_{-1,-1}.
$$
Therefore, we obtain a surjective homomorphism
$$
(U_{VO})^{k_1,k_2}_{l_1,l_2,l_3}\to
(U_{VO})^{k_1,k_2}_{l_1,l_2-1,l_3},\quad
w_1(l_1,l_2,l_3)\mapsto w_2(l_1,l_2-1,l_3).
$$
In addition, the vector $e_{32}[0]^{l_2}w_1(l_1,l_2,l_3)$ maps to zero,
because $$e_{32}[0]^{l_2}w_2(l_1,l_2-1,l_3)=0.$$
The proposition is proved.
\end{proof}

\begin{cor}
Sequence (\ref{am}) is exact in the first and third terms.
\end{cor}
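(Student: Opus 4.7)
The plan is to combine two earlier results in this section. Exactness at the first term of (\ref{am}) is the statement that $\iota_1$ is injective, and this was already established in the embedding proposition preceding Proposition \ref{SURJ1}, where $\iota_1$ is constructed explicitly as an embedding of $\nh$-modules satisfying $\iota_1(T_{1,-1}w_3(l_3,k_2-l_2,l_1))=e_{32}[0]^{l_2}w_1(l_1,l_2,l_3)$. No further work is required for this half of the claim.

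Exactness at the third term is the surjectivity of the right-hand arrow. When $l_2>0$, Proposition \ref{SURJ1} provides a surjection
\[
(U_{VO})^{k_1,k_2}_{l_1,l_2,l_3}/\langle e_{32}[0]^{l_2}\rangle \twoheadrightarrow (U_{VO})^{k_1,k_2}_{l_1,l_2-1,\min(l_3,l_2-1)}.
\]
Precomposing with the canonical projection $(U_{VO})^{k_1,k_2}_{l_1,l_2,l_3}\twoheadrightarrow (U_{VO})^{k_1,k_2}_{l_1,l_2,l_3}/\langle e_{32}[0]^{l_2}\rangle$ produces the third arrow of (\ref{am}) as a surjection. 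In the boundary case $l_2=0$, the constraints $(l_1,0,l_3)\in R^{k_1,k_2}_U$ together with $l_3\le\min(l_1,l_2)=0$ and $k_1\le l_1+l_2-l_3\le k_2$ force $l_3=0$ and $l_1=k_1$, so the third term is zero under the convention stated at the end of the theorem, and surjectivity is automatic.

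Finally, although the corollary only asserts exactness at two of the three positions, we should verify that (\ref{am}) is a complex so the statement is meaningful. By construction $\iota_1$ sends the cyclic vector to $e_{32}[0]^{l_2}w_1(l_1,l_2,l_3)$, hence the whole image of $\iota_1$ is contained in the submodule $\langle e_{32}[0]^{l_2}\rangle$, which is precisely the kernel of the canonical projection used in the preceding paragraph. Therefore the composition of the two arrows in (\ref{am}) vanishes.

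The main \emph{obstacle}, if one can call it that, is purely organizational: Proposition \ref{SURJ1} is phrased as a surjection out of a quotient, so the content of the corollary is recognizing that this quotient is realized by factoring through the image of $\iota_1$. No new representation-theoretic input is required, and the proof should be a short paragraph citing the embedding proposition and Proposition \ref{SURJ1}.
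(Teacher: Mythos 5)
Your argument is correct and is essentially the paper's own: exactness at the first term is the injectivity of $\iota_1$ from the embedding proposition, exactness at the third term follows by composing the canonical projection with the surjection of Proposition \ref{SURJ1} (the $l_2=0$ case being trivial since the third term vanishes by the negative-index convention), and the complex property holds because the image of $\iota_1$ is $\langle e_{32}[0]^{l_2}\rangle$. Nothing is missing.
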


\begin{prop}
Suppose $(l_1,l_2,l_3)\in R^{k_1,k_2}_U$,
$l_1+l_2-l_3<k_2$ and $l_3>0$. Then we
have a surjective homomorphism of $\nh$-modules
\begin{gather}\label{UL3}
(U_{VO})^{k_1,k_2}_{l_1,l_2,l_3}/
\langle e_{31}[1]^{l_3}\rangle
\rightarrow
(U_{VO})^{k_1,k_2}_{l_1,l_2,l_3-1},\\ \nonumber
w_1(l_1,l_2,l_3)\mapsto w_1(l_1,l_2,l_3-1).
\end{gather}
\end{prop}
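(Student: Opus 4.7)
The plan is to follow the template of Proposition~\ref{SURJ1}, producing the surjection by modifying a single pair of tensor factors in the vertex operator realization. Under the hypotheses $(l_1,l_2,l_3)\in R^{k_1,k_2}_U$, $l_1+l_2-l_3<k_2$, and $l_3>0$, one checks that $(l_1,l_2,l_3-1)$ also lies in $R^{k_1,k_2}_U$, so both modules fall under the first case of Definition~\ref{one}. Comparing the ambient tensor products $T_1$ of $(U_{VO})^{k_1,k_2}_{l_1,l_2,l_3}$ and $T_2$ of $(U_{VO})^{k_1,k_2}_{l_1,l_2,l_3-1}$, one sees that passing from $T_1$ to $T_2$ amounts to replacing one pair of factors $W_{0,0}\otimes W_{-1}$ by the pair $W_{0,-1}\otimes W_0$, all other factors being unchanged.

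The key step is to construct a surjective morphism of $\nh$-modules
$$
\phi:\ \nh\cdot(v_{0,0}\otimes v_{-1})\ \longrightarrow\ \nh\cdot(v_{0,-1}\otimes v_0)
$$
(the source a submodule of $W_{0,0}\otimes W_{-1}$, the target a submodule of $W_{0,-1}\otimes W_0$) sending $v_{0,0}\otimes v_{-1}\mapsto v_{0,-1}\otimes v_0$ and annihilating $v_{-1,-1}\otimes v_{-1}=e_{31}[1](v_{0,0}\otimes v_{-1})$. By Definition~\ref{one}, the source and target coincide with $(U_{VO})^{1,2}_{1,1,1}$ and $(U_{VO})^{1,2}_{1,1,0}$, respectively. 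Since $l_3=\min(l_1,l_2)$ at the source, Proposition~\ref{GORDON} identifies it with $U^{1,2}_{1,1,1}$, while Lemma~\ref{surj1} provides a surjection $U^{1,2}_{1,1,0}\to(U_{VO})^{1,2}_{1,1,0}$. Since the defining relations of $U^{1,2}_{1,1,0}$ are obtained from those of $U^{1,2}_{1,1,1}$ by strengthening $e_{31}[1]^2v=0$ to $e_{31}[1]v=0$, there is a canonical surjection $U^{1,2}_{1,1,1}\to U^{1,2}_{1,1,0}$ with kernel generated by $e_{31}[1]v$; composing gives $\phi$.

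Next I will rearrange $T_1$ so that the $W_{0,0}$ and $W_{-1}$ factors to be replaced are adjacent, and tensor $\phi$ with the identity on the remaining factors. This produces a morphism of $\nh$-modules defined on a submodule of $T_1$ that contains $(U_{VO})^{k_1,k_2}_{l_1,l_2,l_3}=\nh\cdot w_1(l_1,l_2,l_3)$; its restriction maps $w_1(l_1,l_2,l_3)\mapsto w_1(l_1,l_2,l_3-1)$ and is therefore surjective onto $(U_{VO})^{k_1,k_2}_{l_1,l_2,l_3-1}$. A direct computation via the coproduct and Lemma~\ref{comp} yields
$$
e_{31}[1]^{l_3}w_1(l_1,l_2,l_3)=l_3!\cdot v_{-1,-1}^{\otimes l_3}\otimes(\text{other factors}),
$$
and the $v_{-1,-1}\otimes v_{-1}$ contribution at the grouped pair lies in $\ker\phi$, so this image vanishes. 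Hence the morphism descends to the required surjection from the quotient $(U_{VO})^{k_1,k_2}_{l_1,l_2,l_3}/\langle e_{31}[1]^{l_3}\rangle$.

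The main subtlety will be in the construction of $\phi$: it is essential that Proposition~\ref{GORDON} provides an \emph{isomorphism} (not merely a surjection) at the source, so that the canonical surjection of $U$-modules can be transferred to a surjection of the vertex operator modules; for the target only the surjection of Lemma~\ref{surj1} is needed.
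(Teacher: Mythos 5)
Your proposal is correct and follows essentially the same route as the paper: replace one pair of factors $W_{0,0}\otimes W_{-1}$ by $W_{0,-1}\otimes W_0$, identify the relevant two-factor submodules with $(U_{VO})^{1,2}_{1,1,1}$ and $(U_{VO})^{1,2}_{1,1,0}$, and build the map as the composition $(U_{VO})^{1,2}_{1,1,1}\simeq U^{1,2}_{1,1,1}\to U^{1,2}_{1,1,0}\to (U_{VO})^{1,2}_{1,1,0}$ using Proposition \ref{GORDON} and Lemma \ref{surj1}. Your explicit computation of $e_{31}[1]^{l_3}w_1(l_1,l_2,l_3)$ via Lemma \ref{comp} is just a slightly more detailed way of stating the paper's closing observation that $e_{31}[1]^{l_3}w_1(l_1,l_2,l_3-1)=0$, so the quotient by $\langle e_{31}[1]^{l_3}\rangle$ maps onto the target.
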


\begin{proof}
Replacing a factor $v_{0,0}\otimes v_{-1}$ in
$w_1(l_1,l_2,l_3)$
by the factor  $v_{0,-1}\T v_0$, we obtain
$w_1(l_1,l_2,l_3-1)$.
By Definition \ref{one},
\begin{equation*}
U(\nh)\cdot (v_{0,0}\T v_{-1})=(U_{VO})^{1,2}_{1,1,1},\quad
U(\nh)\cdot (v_{0,-1}\T v_0)=(U_{VO})^{1,2}_{1,1,0}.
\end{equation*}
Therefore to construct (\ref{UL3}) it is sufficient to construct
a homomorphism
$$
(U_{VO})^{1,2}_{1,1,1}\to (U_{VO})^{1,2}_{1,1,0},\quad
v_{0,0}\T v_{-1}\mapsto v_{0,-1}\T v_0.
$$
Using Proposition \ref{GORDON}, we have
$$(U_{VO})^{1,2}_{1,1,1}\simeq U^{1,2}_{1,1,1}\to U^{1,2}_{1,1,0}\to
(U_{VO})^{1,2}_{1,1,0}.$$
The proposition follows from $$e_{31}[1]^{l_3}w_1(l_1,l_2,l_3-1)=0.$$
\end{proof}

\begin{prop}
Suppose $(l_1,l_2,l_3)\in \bR^{k_1,k_2}_U$,
$l_1+l_2-l_3<k_1$ and $l_3>0$.
Then we have a surjection of $\nh$-modules
\begin{gather*}
(U_{VO})^{k_1,k_2}_{l_1,l_2,l_3}/
\langle e_{31}[1]^{l_3}\rangle
\rightarrow
(U_{VO})^{k_1,k_2}_{l_1,l_2,l_3-1},\\
w_2(l_1,l_2,l_3)\mapsto w_2(l_1,l_2,l_3-1).
\end{gather*}
\end{prop}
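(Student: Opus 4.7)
The plan is to follow the strategy of the preceding proposition, but using the $w_2$-presentation on both sides, which is what the hypothesis $l_1+l_2-l_3<k_1$ dictates. First, I would compare $w_2(l_1,l_2,l_3)$ and $w_2(l_1,l_2,l_3-1)$ directly from Definition \ref{one}: the two cyclic vectors agree in all tensor slots except that one factor $v_{0,0}\otimes v_{-1,-1}$ appearing in $w_2(l_1,l_2,l_3)$ is replaced by $v_{0,-1}\otimes v_{-1,0}$ in $w_2(l_1,l_2,l_3-1)$ (with the remaining multiplicities rearranged accordingly). Since $\nh$ acts diagonally on the ambient tensor product of Fock spaces via the Frenkel--Kac vertex operators, it suffices to produce an $\nh$-homomorphism on the two-factor block sending $v_{0,0}\otimes v_{-1,-1}\mapsto v_{0,-1}\otimes v_{-1,0}$; tensoring with the identity on the untouched blocks will then yield the desired surjection of cyclic submodules mapping $w_2(l_1,l_2,l_3)$ to $w_2(l_1,l_2,l_3-1)$.

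Second, I would identify these small modules via Definition \ref{one}:
\[
U(\nh)\cdot(v_{0,0}\otimes v_{-1,-1})=(U_{VO})^{2,2}_{1,1,1},\qquad
U(\nh)\cdot(v_{0,-1}\otimes v_{-1,0})=(U_{VO})^{2,2}_{1,1,0},
\]
so the reduced task is to construct an $\nh$-surjection $(U_{VO})^{2,2}_{1,1,1}\to(U_{VO})^{2,2}_{1,1,0}$ carrying cyclic vector to cyclic vector. Since $l_3=1=\min(l_1,l_2)$ holds for the triple $(1,1,1)$, Proposition \ref{GORDON} gives an isomorphism $(U_{VO})^{2,2}_{1,1,1}\simeq U^{2,2}_{1,1,1}$. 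The defining relations of $U^{2,2}_{1,1,0}$ are those of $U^{2,2}_{1,1,1}$ together with the extra relation $e_{31}[1]v=0$, producing a canonical surjection $U^{2,2}_{1,1,1}\to U^{2,2}_{1,1,0}$. Composing with the surjection $U^{2,2}_{1,1,0}\to(U_{VO})^{2,2}_{1,1,0}$ furnished by Lemma \ref{surj1} delivers the small $\nh$-homomorphism, and hence the full surjection $(U_{VO})^{k_1,k_2}_{l_1,l_2,l_3}\to(U_{VO})^{k_1,k_2}_{l_1,l_2,l_3-1}$.

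Finally, one must check that the surjection factors through the quotient by $\langle e_{31}[1]^{l_3}\rangle$. Under the constructed map the cyclic vectors correspond, so $e_{31}[1]^{l_3}w_2(l_1,l_2,l_3)$ is sent to $e_{31}[1]^{l_3}w_2(l_1,l_2,l_3-1)$, which vanishes because relation \eqref{U1} for the target parameter $l_3-1$ reads exactly $e_{31}[1]^{l_3}v=0$ and descends from $U^{k_1,k_2}_{l_1,l_2,l_3-1}$ to $(U_{VO})^{k_1,k_2}_{l_1,l_2,l_3-1}$ via Lemma \ref{surj1}. I do not expect any genuinely new obstacle beyond the preceding proposition; the main point to keep straight is the combinatorics of the $w_2$-presentation (as opposed to the $w_1$-presentation in the previous case) and, implicitly, the assumption that $l_1+l_2-(l_3-1)<k_1$ so that $w_2(l_1,l_2,l_3-1)$ is indeed the relevant cyclic vector on the target side.
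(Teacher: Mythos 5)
Your argument is essentially identical to the paper's proof: replace one factor $v_{0,0}\otimes v_{-1,-1}$ of $w_2(l_1,l_2,l_3)$ by $v_{0,-1}\otimes v_{-1,0}$, identify the two-factor blocks with $(U_{VO})^{2,2}_{1,1,1}$ and $(U_{VO})^{2,2}_{1,1,0}$, build the map as the composition $(U_{VO})^{2,2}_{1,1,1}\simeq U^{2,2}_{1,1,1}\to U^{2,2}_{1,1,0}\to (U_{VO})^{2,2}_{1,1,0}$, and conclude via $e_{31}[1]^{l_3}w_2(l_1,l_2,l_3-1)=0$. Your closing worry about needing $l_1+l_2-(l_3-1)<k_1$ is unnecessary: in the boundary case $l_1+l_2-l_3=k_1-1$ the target is presented by $w_1(l_1,l_2,l_3-1)$, but that vector coincides with the formula for $w_2(l_1,l_2,l_3-1)$, so the same argument goes through.
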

\begin{proof}
Replacing a factor $v_{0,0}\T v_{-1,-1}$ in $w_2(l_1,l_2,l_3)$ by
the factor $v_{0,-1}\T v_{-1,0}$, we obtain $w_2(l_1,l_2,l_3-1)$.
We have
$$U(\nh)\cdot v_{0,0}\T v_{-1,-1}= (U_{VO})^{2,2}_{1,1,1},\quad
U(\nh)\cdot v_{0,-1}\T v_{-1,0}= (U_{VO})^{2,2}_{1,1,0}.$$
We have a surjective homomorphism
$(U_{VO})^{2,2}_{1,1,1}\to (U_{VO})^{2,2}_{1,1,0}$, which is the composition
$$(U_{VO})^{2,2}_{1,1,1}\simeq U^{2,2}_{1,1,1}\to U^{2,2}_{1,1,0} \to (U_{VO})^{2,2}_{1,1,0}.$$
Therefore we obtain a surjection
$$
(U_{VO})^{k_1,k_2}_{l_1,l_2,l_3}\to (U_{VO})^{k_1,k_2}_{l_1,l_2,l_3-1}.
$$
The proposition follows from $$e_{31}[1]^{l_3}w(l_1,l_2,l_3-1)=0.$$
\end{proof}

\begin{cor}
Sequence (\ref{cm}) is exact in the first and third terms.
\end{cor}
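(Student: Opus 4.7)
My plan is to assemble the corollary from the three results immediately preceding it: the embedding proposition providing $\iota_3$, and the two surjection propositions providing the quotient maps $\pi\colon (U_{VO})^{k_1,k_2}_{l_1,l_2,l_3}/\langle e_{31}[1]^{l_3}\rangle \to (U_{VO})^{k_1,k_2}_{l_1,l_2,l_3-1}$. Three conditions must be checked: surjectivity of $\pi$, the vanishing $\pi\circ\iota_3=0$ making the sequence a complex, and injectivity of $\iota_3$.

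For the surjectivity, I would split the hypothesis $(l_1,l_2,l_3)\in\bR^{k_1,k_2}_U$ with $l_1+l_2-l_3\ne k_2$ or $l_3=0$ into three subcases: the boundary case $l_3=0$ (where the target is the zero module by convention and surjectivity is trivial), the case $l_1+l_2-l_3<k_1$ (covered by the second of the two surjection propositions), and the case $k_1\le l_1+l_2-l_3<k_2$, which is exactly $R^{k_1,k_2}_U$ (covered by the first surjection proposition). For the complex property, the image of $\iota_3$ is by construction contained in the cyclic submodule $\langle e_{31}[1]^{l_3}\rangle$, and this is precisely the kernel through which $\pi$ is defined to factor in both surjection propositions, so $\pi\circ\iota_3=0$ is automatic.

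For the injectivity of $\iota_3$, I would invoke the embedding proposition proved earlier in the section. The key computation there, via Lemma \ref{comp}, is that $e_{31}[1]$ annihilates every vacuum vector $v_{0,-1}$, $v_{-1,0}$, $v_{-1,-1}$, $v_0$, $v_{-1}$ appearing in $w_i(l_1,l_2,l_3)$ ($i=1,2$) and acts on each $v_{0,0}$ factor by sending it to $v_{-1,-1}$ up to a nonzero scalar. Hence $e_{31}[1]^{l_3}w_i(l_1,l_2,l_3)$ equals, up to a permutation of tensor factors and a nonzero scalar, the cyclic vector $w_3$ of the source $V_{VO}$ module on the left of \eqref{cm}; since a permutation of factors induces an $\nh$-module isomorphism of the corresponding Fock tensor products, $\iota_3$ identifies the source with an honest cyclic submodule of $(U_{VO})^{k_1,k_2}_{l_1,l_2,l_3}$ and is therefore injective. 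The only genuine obstacle is the bookkeeping of tensor factor multiplicities across the definitions of $w_1,w_2,w_3$, but the parameter ranges defining $R^{k_1,k_2}_U$ and $\bR^{k_1,k_2}_U\setminus R^{k_1,k_2}_U$ have been chosen precisely so that this combinatorial matching works in each subcase.
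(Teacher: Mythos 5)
Your proposal is correct and matches the paper's intended argument: the corollary there is an immediate assembly of the embedding proposition (injectivity of $\iota_3$, proved exactly by the Lemma \ref{comp} computation you sketch, identifying $e_{31}[1]^{l_3}w_i(l_1,l_2,l_3)$ with the cyclic vector $w_3$ of the source) together with the two surjection propositions for the quotient by $\langle e_{31}[1]^{l_3}\rangle$, with the same case split on $l_3=0$, $l_1+l_2-l_3<k_1$, and $k_1\le l_1+l_2-l_3<k_2$. The complex property $\pi\circ\iota_3=0$ is, as you say, automatic because the image of $\iota_3$ is the cyclic submodule $\langle e_{31}[1]^{l_3}\rangle$ through which the surjection factors.
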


\begin{prop}
Let $(l_1,l_2,l_3)\in R^{k_1,k_2}_V$ and $l_2>0$. Then
there exists a surjective homomorphism of $\nh$-modules
\begin{gather*}
(V_{VO})^{k_1,k_2}_{l_1,l_2,l_3}/
\langle e_{32}[0]^{l_2}\rangle
\rightarrow
(V_{VO})^{k_1,k_2}_{l_1,l_2-1,\min(l_3, l_1+l_2-1)},\\
w_3(l_1,l_2,l_3)\mapsto w_3(l_1,l_2-1,\min(l_3,l_1+l_2-1)).
\end{gather*}
\end{prop}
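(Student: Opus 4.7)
The plan is to follow the template of the three preceding surjection propositions: realize $w_3(l_1,l_2-1,l_3')$, with $l_3'=\min(l_3,l_1+l_2-1)$, as the image of $w_3(l_1,l_2,l_3)$ under a map that modifies a single tensor factor, and then check that $e_{32}[0]^{l_2}w_3(l_1,l_2,l_3)$ lies in the kernel of that map.

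The first step is a factor-by-factor comparison of the two cyclic vectors, which bifurcates according to $l_3'$. If $l_3<l_1+l_2$, then $l_3'=l_3$; only the $v_0$ exponent decreases by one and the $v_{-1}$ exponent increases by one, so the net effect is a single substitution $v_0\mapsto v_{-1}$. If $l_3=l_1+l_2$, then $l_3'=l_3-1$; the $v_0$ factor is absent in both expressions, while one $v_{-1,0}$ is replaced by one $v_{-1,-1}$. The assumption $l_2\geq1$ together with $(l_1,l_2,l_3)\in R^{k_1,k_2}_V$ guarantees that in each case the factor to be modified is actually present in $w_3(l_1,l_2,l_3)$ and that the target triple $(l_1,l_2-1,l_3')$ lies in $P^{k_1,k_2}_V$, so the target VO module is well-defined.

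In the first case I would invoke the surjection $W_0\twoheadrightarrow W_{-1}$ sending $v_0\mapsto v_{-1}$. Under the identifications $W_0\simeq U^{0,1}_{0,1,0}$ and $W_{-1}\simeq U^{0,1}_{0,0,0}$ of Corollary \ref{int}, this is the tautological quotient map that strengthens the defining relation $e_{32}[0]^2v=0$ into $e_{32}[0]v=0$. In the second case I would use the analogous surjection $W_{-1,0}\twoheadrightarrow W_{-1,-1}$, identified with $U^{1,1}_{0,1,0}\twoheadrightarrow U^{1,1}_{0,0,0}$. Tensoring either of these with the identity on the unchanged factors, and restricting to the cyclic $\nh$-submodule generated by $w_3(l_1,l_2,l_3)$, yields the desired surjection $(V_{VO})^{k_1,k_2}_{l_1,l_2,l_3}\twoheadrightarrow(V_{VO})^{k_1,k_2}_{l_1,l_2-1,l_3'}$ sending $w_3(l_1,l_2,l_3)$ to $w_3(l_1,l_2-1,l_3')$.

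Finally I would verify that $\langle e_{32}[0]^{l_2}\rangle$ lies in the kernel, which reduces to the single identity $e_{32}[0]^{l_2}w_3(l_1,l_2-1,l_3')=0$. This is immediate from the defining relation $e_{32}[0]^{(l_2-1)+1}v=0$ in $V^{k_1,k_2}_{l_1,l_2-1,l_3'}$ together with the surjection of Lemma \ref{surj2}. I do not expect substantive obstacles; the only care needed is to distinguish the two cases of $l_3'$ and to track which single tensor factor is modified in each.
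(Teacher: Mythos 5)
Your proposal is correct and follows essentially the same route as the paper: the paper's proof simply says it is analogous to Proposition \ref{SURJ1}, i.e.\ a single tensor-factor replacement ($v_0\mapsto v_{-1}$ when $l_3\le l_1+l_2-1$, $v_{-1,0}\mapsto v_{-1,-1}$ when $l_3=l_1+l_2$) using the obvious surjections among the building blocks from Corollary \ref{int}, followed by the observation that $e_{32}[0]^{l_2}$ kills the new cyclic vector. Your case analysis and the kernel check via Lemma \ref{surj2} are exactly this argument, spelled out.
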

\begin{proof}
The proof is similar to the proof of Proposition \ref{SURJ1}.
\end{proof}

\begin{cor}
Sequence (\ref{bm}) is exact in the first and third terms.
\end{cor}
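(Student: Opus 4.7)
The corollary has two parts: exactness at the first term (injectivity of $\iota_2$) and exactness at the third term (surjectivity of the map $(V_{VO})^{k_1,k_2}_{l_1,l_2,l_3}\to (V_{VO})^{k_1,k_2}_{l_1,l_2-1,\min(l_3,l_1+l_2-1)}$). The third-term exactness is immediate from the preceding proposition, which supplies a surjective homomorphism from $(V_{VO})^{k_1,k_2}_{l_1,l_2,l_3}/\langle e_{32}[0]^{l_2}\rangle$ to the quotient module, so composing with the projection gives what is needed. Thus the entire task reduces to establishing injectivity of $\iota_2$.

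My plan is to exploit the vertex operator realization, in the same spirit as the analogous corollaries for sequences \eqref{am} and \eqref{cm}. Using Lemma \ref{comp}, I would compute $e_{32}[0]^{l_2}w_3(l_1,l_2,l_3)$ explicitly: among the tensor factors of $w_3(l_1,l_2,l_3)$, only $v_{-1,0}$ (becoming $v_{0,-2}$) and $v_0$ (becoming $v_{-2}$) survive a single application of $e_{32}[0]$, while $v_{0,-1}$, $v_{-1,-1}$ and $v_{-1}$ are annihilated, and a second application kills $v_{0,-2}$ and $v_{-2}$ as well. Since the total number of surviving factors is $(l_3-l_1)+(l_1+l_2-l_3)=l_2$, the Leibniz expansion forces $e_{32}[0]^{l_2}w_3(l_1,l_2,l_3)$ to equal, up to a non-zero scalar,
\[
v_{0,-1}^{\otimes l_1}\otimes v_{0,-2}^{\otimes (l_3-l_1)}\otimes v_{-1,-1}^{\otimes (k_1-l_3)}\otimes v_{-2}^{\otimes (l_1+l_2-l_3)}\otimes v_{-1}^{\otimes (k_2-k_1-l_1-l_2+l_3)}.
\]
Using the identifications $W_{m,n}[0,-1]\simeq W_{m,n-1}$ and $W_n[0,-1]\simeq W_{n-1}$ induced by the shift automorphism $T_{0,-1}$, this vector is precisely the cyclic vector of the Fock realization of $(U_{VO})^{k_1,k_2}_{l_3,k_2-l_2,l_1}[0,-1]$ sitting inside the same ambient Fock tensor product as $(V_{VO})^{k_1,k_2}_{l_1,l_2,l_3}$. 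Consequently, the submodule $\langle e_{32}[0]^{l_2}\rangle\subset (V_{VO})^{k_1,k_2}_{l_1,l_2,l_3}$ coincides, as a submodule of the ambient Fock module, with $(U_{VO})^{k_1,k_2}_{l_3,k_2-l_2,l_1}[0,-1]$, so $\iota_2$ is an isomorphism onto its image, hence injective.

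The main (minor) obstacle is a case split on the parameter regime, since the Fock realization of $(U_{VO})^{k_1,k_2}_{l_3,k_2-l_2,l_1}$ uses either $w_1$ (when $(l_3,k_2-l_2,l_1)\in R^{k_1,k_2}_U$) or $w_2$ (when $l_3+(k_2-l_2)-l_1<k_1$); in either case the resulting tensor product of Fock factors, after applying the shift $[0,-1]$, matches the vector computed above by direct inspection of Definition \ref{one}. This is routine bookkeeping, directly parallel to the verifications already carried out for \eqref{am} and \eqref{cm}.
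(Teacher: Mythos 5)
Your proposal is correct and takes essentially the same route as the paper: exactness at the third term is read off from the preceding surjection proposition, and injectivity of $\iota_2$ is established—just as the paper does explicitly for $\iota_1$ and then invokes "similarly"—by computing $e_{32}[0]^{l_2}w_3(l_1,l_2,l_3)$ with Lemma \ref{comp} and identifying the resulting pure tensor with the $T_{0,-1}$-shifted cyclic vector of $(U_{VO})^{k_1,k_2}_{l_3,k_2-l_2,l_1}[0,-1]$ inside the same ambient Fock tensor product. Only one small remark: your case split is vacuous, since $(l_1,l_2,l_3)\in R_V^{k_1,k_2}$ gives $l_3+(k_2-l_2)-l_1=k_2-(l_1+l_2-l_3)\ge k_1$, so $(l_3,k_2-l_2,l_1)$ always lies in $R_U^{k_1,k_2}$ and only the $w_1$-realization occurs (the $w_2$-branch would not match the computed vector, but it never arises).
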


\begin{prop}
Let $(l_1,l_2,l_3)\in R^{k_1,k_2}_V$ and $l_1>0$. Then
there exists a surjective homomorphism of $\nh$-modules
\begin{gather*}
(V_{VO})^{k_1,k_2}_{l_1,l_2,l_3}/
\langle e_{21}[0]^{l_1}\rangle
\rightarrow
(V_{VO})^{k_1,k_2}_{l_1-1,l_2,\min(l_3, l_1+l_2-1)},\\
w_3(l_1,l_2,l_3)\mapsto w_3(l_1-1,l_2,\min(l_3,l_1+l_2-1)).
\end{gather*}
\end{prop}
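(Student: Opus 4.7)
The plan is to mirror the proof of Proposition \ref{SURJ1} and its analogues, reducing to a small-rank local modification of the tensor product.

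First I would compare the two cyclic vectors in the definition of $(V_{VO})^{k_1,k_2}_{l_1,l_2,l_3}$ and $(V_{VO})^{k_1,k_2}_{l_1-1,l_2,\min(l_3,l_1+l_2-1)}$, splitting into two subcases according to whether $l_3<l_1+l_2$ or $l_3=l_1+l_2$. In the first subcase $\min(l_3,l_1+l_2-1)=l_3$, and the difference amounts to replacing one occurrence of the factor $v_{0,-1}\otimes v_0\in W_{0,-1}\otimes W_0$ by $v_{-1,0}\otimes v_{-1}\in W_{-1,0}\otimes W_{-1}$; the tensor-factor counts work out correctly because decreasing $l_1$ by one simultaneously shifts one $W_{0,-1}$-factor to $W_{-1,0}$ and one $W_0$-factor to $W_{-1}$. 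In the second subcase the $W_0$-factor is absent (since $l_1+l_2-l_3=0$), $\min(l_3,l_1+l_2-1)=l_3-1$, and one only replaces $v_{0,-1}$ by $v_{-1,-1}$.

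Second, I would construct the local surjective homomorphism of $\nh$-modules in each subcase. In the first subcase, $U(\nh)\cdot(v_{0,-1}\otimes v_0)$ is identified with $(V_{VO})^{1,2}_{1,1,1}$, which by Lemma \ref{surj2} is a quotient of $V^{1,2}_{1,1,1}$; one then checks, using Lemma \ref{comp} together with the level-one identities $a(z)^2=0$ and $b(z)^3=0$ on the product of two Frenkel--Kac modules, that $v_{-1,0}\otimes v_{-1}$ satisfies all defining relations of $V^{1,2}_{1,1,1}$, producing a surjection $(V_{VO})^{1,2}_{1,1,1}\to U(\nh)\cdot(v_{-1,0}\otimes v_{-1})$ sending cyclic vector to cyclic vector. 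In the second subcase, Corollary \ref{int} gives $W_{0,-1}\simeq U^{1,1}_{1,0,0}$ and $W_{-1,-1}\simeq U^{1,1}_{0,0,0}$; since the defining relations of the latter are strictly stronger than those of the former, an evident surjection sending $v_{0,-1}$ to $v_{-1,-1}$ exists.

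Third, tensoring the local map with the identity on all the remaining factors yields a surjective homomorphism
\[
(V_{VO})^{k_1,k_2}_{l_1,l_2,l_3}\longrightarrow(V_{VO})^{k_1,k_2}_{l_1-1,l_2,\min(l_3,l_1+l_2-1)}
\]
that sends $w_3(l_1,l_2,l_3)$ to $w_3(l_1-1,l_2,\min(l_3,l_1+l_2-1))$. Finally, since $e_{21}[0]^{l_1}$ annihilates the new cyclic vector (this being one of its defining relations), the submodule $\langle e_{21}[0]^{l_1}\rangle$ lies in the kernel, and the map factors through the quotient, as required.

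The main obstacle is the verification in the first subcase that $v_{-1,0}\otimes v_{-1}$ satisfies the defining relations $e_{21}[n]v=e_{32}[n]v=e_{31}[n]v=0$ ($n>0$), $e_{21}[0]^2v=0$, $e_{32}[0]^2v=0$, $e_{21}[0]^\alpha e_{31}[0]^{2-\alpha}v=0$ ($0\le\alpha\le2$), together with the integrability currents identities $e_{21}(z)^2=0$ and $e_{32}(z)^3=0$. Each of these follows from a direct computation with Lemma \ref{comp} and standard lattice-VOA properties; once that verification is in place, everything else is a routine parallel to the preceding propositions in this section.
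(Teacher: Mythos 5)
Your reduction to a local two-factor replacement, the tensor-factor bookkeeping in both subcases, the easy subcase $l_3=l_1+l_2$ (where only $v_{0,-1}\mapsto v_{-1,-1}$ is needed and the surjection $W_{0,-1}\to W_{-1,-1}$ is immediate), and the final observation that $e_{21}[0]^{l_1}$ kills the new cyclic vector are all in line with the paper. The gap is in the main subcase, at the construction of the local surjection $U(\nh)\cdot(v_{0,-1}\T v_0)\to U(\nh)\cdot(v_{-1,0}\T v_{-1})$. Checking that $v_{-1,0}\T v_{-1}$ satisfies the defining relations of $V^{1,2}_{1,1,1}$ only yields a surjection from the abstractly presented module $V^{1,2}_{1,1,1}$ onto the target; it does not yield a map from the quotient $(V_{VO})^{1,2}_{1,1,1}=U(\nh)\cdot(v_{0,-1}\T v_0)$, which is what you actually need. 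A priori the vertex-operator realization could satisfy extra relations beyond the defining ones, and every such relation would have to hold on $v_{-1,0}\T v_{-1}$ before the map descends; the implication ``quotient of $V^{1,2}_{1,1,1}$, plus the target vector satisfies the relations of $V^{1,2}_{1,1,1}$, hence a surjection from the quotient'' runs in the wrong direction of quotienting.

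Supplying this missing point is precisely the nontrivial content of the paper's proof: it first establishes the isomorphism $(U_{VO})^{1,2}_{1,1,0}\simeq U^{1,2}_{1,1,0}$ (this module is the same cyclic submodule of $W_{0,-1}\T W_0$ as your $(V_{VO})^{1,2}_{1,1,1}$), and only then defines the local map as the composition $(U_{VO})^{1,2}_{1,1,0}\simeq U^{1,2}_{1,1,0}\to U^{1,2}_{0,1,0}\to(U_{VO})^{1,2}_{0,1,0}$. Note that this isomorphism is not available from Proposition \ref{GORDON}, because here $l_3=0\neq\min(l_1,l_2)$, and invoking Theorem \ref{thm:equalities} would be circular, since that theorem is proved later using the exact sequences of Theorem \ref{mainbos} to which the present proposition contributes. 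The paper instead squeezes characters: the upper estimate \eqref{Ra} gives $\varphi^{1,2}_{1,1,0}\le\varphi^{1,2}_{1,0,0}+z_2\,\psi^{1,2}_{0,1,1}(q^{-1}z_1,qz_2)$ with $\psi^{1,2}_{0,1,1}=\varphi^{1,2}_{0,1,0}$ by definition, Proposition \ref{SURJ1} gives the matching lower estimate for the VO characters, and Proposition \ref{GORDON} identifies the characters at the two adjacent parameter points $(1,0,0)$ and $(0,1,0)$; comparing forces $(\varphi_{VO})^{1,2}_{1,1,0}=\varphi^{1,2}_{1,1,0}$. Without this step (or some substitute argument that the VO module at $(1,1,0)$ has no extra relations), your construction of the local surjection, and hence of the global one, does not go through.
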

\begin{proof}
The proof is done similarly to the proof of other cases with the help of
the surjective homomorphism
\begin{equation}\label{last}
(U_{VO})^{1,2}_{1,1,0}\to (U_{VO})^{1,2}_{0,1,0},\
v_{0,-1}\T v_0\mapsto v_{-1,0}\T v_{-1},
\end{equation}
which we construct below.

{}First, we show that
\begin{equation}\label{eq}
(U_{VO})^{1,2}_{1,1,0}\simeq U^{1,2}_{1,1,0}.
\end{equation}
By \eqref{Ra}, we have the inequality
\begin{equation*}
\varphi^{1,2}_{1,1,0}(z_1,z_2)
\le \varphi^{1,2}_{1,0,0}(z_1,z_2)+z_2\psi^{1,2}_{0,1,1}(q^{-1}z_1,qz_2).
\end{equation*}
By the definition we have an isomorphism
$$
V^{1,2}_{0,1,1}\simeq U^{1,2}_{0,1,0},
$$
Therefore
\begin{equation*}
\varphi^{1,2}_{1,1,0}(z_1,z_2)
\le \varphi^{1,2}_{1,0,0}(z_1,z_2)
+z_2\varphi^{1,2}_{0,1,0}(q^{-1}z_1,qz_2).
\end{equation*}
By Proposition $\ref{SURJ1}$
and $(V_{VO})^{1,2}_{0,1,1}\simeq (U_{VO})^{1,2}_{0,1,0}\subset W_{-1,0}\otimes W_{-1}$,
we have the inequality
\be
(\varphi_{VO})^{1,2}_{1,1,0}(z_1,z_2)\ge
(\varphi_{VO})^{1,2}_{1,0,0}(z_1,z_2)
+z_2(\varphi_{VO})^{1,2}_{0,1,0}(q^{-1}z_1,qz_2).
\en
By Proposition $\ref{GORDON}$, we obtain the following diagram:
\be
\begin{matrix}
\varphi^{1,2}_{1,1,0}(z_1,z_2)&\leq&z_2\,
\varphi^{1,2}_{0,1,0}(q^{-1}z_1,qz_2)&+&
\varphi^{1,2}_{1,0,0}(z_1,z_2)\\
{\buildrel\phantom{c}\over\vee}\raise1pt\hbox{$\scriptstyle|$}
&&\scriptstyle||&&\scriptstyle||\\[2pt]
(\varphi_{VO})^{1,2}_{1,1,0}(z_1,z_2)&\geq&
z_2\,(\varphi_{VO})^{1,2}_{0,1,0}(q^{-1}z_1,qz_2)&+&
(\varphi_{VO})^{1,2}_{1,0,0}(z_1,z_2)
\end{matrix}
\en
{}From here we obtain isomorphism (\ref{eq}).

Map (\ref{last}) is a composition of
the following mappings:
$$
(U_{VO})^{1,2}_{1,1,0}\simeq U^{1,2}_{1,1,0}\to U^{1,2}_{0,1,0}
\to (U_{VO})^{1,2}_{0,1,0}.
$$
\end{proof}

\begin{cor}
Sequence (\ref{dm}) is exact in the first and third terms.
\end{cor}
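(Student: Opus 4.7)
The plan is to assemble the corollary by combining two results already in hand in this section. What the corollary asserts is exactness of
\[
0\to (U_{VO})^{k_1,k_2}_{k_1-l_1,l_1+l_2,l_3-l_1}[-1,0]
\xrightarrow{\iota_4}
(V_{VO})^{k_1,k_2}_{l_1,l_2,l_3}
\xrightarrow{\pi}
(V_{VO})^{k_1,k_2}_{l_1-1,l_2,\min(l_3,l_1+l_2-1)}\to 0
\]
at the first position (injectivity of $\iota_4$) and at the third position (surjectivity of $\pi$); the middle-term exactness is deliberately not claimed, which is consistent with inequality \eqref{d} rather than an equality.

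First I would handle injectivity of $\iota_4$. This is nothing to reprove: the first proposition of the section, which constructs the four maps $\iota_1,\iota_2,\iota_3,\iota_4$, already states that $\iota_4\colon (U_{VO})^{k_1,k_2}_{k_1-l_1,l_1+l_2,l_3-l_1}[-1,0]\hookrightarrow(V_{VO})^{k_1,k_2}_{l_1,l_2,l_3}$ is an embedding of $\nh$-modules carrying the shifted cyclic vector $T_{-1,0}(w_1(k_1-l_1,l_1+l_2,l_3-l_1))$ to $e_{21}[0]^{l_1}w_3(l_1,l_2,l_3)$. That proposition is proved by the same direct Fock-space calculation as used for $\iota_1$: one applies Lemma \ref{comp} to compute $e_{21}[0]^{l_1}$ acting on the tensor product of vacuum vectors $w_3(l_1,l_2,l_3)$ and sees that the result is precisely the cyclic vector of the target, realized inside the same ambient tensor product of Fock modules.

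Next I would deduce surjectivity of $\pi$ from the proposition immediately preceding the corollary, which constructs a surjection
\[
(V_{VO})^{k_1,k_2}_{l_1,l_2,l_3}\big/\langle e_{21}[0]^{l_1}\rangle
\twoheadrightarrow
(V_{VO})^{k_1,k_2}_{l_1-1,l_2,\min(l_3,l_1+l_2-1)}
\]
sending $w_3(l_1,l_2,l_3)$ to $w_3(l_1-1,l_2,\min(l_3,l_1+l_2-1))$. Composing with the canonical projection $(V_{VO})^{k_1,k_2}_{l_1,l_2,l_3}\twoheadrightarrow (V_{VO})^{k_1,k_2}_{l_1,l_2,l_3}/\langle e_{21}[0]^{l_1}\rangle$ yields $\pi$, which is therefore surjective. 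Finally, the sequence is a complex: by the prescription of $\iota_4$ above, the image of $\iota_4$ lies inside the submodule $\langle e_{21}[0]^{l_1}w_3(l_1,l_2,l_3)\rangle=\langle e_{21}[0]^{l_1}\rangle$, which is exactly the kernel of the projection to the quotient, hence $\pi\circ\iota_4=0$.

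There is no real obstacle here; the only thing to verify is the matching of the image of $\iota_4$ with the submodule by which one quotients in the preceding proposition, and this is immediate from the explicit formulas for the cyclic vectors. The mild asymmetry with sequence (\ref{am}), whose proof treats several sub-cases depending on whether $l_2>l_3$ and whether $l_1+l_2-1-l_3\ge k_1$, does not appear for (\ref{dm}) because the target of the surjection in the preceding proposition is a single module, so a single argument handles all $(l_1,l_2,l_3)\in R^{k_1,k_2}_V$ with $l_1>0$; the boundary case $l_1=0$ is vacuous since then $(U_{VO})^{k_1,k_2}_{k_1-l_1,l_1+l_2,l_3-l_1}$ has a negative index and is set to zero.
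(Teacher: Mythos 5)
Your proposal is correct and is essentially the paper's own (implicit) argument: the corollary is obtained by combining the embedding proposition (which gives injectivity of $\iota_4$, hence exactness at the first term) with the immediately preceding surjection proposition (exactness at the third term), the complex property being clear since the image of $\iota_4$ is the submodule $\langle e_{21}[0]^{l_1}\rangle$ annihilated by the quotient map. One small slip: when $l_1=0$ it is the third term $(V_{VO})^{k_1,k_2}_{l_1-1,l_2,\min(l_3,l_1+l_2-1)}$, not the first, that is set to zero (the indices $k_1-l_1$, $l_1+l_2$, $l_3-l_1$ are then non-negative), but the conclusion in that case still follows, since injectivity of $\iota_4$ is covered by the embedding proposition for all $(l_1,l_2,l_3)\in R^{k_1,k_2}_V$ and surjectivity onto the zero module is trivial.
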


Theorem \ref{mainbos} is proved.
\section{Uniqueness of the solution to the SES-recursion}\label{sec:5}
\subsection{The main case of the recursion}

Recall the definition of the regions of the parameters $R_U^{k_1,k_2},R_V^{k_1,k_2},\overline R_U^{k_1,k_2}$ (see \eqref{Rphi}--\eqref{Rphib}).

Let $\bar{\varphi}^{k_1,k_2}_{l_1,l_2,l_3}(z_1,z_2)$
($(l_1,l_2,l_3)\in \bR_U^{k_1,k_2}$)
and $\bar{\psi}^{k_1,k_2}_{l_1,l_2,l_3}(z_1,z_2)$
($(l_1,l_2,l_3)\in {R}_V^{k_1,k_2}$) be formal power series in variables
$z_1,z_2$ whose coefficients are Laurent power series in $q$.

We use the following convention.
A series with a negative index is understood to be zero.
If $l_3>\min(l_2,l_3)$, then
$\bar\varphi^{k_1,k_2}_{l_1,l_2,l_3}(z_1,z_2):=
\bar\varphi^{k_1,k_2}_{l_1,l_2,\min(l_1,l_2)}(z_1,z_2)$.
If $l_3>l_1+l_2$, then
$\bar\psi^{k_1,k_2}_{l_1,l_2,l_3}(z_1,z_2)
:=\bar\psi^{k_1,k_2}_{l_1,l_2,l_1+l_2}(z_1,z_2)$.

Assume that the following inequalities hold:

If $(l_1,l_2,l_3)\in {R}_U^{k_1,k_2}$, then
\bea
{\bar\varphi}^{k_1,k_2}_{l_1,l_2,l_3}(z_1,z_2)\label{M1}
\leq{\bar \varphi}^{k_1,k_2}_{l_1,l_2-1,l_3}(z_1,z_2)+
z_2^{l_2}{\bar\psi}^{k_1,k_2}_{l_3,k_2-l_2,l_1}(q^{-1}z_1,qz_2).
\ena
If $(l_1,l_2,l_3)\in {R}_V^{k_1,k_2}$, then
\bea
{\bar\psi}^{k_1,k_2}_{l_1,l_2,l_3}(z_1,z_2)\label{M2}
\leq {\bar\psi}^{k_1,k_2}_{l_1,l_2-1,l_3}(z_1,z_2)+
z_2^{l_2}{\bar\varphi}^{k_1,k_2}_{l_3,k_2-l_2,l_1}(z_1,qz_2).
\ena
If $(l_1,l_2,l_3)\in \bR_U^{k_1,k_2}$ and either
$l_1+l_2-l_3\neq k_2$ or $l_3=0$, then
\bea
\lefteqn{ {\bar\varphi}^{k_1,k_2}_{l_1,l_2,l_3}(z_1,z_2)\label{M3}} \notag\\
&&\hspace{40pt}
\leq{\bar\varphi}^{k_1,k_2}_{l_1,l_2,l_3-1}(z_1,z_2)+(q^{-1}z_1z_2)^{l_3}
{\bar\psi}^{k_1,k_2}_{l_1-l_3,l_2-l_3,k_1-l_3}(z_1,z_2).
\ena
If $(l_1,l_2,l_3)\in {R}_V^{k_1,k_2}$, then
\bea
 {\bar\psi}^{k_1,k_2}_{l_1,l_2,l_3}(z_1,z_2)\label{M4}
\leq{\bar\psi}^{k_1,k_2}_{l_1-1,l_2,l_3}(z_1,z_2)+
z_1^{l_1}{\bar\varphi}^{k_1,k_2}_{k_1-l_1,l_1+l_2,l_3-l_1}(qz_1,z_2).
\ena

We call formal power series of the forms
\be
p(z_1,z_2){\bar\varphi}^{k_1,k_2}_{l_1,l_2,l_3}(z_1,z_2)
\quad((l_1,l_2,l_3)\in \bR_U^{k_1,k_2}),
\quad
p(z_1,z_2){\bar\psi}^{k_1,k_2}_{l_1,l_2,l_3}(z_1,z_2)
\quad((l_1,l_2,l_3)\in {R}_V^{k_1,k_2})
\en
{\it higher degree series} if
$p(z_1,z_2)$ is a polynomial in $z_1,z_2$ whose coefficients are
Laurent polynomials in $q$ and if $p(0,0)=0$.

Let $F(z_1,z_2)$ and $G(z_1,z_2)$ be formal power series in variables
$z_1,z_2$. We write $F(z_1,z_2){\leq_*} G(z_1,z_2)$ if there exist
higher degree series $H_1(z_1,z_2),\dots,H_s(z_1,z_2)$ such that \newline
$F(z_1,z_2)\leq G(z_1,z_2)+\sum_{i=1}^s H_i(z_1,z_2)$.

\begin{lem}\label{loop}
Under the assumptions above,
let $F(z_1,z_2)$ be either
${\bar\varphi}^{k_1,k_2}_{l_1,l_2,l_3}(z_1,z_2)$, for some $(l_1,l_2,l_3)\in
  \bR_U^{k_1,k_2}$, or ${\bar
\psi}^{k_1,k_2}_{l_1,l_2,l_3}(z_1,z_2)$, for some $(l_1,l_2,l_3)\in
{R}_V^{k_1,k_2}$. Then there exist $m_1,m_2\in\Z_{\geq 1}$ such that
$F(z_1,z_2){\leq_*}{\bar
\psi}_{0,0,0}^{k_1,k_2}(q^{m_1}z_1,q^{m_2}z_2)$.
\end{lem}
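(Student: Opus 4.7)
The plan is to prove Lemma \ref{loop} by induction, using a complexity measure on the parameter triple $(l_1,l_2,l_3)$ together with the type (either $\bar\varphi$ or $\bar\psi$). The name ``loop'' already hints at the key combinatorial device: a cyclic reduction pattern that, starting at $\bar\psi^{k_1,k_2}_{0,0,0}(z_1,z_2)$, traverses the four inequalities \eqref{M1}--\eqref{M4} and returns to $\bar\psi^{k_1,k_2}_{0,0,0}$ evaluated at a shifted argument $(q^{m_1}z_1,q^{m_2}z_2)$ with $m_1,m_2\geq 1$ accumulated along the path.

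In the inductive step, for $F=\bar\varphi^{k_1,k_2}_{l_1,l_2,l_3}$ with $(l_1,l_2,l_3)\ne(0,0,0)$, I would apply \eqref{M1} when $l_2>0$ and $(l_1,l_2,l_3)\in R_U^{k_1,k_2}$, or \eqref{M3} when $l_3>0$ and the exclusion $l_1+l_2-l_3\neq k_2$ is respected. Analogously, for $F=\bar\psi^{k_1,k_2}_{l_1,l_2,l_3}$ with $(l_1,l_2,l_3)\ne(0,0,0)$, I would use \eqref{M2} or \eqref{M4}. Each recursion bounds $F$ by a decrement term with strictly smaller parameters (handled by the inner induction) plus a boundary term carrying a positive-degree monomial prefactor ($z_1^{l_1}$, $z_2^{l_2}$, or $(q^{-1}z_1z_2)^{l_3}$). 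Applying the inductive hypothesis to the $\bar\varphi$ or $\bar\psi$ factor inside the boundary term (at a $q$-shifted argument) reduces it to a shifted $\bar\psi^{k_1,k_2}_{0,0,0}$ plus honest higher-degree series; multiplying by the outer monomial prefactor (which vanishes at the origin) keeps the whole contribution higher degree.

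The base case $F=\bar\psi^{k_1,k_2}_{0,0,0}(z_1,z_2)$ is proved by constructing the loop explicitly. Apply \eqref{M4} at $l_1=0$ (the decrement term $\bar\psi^{k_1,k_2}_{-1,0,0}$ vanishes by convention), followed by \eqref{M3} at $l_3=0$ (allowed by the exclusion clause) and \eqref{M2} at $l_2=0$, to reach $F\leq\bar\varphi^{k_1,k_2}_{k_1,k_2,k_1}(qz_1,qz_2)$. Then iterate \eqref{M1} and \eqref{M3} to reduce this $\bar\varphi$ back to $\bar\psi^{k_1,k_2}_{0,0,0}$, tracking the accumulated $q$-shifts in the arguments at each step; the corresponding boundary terms all carry polynomial prefactors of strictly positive degree and are absorbed as higher-degree series by the cascade described above.

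The main obstacle I anticipate is twofold. First, verifying that the boundary terms, which involve $\bar\varphi$ or $\bar\psi$ factors at $q$-shifted arguments, can be legitimately recast as higher-degree series in the strict sense of the excerpt (where the inside $\bar$ factors appear at the unshifted argument $(z_1,z_2)$); this is handled by the inductive cascade, since a polynomial prefactor vanishing at the origin multiplied by a series bounded (by induction) modulo higher-degree series still lies in that class. Second, navigating the exclusion $l_1+l_2-l_3\neq k_2$ in \eqref{M3} and the restricted ranges of $R_U^{k_1,k_2}$, $R_V^{k_1,k_2}$, $\bar R_U^{k_1,k_2}$, which requires choosing the order of recursion applications with care (as illustrated by the base-case loop) and possibly separate treatment of degenerate configurations such as $k_1=k_2$.
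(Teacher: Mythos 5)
Your overall strategy is the same as the paper's: reduce an arbitrary $\bar\varphi$ or $\bar\psi$ to $\bar\psi^{k_1,k_2}_{0,0,0}$ by finite chains of \eqref{M1}--\eqref{M4}, discard boundary terms carrying positive-degree prefactors, and close with the explicit loop $\bar\psi^{k_1,k_2}_{0,0,0}(z_1,z_2)\,{\leq_*}\,\bar\psi^{k_1,k_2}_{0,0,0}(qz_1,qz_2)$; your opening moves \eqref{M4}, \eqref{M3}, \eqref{M2} are legitimate and reach the same intermediate point $\bar\varphi^{k_1,k_2}_{k_1,k_2,k_1}(qz_1,qz_2)$ as the paper's chain \eqref{M2}, \eqref{M3}, \eqref{M4}. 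However, the closing of your loop fails as stated: you cannot return from $\bar\varphi^{k_1,k_2}_{k_1,k_2,k_1}(qz_1,qz_2)$ to $\bar\psi^{k_1,k_2}_{0,0,0}$ by iterating only \eqref{M1} and \eqref{M3}. Both of these inequalities bound a $\bar\varphi$, so once the chain changes type to a $\bar\psi$ it can no longer be continued by them; and the first prefactor-free $\bar\psi$ you can reach this way is $\bar\psi^{k_1,k_2}_{k_1,0,k_1}(qz_1,qz_2)$ (via \eqref{M1} applied $k_2$ times and then \eqref{M3} at $l_3=0$, which leaves the first index untouched), never $\bar\psi^{k_1,k_2}_{0,0,0}$ when $k_1\geq 1$. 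You must finish with $k_1$ applications of \eqref{M4}, whose boundary terms $z_1^{l_1}\bar\varphi^{k_1,k_2}_{\cdots}(q^2z_1,qz_2)$ are higher degree; this is exactly how the paper closes the loop, and it is the same move you already used at the start, so the gap is easily repaired but the step as written would not compile into a proof.

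The second problem is your handling of boundary terms. A term such as $z_2^{l_2}\bar\psi^{k_1,k_2}_{l_3,k_2-l_2,l_1}(q^{-1}z_1,qz_2)$ with $l_2\ge1$ should simply be discarded: a polynomial prefactor vanishing at the origin times one of the series already serves as one of the $H_i$ in the definition of ${\leq_*}$, and the $q$-shift of the argument is harmless because the minimal-coefficient argument of Corollary \ref{cor} only uses that the coefficient of $z_1^{n_1}z_2^{n_2}$ in such a product involves coefficients of strictly smaller bidegree. Your proposed ``inductive cascade'', which applies the inductive hypothesis to the factor inside the boundary term, is not well-founded: the boundary parameters, e.g.\ $(l_3,k_2-l_2,l_1)$ in \eqref{M2} or $(k_1-l_1,l_1+l_2,l_3-l_1)$ in \eqref{M4}, are in general not smaller than $(l_1,l_2,l_3)$ under any measure that also decreases along the decrement terms, so the recursion you describe can cycle. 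Once the cascade is dropped, only decrement terms and the prefactor-free boundary terms (\eqref{M1}, \eqref{M2} at $l_2=0$, \eqref{M3} at $l_3=0$, \eqref{M4} at $l_1=0$) need to be followed, and your induction collapses into explicit finite chains which still have to be exhibited for the remaining configurations not covered by your step (e.g.\ $\bar\varphi_{l_1,0,0}$ with $l_1>0$, the region $l_1+l_2-l_3<k_1$ where \eqref{M1} is unavailable, and general $\bar\psi_{l_1,l_2,l_3}$); that case analysis, together with checking that the accumulated shifts are nonnegative so that one may append the loop to obtain $m_1,m_2\geq1$, is precisely the content of the paper's proof.
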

\begin{proof}
{}First, consider the case $F(z_1,z_2)={\bar \psi}_{0,0,0}^{k_1,k_2}(z_1,z_2)$.
We use the given inequalities as follows:
\be
{\bar\psi}_{0,0,0}^{k_1,k_2}(z_1,z_2){\leq_*}
{\bar\varphi}_{0,k_2,0}^{k_1,k_2}(z_1,qz_2){\leq_*}
{\bar\psi}_{0,k_2,k_1}^{k_1,k_2}(z_1,qz_2){\leq_*}
{\bar\varphi}_{k_1,k_2,k_1}^{k_1,k_2}(qz_1,qz_2).
\en
Here we used (\ref{M2}) then (\ref{M3}) then (\ref{M4}).
Then we use inequality (\ref{M1}) $k_2$ times to obtain:
\be
{\bar\varphi}_{k_1,k_2,k_1}^{k_1,k_2}(qz_1,qz_2){\leq_*}
{\bar\varphi}_{k_1,0,0}^{k_1,k_2}(qz_1,qz_2).
\en
{}Finally, using (\ref{M3}) followed by the $k_1$ applications of (\ref{M4}),
we obtain:
\be
{\bar\varphi}_{k_1,0,0}^{k_1,k_2}(qz_1,qz_2){\leq_*}
{\bar\psi}_{k_1,0,k_1}^{k_1,k_2}(qz_1,qz_2){\leq_*}
{\bar\psi}_{0,0,0}^{k_1,k_2}(qz_1,qz_2).
\en
Combining, we obtain: ${\bar\psi}_{0,0,0}^{k_1,k_2}(z_1,z_2){\leq_*}
{\bar\psi}_{0,0,0}^{k_1,k_2}(qz_1,qz_2)$.

Next, consider the case $F(z_1,z_2)={\bar
\varphi}_{l_1,l_2,l_3}^{k_1,k_2}(z_1,z_2)$, $(l_1,l_2,l_3)\in
{R}_U^{k_1,k_2}$.

We start using inequality (\ref{M1}) followed by (\ref{M3}). We repeat
this step $l_3$ times. Then we apply (\ref{M3})
one more time. We get
\be
{\bar \varphi}_{l_1,l_2,l_3}^{k_1,k_2}(z_1,z_2){\leq_*}
{\bar\varphi}_{l_1,l_2-l_3,0}^{k_1,k_2}(z_1,z_2){\leq_*}
{\bar\psi}_{l_1,l_2-l_3,k_1}^{k_1,k_2}(z_1,z_2).
\en

Then we use the $l_1$ applications of (\ref{M4}) and after that
the $l_2-l_3$ applications of (\ref{M2}). We obtain
\be
{\bar\psi}_{l_1,l_2-l_3,k_1}^{k_1,k_2}(z_1,z_2){\leq_*}
{\bar\psi}_{0,l_2-l_3,k_1}^{k_1,k_2}(z_1,z_2){\leq_*}
{\bar\psi}_{0,0,0}^{k_1,k_2}(z_1,z_2).
\en
Combining, we obtain
\be
{\bar
\varphi}_{l_1,l_2,l_3}^{k_1,k_2}(z_1,z_2){\leq_*}
{\bar\psi}_{0,0,0}^{k_1,k_2}(z_1,z_2){\leq_*}
{\bar\psi}_{0,0,0}^{k_1,k_2}(qz_1,qz_2).
\en

\medskip

Next, we consider the case $F(z_1,z_2)={\bar
\varphi}_{l_1,l_2,l_3}^{k_1,k_2}(z_1,z_2)$, $(l_1,l_2,l_3)\in
\bR_U^{k_1,k_2}$ and $(l_1,l_2,l_3)\not \in
{R}_U^{k_1,k_2}$.

Let $l=\min(l_3,k_1-l_1-l_2+l_3)$. We use $l$ applications of
$(\ref{M3})$ and obtain
\be
{\bar\varphi}_{l_1,l_2,l_3}^{k_1,k_2}(z_1,z_2){\leq_*}
{\bar\varphi}_{l_1,l_2,l_3-l}^{k_1,k_2}(z_1,z_2).
\en
We have either $(l_1,l_2,l_3-l)\in
{R}_U^{k_1,k_2}$, or $l_3-l=0$.
In the first case we are reduced to the situation treated
above. In the second case, we use $(\ref{M3})$
to obtain
\be
{\bar\varphi}_{l_1,l_2,0}^{k_1,k_2}(z_1,z_2)
{\leq_*}
{\bar\psi}_{l_1,l_2,0}^{k_1,k_2}(z_1,z_2),
\en
which again is the situation discussed above.

\medskip

Last, consider the case $F(z_1,z_2)={\bar
\psi}_{l_1,l_2,l_3}^{k_1,k_2}(z_1,z_2)$, $(l_1,l_2,l_3)\in
{R}_V^{k_1,k_2}$. We use (\ref{M2}) $l_2+1$ times and obtain:
\be
{\bar\psi}_{l_1,l_2,l_3}^{k_1,k_2}(z_1,z_2){\leq_*}
{\bar\psi}_{l_1,0,l_1}^{k_1,k_2}(z_1,z_2){\leq_*}
{\bar\varphi}_{l_1,k_2,l_1}^{k_1,k_2}(z_1,qz_2).
\en
Therefore we are again reduced to the previous cases and the lemma is proved.
\end{proof}

\begin{cor}\label{cor}
Let $\bar{\varphi}^{k_1,k_2}_{l_1,l_2,l_3}$, $(l_1,l_2,l_3)\in 
\bR_U^{k_1,k_2}$, and $\bar{\psi}^{k_1,k_2}_{l_1,l_2,l_3}$,
$(l_1,l_2,l_3)\in {R}_V^{k_1,k_2}$, be formal power series in variables
$z_1,z_2$, such that inequalities (\ref{M1})-(\ref{M4})
are satisfied.

Assume that all coefficients of the power series are Laurent
power series in $q$ with non-negative integer coefficients. Assume also
that the formal power series $\bar{\psi}^{k_1,k_2}_{0,0,0}$ has no
constant term.

Then all these formal power series are identically zero.
\end{cor}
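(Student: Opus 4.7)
The plan is to combine Lemma \ref{loop} with an induction on the total $(z_1,z_2)$-degree $N=i+j$, carried out simultaneously for all the series. Unpacking the definition of $\leq_*$, Lemma \ref{loop} yields, for each of our series $F$, a bound of the form
\begin{equation*}
F(z_1,z_2)\leq \bar{\psi}^{k_1,k_2}_{0,0,0}(q^{m_1}z_1,q^{m_2}z_2)
+\sum_{\alpha}p_\alpha(z_1,z_2)\,G_\alpha(z_1,z_2),
\end{equation*}
where $m_1,m_2\geq 1$, each $p_\alpha$ is a polynomial in $z_1,z_2$ with Laurent polynomial coefficients in $q$ satisfying $p_\alpha(0,0)=0$, and each $G_\alpha$ is one of the given series.

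For the base case $N=0$ I extract the coefficient of $z_1^0z_2^0$ on both sides. The error terms $p_\alpha G_\alpha$ contribute nothing because $p_\alpha(0,0)=0$, and the $q$-shift of $\bar{\psi}^{k_1,k_2}_{0,0,0}$ contributes $\bar{\psi}^{k_1,k_2}_{0,0,0}|_{(0,0)}$, which is zero by hypothesis. Non-negativity of the coefficients then forces $F|_{(0,0)}=0$ for every $F$. For the inductive step, assume $F|_{(i,j)}=0$ for all $F$ and all $(i,j)$ with $i+j<N$, and extract the $(i,j)$-coefficient of both sides for $i+j=N$. The $(i,j)$-coefficient of $p_\alpha G_\alpha$ only involves coefficients $G_\alpha|_{(i-a,j-b)}$ with $a+b\geq 1$, hence $(i-a)+(j-b)<N$, and these vanish by the inductive hypothesis. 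This leaves the clean estimate
\begin{equation*}
F|_{(i,j)}\leq q^{m_1i+m_2j}\,\bar{\psi}^{k_1,k_2}_{0,0,0}|_{(i,j)}.
\end{equation*}

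The decisive step is to apply this estimate first with $F=\bar{\psi}^{k_1,k_2}_{0,0,0}$ itself. Writing $A=\bar{\psi}^{k_1,k_2}_{0,0,0}|_{(i,j)}$ and $s=m_1i+m_2j\geq 1$, we get $A\leq q^sA$ as Laurent series in $q$ with non-negative integer coefficients bounded below. If $A$ were non-zero, write $A=\sum_{n\geq n_0}a_nq^n$ with $a_{n_0}>0$; the coefficient of $q^{n_0}$ on the right-hand side is $0$, contradicting $a_{n_0}\leq 0$. Hence $A=0$, and substituting this back yields $F|_{(i,j)}=0$ for every $F$, closing the induction. The main obstacle is precisely this final ``$q$-contraction'' step: the induction on $N$ disposes of the error terms, but one still needs the strict shift $s\geq 1$ together with the bounded-below property of Laurent series in $q$ to propagate the vanishing through $\bar{\psi}^{k_1,k_2}_{0,0,0}$ to every series at each new degree.
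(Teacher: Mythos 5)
Your argument is correct and is essentially the paper's: both rest entirely on Lemma \ref{loop}, dispose of the higher degree series by minimality/induction in the $z_1,z_2$-grading, and then use the strict shift $q^{m_1 i+m_2 j}$ together with non-negativity and boundedness below in $q$ to force vanishing. The paper organizes this as a minimal-counterexample argument (a minimal bidegree $(n_1,n_2)$ and then the smallest power $q^{n_3}$ with a nonzero coefficient across all series), while you run an induction on total $z$-degree followed by the contraction $A\le q^sA$ applied first to $\bar\psi^{k_1,k_2}_{0,0,0}$ — a difference only of bookkeeping.
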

\begin{proof}
Suppose the contrary, and
let $n_1,n_2$ be non-negative integers
such that one of the series has
a non-trivial coefficient of $z_1^{n_1}z_2^{n_2}$
and all coefficients
of $z_1^{\al_1}z_2^{\al_2}$ of all power series are zero if
either $\al_1<n_1$ or $\al_2<n_2$.
By the assumption and Lemma \ref{loop},
we have $n_1+n_2>0$.

Let $n_3$ be the smallest possible
integer such that the coefficient of
$z_1^{n_1}z_2^{n_2}q^{n_3}$ is non-zero in one of the series
$F(z_1,z_2)$. By our assumption, this coefficient is positive.

By Lemma \ref{loop}, we have $F(z_1,z_2)\leq \sum_{i=1}^s
H_i(z_1,z_2)+ \bar{\psi}^{k_1,k_2}_{0,0,0}(z_1q^{m_1},z_2q^{m_2})$,
where $H_i$ are higher degree series and $m_1,m_2\geq 1$. Clearly, the
coefficient of $z_1^{n_1}z_2^{n_2}q^{n_3}$  is zero on the right hand side of
this inequality and positive on the left hand side,
which is a contradiction.
\end{proof}

\subsection{All inequalities are equalities}
Recall that we have a set of $\hat{\mathfrak{n}}$-modules
$(U_{VO})^{k_1,k_2}_{l_1,l_2,l_3}$, $U^{k_1,k_2}_{l_1,l_2,l_3}$
for $(l_1,l_2,l_3)\in {\bR}_{U}^{k_1,k_2}$ and
$(V_{VO})^{k_1,k_2}_{l_1,l_2,l_3}$, $V^{k_1,k_2}_{l_1,l_2,l_3}$
for $(l_1,l_2,l_3)\in {R}_{V}^{k_1,k_2}$,
whose characters we denote by
$(\varphi_{VO})^{k_1,k_2}_{l_1,l_2,l_3}(z_1,z_2)$,
$\varphi^{k_1,k_2}_{l_1,l_2,l_3}(z_1,z_2)$ and
$(\psi_{VO})^{k_1,k_2}_{l_1,l_2,l_3}(z_1,z_2)$,
$\psi^{k_1,k_2}_{l_1,l_2,l_3}(z_1,z_2)$, respectively.

\begin{thm}\label{thm:equalities}
We have
\bea
(\varphi_{VO})^{k_1,k_2}_{l_1,l_2,l_3}(z_1,z_2)&=
&\varphi^{k_1,k_2}_{l_1,l_2,l_3}(z_1,z_2)
\qquad  ((l_1,l_2,l_3)\in\bR_U^{k_1,k_2}),
\notag
\\
(\psi_{VO})^{k_1,k_2}_{l_1,l_2,l_3}(z_1,z_2)&=
&\psi^{k_1,k_2}_{l_1,l_2,l_3}(z_1,z_2)\qquad
((l_1,l_2,l_3)\in R_V^{k_1,k_2}),
\notag
\ena
and therefore surjections in Lemmas \ref{surj1} and \ref{surj2} are
isomorphisms of $\hat{\mathfrak{n}}$-modules.

Moreover, Theorem \ref{thm:chrec} holds.
\end{thm}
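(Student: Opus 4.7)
\medskip

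\noindent\textbf{Proof plan.}
The plan is to prove the two character identities by a squeeze argument, then read off Theorem~\ref{thm:chrec} and the isomorphism statement as consequences. Set
\[
\bar\varphi^{k_1,k_2}_{l_1,l_2,l_3}(z_1,z_2)
=\varphi^{k_1,k_2}_{l_1,l_2,l_3}(z_1,z_2)-(\varphi_{VO})^{k_1,k_2}_{l_1,l_2,l_3}(z_1,z_2),
\]
\[
\bar\psi^{k_1,k_2}_{l_1,l_2,l_3}(z_1,z_2)
=\psi^{k_1,k_2}_{l_1,l_2,l_3}(z_1,z_2)-(\psi_{VO})^{k_1,k_2}_{l_1,l_2,l_3}(z_1,z_2).
\]
By the surjections of Lemmas~\ref{surj1} and~\ref{surj2}, these are formal power series in $z_1,z_2$ whose coefficients are Laurent polynomials in $q$ with non-negative integer coefficients.

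The first task is to show that $\bar\varphi$ and $\bar\psi$ satisfy the hypotheses (\ref{M1})--(\ref{M4}) of Corollary~\ref{cor}. Here I would subtract the lower estimates \eqref{a}--\eqref{d} of Theorem~\ref{mainbos} from the corresponding upper estimates \eqref{Ra}--\eqref{Rd} derived in Section~3. The one piece of bookkeeping is that the upper estimates carry indices like $\varphi^{k_1,k_2}_{l_1,l_2-1,l_3}$ while the lower estimates carry $(\varphi_{VO})^{k_1,k_2}_{l_1,l_2-1,\min(l_3,l_2-1)}$; but since $l_3\le\min(l_1,l_2)$ throughout $P_U^{k_1,k_2}$, Lemma~\ref{lem:U=U} identifies $\varphi^{k_1,k_2}_{l_1,l_2-1,l_3}=\varphi^{k_1,k_2}_{l_1,l_2-1,\min(l_3,l_2-1)}$, and similarly Lemma~\ref{lem:V=V} normalises the $\psi$ indices appearing in \eqref{Rb},\eqref{Rd}. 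Once the indices match, the subtraction yields exactly the inequalities (\ref{M1})--(\ref{M4}) for $\bar\varphi,\bar\psi$.

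Next I would verify that $\bar\psi^{k_1,k_2}_{0,0,0}(z_1,z_2)$ has no constant term. By the defining relations with $l_1=l_2=l_3=0$, the cyclic vector $v$ of $V^{k_1,k_2}_{0,0,0}$ is annihilated by $e_{21}[0],e_{32}[0],e_{31}[0]$ and by all positive modes, so its $(0,0,0)$ homogeneous component is one-dimensional; the same is true for $(V_{VO})^{k_1,k_2}_{0,0,0}$ (its cyclic vector is a product of vacuum vectors of Fock spaces). Hence both constant terms equal $1$ and the difference vanishes. Corollary~\ref{cor} then forces $\bar\varphi\equiv 0$ and $\bar\psi\equiv 0$, giving the desired character equalities, and consequently the surjections of Lemmas~\ref{surj1}--\ref{surj2} are isomorphisms of $\nh$-modules.

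{}Finally, Theorem~\ref{thm:chrec} drops out for free: the upper estimates \eqref{Ra}--\eqref{Rd} give the $\le$ direction for $\varphi,\psi$, while the lower estimates \eqref{a}--\eqref{d}, transported via the character identities just established, supply the matching $\ge$ direction in the parameter regions where Theorem~\ref{mainbos} applies; index adjustments via Lemmas~\ref{lem:V=V} and~\ref{lem:U=U} again produce the ``$\min$'' form stated in \eqref{TRa}--\eqref{TRd}. The main obstacle is purely combinatorial — aligning the various index reductions so that (\ref{M1})--(\ref{M4}) hold over the full regions $\bR_U^{k_1,k_2}$ and $R_V^{k_1,k_2}$ assumed in Corollary~\ref{cor} — rather than involving any new representation-theoretic input.
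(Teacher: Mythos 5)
Your proposal is correct and follows exactly the paper's route: the paper's (one-line) proof also forms the differences $\varphi-\varphi_{VO}$, $\psi-\psi_{VO}$ and applies Corollary \ref{cor}, with the non-negativity coming from Lemmas \ref{surj1}--\ref{surj2}, the inequalities (\ref{M1})--(\ref{M4}) from subtracting \eqref{a}--\eqref{d} from \eqref{Ra}--\eqref{Rd} (the ``$\min$'' indices being absorbed by the convention of Section 5 together with Lemmas \ref{lem:V=V}--\ref{lem:U=U}), and Theorem \ref{thm:chrec} then following as you describe. The only cosmetic slip is calling the $q$-coefficients Laurent \emph{polynomials}: for a fixed $z_1,z_2$-weight they are Laurent power series in $q$, which is exactly what Corollary \ref{cor} requires, so the argument is unaffected.
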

\begin{proof}
The theorem immediately follows from Corollary \ref{cor} applied to the series
\be
\bar{\varphi}^{k_1,k_2}_{l_1,l_2,l_3}(z_1,z_2)=
\varphi^{k_1,k_2}_{l_1,l_2,l_3}(z_1,z_2)-
(\varphi_{VO})^{k_1,k_2}_{l_1,l_2,l_3}(z_1,z_2),\\
\bar{\psi}^{k_1,k_2}_{l_1,l_2,l_3}(z_1,z_2)=
\psi^{k_1,k_2}_{l_1,l_2,l_3}(z_1,z_2)-
(\psi_{VO})^{k_1,k_2}_{l_1,l_2,l_3}(z_1,z_2).
\en
\end{proof}

\begin{cor}\label{V=Vpr}
The principal subspace $V^{k}_{l_1,l_2}\subset M^k_{l_1,l_2}$ is isomorphic to
$V^{k,k}_{l_1,l_2,l_1+l_2}$.
\end{cor}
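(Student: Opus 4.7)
The plan is to combine the surjection already provided by Remark~\ref{remV=V} with the character identifications established above. By Remark~\ref{remV=V} there is a surjective homomorphism of $\widehat{\mathfrak{n}}$-modules $V^{k,k}_{l_1,l_2,l_1+l_2}\twoheadrightarrow V^k_{l_1,l_2}$, together with an isomorphism $V^{k,k}_{l_1,l_2,l_1+l_2}\simeq U^{k,k}_{l_1,l_2,0}$. Since both sides are graded with finite-dimensional homogeneous components, it suffices to prove
$$\mathrm{ch}\,V^k_{l_1,l_2}=\varphi^{k,k}_{l_1,l_2,0}(z_1,z_2).$$

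For the right-hand side I would invoke Theorem~\ref{thm:equalities}: the integrability constraint $l_1+l_2\le k$ places $(l_1,l_2,0)$ in $\bR^{k,k}_U$, so $\varphi^{k,k}_{l_1,l_2,0}(z_1,z_2)=(\varphi_{VO})^{k,k}_{l_1,l_2,0}(z_1,z_2)$, the character of the vertex-operator model from Definition~\ref{one}. That model lies in the tensor product $W_{0,-1}^{\otimes l_1}\otimes W_{-1,0}^{\otimes l_2}\otimes W_{-1,-1}^{\otimes (k-l_1-l_2)}$ (with the obvious degeneration when $l_1+l_2=k$, which is the other case of Definition~\ref{one}). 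The $k$ factors each carry a level-one Frenkel-Kac action of $\widehat{\mathfrak{sl}_3}$, giving a level-$k$ action on the whole.

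The last step is to identify $(U_{VO})^{k,k}_{l_1,l_2,0}$ with the principal subspace $V^k_{l_1,l_2}$ inside this tensor product. Using Lemma~\ref{comp} one checks that the cyclic vector
$$w_2(l_1,l_2,0)=v_{0,-1}^{\otimes l_1}\otimes v_{-1,0}^{\otimes l_2}\otimes v_{-1,-1}^{\otimes (k-l_1-l_2)}$$
is annihilated by $e_{21}[n]$ and $e_{32}[n]$ for $n>0$, by $e_{31}[n]$ for $n\ge 1$, and by $e_{ab}[0]$ for $a<b$; that it has Heisenberg weight matching the $\mathfrak{sl}_3$-weight $(l_1,l_2)$; and that $e_{21}[0]^{l_1+1}w_2=e_{32}[0]^{l_2+1}w_2=0$. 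By the irreducibility of $M^k_{l_1,l_2}$ this forces the $\widehat{\mathfrak{sl}_3}$-submodule generated by $w_2(l_1,l_2,0)$ to be isomorphic to $M^k_{l_1,l_2}$ with $w_2(l_1,l_2,0)$ as highest weight vector, so the $\widehat{\mathfrak{n}}$-submodule it generates coincides with $V^k_{l_1,l_2}$. The main obstacle is precisely this last identification: it requires extending the Frenkel-Kac level-one action to a level-$k$ action on the tensor product and then pinning down that $w_2(l_1,l_2,0)$ is the correct highest weight vector. Once that is done, equality of characters follows automatically, upgrading the surjection of Remark~\ref{remV=V} to the desired isomorphism.
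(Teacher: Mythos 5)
Your argument is correct and follows essentially the same route as the paper: the surjection from Remark \ref{remV=V}, the Frenkel--Kac tensor-product realization identifying the principal subspace with the vertex-operator module (the paper phrases this as a surjection $V^k_{l_1,l_2}\to (V_{VO})^{k,k}_{l_1,l_2,l_1+l_2}$, whose cyclic vector is exactly your $w_2(l_1,l_2,0)$), and Theorem \ref{thm:equalities} to force equality of characters and hence isomorphism. The only difference is presentational: you assert the isomorphism with the VO module directly via the singular-vector/complete-reducibility argument, while the paper records just the surjection and lets the character identity close the loop.
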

\begin{proof}
As we noted in Remark \ref{remV=V}, there is a
surjective homomorphism of $\nh$-modules
\be
V^{k,k}_{l_1,l_2,l_1+l_2}\to V^{k}_{l_1,l_2}\to 0.
\en
On the other hand, taking tensor products of the Frenkel-Kac construction
we obtain a surjection
\be
V^{k}_{l_1,l_2}\to (V_{VO})^{k,k}_{l_1,l_2,l_1+l_2}\to 0.
\en
Hence the assertion follows from Theorem \ref{thm:equalities}.
\end{proof}

\subsection{Other cases}
In this section we describe two more versions of Corollary \ref{cor}
which we use later to establish the bosonic formulas for our
characters.

Set
\bea
{\widetilde R}_U^{k_1,k_2}=\{(l_1,l_2,l_3) \ |\
k_1-1\leq l_1+l_2-l_3\leq k_2\}\cap
P_U^{k_1,k_2}.
\label{Rtilde}
\ena

\begin{prop}\label{unique2}
Let $\bar{\varphi}^{k_1,k_2}_{l_1,l_2,l_3}(z_1,z_2)$
($(l_1,l_2,l_3)\in {\widetilde
R}_U^{k_1,k_2}$) and $\bar{\psi}^{k_1,k_2}_{l_1,l_2,l_3}(z_1,z_2)$
($(l_1,l_2,l_3)\in {R}_V^{k_1,k_2}$) be formal power series in variables
$z_1,z_2$
such that equations (\ref{TRa})-(\ref{TRd})
are satisfied (where (\ref{TRc}) is assumed for
$(l_1,l_2,l_3)\in {\widetilde R}_U^{k_1,k_2}$ and either
$l_1+l_2-l_3\neq k_2$ or $l_3=0$).

Assume that all coefficients of the power series are Laurent
power series in $q$.
Assume also
that the formal power series $\bar{\psi}^{k_1,k_2}_{0,0,0}(z_1,z_2)$ has no
constant term.

Then all these formal power series are identically zero.
\end{prop}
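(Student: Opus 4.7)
The plan is to parallel the proof of Corollary \ref{cor} closely, replacing the inequalities (\ref{M1})--(\ref{M4}) by the equalities (\ref{TRa})--(\ref{TRd}) and trading the positivity assumption for these equalities. First I would establish an equality analog of Lemma \ref{loop}: for every series $F$ in either family, there exist integers $m_1,m_2\ge 1$ and a finite sum $H$ of \emph{higher degree series} (i.e., products $p(z_1,z_2)G$ with $p(0,0)=0$ and $G$ one of our series) such that
\[
F(z_1,z_2)=H(z_1,z_2)+\bar\psi^{k_1,k_2}_{0,0,0}(q^{m_1}z_1,q^{m_2}z_2).
\]
This is obtained by running the chain of recursions of Lemma \ref{loop}, with each step replaced by the corresponding equality from (\ref{TRa})--(\ref{TRd}) and the residual terms collected into $H$.

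The main obstacle will be to verify that all intermediate parameters stay within the now-smaller regions: for $\bar\varphi$ we must remain in ${\widetilde R}_U^{k_1,k_2}$ rather than $\bR_U^{k_1,k_2}$. The central loop $\bar\psi_{0,0,0}\to\bar\varphi_{0,k_2,0}\to\bar\psi_{0,k_2,k_1}\to\bar\varphi_{k_1,k_2,k_1}\to\cdots\to\bar\varphi_{k_1,0,0}\to\bar\psi_{k_1,0,k_1}\to\bar\psi_{0,0,0}$ visits only $\bar\varphi$-indices with $l_1+l_2-l_3\in\{k_1,k_2\}$, all lying in $R_U^{k_1,k_2}\subset{\widetilde R}_U^{k_1,k_2}$. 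For a starting index $(l_1,l_2,l_3)\in{\widetilde R}_U^{k_1,k_2}\setminus R_U^{k_1,k_2}$ (so $l_1+l_2-l_3=k_1-1<k_2$), the hypothesis of (\ref{TRc}) is satisfied: if $l_3=0$ it converts $\bar\varphi_{l_1,l_2,0}$ directly into $\bar\psi_{l_1,l_2,k_1-1}\in R_V^{k_1,k_2}$, and if $l_3>0$ it sends us to $(l_1,l_2,l_3-1)$ with $l_1+l_2-(l_3-1)=k_1$, landing in $R_U^{k_1,k_2}$; in either case we enter the region covered by the central loop.

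The final step is a minimum-degree argument that replaces the positivity-based argument used in Corollary \ref{cor}. Let $d^*$ be the smallest integer such that some series has a non-zero coefficient of $z_1^{n_1}z_2^{n_2}$ with $n_1+n_2=d^*$. Since $\bar\psi^{k_1,k_2}_{0,0,0}$ has no constant term and every higher degree series vanishes at the origin, the equality forces $d^*\ge 1$. Each higher degree series in $H$ has minimal total $z$-degree at least $d^*+1$, so comparing $(n_1,n_2)$-coefficients in the equality for some $F$ with $[z_1^{n_1}z_2^{n_2}]F\neq 0$ shows that $[z_1^{n_1}z_2^{n_2}]\bar\psi^{k_1,k_2}_{0,0,0}$ is itself non-zero. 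Applying the same equality now with $F=\bar\psi^{k_1,k_2}_{0,0,0}$ then yields
\[
c(q)=q^{m_1n_1+m_2n_2}c(q)
\]
with $c(q)=[z_1^{n_1}z_2^{n_2}]\bar\psi^{k_1,k_2}_{0,0,0}$; since $m_1n_1+m_2n_2\ge 1$, this forces $c(q)=0$ as a Laurent power series in $q$, the desired contradiction.
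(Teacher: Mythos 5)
Your proposal is correct and follows the paper's intended route: the paper's own proof is only the remark that it is ``similar to the proof of Corollary \ref{cor}'', and your equality analogue of Lemma \ref{loop} together with the lowest-total-degree argument (which replaces the positivity comparison, unavailable here, by $c(q)=q^{e}c(q)$ with $e\ge 1$) is exactly that adaptation. One small inaccuracy: along the central loop the $\bar\varphi$-indices have $l_1+l_2-l_3$ ranging over the whole interval $[k_1,k_2]$ (not just the two values $k_1,k_2$), but they still lie in $R_U^{k_1,k_2}\subset\widetilde{R}_U^{k_1,k_2}$, so your region check goes through.
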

\begin{proof}
The proof of the proposition is similar to the proof of Corollary \ref{cor}.
\end{proof}

\medskip

Consider the case $k_1=k_2=k$.
Then if $(l_1,l_2,l_3)\in {R}_U^{k,k}$, then $l_3=l_1+l_2-k$.
And if $(l_1,l_2,l_3)\in {R}_V^{k,k}$, then $l_3=l_1+l_2$.

Let $\bar{\varphi}^{k,k}_{l_1,l_2,l_3}(z_1,z_2)$
($(l_1,l_2,l_3)\in {R}_U^{k,k}$) and $\bar{\psi}^{k,k}_{l_1,l_2,l_3}(z_1,z_2)$
($(l_1,l_2,l_3)\in {R}_V^{k,k}$) be formal power series in variables
$z_1,z_2$
such that the equations
\bea
\bar{\psi}_{l_1,l_2,l_1+l_2}^{k,k}(z_1,z_2)=
\bar{\psi}_{l_1,l_2-1,l_1+l_2-1}^{k,k}(z_1,z_2)+
z_2^{l_2}
\bar{\varphi}_{l_1+l_2,k-l_2,l_1}^{k,k}(z_1,qz_2),\notag\\
\bar{\psi}_{l_1,l_2,l_1+l_2}^{k,k}(z_1,z_2)=
\bar{\psi}_{l_1-1,l_2,l_1+l_2-1}^{k,k}(z_1,z_2)+
z_1^{l_1}
\bar{\varphi}_{k-l_1,l_1+l_2,l_2}^{k,k}(qz_1,z_2)
\notag
\ena
and
\be
\bar{\varphi}_{l_1,l_2,l_1+l_2-k}^{k,k}(z_1,z_2)=
\bar{\varphi}_{l_1,l_2-1,l_1+l_2-k-1}^{k,k}(z_1,z_2)
{\hspace{100pt}}\\
+z_2^{l_2}
\bar{\psi}_{l_1+l_2-k,k-l_2,l_1}^{k,k}(q^{-1}z_1,qz_2)
{\hspace{90pt}}\\
{\hspace{50pt}}+(q^{-1}z_1z_2)^{l_1+l_2-k}
\bar{\psi}_{k-l_2,k-l_1-1,2k-l_1-l_2-1}^{k,k}(z_1,z_2)
\en
are satisfied.

Note that the last equation is obtained from equations
(\ref{TRa}) and (\ref{TRc}) via eliminating the term
$\bar \varphi_{l_1,l_2,l_1+l_2-k-1}^{k,k}(z_1,z_2)$.

\begin{prop}\label{unique3}
Assume that all coefficients of all power series
$\bar{\varphi}^{k,k}_{l_1,l_2,l_3}(z_1,z_2)$ ($(l_1,l_2,l_3)\in
{R}_U^{k,k}$) and $\bar{\psi}^{k,k}_{l_1,l_2,l_3}(z_1,z_2)$
($(l_1,l_2,l_3)\in {R}_V^{k,k}$)  are Laurent
power series in $q$.
Assume also
that the formal power series
$\bar{\psi}^{k,k}_{0,0,0}(z_1,z_2)$ has no constant term.

Then all these formal power series are identically zero.
\end{prop}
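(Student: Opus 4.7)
The plan is to follow the same three-step strategy used in Corollary \ref{cor} and Proposition \ref{unique2}: first an analog of Lemma \ref{loop} adapted to the restricted regions $R_U^{k,k}$ and $R_V^{k,k}$, then a minimality-of-coefficient argument. Since the given relations are equalities rather than inequalities, no positivity of coefficients is needed; what is needed is that each coefficient, viewed as a Laurent series in $q$, is bounded below in $q$-degree, which is guaranteed by hypothesis.

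The first step is to show that for every allowed series $F(z_1,z_2)$ — either $\bar{\varphi}^{k,k}_{l_1,l_2,l_3}$ with $(l_1,l_2,l_3)\in R_U^{k,k}$ or $\bar{\psi}^{k,k}_{l_1,l_2,l_3}$ with $(l_1,l_2,l_3)\in R_V^{k,k}$ — there exist positive integers $m_1,m_2$ and higher-degree series $H_1,\ldots,H_s$ (in the sense of Section 5.1) such that
\[
F(z_1,z_2) = \sum_{i=1}^s H_i(z_1,z_2) + \bar{\psi}^{k,k}_{0,0,0}(q^{m_1}z_1,q^{m_2}z_2).
\]
I would prove this by iterating the three given equations along a cycle of substitutions mimicking the one in Lemma \ref{loop}. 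The delicate point is the three-term relation for $\bar{\varphi}$: its last summand $(q^{-1}z_1z_2)^{l_1+l_2-k}\bar{\psi}^{k,k}_{k-l_2,k-l_1-1,2k-l_1-l_2-1}$ carries a polynomial prefactor vanishing at $(z_1,z_2)=(0,0)$ whenever $l_1+l_2>k$, and in the boundary case $l_1+l_2=k$ the extra $\bar{\psi}$ has indices small enough that continuing the reduction still terminates in a chain ending at $\bar{\psi}^{k,k}_{0,0,0}$ after an overall strictly positive $q$-shift in both arguments. All extra terms generated along the way are therefore absorbable into the higher-degree remainders.

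With the reduction in hand, the rest is the standard minimality trick. Suppose some series is not identically zero. Lexicographically pick the smallest $(n_1,n_2)\in\Z_{\ge0}^2$ for which some series has a non-zero coefficient at $z_1^{n_1}z_2^{n_2}$, then the smallest $n_3\in\Z$ (which exists by the Laurent-series hypothesis) for which some $F$ has $[z_1^{n_1}z_2^{n_2}q^{n_3}]F\neq 0$. Apply the formula above to such an $F$ and extract the coefficient of $z_1^{n_1}z_2^{n_2}q^{n_3}$: each higher-degree term $H_i=p(z_1,z_2)F_i$ with $p(0,0)=0$ contributes zero, since any monomial $z_1^{a_1}z_2^{a_2}$ of $p$ with $(a_1,a_2)\neq(0,0)$ forces $F_i$ to be evaluated at a pair strictly smaller in lex order, vanishing by minimality of $(n_1,n_2)$. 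The final term contributes $[z_1^{n_1}z_2^{n_2}q^{n_3-m_1n_1-m_2n_2}]\bar{\psi}^{k,k}_{0,0,0}$. Since $\bar{\psi}^{k,k}_{0,0,0}$ has no constant term, $(n_1,n_2)\neq(0,0)$, hence $m_1n_1+m_2n_2\ge 1$ and this coefficient vanishes by minimality of $n_3$. The left side is non-zero, giving a contradiction.

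The main obstacle is the first step. In the unconstrained setting of Lemma \ref{loop}, the three free parameters $l_1,l_2,l_3$ admit a generous cycle of substitutions leading back to $\bar{\psi}_{0,0,0}$ with a clean $q$-shift. Here $R_U^{k,k}$ and $R_V^{k,k}$ are essentially two-dimensional slices (one coordinate is determined by the other two), the equation for $\bar{\varphi}$ acquires an extra third summand from the elimination procedure, and one of the arguments in the $\bar{\varphi}$-relation is shifted by $q^{-1}$ in $z_1$; the care is in checking that the cycle still closes and that the final $q$-shifts in both $z_1$ and $z_2$ are strictly positive.
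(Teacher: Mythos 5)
Your overall architecture (an analog of Lemma \ref{loop} followed by the minimal-coefficient extraction, with positivity of coefficients correctly discarded since the relations are equalities) is exactly what the paper intends, but the step you flag and do not check is where the proposal breaks. If you chain the three given equalities only in the ``forward'' direction, i.e.\ always keeping the one summand whose prefactor does not vanish at $(z_1,z_2)=(0,0)$ as in Lemma \ref{loop}, you can never produce a net positive $q$-shift in $z_1$. Indeed, the only move that introduces $qz_1$ is the second $\bar\psi$-recursion used at $l_1=0$, which lands you at $\bar\varphi^{k,k}_{k,l_2,l_2}(qz_1,z_2)$; the three-term relation never changes the first index of $\bar\varphi$, and along the family $\bar\varphi^{k,k}_{k,l_2,l_2}$ its last summand has second index $k-l_1-1=-1$, hence vanishes, so the only forward exit back to the $\bar\psi$-family is the middle term at $(l_1,l_2)=(k,0)$, which carries $q^{-1}z_1$ and cancels the gain. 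Consequently every forward cycle closes in the form $\bar\psi^{k,k}_{0,0,0}(z_1,z_2)=\bar\psi^{k,k}_{0,0,0}(z_1,q^{m}z_2)+(\hbox{higher degree series})$ with no shift in $z_1$, and your minimality argument then cannot rule out nonzero coefficients of monomials $z_1^{n_1}z_2^{0}$, where $m_1n_1+m_2n_2=0$. So the asserted reduction with $m_1,m_2\ge1$ is not delivered by the procedure you describe; this is a genuine gap, not a routine verification.

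The missing ingredient is the very freedom you note but never use: because the relations are equalities, they may also be traversed backwards. For instance, $\bar\psi^{k,k}_{0,0,0}(z_1,z_2)=\bar\varphi^{k,k}_{k,0,0}(qz_1,z_2)$; climbing the three-term relation backwards along the row $l_1=k$ gives $\bar\varphi^{k,k}_{k,0,0}=\bar\varphi^{k,k}_{k,k,k}$ modulo higher degree series, and the first $\bar\psi$-recursion at $(k,0)$ read backwards is the exact identity $\bar\varphi^{k,k}_{k,k,k}(w_1,w_2)=\bar\psi^{k,k}_{k,0,k}(w_1,q^{-1}w_2)$; this yields $\bar\psi^{k,k}_{0,0,0}(z_1,z_2)=\bar\psi^{k,k}_{k-1,0,k-1}(qz_1,q^{-1}z_2)$ modulo higher degree series, after which two applications of the loop $\bar\psi^{k,k}_{k-1,0,k-1}(w_1,w_2)=\bar\psi^{k,k}_{k-1,0,k-1}(w_1,qw_2)+(\hbox{higher degree series})$ (first recursion at $(k-1,0)$, descend the row $l_1=k-1$ to $l_2=1$, exit through the last summand, whose prefactor is $1$ there) restore the $z_2$-shift and give $\bar\psi^{k,k}_{0,0,0}(z_1,z_2)=\bar\psi^{k,k}_{0,0,0}(qz_1,qz_2)$ modulo higher degree series; the general $F$ is then handled by composing with this loop. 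Alternatively, one can first kill the $z_2$-degree-zero slice separately: restricting all relations to $z_2$-degree $0$ forces that slice $P(z_1)$ of $\bar\psi^{k,k}_{0,0,0}$ to satisfy $P(z_1)=P(q^{-1}z_1)$, hence $P=0$ given the vanishing constant term, after which the $(z_1,qz_2)$-loop suffices. Without one of these additions the proof as written is incomplete; the remainder of your argument (lexicographic minimality, absorption of prefactored terms, no positivity assumption) is sound.
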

\begin{proof}
The proof of the proposition is similar to the proof of Corollary \ref{cor}.
\end{proof}

\section{Bosonic formulas for the characters of $\nh$-modules}
In this section we write explicit solutions of recursion
relations \eqref{TRa}--\eqref{TRd} in the regions $\tilde
R^{k_1,k_2}_U$ and $R^{k_1,k_2}_V$ in the bosonic form. First, we
prepare notation and recall basic facts about the small principal
$\widehat{\mathfrak{sl}_3}$ subspaces (see \cite{FJLMM1}).

\subsection{The small principal subspaces.}
Let $\ah$ denote the abelian Lie algebra
spanned by $e_{21}[n]$, $e_{31}[n]$,
$n\in\Z$.
{}For non-negative integers $k,l_1,l_2$ satisfying
$l_1+l_2\le k$, define $X_{l_1,l_2}^k$ to
be the cyclic $\ah$-module with a cyclic vector $v$ and
the defining relations
\be
&&e_{21}[n]v=e_{31}[n]v=0
\quad (n>0),
\\
&&e_{21}[0]^{l_1+1}v=0,
\\
&&e_{21}[0]^\alpha e_{31}[0]^\beta v=0
\quad (\alpha+\beta= l_1+l_2+1),
\\
&&e_{21}(z)^\alpha e_{32}(z)^\beta=0
\quad (\alpha+\beta=k+1).
\en

The space $X_{l_1,l_2}^k$ has a monomial basis of the form
$$
\dots e_{31}[-1]^{a_3}e_{21}[-1]^{a_2}
e_{31}[0]^{a_1}e_{21}[0]^{a_0}v,
$$
where $\{a_i\}_{i \ge 0}$ run over
sequences of non-negative integers such that
$a_i=0$ for almost all $i$ and that satisfy the conditions
\begin{gather*}
a_0\le l_1,\ a_0+a_1\le l_1+l_2,\\
a_i+a_{i+1}+a_{i+2}\le k.
\end{gather*}

Let $\chi_{l_1,l_2}^k(z_1,z_2)$ denote the character of $X_{l_1,l_2}^k$
(normalized in such a way that the degree of the cyclic vector $v$ is
$(0,0,0)$).
The description of the monomial basis of $X_{l_1,l_2}^k$  leads to the
following recursion relations
\begin{equation}\label{sr}
\chi_{l_1,l_2}^k(z_1,z_2)=
\chi_{l_1-1,l_2}^k(z_1,z_2)
+z_1^{l_1} \chi^k_{l_2,k-l_1-l_2}(z_1,qz_2).
\end{equation}

We now write a formula for $\chi_{l_1,l_2}^k(z_1,z_2)$.
Let the quantities $p(m,n,s,z_1,z_2)$ $(m,n\in\Z_{\geq 0},\ 0\le s\le 5)$ be
given by
\bea
&&p^k_{l_1,l_2}(m,n,0,z_1,z_2)=z_1^{km}z_2^{kn}, \nonumber
\\
&&p^k_{l_1,l_2}(m,n,1,z_1,z_2)=z_1^{km+l_1}z_2^{kn},
\nonumber\\
&&p^k_{l_1,l_2}(m,n,2,z_1,z_2)=z_1^{km+l_1+l_2}z_2^{kn+l_2},
\nonumber\\
&&p^k_{l_1,l_2}(m,n,3,z_1,z_2)=z_1^{km+l_1+l_2}z_2^{kn+l_1+l_2},
\nonumber\\
&&p^k_{l_1,l_2}(m,n,4,z_1,z_2)=z_1^{km+l_1}z_2^{kn+l_1+l_2},
\nonumber\\
&&p^k_{l_1,l_2}(m,n,5,z_1,z_2)=z_1^{km}z_2^{kn+l_2}.
\nonumber
\ena
Let the quantities $a(m,n,s)$ $(m,n\in\Z_{\geq 0},\ 0\le s\le 5)$ be
given by
\bea
&&a^k_{l_1,l_2}(m,n,0)=kQ_2(m,n)-ml_1-nl_2, \nonumber
\\
&&a^k_{l_1,l_2}(m,n,1)=kQ_2(m,n)+(m-n)l_1-nl_2,
\nonumber\\
&&a^k_{l_1,l_2}(m,n,2)=kQ_2(m,n)+(m-n)l_1+ml_2,
\nonumber\\
&&a^k_{l_1,l_2}(m,n,3)=kQ_2(m,n)+nl_1+ml_2,
\nonumber\\
&&a^k_{l_1,l_2}(m,n,4)=kQ_2(m,n)+nl_1+(-m+n)l_2,
\nonumber\\
&&a^k_{l_1,l_2}(m,n,5)=kQ_2(m,n)-ml_1+(-m+n)l_2,
\nonumber
\ena
where $Q_2(m,n)=m^2+n^2-mn.$

Let the quantities $d(m,n,s,z_1,z_2)$ $(m,n\in\Z_{\geq 0},\ 0\le s\le 5)$ be
given by
\be
d(m,n,0,z_1,z_2)&=&(q)_n(q)_{m-n}(z_1q^{2m-n})_\infty(z_1^{-1}q^{-2m+n+1})_{m-n}\\
&&\times(z_1z_2q^{m+n})_\infty(z_1^{-1}z_2^{-1}q^{-m-n+1})_n
(z_2q^{2n-m})_{m-n}(z_2^{-1}q^{-2n+m+1})_n,\\
d(m,n,1,z_1,z_2)&=&(q)_n(q)_{m-n}(z_1q^{2m-n+1})_\infty(z_1^{-1}q^{-2m+n})_{m-n+1}\\
&&\times(z_1z_2q^{m+n})_\infty(z_1^{-1}z_2^{-1}q^{-m-n+1})_n
(z_2q^{2n-m})_{m-n}(z_2^{-1}q^{-2n+m+1})_n,\\
d(m,n,2,z_1,z_2)&=&(q)_n(q)_{m-n}(z_1q^{2m-n+1})_\infty
(z_1^{-1}q^{-2m+n})_{m-n}\\
&&\times(z_1z_2q^{m+n+1})_\infty(z_1^{-1}z_2^{-1}q^{-m-n})_{n+1}
(z_2q^{2n-m})_{m-n+1}(z_2^{-1}q^{-2n+m+1})_n,\\
d(m,n,3,z_1,z_2)&=&(q)_n(q)_{m-n}(z_1q^{2m-n+1})_\infty
(z_1^{-1}q^{-2m+n})_{m-n}\\
&&\times(z_1z_2q^{m+n+1})_\infty(z_1^{-1}z_2^{-1}q^{-m-n})_{n+1}
(z_2q^{2n-m+1})_{m-n}(z_2^{-1}q^{-2n+m})_{n+1},\\
d(m,n,4,z_1,z_2)&=&(q)_n
(q)_{m-n-1}(z_1q^{2m-n})_\infty(z_1^{-1}q^{-2m+n+1})_{m-n}\\
&&\times(z_1z_2q^{m+n+1})_\infty(z_1^{-1}z_2^{-1}q^{-m-n})_{n+1}
(z_2q^{2n-m+1})_{m-n}(z_2^{-1}q^{-2n+m})_{n+1},\\
d(m,n,5,z_1,z_2)&=&(q)_n
(q)_{m-n-1}(z_1q^{2m-n})_\infty(z_1^{-1}q^{-2m+n+1})_{m-n}\\
&&\times(z_1z_2q^{m+n})_\infty(z_1^{-1}z_2^{-1}q^{-m-n+1})_n
(z_2q^{2n-m+1})_{m-n}(z_2^{-1}q^{-2n+m})_{n+1}.
\en

{}For all integers $k,l_1,l_2$, we define
\begin{equation}\label{chiB}
(\chi_B)^k_{l_1,l_1+l_2}(z_1,z_2)=
\sum_{m\geq n\geq0,s=0,\ldots,5}\frac{p^k_{l_1,l_2}(m,n,s,z_1,z_2)
q^{a^k_{l_1,l_2}(m,n,s)}}{d(m,n,s,z_1,z_2)}.
\end{equation}

\begin{rem}
We deal with  expressions of the form
$(\chi_B)^k_{l_1,l_1+l_2}(q^\alpha z_1,q^\beta z_2)$,
$(\chi_B)^k_{l_1,l_1+l_2}(q^\alpha z_1z_2, q^\beta z_2^{-1})$ etc.
All these expressions are sums where each term is a ratio of a monomial in $z_1,z_2$ and of a product of factors of the form $(1-z_1^iz_2^jq^k)$,
where either $i\ge 0, j\ge 0$ or $i< 0$, $j< 0$. In the first case we expand
$$\frac{1}{1-z_1^iz_2^jq^k}=\sum_{\al=0}^\infty (z_1^iz_2^jq^k)^\alpha$$
and in the second case
$$\frac{1}{1-z_1^iz_2^jq^k}=(-z_1^{-i}z_2^{-j}q^{-k})\sum_{\al=0}^\infty (z_1^iz_2^jq^k)^{-\alpha}.$$
Using these expansions we can always rewrite our expressions
as formal power series in the variables $z_1,z_2$ whose
coefficients are Laurent power series in $q$.
\end{rem}

The following result can be extracted from
\cite{FJLMM2}. For completeness we give a proof.
\begin{thm}\label{chsp}
{}For non-negative integers $k,l_1,l_2$ such that $l_1+l_2\leq k$, we have
\be
\chi^k_{l_1,l_2}(z_1,z_2)=(\chi_B)^k_{l_1,l_2}(z_1,z_2).
\en
\end{thm}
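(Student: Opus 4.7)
The plan is to verify the identity by induction via the recursion \eqref{sr}. Since $\chi^k_{l_1,l_2}(z_1,z_2)$ satisfies \eqref{sr} by inspection of the combinatorial basis of $X^k_{l_1,l_2}$, the task reduces to showing that the bosonic series $(\chi_B)^k_{l_1,l_2}(z_1,z_2)$ satisfies the same recursion and matches $\chi^k_{l_1,l_2}$ at a suitable base. Uniqueness of the solution to \eqref{sr} is established by iterating the recursion: the second summand carries the factor $z_1^{l_1}$ together with the shift $z_2\mapsto qz_2$, so for any fixed monomial in $z_1,z_2$ only finitely many iterations can contribute, and its coefficient is determined by the starting data.

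For the base case I would take $l_1=l_2=0$. There the defining relations of $X^k_{0,0}$ reduce to the integrability conditions $e_{21}(z)^{\alpha}e_{31}(z)^{k+1-\alpha}=0$, and on the bosonic side the data $p^k_{0,0}$ and $a^k_{0,0}$ lose their $l_1,l_2$-dependent terms, so many of the six pieces $s=0,\ldots,5$ coincide and the sum collapses to a manageable expression. The equality with $\chi^k_{0,0}$ can then be checked directly; alternatively, one may argue that \eqref{sr} together with the normalization that the cyclic vector has degree zero (so the constant term equals one) pins down the generating function uniquely, so matching one low-degree coefficient is enough.

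The main step is to show that $(\chi_B)^k_{l_1,l_2}(z_1,z_2)$ satisfies \eqref{sr}. I would interpret the six summands $s=0,1,\ldots,5$ in \eqref{chiB} as the contributions of the six elements of the Weyl group of $\mathfrak{sl}_3$, while the sum over $(m,n)$ with $m\ge n\ge 0$ parametrizes the affine extension. Under the substitution $(l_1,l_2)\mapsto(l_2,k-l_1-l_2)$ together with $z_2\mapsto qz_2$ appearing in the second term of \eqref{sr}, each piece labeled by $s$ is sent to a piece labeled by a specific $s'$, accompanied by a bounded shift of the summation variables $(m,n)$. After identifying the bijection $s\leftrightarrow s'$ and performing the associated reindexing, the matching with the pieces of $(\chi_B)^k_{l_1-1,l_2}(z_1,z_2)$ reduces to elementary telescoping $q$-identities such as $(x)_{n+1}=(1-x)(qx)_n$ and $(x)_\infty=(1-x)(qx)_\infty$ applied to the Pochhammer denominators $d(m,n,s,z_1,z_2)$.

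The hard part will be the combinatorial bookkeeping: identifying the correct bijection $s\leftrightarrow s'$ between the six pieces, tracking the simultaneous shifts of $(m,n)$ and of the exponents of $q,z_1,z_2$, and checking that the telescoping aligns across all six cases at once (with care at boundary terms where, for instance, the two summation ranges $m\ge n$ and $m\ge n+1$ in $d(m,n,s,\cdot)$ are exchanged). Once this matching is in place the recursion for $(\chi_B)$ is established and the theorem follows from the uniqueness argument above; the explicit computation is essentially that carried out in \cite{FJLMM2}.
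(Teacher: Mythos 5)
Your proposal follows essentially the same route as the paper: both sides are shown to satisfy the recursion \eqref{sr}, the character side from the monomial basis and the bosonic side by a direct computation with the six $s$-pieces, and the conclusion follows from uniqueness of the solution to \eqref{sr} given the normalization (constant term $1$) and boundary data (the paper phrases the latter as the initial condition $\chi^k_{-1,l_2}=0$ rather than a base case at $l_1=l_2=0$, a cosmetic difference). One small caution: in your uniqueness sketch note that when $l_1=0$ the factor $z_1^{l_1}$ is trivial and it is the shift $z_2\mapsto qz_2$ (together with the vanishing at index $-1$) that makes the iteration terminate on each fixed monomial.
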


\begin{proof}
The set of
formal power series $\chi_{l_1,l_2}^k(z_1,z_2)$
($0\le l_1,l_2,l_1+l_2\le k$)
are uniquely determined
by
\begin{enumerate}
\item relations (\ref{sr}),
\item the normalization $\chi_{l_1,l_2}^k(0,0)=1$,
\item the initial condition $\chi_{-1,l_2}^k(z_1,z_2)=0$.
\end{enumerate}
All these conditions can be verified for
$(\chi_B)^k_{l_1,l_2}(z_1,z_2)$ by a direct computation.
\end{proof}

\subsection{The bosonic formula for the $\A$-modules.}
{}For all integers $k_1,k_2,l_1,l_2,l_3$
we introduce the formal power series in $z_1$, $z_2$ whose
coefficients are Laurent power series in $q$:
\bea
(\varphi_B)^{k_1,k_2}_{l_1,l_2,l_3}(z_1,z_2)
&=&\sum_{i\geq0}
\frac{z_2^{ik_2}q^{i^2k_2-il_2}}
{(q)_i(q^{2i}z_2)_\infty(q^{-2i+1}z_2^{-1})_i}
(\chi_B)^{k_1}_{l_3,l_1}(q^{i-1}z_1z_2,q^{-2i+1}z_2^{-1})
\label{PHIB}\\
&&+\sum_{i\geq0}
\frac{z_2^{ik_2+l_2}q^{i^2k_2+il_2}}
{(q)_i(q^{2i+1}z_2)_\infty(q^{-2i}z_2^{-1})_{i+1}}
(\chi_B)^{k_1}_{l_3,l_1}(q^{-i-1}z_1,q^{2i+1}z_2),
\nonumber\\
(\psi_B)^{k_1,k_2}_{l_1,l_2,l_3}(z_1,z_2)
&=&\sum_{i\geq0}
\frac{z_2^{ik_2}q^{i^2k_2-il_2}}
{(q)_i(q^{2i}z_2)_\infty(q^{-2i+1}z_2^{-1})_i}
(\chi_B)^{k_1}_{l_1,l_3}(q^{-i}z_1,q^{2i}z_2)
\label{PSIB}\\
&&+\sum_{i\geq0}
\frac{z_2^{ik_2+l_2}q^{i^2k_2+il_2}}
{(q)_i(q^{2i+1}z_2)_\infty(q^{-2i}z_2^{-1})_{i+1}}
(\chi_B)^{k_1}_{l_1,l_3}(q^iz_1z_2,q^{-2i}z_2^{-1}).
\nonumber
\ena
\begin{prop}
{}For all integers  $k_1,k_2,l_1,l_2,l_3$ we have
\bea
&&(\varphi_B)^{k_1,k_2}_{l_1,l_2,l_3}(z_1,z_2)
=(\varphi_B)^{k_1,k_2}_{l_1,l_2-1,l_3}(z_1,z_2)+
z_2^{l_2}(\psi_B)^{k_1,k_2}_{l_3,k_2-l_2,l_1}(q^{-1}z_1,qz_2),
\label{Ba}\\
&&(\psi_B)^{k_1,k_2}_{l_1,l_2,l_3}(z_1,z_2)
=(\psi_B)^{k_1,k_2}_{l_1,l_2-1,l_3}(z_1,z_2)+
z_2^{l_2}(\varphi_B)^{k_1,k_2}_{l_3,k_2-l_2,l_1}(z_1,qz_2),
\label{Bb}\\
&&(\varphi_B)^{k_1,k_2}_{l_1,l_2,l_3}(z_1,z_2)
=(\varphi_B)^{k_1,k_2}_{l_1,l_2,l_3-1}(z_1,z_2)
+(q^{-1}z_1z_2)^{l_3}
(\psi_B)^{k_1,k_2}_{l_1-l_3,l_2-l_3,k_1-l_3}(z_1,z_2),
\label{Bc}\\
&&(\psi_B)^{k_1,k_2}_{l_1,l_2,l_3}(z_1,z_2)
=(\psi_B)^{k_1,k_2}_{l_1-1,l_2,l_3}(z_1,z_2)+
z_1^{l_1}(\varphi_B)^{k_1,k_2}_{k_1-l_1,l_1+l_2,l_3-l_1}
(qz_1,z_2).
\label{Bd}
\ena
\end{prop}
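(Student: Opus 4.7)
The plan is to verify the four identities as formal power series identities in $z_1,z_2$ by reducing each to two recursions already in hand: the $\widehat{\mathfrak{sl}}_2$ bosonic recursion $\chi^k_l(z)=\chi^k_{l-1}(z)+z^l\chi^k_{k-l}(qz)$, which holds term-by-term for the explicit formula \eqref{BF} from Section 2, and the small principal subspace recursion \eqref{sr}, which by Theorem \ref{chsp} is satisfied by the explicit series $(\chi_B)^{k}_{l_1,l_2}(z_1,z_2)$ for arbitrary integer parameters.

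For \eqref{Ba} and \eqref{Bb}, which change only $l_2$, I would exploit the following observation: the two sums in the definitions \eqref{PHIB} and \eqref{PSIB} have prefactors that exactly reproduce the two summands of the bosonic formula \eqref{BF} for $\chi^{k_2}_{l_2}(z_2)$, while the $(\chi_B)^{k_1}$-factors depend on the summation index and on $l_1,l_3,k_1$ but not on $l_2$. Accordingly, I would group both sides of each identity by the argument of the $(\chi_B)^{k_1}$-factor appearing in it; these arguments take only two distinct values up to the summation index, and after reindexing one of the sums by $i\mapsto i+1$, each identity splits into a pair of $q$-series identities among the $\widehat{\mathfrak{sl}}_2$ bosonic coefficients. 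These then follow by direct and elementary manipulation of $q$-Pochhammer symbols, equivalent in content to the level-$k_2$ $\widehat{\mathfrak{sl}}_2$ recursion.

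For \eqref{Bc} and \eqref{Bd}, which change $l_3$ or $l_1$ and hence shift the indices of $(\chi_B)^{k_1}$ itself, I would apply \eqref{sr} termwise to each occurrence of $(\chi_B)^{k_1}$ inside the sums in $(\varphi_B)$ or $(\psi_B)$. The two terms produced by \eqref{sr} should assemble, after bookkeeping of the induced shifts in $z_1,z_2$ and the reindexing, into the two summands on the right-hand side of the identity; in particular, the overall prefactors $(q^{-1}z_1z_2)^{l_3}$ in \eqref{Bc} and $z_1^{l_1}$ together with the $qz_1$-shift in $(\varphi_B)$ in \eqref{Bd} arise from the leading powers of the first argument of $(\chi_B)^{k_1}$ in the extra term of \eqref{sr}. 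The main obstacle will be the combined bookkeeping of two layers of two-sum structure---the bosonic prefactors coming from \eqref{BF} and the inner $(\chi_B)^{k_1}$-factors---and in particular the matching of the $(\chi_B)^{k_1}$-arguments across the two sides of \eqref{Bc} and \eqref{Bd}, which requires carefully tracking the $(z_1,z_2,q)$-shifts induced by \eqref{sr} as well as by the substitutions appearing on the right-hand side of each identity. Once this matching is set up, the remaining verification reduces to elementary identities of $q$-Pochhammer expressions of the form $(q^az_2^b)_c$ and $(q)_d$.
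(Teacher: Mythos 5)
Your handling of \eqref{Ba} and \eqref{Bb} is correct and coincides with the paper's own argument: there the inner factors on both sides are the same series $(\chi_B)^{k_1}_{l_3,l_1}$ (resp.\ $(\chi_B)^{k_1}_{l_1,l_3}$), their arguments match after the stated substitution and a shift $i\mapsto i+1$ in one of the two families, and what remains is an elementary Pochhammer identity among the $\widehat{\mathfrak{sl}}_2$-type prefactors (the paper proves \eqref{Ba} by inserting $1=[1-(1-q^{i})]$ and $1=[1-(1-q^{-i}z_2^{-1})]$ and declares the other cases similar). So that half of your proposal is fine.

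The gap is in your plan for \eqref{Bc} and \eqref{Bd}. In the indexing used in \eqref{PHIB}--\eqref{PSIB}, where $(\chi_B)^{k}_{A,B}$ denotes the series \eqref{chiB} with $l_1=A$, $l_2=B-A$, the recursion \eqref{sr} reads $(\chi_B)^{k}_{A,B}(z_1,z_2)=(\chi_B)^{k}_{A-1,B-1}(z_1,z_2)+z_1^{A}(\chi_B)^{k}_{B-A,k-A}(z_1,qz_2)$, i.e.\ its first term lowers \emph{both} subscripts. But \eqref{Bc} requires passing from $(\chi_B)^{k_1}_{l_3,l_1}$ to $(\chi_B)^{k_1}_{l_3-1,l_1}$, and \eqref{Bd} from $(\chi_B)^{k_1}_{l_1,l_3}$ to $(\chi_B)^{k_1}_{l_1-1,l_3}$, where only the first subscript drops. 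Hence applying \eqref{sr} termwise produces $(\chi_B)^{k_1}_{l_3-1,l_1-1}$ (resp.\ $(\chi_B)^{k_1}_{l_1-1,l_3-1}$) rather than the series occurring in the first right-hand summand, and the leftover discrepancy (shifting the second subscript at fixed first subscript) is not an instance of \eqref{sr} at all; it is the kind of identity that holds only after summing over $i$ against the $\widehat{\mathfrak{sl}}_2$ prefactors, exactly as in \eqref{4chi} in the proof of Theorem \ref{boson}. The extra terms do not assemble termwise either: in \eqref{Bc}, the extra term coming from the $i$-th term of the first family inherits the prefactor with exponent data $(z_2^{ik_2+l_3},\,q^{-il_2},\,(q)_i)$, whereas the only term of $(q^{-1}z_1z_2)^{l_3}(\psi_B)^{k_1,k_2}_{l_1-l_3,l_2-l_3,k_1-l_3}(z_1,z_2)$ with the same $(\chi_B)$-argument $(q^{i-1}z_1z_2,q^{-2i+2}z_2^{-1})$ is its second family at index $i-1$, whose prefactor carries $(z_2^{(i-1)k_2+l_2},\,(q)_{i-1})$; these agree only in degenerate cases. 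So the claimed two-term assembly fails, and you need an extra ingredient: either verify \eqref{Bc}, \eqref{Bd} directly from the explicit formula \eqref{chiB} by equating the coefficients of the monomials $p^{k_1}_{\cdot,\cdot}(m,n,s,z_1,z_2)$ (the route the paper takes for the analogous identity \eqref{4chi}), or supplement \eqref{sr} with the cross-sum resummations mixing the two $i$-families, in the spirit of the $[1-(1-q^{i})]$ manipulation used for \eqref{Ba}.
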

\begin{proof}
We prove \eqref{Ba}.
The proof of other formulas is similar.

We have
\be
&&(\varphi_B)^{k_1,k_2}_{l_1,l_2,l_3}(z_1,z_2)-
z_2^{l_2}(\psi_B)^{k_1,k_2}_{l_3,k_2-l_2,l_1}(q^{-1}z_1,qz_2)\\
&&\quad=\sum_{i\geq0}
\frac{z_2^{ik_2}q^{i^2k_2-il_2}}{(q)_i(q^{2i}z_2)_\infty(q^{-2i+1}z_2^{-1})_i}
(\chi_B)^{k_1}_{l_3,l_1}(q^{i-1}z_1z_2,q^{-2i+1}z_2^{-1})[1-(1-q^i)]\\
&&\qquad+\sum_{i\geq0}
\frac{z_2^{ik_2+l_2}q^{i^2k_2+il_2}}
{(q)_i(q^{2i+1}z_2)_\infty(q^{-2i}z_2^{-1})_{i+1}}
(\chi_B)^{k_1}_{l_3,l_1}(q^{-i-1}z_1,q^{2i+1}z_2)[1-(1-q^{-i}z_2^{-1})]\\
&&\quad=(\varphi_B)^{k_1,k_2}_{l_1,l_2-1,l_3}(z_1,z_2).
\en
\end{proof}

\begin{thm}\label{boson}
If $(l_1,l_2,l_3)\in \tilde R_U^{k_1,k_2}$, then
$$
(\varphi_B)_{l_1,l_2,l_3}^{k_1,k_2}(z_1,z_2)=\varphi_{l_1,l_2,l_3}^{k_1,k_2}(z_1,z_2).
$$
If $(l_1,l_2,l_3)\in R_V^{k_1,k_2}$, then
$$
(\psi_B)_{l_1,l_2,l_3}^{k_1,k_2}(z_1,z_2)=\psi_{l_1,l_2,l_3}^{k_1,k_2}(z_1,z_2).
$$
\end{thm}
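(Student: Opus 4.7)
The plan is to deduce Theorem \ref{boson} from the uniqueness principle in Proposition \ref{unique2}. I set
\be
&&\bar\varphi^{k_1,k_2}_{l_1,l_2,l_3}(z_1,z_2):=\varphi^{k_1,k_2}_{l_1,l_2,l_3}(z_1,z_2)-(\varphi_B)^{k_1,k_2}_{l_1,l_2,l_3}(z_1,z_2)\qquad ((l_1,l_2,l_3)\in\tilde R^{k_1,k_2}_U),\\
&&\bar\psi^{k_1,k_2}_{l_1,l_2,l_3}(z_1,z_2):=\psi^{k_1,k_2}_{l_1,l_2,l_3}(z_1,z_2)-(\psi_B)^{k_1,k_2}_{l_1,l_2,l_3}(z_1,z_2)\qquad ((l_1,l_2,l_3)\in R^{k_1,k_2}_V),
\en
and verify the hypotheses of Proposition \ref{unique2}: that these differences satisfy the SES recursions \eqref{TRa}--\eqref{TRd} in the respective regions, and that $\bar\psi^{k_1,k_2}_{0,0,0}(z_1,z_2)$ carries no constant term. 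The conclusion then forces $\bar\varphi\equiv 0$ and $\bar\psi\equiv 0$.

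The recursions for $\varphi$ and $\psi$ are exactly Theorem \ref{thm:chrec}, while the proposition preceding Theorem \ref{boson} establishes the \emph{un-truncated} recursions \eqref{Ba}--\eqref{Bd} for $(\varphi_B)$ and $(\psi_B)$ for all integer parameters. When the $\min$ appearing in \eqref{TRa}--\eqref{TRd} acts trivially (for instance $l_3\le l_2-1$ in \eqref{TRa}), the two recursions coincide verbatim. What remains is to check the boundary cases, such as $l_3=l_2$ in \eqref{TRa}, where one needs the stabilization
\be
(\varphi_B)^{k_1,k_2}_{l_1,l_2-1,l_2}=(\varphi_B)^{k_1,k_2}_{l_1,l_2-1,l_2-1},
\en
together with the analogous identities arising from \eqref{TRb}--\eqref{TRd}. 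These are the bosonic counterparts of Lemmas \ref{lem:V=V}--\ref{lem:U=U}, and from the definitions \eqref{PHIB}, \eqref{PSIB} they reduce to stabilization identities for the building blocks $(\chi_B)^{k_1}_{l_3,l_1}$ and $(\chi_B)^{k_1}_{l_1,l_3}$. These in turn follow from Theorem \ref{chsp}, which matches $(\chi_B)^k_{l_1,l_2}$ with the small principal subspace character $\chi^k_{l_1,l_2}$ on the defining range, together with the analogous module-theoretic stabilization properties for $X^k_{l_1,l_2}$.

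For the constant-term condition, the cyclic vector of $V^{k_1,k_2}_{0,0,0}$ lies in degree $(0,0,0)$, so $\psi^{k_1,k_2}_{0,0,0}(0,0)=1$; on the bosonic side, evaluating \eqref{PSIB} at $z_1=z_2=0$ only the $i=0$ summand of the first sum survives, and it contributes $(\chi_B)^{k_1}_{0,0}(0,0)=1$. Hence $\bar\psi^{k_1,k_2}_{0,0,0}(z_1,z_2)$ has vanishing constant term, and Proposition \ref{unique2} yields the theorem. The main obstacle is the verification of the boundary stabilizations for $(\varphi_B)$ and $(\psi_B)$: although these are intuitively dictated by the representation-theoretic identifications of Lemmas \ref{lem:V=V}--\ref{lem:U=U}, establishing them directly for the formal sums \eqref{PHIB}, \eqref{PSIB} requires a somewhat patient computation involving the summation over $i$ and the explicit form of $(\chi_B)^{k_1}_{\cdot,\cdot}$, with Theorem \ref{chsp} as the key bridge between the bosonic and module-theoretic sides.
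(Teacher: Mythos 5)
Your overall strategy is the paper's: form the differences, invoke the uniqueness statement (Proposition \ref{unique2}), use Theorem \ref{thm:chrec} on the module side and the un-truncated recursions \eqref{Ba}--\eqref{Bd} on the bosonic side, and check the constant term of $\bar\psi^{k_1,k_2}_{0,0,0}$. The gap is in the step you defer, namely converting \eqref{Ba}--\eqref{Bd} into the truncated recursions \eqref{TRa}--\eqref{TRd}. Two kinds of boundary checks are needed and your treatment of both is deficient. First, \eqref{Ba}--\eqref{Bd} hold with the literal formulas at negative indices, whereas \eqref{TRa}--\eqref{TRd} are read with the convention that a series with a negative index is zero; so one must prove that the bosonic series actually vanish there, i.e. $(\psi_B)^{k_1,k_2}_{-1,l_2,l_3}=(\psi_B)^{k_1,k_2}_{l_1,-1,l_3}=(\varphi_B)^{k_1,k_2}_{l_1,-1,l_3}=(\varphi_B)^{k_1,k_2}_{l_1,l_2,-1}=0$. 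You never mention these; they are part of the required verification (items 1)--4) in the paper's proof) and are themselves nontrivial cancellations between the two $i$-sums in \eqref{PHIB}, \eqref{PSIB}.

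Second, and more seriously, your proposed proof of the stabilization identities fails. You claim that identities such as $(\varphi_B)^{k_1,k_2}_{l_1,l_2-1,l_2}=(\varphi_B)^{k_1,k_2}_{l_1,l_2-1,l_2-1}$ and $(\psi_B)^{k_1,k_2}_{l_1,l_2,l_1+l_2}=(\psi_B)^{k_1,k_2}_{l_1,l_2,l_1+l_2+1}$ reduce, via the definitions \eqref{PHIB}, \eqref{PSIB}, to stabilization identities for the building blocks $(\chi_B)^{k_1}_{l_3,l_1}$ and $(\chi_B)^{k_1}_{l_1,l_3}$, and that these follow from Theorem \ref{chsp} together with module-theoretic stabilization of $X^k_{l_1,l_2}$. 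The needed $(\chi_B)$-identities are false: shifting $l_3$ by one changes the relevant $(\chi_B)^{k_1}$ into the character of a genuinely different small principal subspace whenever the parameters lie in the range of Theorem \ref{chsp} (the cap on $a_0$, resp. on $a_0+a_1$, changes, so the characters differ already in low degree), and in the remaining cases the shifted parameters leave that range, so neither Theorem \ref{chsp} nor any module $X$ is available as a bridge; Lemmas \ref{lem:V=V}--\ref{lem:U=U} concern the modules $U$, $V$ and have no counterpart at the level of a single $(\chi_B)$. The stabilizations of $(\varphi_B)$ and $(\psi_B)$ hold only because the two $i$-summations in \eqref{PHIB}, \eqref{PSIB} compensate each other: the paper proves item 6), for instance, by extracting the coefficient of $z_2^{ik_2}q^{i^2k_2}$, pairing the contributions of the two sums as in \eqref{4chi}, and then matching the coefficients of each monomial $p^{k}_{l_1,l_2}(m,n,s,z_1,z_2)$, which reduces to explicit relations among the denominators, e.g. $d(m,n,0,q^{-i}z_1,q^{2i}z_2)/d(m,m-n,5,q^iz_1z_2,q^{-2i}z_2^{-1})=(1-q^n)/(1-z_2q^{n+2i})$. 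So the ``patient computation'' you postpone is not a routine consequence of Theorem \ref{chsp}; it is the actual content of the proof, and the route you propose for it does not work.
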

\begin{proof}
Consider $\varphi_{l_1,l_2,l_3}^{k_1,k_2}(z_1,z_2)$
$((l_1,l_2,l_3)\in \tilde R^{k_1,k_2}_U)$ and
$\psi_{l_1,l_2,l_3}^{k_1,k_2}(z_1,z_2)$
$((l_1,l_2,l_3)\in R^{k_1,k_2}_V)$.
They satisfy recursion relations \eqref{TRa}--\eqref{TRd}.\\
The series
$(\varphi_B)_{l_1,l_2,l_3}^{k_1,k_2}(z_1,z_2)$
$((l_1,l_2,l_3)\in \tilde R^{k_1,k_2}_U)$ and
$(\psi_B)_{l_1,l_2,l_3}^{k_1,k_2}(z_1,z_2)$
$((l_1,l_2,l_3)\in R^{k_1,k_2}_V)$ satisfy relations
\eqref{Ba}--\eqref{Bd}.

Because of the uniqueness of the solution
(see Proposition \ref{unique2}) it is enough to check that
\begin{enumerate}
\item[1)] \label{1} $(\psi_B)_{-1,l_2,l_3}^{k_1,k_2}(z_1,z_2)=0,$
\item[2)] \label{2} $(\psi_B)_{l_1,-1,l_3}^{k_1,k_2}(z_1,z_2)=0,$
\item[3)] \label{3} $(\varphi_B)_{l_1,-1,l_3}^{k_1,k_2}(z_1,z_2)=0,$
\item[4)] \label{4} $(\varphi_B)_{l_1,l_2,-1}^{k_1,k_2}(z_1,z_2)=0,$
\item[5)] \label{6}
 $(\varphi_B)_{k_1,l_2,\min(k_1,l_2)}^{k_1,k_2}(z_1,z_2)=(\varphi_B)_{k_1,l_2,\min(k_1,l_2)+1}^{k_1,k_2}(z_1,z_2),$
\item[6)] \label{5}
$(\psi_B)_{l_1,l_2,l_1+l_2}^{k_1,k_2}(z_1,z_2)=(\psi_B)_{l_1,l_2,l_1+l_2+1}^{k_1,k_2}(z_1,z_2).$
\end{enumerate}
We prove the last formula. The proof of the rest is similar.
We need to show that
\bea
&&\sum_{i\geq0}
\frac{z_2^{ik_2}q^{i^2k_2-il_2}}{(q)_i(q^{2i}z_2)_\infty(q^{-2i+1}z_2^{-1})_i}
(\chi_B)^{k_1}_{l_1,l_1+l_2}(q^{-i}z_1,q^{2i}z_2)
\nonumber
\\
&&\quad
+\sum_{i\geq0}
\frac{z_2^{ik_2+l_2}q^{i^2k_2+il_2}}
{(q)_i(q^{2i+1}z_2)_\infty(q^{-2i}z_2^{-1})_{i+1}}
(\chi_B)^{k_1}_{l_1,l_1+l_2}(q^iz_1z_2,q^{-2i}z_2^{-1})
\nonumber\\
&&
=\sum_{i\geq0}
\frac{z_2^{ik_2}q^{i^2k_2-il_2}}
{(q)_i(q^{2i}z_2)_\infty(q^{-2i+1}z_2^{-1})_i}
(\chi_B)^{k_1}_{l_1,l_1+l_2+1}(q^{-i}z_1,q^{2i}z_2)
\nonumber\\
&&\quad
+\sum_{i\geq0}
\frac{z_2^{ik_2+l_2}q^{i^2k_2+il_2}}
{(q)_i(q^{2i+1}z_2)_\infty(q^{-2i}z_2^{-1})_{i+1}}
(\chi_B)^{k_1}_{l_1,l_1+l_2+1}(q^iz_1z_2,q^{-2i}z_2^{-1}).
\nonumber
\ena
It is sufficient to show that the
coefficients of $z_2^{ik_2}q^{i^2k_2}$
for each $i$ are equal:
\begin{multline}\label{4chi}
-q^{-i(l_2+2)}z_2^{-1}
(\chi_B)^{k_1}_{l_1,l_1+l_2}(q^{-i}z_1,q^{2i}z_2)+
q^{il_2} z_2^{l_2}
(\chi_B)^{k_1}_{l_1,l_1+l_2}(q^iz_1z_2,q^{-2i}z_2^{-1})\\=
-q^{-i(l_2+2)}z_2^{-1}
(\chi_B)^{k_1}_{l_1,l_1+l_2+1}(q^{-i}z_1,q^{2i}z_2)+
q^{il_2} z_2^{l_2}
(\chi_B)^{k_1}_{l_1,l_1+l_2+1}(q^iz_1z_2,q^{-2i}z_2^{-1}).
\end{multline}
Each term in \eqref{4chi} has the form
$\sum_{m\ge n\ge 0}\sum_{s=0}^5 p^k_{l_1,l_2}(m,n,s,z_1,z_2)
g_s(z_1,z_2,q)$ where $g_s$ are
independent of $k_1$, $l_1$ and $l_2$.

Equating coefficients of
$p^k_{l_1,l_2}(m,n,0,z_1,z_2)$ we are led to show
\begin{multline}\label{tildea}
\frac{-q^{-2i}
q^{a^{k_1}_{l_1,l_2}(m,n,0)}}{d(m,n,0,q^{-i}z_1,q^{2i}z_2)}+
\frac{z_2
q^{a^{k_1}_{l_1,l_2}(m,m-n,5)}}{d(m,m-n,5,q^iz_1z_2,q^{-2i}z_2^{-1})}\\=
\frac{-q^{-2i}
q^{a^{k_1}_{l_1,l_2+1}(m,n,0)}}{d(m,n,0,q^{-i}z_1,q^{2i}z_2)}+
\frac{z_2
q^{a^{k_1}_{l_1,l_2+1}(m,m-n,5)}}{d(m,m-n,5,q^iz_1z_2,q^{-2i}z_2^{-1})}.
\end{multline}
Equation \eqref{tildea} is equivalent to the equation
$$
\frac{d(m,n,0,q^{-i}z_1,q^{2i}z_2)}{d(m,m-n,5,q^iz_1z_2,q^{-2i}z_2^{-1})}=\frac{1-q^n}{1-z_2q^{n+2i}},
$$
which can be checked by a direct calculation.
In a similar way we check that the coefficients of
all monomials $p^k_{l_1,l_2}(m,n,s,z_1,z_2)$
coincide.
The theorem is proved.
\end{proof}

\section{Case of $k_1=k_2$ and Toda recursion}
In this section we restrict to the case of $k_1=k_2=k$.  If
$(l_1,l_2,l_3)\in R^{k,k}_V$, then $l_3=l_1+l_2$, and if
$(l_1,l_2,l_3)\in R^{k,k}_U$, then $l_3=l_1+l_2-k$.  As a result, in
\eqref{PHIB} and \eqref{PSIB} several terms have the same dependence
on $k,l_1,l_2$.  In principle, these terms can be summed up. However,
a direct summation is not completely obvious. We use our recursion to
obtain the result of the summation, which turns out to have a
factorized form.

Set
\bea
I_{d_1,d_2}(z_1,z_2)=\frac{(q z_1^{-1} z_2^{-1})_{d_1+d_2}}
{(q)_{d_1} (q)_{d_2}(qz_1^{-1})_{d_1} (qz_2^{-1})_{d_2}
(q z_1^{-1} z_2^{-1})_{d_1}(q z_1^{-1} z_2^{-1})_{d_2}}.
\label{Idd}
\ena

\begin{prop}\label{prop:Toda}
The functions $I_{d_1,d_2}(z_1,z_2)$ satisfy
the following recurrence relation
\bea
&&(z_1^{-1}(q^{d_1}-1)+(q^{d_2-d_1}-1)+
z_2(q^{-d_2}-1))I_{d_1,d_2}(z_1,z_2)\nonumber 
\\
&&\quad=
q^{d_2-d_1}I_{d_1-1,d_2}(z_1,z_2)+
                 z_2q^{-d_2}I_{d_1,d_2-1}(z_1,z_2).\nonumber
\ena
\end{prop}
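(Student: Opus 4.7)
The plan is to prove Proposition \ref{prop:Toda} as a direct rational-function identity. Dividing both sides of the claimed recurrence by $I_{d_1,d_2}(z_1,z_2)$ reduces the statement to an algebraic identity in $q^{d_1}$, $q^{d_2}$, $z_1$, $z_2$, once the two ratios $I_{d_1-1,d_2}/I_{d_1,d_2}$ and $I_{d_1,d_2-1}/I_{d_1,d_2}$ are expressed in closed form.

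These ratios are read off directly from \eqref{Idd}. Using the two factorizations
\be
(qz_1^{-1}z_2^{-1})_{d_1+d_2}
&=&(qz_1^{-1}z_2^{-1})_{d_1}(q^{d_1+1}z_1^{-1}z_2^{-1})_{d_2}\\
&=&(qz_1^{-1}z_2^{-1})_{d_2}(q^{d_2+1}z_1^{-1}z_2^{-1})_{d_1}
\en
together with the elementary identity $(a)_n=(a)_{n-1}(1-aq^{n-1})$ applied to each Pochhammer factor in \eqref{Idd}, a telescoping computation yields
\be
\frac{I_{d_1-1,d_2}(z_1,z_2)}{I_{d_1,d_2}(z_1,z_2)}
&=&\frac{(1-q^{d_1})(1-q^{d_1}z_1^{-1})(1-q^{d_1}z_1^{-1}z_2^{-1})}{1-q^{d_1+d_2}z_1^{-1}z_2^{-1}},\\
\frac{I_{d_1,d_2-1}(z_1,z_2)}{I_{d_1,d_2}(z_1,z_2)}
&=&\frac{(1-q^{d_2})(1-q^{d_2}z_2^{-1})(1-q^{d_2}z_1^{-1}z_2^{-1})}{1-q^{d_1+d_2}z_1^{-1}z_2^{-1}}.
\en

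Substituting these two expressions into the right-hand side of the proposed recurrence and clearing the common denominator $1-q^{d_1+d_2}z_1^{-1}z_2^{-1}$ leaves a polynomial identity in $q^{d_1}$, $q^{d_2}$, $z_1^{-1}$, $z_2$. This identity is verified by expanding the two cubic products on the right and matching monomials against $(z_1^{-1}(q^{d_1}-1)+(q^{d_2-d_1}-1)+z_2(q^{-d_2}-1))(1-q^{d_1+d_2}z_1^{-1}z_2^{-1})$. I do not anticipate any conceptual obstacle: the check is routine bookkeeping, and a small numerical case (e.g.\ $d_1=d_2=1$, $z_1=z_2=1$) already confirms that the normalizations line up correctly. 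As a consistency check, the substitution $z_1=p_1^{-2}p_2$, $z_2=p_1p_2^{-2}$ converts the asserted recurrence into \eqref{Todarec}, the Toda recursion already cited from \cite{GL}, \cite{E}, \cite{BF} in the introduction; thus the proposition is simply a convenient repackaging of the eigenvalue equation \eqref{Toda} for $I(Q_1,Q_2,z_1,z_2)$.
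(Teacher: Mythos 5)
Your verification is correct, and it is essentially the argument the paper intends: Proposition \ref{prop:Toda} is stated without a written proof (it is the Toda recursion \eqref{Todarec} of the introduction, attributed to \cite{GL}, \cite{E}, \cite{BF}), so a direct check against the closed form \eqref{Idd} is exactly what is being left to the reader. Your two ratio formulas are right: since $(a)_n/(a)_{n-1}=1-aq^{n-1}$, one gets
$I_{d_1-1,d_2}/I_{d_1,d_2}=(1-q^{d_1})(1-q^{d_1}z_1^{-1})(1-q^{d_1}z_1^{-1}z_2^{-1})/(1-q^{d_1+d_2}z_1^{-1}z_2^{-1})$
and the symmetric expression for $I_{d_1,d_2-1}/I_{d_1,d_2}$, and after clearing the common denominator $1-q^{d_1+d_2}z_1^{-1}z_2^{-1}$ the remaining polynomial identity in $q^{d_1},q^{d_2},z_1^{-1},z_2$ does close (I expanded both sides; all monomials match, so the step you describe as routine bookkeeping indeed goes through rather than being merely plausible). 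Two small points worth making explicit in a final write-up: (i) the boundary cases $d_1=0$ or $d_2=0$, where the convention $I_{-1,d_2}=I_{d_1,-1}=0$ is in force, are consistent because the factors $1-q^{d_1}$, resp. $1-q^{d_2}$, in your ratios vanish there; (ii) your closing observation is accurate — dividing \eqref{Todarec} by $p_1^{-1}p_2$ and using $z_1=p_1^{-2}p_2$, $z_2=p_1p_2^{-2}$ reproduces the proposition verbatim, so the statement is the same recursion in the $(z_1,z_2)$ variables.
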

We call this relation the {\it Toda recursion}.

Set further
\be
\bar{J}_{d_1,d_2}(z_1,z_2)
=\frac1{(z_1)_\infty(z_2)_\infty(z_1z_2)_\infty}
I_{d_1,d_2}(z_1,z_2).
\en

\begin{definition}\label{AB}
We define the series
$A^s_{d_1,d_2}(z_1,z_2)$, $B^s_{d_1,d_2}(z_1,z_2)$ as follows:
\be
&&A^s_{d_1,d_2}(z_1,z_2)
=\bar{J}_{d_1,d_2}(w_1,w_2)f^s(w_1,w_2),
\\
&&B^0_{d_1,d_2}(z_1,qz_2)
=(1-z_2^{-1}q^{d_1-d_2})(-w_2)\bar{J}_{d_1,d_2}(w_1,w_2),
\\
&&B^1_{d_1,d_2}(z_1,qz_2)=
(1-z_1^{-1}z_2^{-1}q^{-d_1})w_1^2w_2
\bar{J}_{d_1,d_2}(w_1,w_2),
\\
&&B^2_{d_1-1,d_2-1}(z_1,qz_2)=(1-q^{d_2})(-w_1)
\bar{J}_{d_1,d_2}(w_1,w_2),
\\
&&B^3_{d_1-1,d_2-1}(z_1,qz_2)=(1-q^{d_1-d_2}z_2^{-1})
               w_1w_2^2 \bar{J}_{d_1,d_2}(w_1,w_2),
\\
&&B^4_{d_1,d_2-1}(z_1,qz_2)=(1-q^{-d_1}z_1^{-1}z_2^{-1})
               (-w_1^2w_2^2) \bar{J}_{d_1,d_2}(w_1,w_2),
\\
&&B^5_{d_1,d_2-1}(z_1,qz_2)=(1-q^{d_2}) \bar{J}_{d_1,d_2}(w_1,w_2),
\en
where $w_1=z_1q^{2d_1-d_2}$, $w_2=z_2q^{2d_2-d_1}$, and
\be
&&f^0(w_1,w_2)=1, f^1(w_1,w_2)=-w_1,
f^2(w_1,w_2)=w_1^2w_2,\\
&&
f^3(w_1,w_2)=-w_1^2w_2^2,
f^4(w_1,w_2)=w_1w_2^2,
f^5(w_1,w_2)=-w_2.
\en
\end{definition}

In this section we prove the following theorem.
\begin{thm}
We have
\label{g-l}
\bea
\label{psi}\\
&&\psi_{l_1,l_2,l_1+l_2}^{k,k}(z_1,z_2)=
\sum_{d_1,d_2\ge 0} z_1^{kd_1} z_2^{kd_2} q^{k(d_1^2+d_2^2-d_1d_2)}\nonumber
\\&&\times\Bigl(
q^{-l_1d_1-l_2d_2} A^0_{d_1,d_2}(z_1,z_2)
+z_1^{l_1} q^{l_1(d_1-d_2)-l_2d_2}A^1_{d_1,d_2}(z_1,z_2)
+z_1^{l_1+l_2}z_2^{l_2} q^{l_1(d_1-d_2)+l_2d_1} A^2_{d_1,d_2}(z_1,z_2)\nonumber\\
&&\quad+(z_1z_2)^{l_1+l_2}q^{l_1d_2+l_2d_1} A^3_{d_1,d_2}(z_1,z_2)
+z_1^{l_1}z_2^{l_1+l_2}q^{l_2(d_2-d_1)+l_1d_2} A^4_{d_1,d_2}(z_1,z_2)\nonumber\\
&&\quad+z_2^{l_2} q^{-l_1d_1-l_2(d_1-d_2)} A^5_{d_1,d_2}(z_1,z_2)\Bigr)\nonumber
\ena
and
\bea
\label{phi}\\
&&\varphi_{l_1,l_2,l_1+l_2-k}^{k,k}(z_1,z_2)= \sum_{d_1,d_2\ge 0}
z_1^{kd_1} z_2^{kd_2} q^{k(d_1^2+d_2^2-d_1d_2)}\nonumber
\\&&\times \Bigl(
q^{-l_1d_1-l_2d_2} B^0_{d_1,d_2}(z_1,z_2)
+z_1^{l_1} q^{l_1(d_1-d_2)-l_2d_2} B^1_{d_1,d_2}(z_1,z_2)
+z_1^{l_1+l_2}z_2^{l_2} q^{l_1(d_1-d_2)+l_2d_1} B^2_{d_1,d_2}(z_1,z_2)\nonumber\\
&&\quad+(z_1z_2)^{l_1+l_2}q^{l_1d_2+l_2d_1} B^3_{d_1,d_2}(z_1,z_2)
+z_1^{l_1}z_2^{l_1+l_2}q^{l_2(d_2-d_1)+l_1d_2} B^4_{d_1,d_2}(z_1,z_2)\nonumber\\
&&\quad+z_2^{l_2} q^{-l_1d_1-l_2(d_1-d_2)} B^5_{d_1,d_2}(z_1,z_2)\Bigr).\nonumber
\ena
\end{thm}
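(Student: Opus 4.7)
The plan is to deduce Theorem \ref{g-l} from the uniqueness of Proposition \ref{unique3} combined with the Toda recursion of Proposition \ref{prop:Toda}. Denote temporarily by $\widetilde\psi^{k,k}_{l_1,l_2,l_1+l_2}(z_1,z_2)$ and $\widetilde\varphi^{k,k}_{l_1,l_2,l_1+l_2-k}(z_1,z_2)$ the right-hand sides of (\ref{psi}) and (\ref{phi}), extended by zero when any subscript becomes negative. By Theorem \ref{boson} (specialized to $k_1=k_2=k$, $l_3=l_1+l_2$ or $l_3=l_1+l_2-k$), the honest characters $\psi$ and $\varphi$ satisfy the three recursions stated just before Proposition \ref{unique3}. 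Hence it is enough to show that $\widetilde\psi$ and $\widetilde\varphi$ satisfy the same three recursions and the corresponding boundary vanishings $\widetilde\psi^{k,k}_{-1,l_2,l_1+l_2-1}=\widetilde\psi^{k,k}_{l_1,-1,l_1+l_2-1}=\widetilde\varphi^{k,k}_{l_1,-1,l_1+l_2-k-1}=0$, and that $\widetilde\psi^{k,k}_{0,0,0}(0,0)=1$ (so that the difference $\psi-\widetilde\psi$ satisfies the hypothesis of Proposition \ref{unique3}).

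The key identity is the Toda recursion for $I_{d_1,d_2}$. Evaluating Proposition \ref{prop:Toda} at the shifted variables $(w_1,w_2)=(z_1q^{2d_1-d_2},z_2q^{2d_2-d_1})$ and dividing by $(w_1)_\infty(w_2)_\infty(w_1w_2)_\infty$, one obtains a three-term identity for $\bar J_{d_1,d_2}(w_1,w_2)$ in which the scalar coefficients are precisely the factors $(1-q^{d_1})$, $(1-q^{d_2})$, $(1-w_2^{-1}q^{d_1-d_2})$, $(1-w_1^{-1}w_2^{-1}q^{-d_1})$ appearing in Definition \ref{AB} between $A^s_{d_1,d_2}$ and $B^s_{d_1,d_2}$. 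I would first compute the difference $\widetilde\psi^{k,k}_{l_1,l_2,l_1+l_2}-\widetilde\psi^{k,k}_{l_1,l_2-1,l_1+l_2-1}$ term by term in $s\in\{0,\ldots,5\}$: each of the six $(l_1,l_2)$-dependent prefactors changes under $l_2\mapsto l_2-1$ by a multiplicative factor depending only on $d_1,d_2$, namely $q^{d_2}$ for $s=0,1$, $z_1^{-1}z_2^{-1}q^{-d_1}$ for $s=2,3$, and $z_2^{-1}q^{d_1-d_2}$ for $s=4,5$. Hence the difference is an explicit sum over $(d_1,d_2)$ and $s$ of $\bar J_{d_1,d_2}(w_1,w_2)$ times a Toda-type coefficient.

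Next I would expand $z_2^{l_2}\widetilde\varphi^{k,k}_{l_1+l_2,k-l_2,l_1}(z_1,qz_2)$ using (\ref{phi}). The substitution $(l_1,l_2,l_3)\mapsto(l_1+l_2,k-l_2,l_1)$ together with the rescaling $z_2\mapsto qz_2$ permutes the six $(l_1,l_2)$-monomial prefactors in a definite way induced by the simple reflection $s_2$ of the Weyl group of $\mathfrak{sl}_3$, and it simultaneously induces a reindexing of the summation $(d_1,d_2)$. After relabeling summation indices and grouping by the matching $s$-classes, each of the six resulting pieces becomes an instance of the Toda recursion applied to $\bar J_{d_1,d_2}(w_1,w_2)$, which yields the first $\widetilde\psi$-recursion. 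The second $\widetilde\psi$-recursion (in $l_1$) follows from the same argument under the $z_1\leftrightarrow z_2$ symmetry and the corresponding reflection $s_1$, and the combined $\widetilde\varphi$-recursion appearing before Proposition \ref{unique3} is obtained by eliminating $\widetilde\varphi^{k,k}_{l_1,l_2,l_1+l_2-k-1}$ from the two $\widetilde\varphi$-recursions (\ref{Ba}), (\ref{Bc}) at $k_1=k_2=k$.

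The main obstacle is the bookkeeping of the six-fold structure: identifying explicitly the bijection $s\mapsto s'$ on $\{0,\ldots,5\}$ induced by each Weyl substitution, and verifying that the shifts $(d_1,d_2)\to(d_1-1,d_2)$ and $(d_1,d_2)\to(d_1,d_2-1)$ on the Toda side correspond, after the reindexing of summation variables, to the correct $s'$-components on the $\widetilde\varphi$ side. Once this combinatorial alignment is in place, every required identity collapses to the Toda recursion of Proposition \ref{prop:Toda}. The boundary vanishings and the normalization $\widetilde\psi^{k,k}_{0,0,0}(0,0)=1$ follow by direct inspection of the six $A^s$-summands and of the explicit formula (\ref{Idd}) for $I_{d_1,d_2}$, completing the application of Proposition \ref{unique3}.
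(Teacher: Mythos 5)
Your overall architecture is the same as the paper's: show that the right-hand sides of \eqref{psi}, \eqref{phi} satisfy the three $k_1=k_2$ recursions \eqref{bar1}--\eqref{bar3}, note that the true characters satisfy them as well (via Theorem \ref{thm:chrec}, or, as you argue, via Theorem \ref{boson} together with \eqref{Ba}--\eqref{Bd}), and conclude by the uniqueness statement of Proposition \ref{unique3} applied to the difference. The gap is in your central computational claim that ``every required identity collapses to the Toda recursion of Proposition \ref{prop:Toda}.'' The Toda recursion relates $I_{d_1,d_2}$, $I_{d_1-1,d_2}$, $I_{d_1,d_2-1}$ \emph{at one and the same argument} $(z_1,z_2)$; evaluating it at $(w_1,w_2)=(z_1q^{2d_1-d_2},z_2q^{2d_2-d_1})$ still gives all three terms at that fixed pair $(w_1,w_2)$. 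But after your reindexing of the $(d_1,d_2)$-summation, the matching of the six $s$-classes forces identities in which the index shift is accompanied by a $q$-shift of the argument, because $w_1,w_2$ themselves depend on $d_1,d_2$. For instance, matching the $s=0$ component of the $l_1$-recursion \eqref{bar2} requires exactly the first relation of Lemma \ref{lem:2}, which after unwinding Definition \ref{AB} reads
\begin{equation*}
(1-q^{d_1})\,\bar{J}_{d_1,d_2}(w_1,w_2)
=(1-z_1^{-1}z_2^{-1}q^{1-d_1})\,q^{-2}w_1^2w_2\,\bar{J}_{d_1-1,d_2}(q^{-1}w_1,w_2),
\end{equation*}
a two-term contiguity relation mixing an index shift with an argument shift; similarly the relations of Lemma \ref{lem:3} relate, e.g., $\bar{J}_{d_1,d_2+1}(x,y)$, $\bar{J}_{d_1,d_2}(x,y)$ and $\bar{J}_{d_1-1,d_2}(q^{-1}x,y)$. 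None of these is an instance of Proposition \ref{prop:Toda}, and they cannot be derived from it alone; in the paper they are exactly the relations of Lemmas \ref{lem:1}--\ref{lem:3}, and their validity for the explicit $A^s,B^s$ of Definition \ref{AB} is the content of Proposition \ref{lem:4}, verified directly from the product formula \eqref{Idd}. (Only the $l_2$-recursion, Lemma \ref{lem:1}, is essentially free, because Definition \ref{AB} defines the $B^s$ so as to make those six relations tautological.)

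Two smaller points. First, your proposal to get the $l_1$-recursion ``by the $z_1\leftrightarrow z_2$ symmetry'' is not justified: while the $\psi$-side sum is symmetric under $(z_1,l_1,d_1)\leftrightarrow(z_2,l_2,d_2)$, the recursion \eqref{bar2} involves $\varphi^{k,k}_{k-l_1,l_1+l_2,l_2}(qz_1,z_2)$, and neither the module $U^{k,k}_{l_1,l_2,l_3}$ nor the six $B^s$ of \eqref{phi} carry an evident symmetry exchanging the two indices, so this recursion needs its own verification (the paper's Lemma \ref{lem:2}). Second, the boundary vanishings of the right-hand sides at negative $l_1$ or $l_2$ and the normalization at $(0,0)$ are not mere ``inspection''; they are cancellations between the six summands of the same nature as the contiguity relations above and must be checked (compare items 1)--6) in the proof of Theorem \ref{boson}). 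With the contiguity relations verified directly from \eqref{Idd} in place of the claimed reduction to the Toda recursion, your argument becomes the paper's proof.
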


The strategy of the proof is as follows.
We show that the right hand sides of
\eqref{psi} and \eqref{phi} satisfy
the recursion relations provided 
certain relations for $A^s_{d_1,d_2}$ and
$B^s_{d_1,d_2}$ are satisfied.
We prove that these relations hold  for
$A^s_{d_1,d_2}$, $B^s_{d_1,d_2}$
of the form given in Definition \ref{AB}.
This proves Theorem  \ref{g-l} because of the uniqueness of the
solution of the recursion relations (Proposition \ref{unique3}).

Substituting \eqref{psi} and \eqref{phi} into the recursion relations, 
one can easily verify the following three Lemmas. 

\begin{lem}
\label{lem:1}
The relations
\be
&&(1-q^{d_2})A^0_{d_1,d_2}(z_1,z_2)=B^5_{d_1,d_2-1}(z_1,qz_2),
\\
&&(1-q^{d_2})A^1_{d_1,d_2}(z_1,z_2)=B^2_{d_1-1,d_2-1}(z_1,qz_2),
\\
&&(1-q^{-d_1}z_1^{-1}z_2^{-1})A^2_{d_1,d_2}(z_1,z_2)
=B^1_{d_1,d_2}(z_1,qz_2),
\\
&&(1-q^{-d_1}z_1^{-1}z_2^{-1})A^3_{d_1,d_2}(z_1,z_2)
=B^4_{d_1,d_2-1}(z_1,qz_2),
\\
&&(1-q^{d_1-d_2}z_2^{-1})A^4_{d_1,d_2}(z_1,z_2)
=B^3_{d_1-1,d_2-1}(z_1,qz_2),
\\
&&(1-q^{d_1-d_2}z_2^{-1})A^5_{d_1,d_2}(z_1,z_2)
=B^0_{d_1,d_2}(z_1,qz_2)
\en
imply the recursion
\begin{equation}\label{bar1}
\psi_{l_1,l_2,l_1+l_2}^{k,k}(z_1,z_2)=
\psi_{l_1,l_2-1,l_1+l_2-1}^{k,k}(z_1,z_2)+
z_2^{l_2} \varphi_{l_1+l_2,k-l_2,l_1}^{k,k}(z_1,qz_2).
\end{equation}
\end{lem}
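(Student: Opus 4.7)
The plan is to substitute the bosonic sums \eqref{psi} and \eqref{phi} directly into the proposed recurrence \eqref{bar1} and match coefficients. I would expand $\psi^{k,k}_{l_1,l_2,l_1+l_2}(z_1,z_2)$ and $\psi^{k,k}_{l_1,l_2-1,l_1+l_2-1}(z_1,z_2)$ via \eqref{psi}, and expand $z_2^{l_2}\varphi^{k,k}_{l_1+l_2,k-l_2,l_1}(z_1,qz_2)$ via \eqref{phi} with the substitutions $(l_1,l_2,l_3)\mapsto(l_1+l_2,k-l_2,l_1)$ and $z_2\mapsto qz_2$; note that $l_1+(k-l_2)-(l_1+l_2-k)=l_1$ is consistent with the third index convention in \eqref{phi}.

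Each of these three expressions is a sum over $(d_1,d_2)\in\Z_{\ge0}^2$ with common prefactor $z_1^{kd_1}z_2^{kd_2}q^{k(d_1^2+d_2^2-d_1d_2)}$ (after absorbing the extra $q^{kd_2}$ produced by $z_2\mapsto qz_2$ on the $\varphi$-side), split into six summands indexed by $s\in\{0,\ldots,5\}$ according to the six distinct $(l_1,l_2)$-shapes appearing in \eqref{psi}. These six shapes are linearly independent as functions of $(l_1,l_2)$, so \eqref{bar1} decouples into six separate equations, one per shape. For each $s$, the difference of the two $\psi$-terms yields $(1-\kappa_s)A^s_{d_1,d_2}(z_1,z_2)$ with the factor $\kappa_s$ read off as the ratio of the $s$-th shape at $(l_1,l_2)$ to that at $(l_1,l_2-1)$; the three values $\kappa_s\in\{q^{d_2},\,q^{-d_1}z_1^{-1}z_2^{-1},\,q^{d_1-d_2}z_2^{-1}\}$ are exactly those appearing on the left-hand sides of the six relations stated in the lemma.

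On the $\varphi$-side, the substitutions $(l_1,l_2,l_3)\mapsto(l_1+l_2,k-l_2,l_1)$ and $z_2\mapsto qz_2$ together with the prefactor $z_2^{l_2}$ permute the six shapes among themselves, while a possible reindexing of $(d_1,d_2)$ by $\pm 1$ is required to restore the common prefactor $z_1^{kd_1}z_2^{kd_2}q^{k(d_1^2+d_2^2-d_1d_2)}$. Identifying, for each $s$, the image index $s'$ and the shift yields exactly the appearance of $B^{s'}_{d_1',d_2'}(z_1,qz_2)$ on the right-hand side of the corresponding relation in the lemma. The main obstacle is purely bookkeeping: verifying the bijection $s\leftrightarrow s'$ between the shape labels on the two sides, together with the accompanying $(d_1,d_2)$-shifts and residual monomial factors in $z_1,z_2,q$. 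No analytic input is needed beyond equating exponents, and once the bijection is established, the six claimed identities are precisely the relations forced by termwise equality in \eqref{bar1}, so the lemma follows by reassembling the sums.
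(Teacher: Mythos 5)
Your proposal is correct and is essentially the paper's own argument: the paper proves this lemma precisely by substituting the bosonic expressions \eqref{psi} and \eqref{phi} into \eqref{bar1}, grouping terms by the six $(l_1,l_2)$-monomial shapes and the $(d_1,d_2)$-prefactor, and observing that the termwise identities are exactly the six stated $A$--$B$ relations. (Only a cosmetic slip: the factor $\kappa_s$ is the ratio of the $s$-th shape at $(l_1,l_2-1)$ to that at $(l_1,l_2)$, which is what your listed values $q^{d_2}$, $q^{-d_1}z_1^{-1}z_2^{-1}$, $q^{d_1-d_2}z_2^{-1}$ actually are.)
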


\begin{lem}
\label{lem:2}
The relations
\be
&&(1-q^{d_1})A^0_{d_1,d_2}(z_1,z_2)=B^1_{d_1-1,d_2}(qz_1,z_2),
\\
&&(1-q^{d_2-d_1}z_1^{-1})A^1_{d_1,d_2}(z_1,z_2)
=B^0_{d_1,d_2}(qz_1,z_2),
\\
&&(1-q^{d_2-d_1}z_1^{-1})A^2_{d_1,d_2}(z_1,z_2)
=B^3_{d_1-1,d_2-1}(qz_1,z_2),
\\
&&(1-q^{-d_2}z_1^{-1}z_2^{-1})A^3_{d_1,d_2}(z_1,z_2)
=B^2_{d_1-1,d_2}(qz_1,z_2),
\\
&&(1-q^{-d_2}z_1^{-1}z_2^{-1})A^4_{d_1,d_2}(z_1,z_2)
=B^5_{d_1,d_2}(qz_1,z_2),
\\
&&(1-q^{d_1})A^5_{d_1,d_2}(z_1,z_2)
=B^4_{d_1-1,d_2-1}(qz_1,z_2)
\en
imply the recursion
\begin{equation}\label{bar2}
\psi_{l_1,l_2,l_1+l_2}^{k,k}(z_1,z_2)=
\psi_{l_1-1,l_2,l_1+l_2-1}^{k,k}(z_1,z_2)+
z_1^{l_1} \varphi_{k-l_1,l_1+l_2,l_2}^{k,k}(qz_1,z_2).
\end{equation}
\end{lem}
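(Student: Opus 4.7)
The plan is to mirror the verification suggested for Lemma \ref{lem:1}: substitute the bosonic formulas \eqref{psi} and \eqref{phi} into both sides of \eqref{bar2} and match coefficients. Indeed, \eqref{bar2} is the $l_1$-analog of the $l_2$-recursion in Lemma \ref{lem:1}, so the same structural strategy applies with the roles of $z_1,z_2$ and $d_1,d_2$ suitably interchanged.

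First I would consider the difference $\psi^{k,k}_{l_1,l_2,l_1+l_2}(z_1,z_2)-\psi^{k,k}_{l_1-1,l_2,l_1+l_2-1}(z_1,z_2)$ on the left-hand side of \eqref{bar2}. By \eqref{psi}, only the $l_1$-dependent prefactor multiplying each $A^s_{d_1,d_2}(z_1,z_2)$ differs between the two terms, so the difference splits into six groups indexed by $s=0,\dots,5$. In each case one extracts a common monomial in $z_1,z_2,q$ (involving $l_1,l_2,d_1,d_2$), leaving one of the ``discrete derivative'' factors $1-q^{d_1}$, $1-q^{d_2-d_1}z_1^{-1}$, or $1-q^{-d_2}z_1^{-1}z_2^{-1}$ multiplying $A^s_{d_1,d_2}(z_1,z_2)$. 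These are exactly the factors on the left-hand sides of the six identities listed in the lemma.

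Next I would expand $z_1^{l_1}\varphi^{k,k}_{k-l_1,l_1+l_2,l_2}(qz_1,z_2)$ by applying \eqref{phi} with the substitutions $l_1\mapsto k-l_1$, $l_2\mapsto l_1+l_2$, $z_1\mapsto qz_1$. This produces six groups of terms, one for each $s$, each involving some $B^s_{d_1',d_2'}(qz_1,z_2)$ with shifted indices as specified in Definition \ref{AB}. After reindexing the summation variables $(d_1',d_2')$ so that the $A$'s and $B$'s appear with matching indices $(d_1,d_2)$, and collecting the common factor $z_1^{kd_1}z_2^{kd_2}q^{k(d_1^2+d_2^2-d_1d_2)}$, each group on the right pairs with the corresponding group on the left. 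The proportionality constant relating the two is precisely one of the six identities stated in the lemma, so assuming those identities, every group vanishes and \eqref{bar2} follows.

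The main technical obstacle is the shift bookkeeping. Under $d_1\mapsto d_1-1$ the prefactor $z_1^{kd_1}q^{k(d_1^2+d_2^2-d_1d_2)}$ picks up a factor $z_1^{-k}q^{-k(2d_1-d_2-1)}$, which must combine correctly with the $l_1,l_2,d_1,d_2$-dependent monomials coming from the substitution $l_1\mapsto k-l_1$, $l_2\mapsto l_1+l_2$ in \eqref{phi}, the evaluation $z_1\mapsto qz_1$, and the extra $z_1^{l_1}$ prefactor, so that the combined $l_1,l_2$-dependence matches the left-hand side exactly. Once this accounting is done, no further ideas beyond those used in Lemma \ref{lem:1} are required.
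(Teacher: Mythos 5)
Your proposal is correct and follows the same route as the paper: the paper's proof of this lemma is precisely the substitution of \eqref{psi} and \eqref{phi} into \eqref{bar2} and a term-by-term matching, which it leaves as a direct verification. Your identification of the six discrete-derivative factors produced by the shift $l_1\mapsto l_1-1$ (namely $1-q^{d_1}$ for $s=0,5$, $1-q^{d_2-d_1}z_1^{-1}$ for $s=1,2$, and $1-q^{-d_2}z_1^{-1}z_2^{-1}$ for $s=3,4$) and of the reindexing needed to align the shifted $B^s(qz_1,z_2)$ terms is exactly the bookkeeping the paper has in mind, and it does check out.
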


\begin{lem}
\label{lem:3}
The relations
\be
&&(1-q^{d_2})B^0_{d_1,d_2}(z_1,z_2)
=z_1^{-1}z_2^{-1}q^{-d_1+1}A^3_{d_1-1,d_2-1}(z_1,z_2)+
A^5_{d_1,d_2-1}(q^{-1}z_1,qz_2),
\\
&&(1-q^{d_2})B^1_{d_1,d_2}(z_1,z_2)=
z_2^{-1}q^{d_1-d_2+1}A^4_{d_1,d_2-1}(z_1,z_2)+
A^2_{d_1,d_2-1}(q^{-1}z_1,qz_2),
\\
&&(1-q^{-d_1}z_1^{-1}z_2^{-1})B^2_{d_1,d_2}(z_1,z_2)=
z_2^{-1}q^{d_1-d_2+1}A^5_{d_1+1,d_2}(z_1,z_2)+
A^1_{d_1+1,d_2}(q^{-1}z_1,qz_2),
\\
&&(1-q^{-d_1}z_1^{-1}z_2^{-1})B^3_{d_1,d_2}(z_1,z_2)
=q^{d_2+1}A^0_{d_1+1,d_2+1}(z_1,z_2)+
A^4_{d_1+1,d_2}(q^{-1}z_1,qz_2),
\\
&&(1-q^{d_1-d_2}z_2^{-1})B^4_{d_1,d_2}(z_1,z_2)
=q^{d_2+1}A^1_{d_1,d_2+1}(z_1,z_2)+
A^3_{d_1,d_2}(q^{-1}z_1,qz_2),
\\
&&(1-q^{d_1-d_2}z_2^{-1})B^5_{d_1,d_2}(z_1,z_2)
=z_1^{-1}z_2^{-1}q^{-d_1+1}A^2_{d_1-1,d_2}(z_1,z_2)+
A^0_{d_1,d_2}(q^{-1}z_1,qz_2)
\en
imply the 3-term relation
\begin{multline}\label{bar3}
\varphi_{l_1,l_2,l_1+l_2-k}^{k,k}(z_1,z_2)=
\varphi_{l_1,l_2-1,l_1+l_2-k-1}^{k,k}(z_1,z_2)+\\
z_2^{l_2}\psi_{l_1+l_2-k,k-l_2,l_1}^{k,k}(q^{-1}z_1,qz_2)+\\
(q^{-1}z_1z_2)^{l_1+l_2-k}
\psi_{k-l_2,k-l_1-1,2k-l_1-l_2-1}^{k,k}(z_1,z_2).
\end{multline}
\end{lem}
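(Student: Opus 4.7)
The plan is to follow the same pattern used (implicitly) in Lemmas \ref{lem:1} and \ref{lem:2}: substitute the explicit bosonic formulas \eqref{psi} and \eqref{phi} for the four characters appearing in \eqref{bar3}, and then extract a coefficient-matching identity for each of the six monomial types indexed by $s=0,1,\ldots,5$. The key observation is that each series in \eqref{psi} and \eqref{phi} is a sum of six families of terms with distinct monomial signatures in $(z_1,z_2,q)$ and distinct dependence on $l_1,l_2$, so contributions to a fixed $s$-family from different characters in \eqref{bar3} can be isolated independently.

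More concretely, I would proceed as follows. First, substitute \eqref{phi} for $\varphi^{k,k}_{l_1,l_2,l_1+l_2-k}$ (LHS) and for $\varphi^{k,k}_{l_1,l_2-1,l_1+l_2-k-1}$ (first RHS term); these produce expressions of the same shape, and taking the difference yields, for each $s$, a coefficient of the form
\[
\bigl(1-q^{\eta_s(d_1,d_2)}\zeta_s(z_1,z_2)\bigr) B^s_{d_1,d_2}(z_1,z_2),
\]
where the factor on the left is one of $1-q^{d_2}$, $1-q^{-d_1}z_1^{-1}z_2^{-1}$, $1-q^{d_1-d_2}z_2^{-1}$, each appearing twice, exactly matching the prefactors on the LHS of the six target identities. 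Next, substitute \eqref{psi} for $\psi^{k,k}_{l_1+l_2-k,k-l_2,l_1}(q^{-1}z_1,qz_2)$ with prefactor $z_2^{l_2}$ and for $\psi^{k,k}_{k-l_2,k-l_1-1,2k-l_1-l_2-1}(z_1,z_2)$ with prefactor $(q^{-1}z_1z_2)^{l_1+l_2-k}$. Shifting summation indices $(d_1,d_2)$ so that the overall monomial in $z_1,z_2,q$ that multiplies each $A^s_{d_1',d_2'}$ term is brought into the canonical form $z_1^{kd_1+\alpha_s}z_2^{kd_2+\beta_s}q^{k(d_1^2+d_2^2-d_1d_2)+\gamma_s}$, one reads off that each $s$-family on the RHS receives exactly two contributions, one from each $\psi$ substitution, yielding the sum on the right of the corresponding identity in the hypothesis.

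The bulk of the work is bookkeeping: identifying, for each of the six monomial patterns, which $A^{s'}$ (with which shifted indices and shifted arguments) is produced from each RHS substitution. The clean way to manage this is to note that the substitution $(z_1,z_2)\mapsto(q^{-1}z_1,qz_2)$ permutes the six monomial types in \eqref{psi} according to a fixed combinatorial rule (and the index shift in $(d_1,d_2)$ compensates the $z_1^{kd_1}z_2^{kd_2}q^{k(d_1^2+d_2^2-d_1d_2)}$ prefactor accordingly), while the extra prefactor $(q^{-1}z_1z_2)^{l_1+l_2-k}$ in the third term of \eqref{bar3} precisely converts a $\psi$-monomial of type $s'$ into a $\varphi$-monomial of another type $s$, after re-indexing. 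Once this correspondence $(s',\text{shift})\mapsto s$ is tabulated for both $\psi$-substitutions, the six coefficient equations emerge one per value of $s$, and they are exactly the six relations in the hypothesis.

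The main obstacle, and the only non-mechanical step, will be verifying that the six target relations exhaust all coefficient conditions: that is, that no extra identity appears and that each of the six emerges from a unique monomial family. This follows from the fact that the six monomials $z_1^{\alpha_s}z_2^{\beta_s}$ (with $s=0,\ldots,5$) together with the prescribed $l_1,l_2$-dependence in \eqref{psi}--\eqref{phi} are linearly independent as functions of $(l_1,l_2)$ with fixed $k$, so the coefficient extraction is unambiguous. Once this is established, the six relations are necessary and sufficient for \eqref{bar3}, and in particular sufficient — which is all that the lemma asserts.
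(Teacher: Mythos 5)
Your proposal is correct and follows essentially the same route as the paper, which proves Lemma \ref{lem:3} simply by substituting the explicit expressions \eqref{psi} and \eqref{phi} into the three-term relation and matching, for each of the six monomial families $s=0,\dots,5$, the difference of the two $\varphi$-terms (which produces exactly the prefactors $1-q^{d_2}$, $1-q^{-d_1}z_1^{-1}z_2^{-1}$, $1-q^{d_1-d_2}z_2^{-1}$, each twice) against the re-indexed $A^{s'}$-contributions from the two $\psi$-terms. Since only sufficiency of the six relations is claimed, your linear-independence remark is not even needed, and the bookkeeping you outline is precisely the "direct substitution" the paper invokes.
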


\begin{prop}\label{lem:4}
The series $A^s_{d_1,d_2}$, $B^s_{d_1,d_2}$
satisfy all relations from Lemmas
$\ref{lem:1}$, $\ref{lem:2}$, $\ref{lem:3}$.
\end{prop}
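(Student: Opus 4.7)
The twelve identities in Lemmas \ref{lem:1} and \ref{lem:2} are direct consequences of Definition \ref{AB}. There, each $B^s$ (with a particular choice of indices and arguments) is specified as the product of some factor $(1-\star)$, the monomial $f^s(w_1,w_2)$, and $\bar J_{d_1,d_2}(w_1,w_2)$. This matches, term-by-term, the left-hand sides $(1-\star)A^s(z_1,z_2)$ appearing in Lemmas \ref{lem:1} and \ref{lem:2}. My plan for these two lemmas is a direct bookkeeping: for each identity, substitute Definition \ref{AB} into both sides, observe that after the index and argument shifts prescribed by the identity, the Weyl-shifted variables $w_1,w_2$ on the two sides coincide, and check that the monomial prefactors $f^s(w_1,w_2)$ built into $A^s$ match those built into the corresponding $B^t$.

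The substantive content of the proposition is in Lemma \ref{lem:3}. My plan is to establish each of its six three-term identities by reduction to the Toda recursion of Proposition \ref{prop:Toda}. After substituting Definition \ref{AB} into a given identity, one obtains an equation of the shape
\[
c_0\,\bar J_{d_1,d_2}(W_1,W_2)=c_1\,\bar J_{d_1',d_2'}(W_1',W_2')+c_2\,\bar J_{d_1'',d_2''}(W_1'',W_2''),
\]
with explicit rational coefficients $c_i$ in $q,W_1,W_2$, and with $(W_1',W_2'),(W_1'',W_2'')$ differing from $(W_1,W_2)$ by simple powers of $q$. Using $\bar J_{d_1,d_2}(x,y)=I_{d_1,d_2}(x,y)/[(x)_\infty(y)_\infty(xy)_\infty]$ together with the shift identities $(q^{-1}x)_\infty=(1-q^{-1}x)(x)_\infty$ and $(qx)_\infty=(x)_\infty/(1-x)$, one clears a common denominator to arrive at an equivalent three-term identity among the $I_{\bullet,\bullet}$'s at nearby indices and arguments.

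The resulting $I$-identity is then recognized as the Toda recursion of Proposition \ref{prop:Toda} applied at $(W_1,W_2)$, possibly composed with its $z_1\leftrightarrow z_2$, $d_1\leftrightarrow d_2$ symmetric variant (which is available by the manifest symmetry of \eqref{Idd} under this exchange). To organize the six cases without repetition, I will observe that the monomials $f^0,\dots,f^5$ together with the associated pairs $(W_1,W_2)$ are permuted by the natural action of the Weyl group $S_3$ of $\mathfrak{sl}_3$; under this action the six identities of Lemma \ref{lem:3} form a single orbit. It therefore suffices to carry out the reduction in detail for the first identity and to invoke this $S_3$-symmetry to deduce the other five.

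The hard part will be the Pochhammer bookkeeping: when the arguments of $\bar J$ in the three terms differ by $q^{\pm1}$, each of the three factors $(x)_\infty$, $(y)_\infty$, $(xy)_\infty$ in the denominator acquires a compensating finite prefactor, and one must rearrange these finite factors so that the coefficients in the cleared identity match exactly the coefficients $(z_1^{-1}(q^{d_1}-1)), (q^{d_2-d_1}-1), (z_2(q^{-d_2}-1)), q^{d_2-d_1}, z_2q^{-d_2}$ appearing in the Toda recursion (evaluated at $(W_1,W_2)$). Once this matching is made for the representative case, the full proposition follows, and hence by Proposition \ref{unique3} the bosonic formulas \eqref{psi} and \eqref{phi} of Theorem \ref{g-l} are established.
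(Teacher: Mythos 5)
Your reading of Lemma \ref{lem:1} is correct: those six relations are literally Definition \ref{AB} read backwards, so nothing needs to be proved there. But your treatment of Lemma \ref{lem:2} already contains a false step: the Weyl-shifted variables on the two sides do \emph{not} coincide. For instance, in $(1-q^{d_1})A^0_{d_1,d_2}(z_1,z_2)=B^1_{d_1-1,d_2}(qz_1,z_2)$, unwinding Definition \ref{AB} (with $z_2\mapsto q^{-1}z_2$, indices $(d_1-1,d_2)$) produces $\bar J_{d_1-1,d_2}(q^{-1}w_1,w_2)$ on the right against $\bar J_{d_1,d_2}(w_1,w_2)$ on the left, so the identity is a genuine two-term contiguous relation that has to be checked by computing the finite ratio of Pochhammer factors; it is routine, but it is not the term-by-term matching you describe.

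The more serious gap is in your plan for Lemma \ref{lem:3}. After substituting Definition \ref{AB}, the first identity there becomes (with $x=w_1$, $y=q^{-1}w_2$)
\begin{equation*}
(1-q^{d_2})(1-q^{d_2}y^{-1})\,\bar J_{d_1,d_2}(x,y)
= q^{d_2-2}x\,\bar J_{d_1-1,d_2-1}(q^{-1}x,y)+\bar J_{d_1,d_2-1}(x,y),
\end{equation*}
whose index pattern $(d_1,d_2)$, $(d_1-1,d_2-1)$, $(d_1,d_2-1)$ together with the argument shift $x\mapsto q^{-1}x$ in the middle term cannot be matched with the Toda recursion of Proposition \ref{prop:Toda}, which involves $(d_1-1,d_2)$ and $(d_1,d_2-1)$ at \emph{unshifted} arguments, even after the $z_1\leftrightarrow z_2$, $d_1\leftrightarrow d_2$ flip. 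So the step ``recognize the cleared identity as the Toda recursion'' would fail. What actually happens is simpler: dividing by $\bar J_{d_1,d_2}(x,y)$ and computing the explicit Pochhammer ratios reduces the identity to the elementary cancellation $q^{d_2}x^{-1}y^{-1}(1-q^{d_1})+(1-x^{-1}y^{-1}q^{d_2})=1-x^{-1}y^{-1}q^{d_1+d_2}$, and the other five cases are analogous but not identical. Your proposed $S_3$-symmetry shortcut is also unjustified: the definitions are not manifestly equivariant (the $B^s$ are specified only through their values at $(z_1,qz_2)$, with different index shifts for different $s$), so reducing the six cases to one would itself require an argument essentially as long as checking them. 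This is consistent with the paper, whose proof of the proposition is exactly such a case-by-case direct calculation; your write-up needs to replace the Toda-recognition and orbit arguments by those direct verifications.
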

\begin{proof}
The proposition is proved by a direct calculation.
\end{proof}

{\it Proof of Theorem \ref{g-l}.}
Theorem \ref{g-l} follows from Lemmas \ref{lem:1}--\ref{lem:3},
Proposition \ref{lem:4} and Proposition \ref{unique3}.
\qed

Recall that $\psi_{0,0,0}^{k,k}(z_1,z_2)$ is equal to the character of the
principal subspace $V^k$ of the level $k$ vacuum $\widehat{\mathfrak{sl}_3}$
module.
In this case, one can further sum the six terms in
\eqref{g-l}
to obtain the following corollary:

\begin{cor}\label{CorVI}
\begin{equation}
\label{VI} \ch V^k=\sum_{d_1,d_2\ge 0} z_1^{kd_1} z_2^{kd_2}
q^{k(d_1^2+d_2^2-d_1d_2)} J_{d_1,d_2}(z_1q^{2d_1-d_2},
z_2q^{2d_2-d_1}),
\end{equation}
where
$J_{d_1,d_2}(z_1,z_2)=\bar J_{d_1,d_2}(z_1,z_2) (1-z_1)(1-z_2)(1-z_1z_2)$.
\end{cor}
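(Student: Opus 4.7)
The plan is to deduce Corollary \ref{CorVI} by specializing Theorem \ref{g-l} at $l_1=l_2=0$, using the fact stated just before the corollary that $\ch V^k=\psi^{k,k}_{0,0,0}(z_1,z_2)$. (This identification itself follows from Corollary \ref{V=Vpr} together with Remark \ref{remV=V}.) At $l_1=l_2=0$ every prefactor of the form $z_i^{l_j}$ and $q^{l_i d_j}$ in \eqref{psi} becomes $1$, so the six summands collapse to
\begin{equation*}
\psi^{k,k}_{0,0,0}(z_1,z_2)
=\sum_{d_1,d_2\ge 0} z_1^{kd_1}z_2^{kd_2}q^{k(d_1^2+d_2^2-d_1d_2)}
\sum_{s=0}^{5}A^s_{d_1,d_2}(z_1,z_2).
\end{equation*}

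Next, I invoke Definition \ref{AB}, which states $A^s_{d_1,d_2}(z_1,z_2)=\bar J_{d_1,d_2}(w_1,w_2)f^s(w_1,w_2)$ with $w_1=z_1q^{2d_1-d_2}$, $w_2=z_2q^{2d_2-d_1}$. Pulling the common factor $\bar J_{d_1,d_2}(w_1,w_2)$ out, it suffices to identify $\sum_{s=0}^{5}f^s(w_1,w_2)$ in factored form. From the explicit list of $f^s$ one computes
\begin{equation*}
\sum_{s=0}^{5}f^s(w_1,w_2)
=1-w_1-w_2+w_1^2w_2+w_1w_2^2-w_1^2w_2^2
=(1-w_1)(1-w_2)(1-w_1w_2),
\end{equation*}
which is a direct algebraic check by expanding the right hand side.

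Combining these two steps with the definition $J_{d_1,d_2}(z_1,z_2)=\bar J_{d_1,d_2}(z_1,z_2)(1-z_1)(1-z_2)(1-z_1z_2)$ gives
\begin{equation*}
\sum_{s=0}^{5}A^s_{d_1,d_2}(z_1,z_2)
=\bar J_{d_1,d_2}(w_1,w_2)(1-w_1)(1-w_2)(1-w_1w_2)
=J_{d_1,d_2}(w_1,w_2),
\end{equation*}
which is exactly the summand appearing on the right hand side of \eqref{VI}. Substituting this back yields the claimed formula for $\ch V^k$.

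I do not foresee any genuine obstacle: the entire corollary is a telescoping of the six-term bosonic sum obtained in Theorem \ref{g-l} through the elementary identity above. The only caveat to watch is the interpretation of each summand as a power series expansion in $z_1,z_2$ (as stressed after \eqref{VK}), so that the rewriting $\bar J\cdot (1-w_1)(1-w_2)(1-w_1w_2)=J$ is valid termwise without any convergence or rearrangement issue; this is immediate since all six $A^s$ share the same $\bar J_{d_1,d_2}(w_1,w_2)$ prefactor and are combined within a fixed pair $(d_1,d_2)$.
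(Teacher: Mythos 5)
Your proposal is correct and is essentially the paper's own argument: the paper derives Corollary \ref{CorVI} precisely by setting $l_1=l_2=0$ in Theorem \ref{g-l} (using $\ch V^k=\psi^{k,k}_{0,0,0}$ from Corollary \ref{V=Vpr}) and summing the six terms, which amounts to the identity $\sum_{s=0}^5 f^s(w_1,w_2)=(1-w_1)(1-w_2)(1-w_1w_2)$ that you verify explicitly.
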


\begin{cor}
The functions
$I_{d_1,d_2}(z_1,z_2)$
satisfy the following relations:
\begin{equation}
\label{Irec}
I_{d_1,d_2}(z_1,z_2)=\sum_{n_1=0}^{d_1} \sum_{n_2=0}^{d_2}
\frac{z_1^{-n_1}z_2^{-n_2}q^{n_1^2+n_2^2-n_1n_2}}{(q)_{d_1-n_1}(q)_{d_2-n_2}}
I_{n_1,n_2}(z_1,z_2).
\end{equation}
\end{cor}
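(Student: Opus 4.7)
The plan is to derive \eqref{Irec} by comparing the bosonic formula for $\ch V^k$ from Corollary \ref{CorVI} with a one-step recursion extracted from the fermionic formula \eqref{fV}. It is more convenient to work with the normalized functions $J_{d_1,d_2}(z_1,z_2)=I_{d_1,d_2}(z_1,z_2)/((qz_1)_\infty(qz_2)_\infty(qz_1z_2)_\infty)$, since multiplying through by this common infinite product shows that \eqref{Irec} is equivalent to the analogous identity for the $J$'s.

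First I unfold \eqref{fV} by one step: isolating in the fermionic sum the subsums with fixed $(n_k,m_k)=(d_1,d_2)$, a direct calculation of the quadratic form $\sum_{i,j=1}^{k}\min(i,j)(n_in_j-m_in_j+m_im_j)$---splitting into the $(k,k)$, mixed $(i,k)/(k,j)$, and $(i,j<k)$ contributions---shows that the residual sum over $(n_1,\ldots,n_{k-1},m_1,\ldots,m_{k-1})$ reassembles into $\ch V^{k-1}$ at the shifted arguments $u(d)=(z_1q^{2d_1-d_2},z_2q^{2d_2-d_1})$, giving
\[
\ch V^k=\sum_{d_1,d_2\ge0}\frac{z_1^{kd_1}z_2^{kd_2}q^{k(d_1^2+d_2^2-d_1d_2)}}{(q)_{d_1}(q)_{d_2}}\,\ch V^{k-1}(u(d)).
\]
Next I substitute the bosonic formula of Corollary \ref{CorVI} for $\ch V^{k-1}(u(d))$ and change the summation variables via $d_1=D_1+e_1$, $d_2=D_2+e_2$. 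A careful manipulation of the $q$-exponent produces a notable simplification: the further-shifted argument of the inner $J_{e_1,e_2}$ collapses back to $u(d_1,d_2)$ independent of the decomposition, and the $k$-dependence factors cleanly, leaving
\[
\ch V^k=\sum_{d_1,d_2\ge0}z_1^{kd_1}z_2^{kd_2}q^{k(d_1^2+d_2^2-d_1d_2)}\,\widehat J_{d_1,d_2}(u(d)),
\]
where $\widehat J_{d_1,d_2}$ is precisely the right-hand side of the $J$-version of \eqref{Irec}.

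Equating this with the bosonic formula of Corollary \ref{CorVI} directly, I obtain for every $k\ge 1$
\[
\sum_{d_1,d_2\ge0}z_1^{kd_1}z_2^{kd_2}q^{k(d_1^2+d_2^2-d_1d_2)}\bigl(J_{d_1,d_2}(u(d))-\widehat J_{d_1,d_2}(u(d))\bigr)=0.
\]
Setting $R_{d_1,d_2}=J_{d_1,d_2}-\widehat J_{d_1,d_2}$, I conclude $R_{d_1,d_2}\equiv 0$ by induction on $(d_1,d_2)$ in lexicographic order: for fixed $(D_1,D_2)$ and bounded $(A_1,A_2)\ge 0$, taking $k>\max(A_1,A_2)$ restricts the contributing summands of $[z_1^{kD_1+A_1}z_2^{kD_2+A_2}]$ to those with $(d_1,d_2)\le (D_1,D_2)$; by the inductive hypothesis only $(D_1,D_2)$ survives, forcing $[z_1^{A_1}z_2^{A_2}]R_{D_1,D_2}(u(D_1,D_2))=0$ for every $(A_1,A_2)$, so $R_{D_1,D_2}\equiv 0$. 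The hardest step will be this uniqueness argument: the bosonic summands overlap as formal power series, so the fact that the family of identities indexed by $k\ge 1$ pins down each $R_{d_1,d_2}$ uniquely requires the delicate choice of $k$ above for each target coefficient.
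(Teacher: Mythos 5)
Your proposal is correct and follows essentially the same route as the paper: unfold the fermionic formula \eqref{fV} by fixing $(n_k,m_k)$ to get the level recursion \eqref{Vrec}, then substitute the bosonic formula \eqref{VI} and match the summands at the shifted arguments $u(d)=(z_1q^{2d_1-d_2},z_2q^{2d_2-d_1})$. Your large-$k$ coefficient-extraction argument is a valid (and welcome) justification of the term-by-term identification that the paper carries out implicitly when it says ``substituting \eqref{VI} into \eqref{Vrec} we obtain \eqref{Irec}.''
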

\begin{proof}
We recall the fermionic formula for the character of $V^k$:
\begin{equation*}
\mathrm{ch} V^k=\sum_{\genfrac{}{}{0pt}{}{n_1,\dots,n_k\ge
0}{m_1,\dots,m_k\ge 0}} \frac{z_1^{\sum in_i}z_2^{\sum im_i}
q^{\sum_{i,j=1}^k \min(i,j) (n_in_j - m_in_j + m_im_j)}}
{(q)_{n_1}\dots (q)_{n_k} (q)_{m_1}\dots (q)_{m_k}}.
\end{equation*}
Summing up all terms with the fixed values of $n_k$ and $m_k$
we obtain the relation
\begin{equation}
\label{Vrec}
\mathrm{ch} V^k=\sum_{n,m\ge 0} \frac{z_1^{kn}z_2^{km}
q^{k(n^2+m^2-mn)}}{(q)_n(q)_m}\mathrm{ch} V^{k-1}(q^{2n-m}z_1,q^{2m-n}z_2).
\end{equation}
Substituting $(\ref{VI})$ into $(\ref{Vrec})$ we obtain
$(\ref{Irec})$.
\end{proof}

\begin{cor}
The function $I_{d_1,d_2}(z_1,z_2)$
is given by the fermionic formula
\be
I_{d_1,d_2}(z_1,z_2)=
\sum_{\{m_i\}_{i>0},\{n_i\}_{i>0}}
\frac{z_1^{-\sum_{i>0} n_i}z_2^{-\sum_{i>0} m_i}
q^{\sum_{i>0}(n_i^2+m_i^2-n_im_i)}}
{(q)_{d_1-n_1}(q)_{n_1-n_2}\dots (q)_{d_2-m_1}(q)_{m_1-m_2}\dots},
\en
where the sum is over all sequences
$\{m_i\}_{i>0},\{n_i\}_{i>0}$ satisfying
$m_i,n_i\in\Z_{\geq0}$ and $d_1\geq n_1\geq n_2\geq\cdots$, $d_2\geq
m_1\geq m_2\geq\cdots,$ and $m_i=n_i=0$ for almost all $i$.
\end{cor}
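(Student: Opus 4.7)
The plan is to prove the fermionic formula by showing that its right hand side, call it $F_{d_1,d_2}(z_1,z_2)$, satisfies the same recursion \eqref{Irec} with the same initial value $F_{0,0}=1$, and then to invoke uniqueness. First I would check that $F_{d_1,d_2}$ is a well-defined formal power series in $q$ with coefficients Laurent polynomials in $z_1,z_2$: any fixed monomial $z_1^{-K_1}z_2^{-K_2}q^M$ can receive contributions only from sequences with $\sum n_i=K_1$, $\sum m_i=K_2$ and $\sum(n_i^2+m_i^2-n_im_i)\le M$, and since each nonzero pair $(n_i,m_i)$ contributes at least $1$ to the last exponent, only finitely many sequences are relevant. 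The value $F_{0,0}=1$ is immediate because the only admissible sequence is identically zero.

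Next I would verify that $F$ satisfies \eqref{Irec}. The idea is to peel off the first entries $(n_1,m_1)$ of each sequence and set $\tilde n_i:=n_{i+1}$, $\tilde m_i:=m_{i+1}$. The tail $(\tilde n_i,\tilde m_i)$ then ranges over exactly the sequences summed in $F_{n_1,m_1}$, and the weight of the original sequence factors cleanly: the $z$-part as $z_1^{-n_1}z_2^{-m_1}\cdot z_1^{-\sum\tilde n_i}z_2^{-\sum\tilde m_i}$, the $q$-part as $q^{n_1^2+m_1^2-n_1m_1}\cdot q^{\sum(\tilde n_i^2+\tilde m_i^2-\tilde n_i\tilde m_i)}$, and the denominator as $(q)_{d_1-n_1}(q)_{d_2-m_1}$ times the denominator appearing in $F_{n_1,m_1}$. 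Summing over the tail first and then over $(n_1,m_1)\in[0,d_1]\times[0,d_2]$ reproduces exactly the right hand side of \eqref{Irec} with $F$ in place of $I$ (with $(n_1,m_1)$ playing the role of the dummy variables $(n_1,n_2)$ there).

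The final step is uniqueness. Rewriting \eqref{Irec} by moving the $(n_1,n_2)=(d_1,d_2)$ summand to the left yields
\begin{equation*}
\bigl(1-z_1^{-d_1}z_2^{-d_2}q^{d_1^2+d_2^2-d_1d_2}\bigr)\,I_{d_1,d_2}=\sum\frac{z_1^{-n_1}z_2^{-n_2}q^{n_1^2+n_2^2-n_1n_2}}{(q)_{d_1-n_1}(q)_{d_2-n_2}}\,I_{n_1,n_2},
\end{equation*}
where the sum is over $0\le n_i\le d_i$ with $(n_1,n_2)\ne(d_1,d_2)$. For $(d_1,d_2)\ne(0,0)$ we have $d_1^2+d_2^2-d_1d_2>0$, so the prefactor on the left is invertible in the ring of formal power series in $q$ with coefficients in $\C[z_1^{\pm1},z_2^{\pm1}]$. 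Because the excluded diagonal term is the only one with $n_1+n_2=d_1+d_2$, an induction on $d_1+d_2$ shows that \eqref{Irec} together with $I_{0,0}=1$ pins down the entire family, whence $F_{d_1,d_2}=I_{d_1,d_2}$.

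The principal care required is the reindexing in the middle step: aligning the chain $d_1\ge n_1\ge n_2\ge\cdots$ and the telescoping denominator $(q)_{d_1-n_1}(q)_{n_1-n_2}\cdots$ with the two-stage decomposition ``first pick $n_1\in[0,d_1]$, then pick a sequence bounded by $n_1$'' is elementary but has to be carried out carefully, since that is the single place where the fermionic formula is actually verified. The convergence check and the uniqueness step are then routine.
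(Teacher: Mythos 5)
Your proposal is correct and takes essentially the same route as the paper, which presents this corollary as an immediate consequence of the recursion \eqref{Irec}, i.e.\ as the result of iterating that recursion; your peeling-off of $(n_1,m_1)$ is exactly that iteration step, packaged as ``the fermionic sum satisfies \eqref{Irec}'' plus a uniqueness argument (invertibility of $1-z_1^{-d_1}z_2^{-d_2}q^{d_1^2+d_2^2-d_1d_2}$ in $\C[z_1^{\pm1},z_2^{\pm1}][[q]]$ and induction on $d_1+d_2$, with $I_{0,0}=1$). The convergence and uniqueness checks you supply are the details the paper leaves implicit, and they are carried out correctly.
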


\begin{prop}\label{prop:I=I}
We have $I_{d_1,d_2}(z_1,z_2)
=\sum_{n=0}^{\min(d_1,d_2)}I_{d_1,d_2,n}(z_1,z_2)$,
where
\bea
&&I_{d_1,d_2,n}(z_1,z_2)
=\frac{1}{(q)_{d_1-n}(q)_{d_2-n}(q)_n}
\label{Iddn}\\
&&\quad
\times\frac{(qz_2)_\infty}
{(qz_1^{-1})_{d_1-n}
(qz_1^{-1}z_2^{-1})_n (q^{d_1-2n+1}z_2)_\infty
(q^{-d_1+2n+1}z_2^{-1})_{d_2-n} (qz_2)_{d_1-n}
 (qz_2^{-1})_n}.
\nonumber
\ena
\end{prop}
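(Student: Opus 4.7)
The strategy is to prove the identity by comparing two explicit formulas for the character of the principal subspace $V^k$ of the level-$k$ vacuum $\widehat{\mathfrak{sl}_3}$-module. By Corollary \ref{V=Vpr}, $V^k\simeq V^{k,k}_{0,0,0}$, so on the one hand Corollary \ref{CorVI} gives
\be
\ch V^k
=\sum_{d_1,d_2\ge 0}z_1^{kd_1}z_2^{kd_2}q^{k(d_1^2+d_2^2-d_1d_2)}
J_{d_1,d_2}(z_1q^{2d_1-d_2},z_2q^{2d_2-d_1}),
\en
with $J_{d_1,d_2}(z_1,z_2)=I_{d_1,d_2}(z_1,z_2)/\bigl((qz_1)_\infty(qz_2)_\infty(qz_1z_2)_\infty\bigr)$. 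On the other hand, Theorem \ref{boson} applied at $(l_1,l_2,l_3)=(0,0,0)$ yields $\ch V^k=\psi^{k,k}_{0,0,0}(z_1,z_2)=(\psi_B)^{k,k}_{0,0,0}(z_1,z_2)$, which by \eqref{PSIB} and \eqref{chiB} is an explicit quadruple sum over $(i,m,n,s)$ with $i\ge 0$, $m\ge n\ge 0$, $s\in\{0,\dots,5\}$, built from the small-principal character $(\chi_B)^k_{0,0}$.

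The plan is to match the coefficient of the extremal monomial $z_1^{kd_1}z_2^{kd_2}q^{k(d_1^2+d_2^2-d_1d_2)}$ in these two expressions, treating $k$ as a formal parameter (equivalently, large enough that the extremal monomials are pairwise distinct in the bosonic expansion). A short direct check — computing $i^2+Q_2(m,n)$ together with the $z$-exponents produced by the substitutions $z_1\mapsto q^{-i}z_1$, $z_2\mapsto q^{2i}z_2$ (respectively $z_1\mapsto q^iz_1z_2$, $z_2\mapsto q^{-2i}z_2^{-1}$) — shows that the first series in \eqref{PSIB} contributes to the extremal monomial with labels $(d_1,d_2)$ via $(m,n,i)=(d_1,n',d_2-n')$, and the second via $(m,n,i)=(d_1,d_1-n',d_2-n')$, where in both cases the new summation parameter $n'$ ranges over $\{0,\dots,\min(d_1,d_2)\}$. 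Equating the two coefficients, dividing out the infinite-product factor hidden in $J_{d_1,d_2}$, and inverting the shift $z_1\mapsto z_1q^{2d_1-d_2}$, $z_2\mapsto z_2q^{2d_2-d_1}$, yields the claimed identity $I_{d_1,d_2}(z_1,z_2)=\sum_{n=0}^{\min(d_1,d_2)}I_{d_1,d_2,n}(z_1,z_2)$, with each $I_{d_1,d_2,n}$ assembling into the explicit form \eqref{Iddn}.

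The main obstacle, and the only nontrivial work, is the $q$-series bookkeeping: verifying that, for each fixed $(d_1,d_2,n')$, the twelve a priori contributions (the six $s$-sectors in \eqref{chiB} combined with the two series in \eqref{PSIB}) collapse into the single explicit expression \eqref{Iddn}. This amounts to tracking the numerous $q$-shifts produced by the above substitutions acting on the Pochhammer factors of $d(m,n,s,z_1,z_2)$, and matching them against the factors $(qz_1^{-1})_{d_1-n}$, $(qz_1^{-1}z_2^{-1})_n$, $(q^{-d_1+2n+1}z_2^{-1})_{d_2-n}$, $(qz_2)_{d_1-n}$, $(qz_2^{-1})_n$ in the denominator of \eqref{Iddn}. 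The distinctive ratio $(qz_2)_\infty/(q^{d_1-2n+1}z_2)_\infty$ in \eqref{Iddn} is then recognized as the residue of the $(q^{2i}z_2)_\infty$-type denominator of \eqref{PSIB} together with the $(z_1z_2q^{m+n})_\infty$-type factor of \eqref{chiB} after the $J$-denominator has been cancelled.
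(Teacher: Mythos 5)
Your proposal is correct and follows essentially the same route as the paper's own proof: the paper likewise writes $\ch V^k=\psi^{k,k}_{0,0,0}(z_1,z_2)$ via \eqref{PSIB} and \eqref{chiB} at $(l_1,l_2,l_3)=(0,0,0)$, regroups the resulting sum by the extremal monomials $z_1^{kd_1}z_2^{kd_2}q^{k(d_1^2+d_2^2-d_1d_2)}$ (exactly the label identifications you describe), and compares with Corollary \ref{CorVI} to conclude $I_{d_1,d_2}=\sum_n I_{d_1,d_2,n}$. The only difference is presentational: you make explicit the sector-separation step (treating $k$ as large/formal) and the Pochhammer bookkeeping that the paper leaves implicit.
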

\begin{proof}
Using the equality $\psi^{k,k}_{0,0,0}(z_1,z_2)=\ch V^k$ and formula
(\ref{PSIB}) we obtain
\begin{multline}\label{000}
 \ch V^k=\sum_{i\geq0}
\frac{z_2^{ik}q^{i^2k}}{(q)_i(q^{2i}z_2)_\infty(q^{-2i+1}z_2^{-1})_i}
(\chi_B)^k_{0,0}(q^{-i}z_1,q^{2i}z_2)\\
+\sum_{i\geq0} \frac{z_2^{ik}q^{i^2k}}
{(q)_i(q^{2i+1}z_2)_\infty(q^{-2i}z_2^{-1})_{i+1}}
(\chi_B)^{k}_{0,0}(q^iz_1z_2,q^{-2i}z_2^{-1}).
\end{multline}
Recall formula (\ref{chiB}).
Because of the equalities
$$
p^k_{0,0}(m,n,s,z_1,z_2)=z_1^{km}z_2^{kn},\quad
a^k_{0,0}(m,n,s)=k(m^2+n^2-mn),
$$
the right hand side of (\ref{000}) can be rewritten as
$$
\sum_{d_1,d_2\ge 0} z_1^{kd_1} z_2^{kd_2}
q^{k(d_1^2+d_2^2-d_1d_2)} \sum_{n=0}^{\min(d_1,d_2)}
J_{d_1,d_2,n}(z_1q^{2d_1-d_2}, z_2q^{2d_2-d_1}),
$$
where
$J_{d_1,d_2,n}(z_1,z_2)=
\frac{1}{(qz_1)_\infty(qz_2)_\infty(qz_1z_2)_\infty}{I_{d_1,d_2,n}(z_1,z_2)}.$
 Now the
proposition follows from Corollary \ref{CorVI}.
\end{proof}


\section{Whittaker vector
and the character of the vacuum module}

\subsection{The quantum group $\Uv$}
Let $\Uv$ be the quantum group associated to $\mathfrak{sl}_3$.  The
quantum group $\Uv$ is an associative algebra over the field of
rational functions $\C(v)$ in formal variable $v$ with generators
$K^{\pm 1}$, $E_i$, $F_i$, $i=1,2$, satisfying the standard
commutation relations:
\bea
 K_iK_i^{-1}=1,\qquad K_iK_j=K_jK_i,
\hspace{119pt}\notag \\
K_iE_i=v^2 E_iK_i,\qquad K_iF_i=v^{-2}F_iK_i,\qquad K_iE_j=v^{-1} E_jK_i,
\qquad K_iF_j=v F_jK_i,\notag\\
E_iF_i-F_iE_i=\frac{K_i-K_i^{-1}}{v-v^{-1}},\qquad
E_iF_j=F_jE_i,
\hspace{119pt}\notag\\
E_i^2E_j-(v+v^{-1})E_iE_jE_i+E_jE_i^2=0, \qquad
F_i^2F_j-(v+v^{-1})F_iF_jF_i+F_jF_i^2=0\notag
\ena
for all $i,j=1,2$, $i\neq j$.

We extend the algebra $\Uv$ by the operators $K_i^{\pm 1/3}$ in the
obvious way and use the same notation
$\Uv$ for the resulting algebra.

Let $Z\in \Uv$ be the quadratic Casimir operator given by:
\bea
\lefteqn{Z=v^{-2}K_1^{-4/3}K_2^{-2/3}+K_1^{2/3}K_2^{-2/3}+v^2 K_1^{2/3}
K_2^{4/3}
}\notag\\
&&+(v-v^{-1})^2(v^{-1}F_1E_1K_1^{-1/3}K_2^{-2/3}+
vF_2E_2K_1^{2/3}K_2^{1/3}+
v^{-1}F_{13}E_{13}K_1^{-1/3}K_2^{1/3}),\notag
\ena
where $F_{13}=F_2F_1-vF_1F_2$ and $E_{13}=E_1E_2-vE_2E_1$.
\begin{lem}
The element $Z$ is in the center of $\Uv$.
\end{lem}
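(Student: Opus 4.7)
The plan is a direct generator-by-generator verification that $Z$ commutes with every element of $\Uv$. Commutation with $K_j^{\pm1/3}$ is essentially free: each of the six summands of $Z$ has Cartan weight zero, since $F_1E_1$, $F_2E_2$ and $F_{13}E_{13}$ all have weight $0$, and the other factors are pure Cartan monomials. To handle $E_i$ and $F_i$, I will first observe that $Z$ is invariant under the Chevalley anti-involution $\Omega$ sending $E_i\mapsto F_i$, $F_i\mapsto E_i$, $K_i\mapsto K_i^{-1}$, $v\mapsto v^{-1}$: under $\Omega$ the three $FE$-summands become $EF$-summands, and rewriting $E_iF_i=F_iE_i+(K_i-K_i^{-1})/(v-v^{-1})$ (and the analogue for $E_{13}$, $F_{13}$) contributes extra purely Cartan terms which, after a short calculation, combine with the three Cartan summands to give $Z$ back. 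Since $\Omega$ is an anti-automorphism, $[Z,F_i]=0$ will follow from $[Z,E_i]=0$.

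It then suffices to verify $[E_1,Z]=0$, since $[E_2,Z]=0$ is strictly parallel via the Dynkin symmetry $1\leftrightarrow 2$ (the only asymmetry being a rearrangement of $E_{13}=E_1E_2-vE_2E_1$ versus $E_1E_2-v^{-1}E_2E_1$, absorbed into the scalar prefactors). I will compute term by term. The three pure Cartan summands contribute $E_1$ times scalar combinations of $v^{\pm2}$ and Cartan monomials via $E_1K_1^a K_2^b=v^{-2a+b}K_1^aK_2^bE_1$. From the summand $v^{-1}F_1E_1K_1^{-1/3}K_2^{-2/3}$ one uses $[E_1,F_1]=(K_1-K_1^{-1})/(v-v^{-1})$ to produce a term of the form $(K_1-K_1^{-1})E_1\cdot(\text{Cartan})$. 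The $vF_2E_2K_1^{2/3}K_2^{1/3}$ summand contributes nothing involving $F_2,E_2$ since $[E_1,F_2]=[E_1,E_2]=0$ (the latter rewritten through $E_{13}$). The most involved summand is $v^{-1}F_{13}E_{13}K_1^{-1/3}K_2^{1/3}$: here I use
\[
[E_1,F_{13}]=F_2[E_1,F_1]-v[E_1,F_1]F_2,
\]
together with the identity $E_1E_{13}=v^{-1}E_{13}E_1$, which follows from the quantum Serre relation on the $E$-side, to reduce $[E_1,v^{-1}F_{13}E_{13}K_1^{-1/3}K_2^{1/3}]$ to terms linear in $E_1$ (multiplied by $F_2$, $E_{13}$, and Cartan monomials). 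Collecting all the resulting expressions and pushing every $E_1$, $E_{13}$, $F_2$ to the right of the Cartan factors using $E_1K_i=v^{-(\alpha_1,\alpha_i)}K_iE_1$ etc., the cancellation becomes a finite scalar identity in $v$ for each monomial type in $\{E_1,E_{13},F_2\}\cdot(\text{Cartan})$.

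The main obstacle is the bookkeeping in this last step: the cancellations work only because the prefactors $v^{-2},1,v^2$ in front of the three Cartan summands and the fractional exponents $\pm 1/3,\pm 2/3,\pm 4/3$ on $K_1,K_2$ are tuned exactly to compensate the $v$-shifts produced by moving $E_1$ across Cartan monomials and by the quantum commutator $[E_1,F_1]$. I will organize the verification by the type of the leftover non-Cartan monomial (either $E_1$ alone, or $F_2E_{13}$, or $E_{13}F_2$), so that each type reduces to an elementary scalar identity in $v$ and $K_1,K_2$.
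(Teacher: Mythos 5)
The paper's own proof is a one-line ``direct calculation,'' and your overall plan (check commutation with each generator directly) is the same in spirit; the issue is that several of the specific reduction steps you describe are wrong as stated. First, the symmetry step: the map $\Omega\colon E_i\mapsto F_i$, $F_i\mapsto E_i$, $K_i\mapsto K_i^{-1}$, $v\mapsto v^{-1}$ is an \emph{anti}-automorphism, so it sends $F_1E_1\mapsto \Omega(E_1)\Omega(F_1)=F_1E_1$: the $FE$-summands do \emph{not} become $EF$-summands, no rewriting via $E_iF_i=F_iE_i+(K_i-K_i^{-1})/(v-v^{-1})$ ever occurs, and the Cartan exponents simply flip sign. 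Comparing pure Cartan parts, $\Omega(Z)$ contains $v^{2}K_1^{4/3}K_2^{2/3}+K_1^{-2/3}K_2^{2/3}+v^{-2}K_1^{-2/3}K_2^{-4/3}$, which differs from the Cartan part of $Z$, so $\Omega(Z)\neq Z$ and your transfer of $[Z,E_i]=0$ to $[Z,F_i]=0$ breaks down. The step is salvageable, but with the paper's $\tau$ ($E_i\mapsto F_i$, $F_i\mapsto E_i$, $K_i\mapsto K_i$, $v$ fixed): since $\tau(E_{13})=F_{13}$ and $\tau(F_{13})=E_{13}$, every summand of $Z$ is $\tau$-fixed, and $\tau(Z)=Z$ does give $[Z,F_i]=0$ from $[Z,E_i]=0$. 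Similarly, the Dynkin symmetry $1\leftrightarrow2$ is not available as stated: $Z$ is visibly not invariant under $\nu$ (its Cartan monomials are not $1\leftrightarrow2$ symmetric, and $\nu(E_{13})\neq E_{13}$ up to scalar), so $[E_2,Z]=0$ is a second, analogous but genuinely separate computation, not a corollary of $[E_1,Z]=0$.

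Second, within the computation of $[E_1,Z]$ itself, the claim that the summand $vF_2E_2K_1^{2/3}K_2^{1/3}$ ``contributes nothing'' is false: $[E_1,F_2]=0$, but $[E_1,E_2]\neq0$ in $\Uv$ (only the Serre relation holds), so this summand produces terms of the form $F_2E_{13}\cdot(\text{Cartan})$ and $F_2E_2E_1\cdot(\text{Cartan})$, and their cancellation against the contributions of $v^{-1}F_{13}E_{13}K_1^{-1/3}K_2^{1/3}$ (via $[E_1,F_{13}]=F_2[E_1,F_1]-v[E_1,F_1]F_2$ and $E_1E_{13}=v^{-1}E_{13}E_1$, which are correct) is exactly the nontrivial part of the verification. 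As written, your bookkeeping scheme omits these cross terms, so the final ``scalar identity in $v$'' you would check is not the right one. With these repairs (use $\tau$ instead of $\Omega$, do the $E_2$ case separately, and include the $F_2E_2$-summand cross terms) the plan becomes a correct, if lengthy, direct verification of the kind the paper asserts.
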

\begin{proof}
The lemma is proved by a direct calculation.
\end{proof}

We have an anti-isomorphism of $\C(v)$-algebras $\tau$ given by:
\be
\tau:\ \Uv\to\Uv, \quad E_i\mapsto F_i, \quad
F_i\mapsto E_i, \quad K_i\mapsto K_i.
\en
We have an isomorphism of $\C(v)$-algebras $\nu$ given by:
\be
\nu:\ \Uv\to\Uv, \quad E_i\mapsto E_{3-i},
\quad F_i\mapsto F_{3-i}, \quad K_i\mapsto K_{3-i}.
\en

We also consider the quantum group $\Uvv$
with parameter $v^{-1}$.
Denote the generators of $\Uvv$ by
$\bar E_i,\bar F_i,\bar K_i$.

We have an isomorphism of  $\C(v)$-algebras $\sigma$ given by:
\be
\sigma:\ \Uv\to\Uvv, \quad E_i\mapsto \bar E_i,
\quad F_i\mapsto \bar F_i, \quad K_i\mapsto \bar K_i^{-1}.
\en

Clearly, all these maps commute: $\tau\circ\sigma=\sigma\circ\tau$,
$\tau\circ\nu=\nu\circ\tau$, $\nu\circ\sigma=\sigma\circ\nu$.

\subsection{Verma modules}

Let $\la_1, \la_2$ be formal variables. Set $\la_3=-\la_1-\la_2$.

 Let $\C(v,v^{\la_1},v^{\la_2})$ be the field of rational functions in
formal variables $v,v^{\la_1},v^{\la_2}$. Let $\mc R$ be the ring
spanned by all elements of the form $\sqrt{g}$, where
$g\in\C(v,v^{\la_1},v^{\la_2})$ , with the obvious operations of
addition and multiplication. In what follows, we consider the quantum
groups with the extended coefficient ring: $\Uv\otimes_{\C(v)} \mc
R$, $\Uvv\otimes_{\C(v)}\mc R$.  We use the same notation $\Uv$,
$\Uvv$ for the extended algebras.

Let $\V_v$
be the $\Uv$-module generated by the highest weight vector $w$ with the
defining relations:
\be
K_1w=v^{\la_1-\la_2}w, \qquad K_2w=v^{\la_2-\la_3}w, \qquad
E_1w=0, \qquad E_2w=0.
\en

Similarly, let  $\V_{v^{-1}}$ be the $\Uvv$-module
generated by the highest weight vector $\bar w$ with the defining relations:
\be
\bar K_1\bar w=v^{\la_2-\la_1}\bar w, \qquad {\bar K_2}\bar w=
v^{\la_3-\la_2}\bar w,
\qquad \bar E_1w=0, \qquad \bar E_2w=0.
\en

We call $\V_v$ and $\V_{v^{-1}}$ the {\it Verma modules} over $\Uv$
and $\Uvv$, respectively.

We denote by $\V_v(d_1,d_2)\subset \V_v$ and by
$\V_{v^{-1}}(d_1,d_2)\subset \V_{v^{-1}}$ the weight subspaces:
\be
\V_v(d_1,d_2)&=&\{w_1\in \V_v \ |\ K_1w_1=v^{\la_1-\la_2-2d_1+d_2}w_1,\
{K_2}w_1=v^{\la_2-\la_3-2d_2+d_1}w_1\},\\
V_{v^{-1}}(d_1,d_2)&=&\{\bar w_2\in \V_{v^{-1}}\ |\
{\bar K_1}\bar w_2=v^{\la_2-\la_1+2d_1-d_2}\bar w_2,\
{\bar K_2}\bar w_2=v^{\la_3-\la_2+2d_2-d_1}\bar w_2\}.
\en
We have $\V_v=\oplus_{d_1,d_2=0}^\infty \V_v(d_1,d_2)$,
$\V_{v^{-1}}=\oplus_{d_1,d_2=0}^\infty \V_{v^{-1}}(d_1,d_2)$.

\begin{lem} There exists a unique non-degenerate $\mc R$-bilinear pairing
$(\ ,\ ):\  \V_v\otimes \V_{v^{-1}}\to \mc R$ such that $(w,\bar w)=1$ and
\be
(g w_1,\bar w_2)=(w_1,(\sigma\circ\tau) (g)\bar w_2), \qquad
\en
for any $w_1\in\V_v$, $\bar w_2\in \V_{v^{-1}}$, $g\in \Uv$.
\end{lem}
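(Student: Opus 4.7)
The plan is to construct the pairing by a Shapovalov-type formula, which yields both uniqueness and existence. For uniqueness: the triangular decomposition $\Uv = U_v^- U_v^0 U_v^+$ together with the Verma module structure identifies $\V_v$ with $U_v^-$ as $\R$-vector spaces via $u \mapsto uw$. Given any pairing with the stated invariance, for $v_1 = uw$ with $u \in U_v^-$ we have
\be
(v_1, \bar v_2) = (w, (\sigma\circ\tau)(u)\, \bar v_2).
\en
Furthermore, the $K_i$-invariance combined with $(\sigma\circ\tau)(K_i) = \bar K_i^{-1}$ and the opposite weights of $w$ and $\bar w$ forces $(w, \bar v) = 0$ on every weight space except $\V_{v^{-1}}(0,0) = \R \bar w$. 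Together with $(w, \bar w) = 1$ this identifies $(w, \cdot)$ with the projection $\pi: \V_{v^{-1}} \to \R$ onto the coefficient of $\bar w$, so any such pairing must equal $(v_1, \bar v_2) = \pi((\sigma\circ\tau)(u)\bar v_2)$.

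For existence, I would adopt this formula as the \emph{definition} and verify that $(gv_1, \bar v_2) = (v_1, (\sigma\circ\tau)(g)\bar v_2)$ holds for all $g \in \Uv$, not merely $g \in U_v^-$. By linearity this reduces to testing generators. The case $g = F_j$ is immediate because $F_j u \in U_v^-$ and $\sigma\circ\tau$ is an anti-homomorphism, so $(\sigma\circ\tau)(F_j u) = (\sigma\circ\tau)(u)\bar F_j$. The case $g = K_j^{\pm 1}$ follows by weight matching. The main point is $g = E_j$: using $[E_j, F_k] = \delta_{jk}(K_j - K_j^{-1})/(v - v^{-1})$ and the quantum Serre relations, one commutes $E_j$ past $u$ to express $E_j u$ in the PBW form $U_v^- U_v^0 + U_v^- U_v^0 E_j$; the $\sigma\circ\tau$ image of these terms lives on the $\Uvv$ side as $\bar F_j$-expressions, and after composing with $\pi$ and using $\bar E_j \bar w = 0$ together with the $\bar K_j$-weights on $\V_{v^{-1}}$, the two sides agree term by term.

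For non-degeneracy, the weight arguments above confine the pairing to the diagonal $\V_v(d_1, d_2) \otimes \V_{v^{-1}}(d_1, d_2)$. In a PBW basis its matrix is, up to signs and powers of $v$, the quantum Shapovalov matrix of $\mathfrak{sl}_3$ at highest weight $(\lambda_1, \lambda_2)$. By the De Concini--Kac formula this determinant is a product of quantum-integer expressions linear in $v^{\lambda_1}, v^{\lambda_2}$, each invertible in $\R$ since $v, v^{\lambda_1}, v^{\lambda_2}$ are formal variables. Hence the pairing is non-degenerate on every weight space, and so overall. The principal obstacle is the invariance check at $g = E_j$, where one must carefully match the commutators in $\Uv$ with their mirror images in $\Uvv$ through the anti-isomorphism $\sigma\circ\tau$; everything else is structural.
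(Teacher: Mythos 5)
Your construction is correct and is exactly the standard Shapovalov-type argument the paper has in mind (its own proof consists of the single phrase ``The proof is standard''): uniqueness from the identification $\V_v\simeq U_v^-w$ plus weight-diagonality, existence by defining $(uw,\bar y)=\pi((\sigma\circ\tau)(u)\bar y)$ with $\pi$ the $\bar w$-coefficient projection, and non-degeneracy from the generic (formal) highest weight via the Shapovalov determinant. Two harmless slips worth fixing: $(\sigma\circ\tau)(F_j)=\bar E_j$, not $\bar F_j$, and in the $E_j$-invariance check the relevant facts are that $\pi$ kills $\bar F_j\V_{v^{-1}}$ for weight reasons and that $(\sigma\circ\tau)$ of the Cartan parts produced by commuting $E_j$ past $u$ acts on the $\bar w$-component by the same character by which they act on $w$ --- not the relation $\bar E_j\bar w=0$.
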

\begin{proof}
The proof is standard.
\end{proof}

In what follows, we use the following notation:
\be
[a]=\frac{v^a-v^{-a}}{v-v^{-1}},
 \qquad [a]!=\prod_{i=1}^a[i], \qquad [a]_b=\prod_{i=0}^{b-1}[a+i].
\en

The Verma module $\V_v$ has the Gelfand-Tsetlin basis
$\{m_{d_1,d_2,n}\ |\ d_1,d_2,n\in\Z_{\geq 0},n\leq\min(d_1,d_2)\}$
(see \cite{J}). In this basis the action of the $\Uv$ is given by:
\bea
K_1m_{d_1,d_2,n}&=&v^{\la_1-\la_2-2d_1+d_2}m_{d_1,d_2,n}, \qquad
{K_2}m_{d_1,d_2,n}=v^{\la_2-\la_3-2d_2+d_1}m_{d_1,d_2,n},\notag \\
E_1m_{d_1,d_2,n}&=&\sqrt{b_1(d_1,d_2,n)}m_{d_1-1,d_2,n-1}+
\sqrt{b_2(d_1,d_2,n)}m_{d_1-1,d_2,n},\notag\\
F_1m_{d_1,d_2,n}&=&\sqrt{b_1(d_1+1,d_2,n+1)}m_{d_1+1,d_2,n+1}+
\sqrt{b_2(d_1+1,d_2,n)}m_{d_1+1,d_2,n},\notag\\
E_2m_{d_1,d_2,n}&=&\sqrt{a(d_1,d_2,n)}m_{d_1,d_2-1,n}, \quad
F_2m_{d_1,d_2,n}=\sqrt{a(d_1,d_2+1,n)}m_{d_1,d_2+1,n},\notag
\ena
where
\bea
a(d_1,d_2,n)&=&[d_2-n][\la_2-\la_3+d_1-d_2-n+1], \notag\\
b_1(d_1,d_2,n)&=&\frac{[d_2-n+1][n][\la_2-\la_3-n+1]
[\la_1-\la_3-n+2]}{[\la_2-\la_3+d_1-2n+1][\la_2-\la_3+d_1-2n+2]},\notag \\
b_2(d_1,d_2,n)&=&\frac{[d_1-n][\la_2-\la_3+d_1-d_2-n]
[\la_2-\la_3+d_1-n+1]}{[\la_2-\la_3+d_1-2n][\la_2-\la_3+d_1-2n+1]}\notag \\
&&\hspace{200pt}\times \frac{[\la_1-\la_2-d_1+n+1]}{1}. \notag
\ena

\begin{rem} Our formulas are identified with the formulas in \cite{J}
as follows. Let the vector $m(d_1,d_2,n)$ correspond
to the Gelfand-Tsetlin pattern in \cite{J} given by:
\be
\left(\begin{matrix}
-\la_3 &&-\la_2&&-\la_1 \\
&-\la_3-n&&-\la_2-d_1+n&\\
&&-\la_3-d_2 &&
\end{matrix}\right)
\en
Then the action of $g\in \Uv$ in our paper is given by
formulas for the action of $\nu(g)$ in \cite{J}.
\end{rem}

Similarly, we have a Gelfand-Tsetlin basis of the Verma module $\V_{v^{-1}}$,
\newline $\{\bar m_{d_1,d_2,n}\ |\ d_1,d_2,n\in\Z_{\geq
0},n\leq\min(d_1,d_2)\}$.

\begin{lem} We have
\be
(m_{d_1,d_2,n},\bar m_{d_1',d_2',n'})=
\delta_{d_1,d_1'}\delta_{d_2,d_2'}\delta_{n,n'}.
\en
\end{lem}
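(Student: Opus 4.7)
The plan is to prove the orthonormality in three stages: first that the pairing is diagonal in the pair $(d_1,d_2)$, second that it is diagonal in $n$ within a fixed weight space, and third that the diagonal values all equal $1$. Each stage exploits the intertwining identity $(gw_1,\bar w_2)=(w_1,(\sigma\tau)(g)\bar w_2)$ in a different way.

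For the first stage I would apply the intertwining identity to the Cartan generators. Since $\tau$ fixes $K_i$ and $\sigma(K_i)=\bar K_i^{-1}$, substituting the explicit $K_i$-eigenvalues from the Gelfand-Tsetlin formulas for $i=1,2$ yields the two linear relations $-2d_1+d_2=-2d'_1+d'_2$ and $d_1-2d_2=d'_1-2d'_2$, whence $d_1=d'_1$ and $d_2=d'_2$ whenever the pairing is non-zero.

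For the second stage I would exploit the quadratic Casimir $C_1$ of the Levi subalgebra $U_v(\mathfrak{sl}_2)\subset\Uv$ generated by $E_1,F_1,K_1$. Its eigenvalue on $m_{d_1,d_2,n}$ depends non-trivially on $n$ through the middle row $(-\la_3-n,-\la_2-d_1+n)$ of the Gelfand-Tsetlin pattern: different values of $n$ label $U_v(\mathfrak{sl}_2)$-subrepresentations of different highest weight, and the resulting eigenvalues are pairwise distinct elements of $\mc R$ since $\la_1,\la_2$ are formal. A short computation shows $\tau(C_1)=C_1$, so $(\sigma\circ\tau)(C_1)$ is central in the analogous subalgebra $U_{v^{-1}}(\mathfrak{sl}_2)\subset\Uvv$ with the same $n'$-dependent eigenvalues on $\bar m_{d_1,d_2,n'}$; equating the two eigenvalues via the intertwining identity forces $n=n'$.

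For the third stage I would induct on $(d_1,d_2,n)$, starting from $(m_{0,0,0},\bar m_{0,0,0})=(w,\bar w)=1$. Since $F_2$ acts as a single-term rescaling,
\[
(m_{d_1,d_2,n},\bar m_{d_1,d_2,n})
=\frac{1}{a(d_1,d_2,n)}(m_{d_1,d_2-1,n},\bar E_2\bar F_2\bar m_{d_1,d_2-1,n}),
\]
and combining $\bar E_2\bar F_2=\bar F_2\bar E_2+(\bar K_2-\bar K_2^{-1})/(v^{-1}-v)$ with the Gelfand-Tsetlin action and the elementary quantum identity $[a][b]-[a-1][b+1]=[b-a+1]$ reduces the right-hand side to $(m_{d_1,d_2-1,n},\bar m_{d_1,d_2-1,n})$, closing the $d_2$-induction. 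The $d_1$- and $n$-inductions follow the same template with $F_1$ in place of $F_2$; although $F_1 m_{d_1,d_2,n}$ is a linear combination of two Gelfand-Tsetlin vectors, stage two kills the cross pairings, and the coefficients $b_1(d_1,d_2,n),b_2(d_1,d_2,n)$ turn out to be tailored precisely to preserve the normalization. The principal obstacle lies in this $F_1$ step, where the two-term structure forces one to combine stage two with a more intricate $q$-number identity of the same flavour as the one above; once that identity is checked, the three stages combine to yield the claim.
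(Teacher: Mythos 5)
Your stage two rests on a false premise, and this is a genuine gap. In this paper's conventions the Gelfand--Tsetlin basis is adapted to the chain for which $E_2,F_2,K_2$ --- not $E_1,F_1,K_1$ --- generate the copy of quantum $\mathfrak{sl}_2$ acting by single-term formulas (see the Remark: the action of $g$ here is the action of $\nu(g)$ in \cite{J}); the middle row $(-\la_3-n,\,-\la_2-d_1+n)$ is preserved by $E_2,F_2$ and changed by $E_1,F_1$. Concretely, $F_2E_2\,m_{d_1,d_2,n}=a(d_1,d_2,n)\,m_{d_1,d_2,n}$ is diagonal, whereas $F_1E_1\,m_{d_1,d_2,n}$ contains the off-diagonal terms $\sqrt{b_1(d_1,d_2,n)b_2(d_1,d_2,n-1)}\,m_{d_1,d_2,n-1}$ and $\sqrt{b_2(d_1,d_2,n)b_1(d_1,d_2,n+1)}\,m_{d_1,d_2,n+1}$, which do not vanish; adding a function of $K_1$ cannot remove them, so the Casimir of the subalgebra generated by $E_1,F_1,K_1$ is \emph{not} diagonal on the $m_{d_1,d_2,n}$, and different $n$ do not label distinct subrepresentations for that subalgebra. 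The separation of $n$ from $n'$ therefore fails as you state it. It can be repaired by using $E_2,F_2,K_2$: since $\tau(F_2E_2)=F_2E_2$ and $\sigma(F_2E_2)=\bar F_2\bar E_2$, the intertwining identity gives $a(d_1,d_2,n)(m,\bar m')=\bar a(d_1,d_2,n')(m,\bar m')$ once stage one has fixed $(d_1,d_2)$, and the eigenvalues $a(d_1,d_2,n)=[d_2-n][\la_2-\la_3+d_1-d_2-n+1]$ are pairwise distinct in $n$ for formal $\la_i$ (one also needs $[x]_{v^{-1}}=[x]_v$, so that $\bar a=a$).

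Stage three is likewise not yet a proof: the $F_1$ step, which you identify as the principal obstacle, is only promised, and it is where the content sits. The needed identity is in fact automatic --- it is the diagonal entry of $[E_1,F_1]=(K_1-K_1^{-1})/(v-v^{-1})$ in the basis, namely $b_1(d_1,d_2,n+1)+b_2(d_1,d_2,n)=b_1(d_1-1,d_2,n)+b_2(d_1-1,d_2,n)+[\la_1-\la_2-2d_1+d_2+2]$ --- but even granting it, pairing $F_1$-images at fixed $(d_1,d_2)$ gives, when $d_2\ge d_1$, only $d_1$ relations tying the $d_1+1$ consecutive diagonal entries, so the system is underdetermined unless you order the induction so that the single-term $F_2$ step first lowers $d_2$ down to $n$ (and treat the corner $n=d_1=d_2$ separately); you also use, without saying so, that the Gelfand--Tsetlin coefficients of $\V_{v^{-1}}$ are the $v\mapsto v^{-1}$ images of those of $\V_v$. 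For comparison, the paper avoids the computation entirely: by uniqueness, the pairing is identified with the Shapovalov form (the form in which each $g$ is dual to $\tau(g)$ and $(w,\bar w)=1$), and orthonormality of the Gelfand--Tsetlin basis for the Shapovalov form is quoted from \cite{J}. Your route amounts to reproving that cited fact, which is legitimate and self-contained in principle, but as written it uses the wrong Levi subalgebra in stage two and leaves the decisive induction unfinished in stage three.
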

\begin{proof}
The Shapovalov form on $\V_v$ is the unique
non-degenerate symmetric bilinear form such
that the length of the highest weight vector $w$ is 1 and every
$g\in\Uv$ is dual to $\tau(g)$.  According to \cite{J}, the
Gelfand-Tsetlin basis is orthonormal with respect to the Shapovalov
form in $\V_v$. The lemma follows.
\end{proof}

\subsection{Whittaker vectors}
We call a series $\omega=\sum_{d_1,d_2=0}^\infty \omega_{d_1,d_2}$,
$\omega_{d_1,d_2}\in \V_v(d_1,d_2)$, the {\it Whittaker vector} 
if $\omega_{0,0}=1$ and
\be
E_1K_1^{-1}\omega_{d_1,d_2}=\frac{1}{1-v^2}\omega_{d_1-1,d_2}, \qquad
E_2\omega_{d_1,d_2}=\frac{1}{1-v^2}\omega_{d_1,d_2-1}\notag.
\en

We call a series $\bar \omega=\sum_{d_1,d_2=0}^\infty \bar \omega_{d_1,d_2}$,
$\bar \omega_{d_1,d_2}\in \V_{v^{-1}}(d_1,d_2)$, the
{\it dual Whittaker vector} if
$\bar \omega_{0,0}=1$ and
\be
\bar E_1\bar \omega_{d_1,d_2}=\frac{v}{1-v^{-2}}\bar \omega_{d_1-1,d_2}, \qquad
\bar E_2\bar K_2\bar \omega_{d_1,d_2}=\frac{v}{1-v^{-2}}\bar \omega_{d_1,d_2-1}.\notag
\en

\begin{rem}
Because of the quantum Serre relations, there are no non-zero vectors
invariant under the action of $E_1$ and $E_2$ (except for the
multiples of the highest weight vector $w$). The operators $e_1=E_1K_1^{-1}$
and $e_2=E_2$ satisfy
\be
e_1^2e_2-(1+v^2)e_1e_2e_1+v^2 e_2e_1^2=0, \qquad
e_2^2e_1-(1+v^{-2})e_2e_1e_2+v^{-2} e_1e_2^2=0,
\en
which does not prohibit the existence of non-trivial Whittaker vectors.
\end{rem}

Let $r(d_1,d_2,n)$ and $s(d_1,d_2)$ be given by the formulas:
\bea
r(d_1,d_2,n)&=&(\la_3-\la_2-d_1-1)n+n^2+(\la_2-\la_1+1)d_1+d_1^2,\notag \\
s(d_1,d_2)&=&-d_1^2-d_2^2+d_1d_2+(\la_1-\la_2)d_1+(\la_2-\la_3)d_2.\notag
\ena

Let $c(d_1,d_2,n)$ be given by the formula:
\bea
&&c(d_1,d_2,n)=\frac{1}{[d_1-n]![d_2-n]![n]!} \notag\\
&&\times  \frac{[\la_2-\la_3+2]_\infty}
{[\la_1-\la_2-d_1+n+1]_{d_1-n}[\la_1-\la_3-n+2]_n[\la_2-\la_3+2]_{d_1-n}
}\notag \\
&&\times  \frac{1}{[\la_2-\la_3+d_1-d_2-n+1]_{d_2-n}
[\la_2-\la_3+d_1-2n+2]_{\infty}[\la_2-\la_3-n+1]_n}.\notag
\ena
Note that  $c(d_1,d_2,n)$ is a rational function in
$v,v^{\la_1},v^{\la_2}$, which is invariant under the change
$v\mapsto v^{-1}$, $v^{\la_1}\mapsto v^{-\la_1}$,
$v^{\la_2}\mapsto v^{-\la_2}$.

Set
\bea
 \omega_{d_1,d_2,n}&=&\frac{1}{(1-v^2)^{d_1+d_2}}v^{r(d_1,d_2,n)}
\sqrt{c(d_1,d_2,n)} \ m_{d_1,d_2,n},\notag\\
\bar \omega_{d_1,d_2,n}&=&\frac{1}{(1-v^{-2})^{d_1+d_2}}
v^{-r(d_1,d_2,n)+s(d_1,d_2)}\sqrt{c(d_1,d_2,n)}\ \bar m_{d_1,d_2,n}.\notag
\ena

\begin{thm}\label{whit vector}
The Whittaker vector $\omega$ and the dual Whittaker vector $\bar \omega$ exist,
are unique and are given by the formula:
\be
\omega_{d_1,d_2}=\sum_{n=0}^{\min(d_1,d_2)} \omega_{d_1,d_2,n},\qquad
\bar \omega_{d_1,d_2}=\sum_{n=0}^{\min(d_1,d_2)}\bar \omega_{d_1,d_2,n}.
\en
\end{thm}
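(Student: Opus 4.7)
The plan is to prove uniqueness by a standard Verma-module argument and then to verify the explicit formulas by substitution into the Whittaker equations. For uniqueness, suppose $\omega$ and $\omega'$ are two Whittaker vectors in $\V_v$. For each $(d_1,d_2)\neq(0,0)$ the difference $u=\omega_{d_1,d_2}-\omega'_{d_1,d_2}\in\V_v(d_1,d_2)$ is annihilated by both $E_1K_1^{-1}$ and $E_2$, and hence by $E_1$ and $E_2$, since $K_1$ is invertible. In the Verma module $\V_v$ the only vectors killed by both raising operators are scalar multiples of the highest weight vector $w$, whose weight lies in $\V_v(0,0)$; therefore $u=0$. Combined with $\omega_{0,0}=\omega'_{0,0}=1$, this forces $\omega=\omega'$. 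The same argument, with $\bar E_1$ and $\bar E_2\bar K_2$ replacing $E_1K_1^{-1}$ and $E_2$, yields uniqueness of $\bar\omega$.

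For existence, I would substitute the explicit Gelfand-Tsetlin expansion into the two defining equations. Using the recalled action of $E_2$, the relation $E_2\omega_{d_1,d_2}=(1-v^2)^{-1}\omega_{d_1,d_2-1}$ decomposes index-by-index into the single identity
\[
(1-v^2)\sqrt{a(d_1,d_2,n)}\,\sqrt{c(d_1,d_2,n)}\,v^{r(d_1,d_2,n)}=\sqrt{c(d_1,d_2-1,n)}\,v^{r(d_1,d_2-1,n)},
\]
which after squaring is a direct comparison of $[\,\cdot\,]$-products. The equation $E_1K_1^{-1}\omega_{d_1,d_2}=(1-v^2)^{-1}\omega_{d_1-1,d_2}$ is the subtler one: for each target index $n'$ in $\V_v(d_1-1,d_2)$ two contributions appear, one from $\omega_{d_1,d_2,n'+1}$ (via the $\sqrt{b_1}$ part of $E_1$) and one from $\omega_{d_1,d_2,n'}$ (via the $\sqrt{b_2}$ part), and they must add up to $(1-v^2)^{-1}\omega_{d_1-1,d_2,n'}$. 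This reduces to a quantum Pascal-type identity among $c(d_1,d_2,n'+1)$, $c(d_1,d_2,n')$ and $c(d_1-1,d_2,n')$; the exponents in $r(d_1,d_2,n)$ are designed so that the two radicals share a common square root factor, and the remaining rational identity becomes a cancellation of $[\,\cdot\,]_\infty$-products against finite Pochhammer blocks.

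The dual Whittaker vector is handled by the same scheme; alternatively, the symmetry of $c(d_1,d_2,n)$ under $(v,v^{\la_1},v^{\la_2})\mapsto(v^{-1},v^{-\la_1},v^{-\la_2})$ together with the sign reversal of $r$ and the shift by $s(d_1,d_2)$ in the exponent of $v$, combined with the isomorphism $\sigma:\Uv\to\Uvv$, allow one to deduce the equations for $\bar\omega$ directly from those for $\omega$. The hard part is the Pascal-type identity coming from the $E_1K_1^{-1}$ equation: once isolated and squared it becomes a finite rational identity in $v, v^{\la_1}, v^{\la_2}$ verifiable by elementary manipulation of the $[\,\cdot\,]$-symbols, but the precise integer exponents in $r$ and the prefactor $(1-v^2)^{-d_1-d_2}$ are delicate and must be carefully matched against the structure constants $b_1$ and $b_2$.
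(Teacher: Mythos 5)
Your plan is, in substance, the paper's own proof: the paper disposes of this theorem with the single line ``proved by a direct calculation,'' i.e.\ precisely the substitution of the Gelfand--Tsetlin expansion into the two Whittaker equations that you outline, so there is no divergence of method. Two points should be repaired before the calculation is carried out. First, your displayed identity for the $E_2$-equation carries a spurious factor $(1-v^2)$: since $\omega_{d_1,d_2,n}$ has the prefactor $(1-v^2)^{-(d_1+d_2)}$, the $\tfrac{1}{1-v^2}$ in $E_2\omega_{d_1,d_2}=\tfrac{1}{1-v^2}\omega_{d_1,d_2-1}$ is exactly absorbed; moreover $r(d_1,d_2,n)$ does not depend on $d_2$, so the $E_2$-check reduces to the clean statement $c(d_1,d_2,n)\,a(d_1,d_2,n)=c(d_1,d_2-1,n)$, which does hold (only the blocks $[d_2-n]!$ and $[\la_2-\la_3+d_1-d_2-n+1]_{d_2-n}$ in $c$ involve $d_2$, and their ratio is exactly $a(d_1,d_2,n)$). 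Second, your uniqueness step asserts that in $\V_v$ the only vectors annihilated by $E_1$ and $E_2$ are multiples of $w$; this is false for Verma modules in general and here requires the genericity of the highest weight (the $\la_i$ are formal), e.g.\ via the non-degeneracy of the Shapovalov form implicit in the orthonormality of the Gelfand--Tsetlin basis, or via the non-degenerate pairing with $\V_{v^{-1}}$, moving $E_1,E_2$ across the pairing against vectors $\bar F_{i_1}\cdots\bar F_{i_m}\bar w$. With these adjustments the argument is exactly the direct verification the paper intends, the genuinely laborious part being, as you say, the two-term identity coming from the $E_1K_1^{-1}$ equation.
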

\begin{proof}
The theorem is proved by a direct calculation.
\end{proof}

\subsection{Toda recursion}
In this section we describe a relation between the
Whittaker vectors, the Toda recursion and the characters of
$\hat{\mathfrak{n}}$-modules.
 Such relations hold with the following identification:
\be
q=v^2, \qquad z_1=q^{\la_1-\la_2+1}, \qquad z_2=q^{\la_2-\la_3+1},
\en
which we always assume in this section.

The following lemma can be extracted from \cite{E}.

\begin{lem}\label{w-t}
The functions $(\omega_{d_1,d_2},\bar \omega_{d_1,d_2})$ satisfy the
Toda recursion given in Proposition \ref{prop:Toda}.
\end{lem}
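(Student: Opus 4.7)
The plan is to exploit the centrality of $Z$ in $\Uv$. First I would compute the scalar $\zeta$ by which $Z$ acts on $\V_v$: on the highest weight vector $w$ the $F_iE_i$ terms vanish since $E_iw=0$, so only the diagonal $K$-eigenvalues contribute. A short computation using $\la_3=-\la_1-\la_2$ together with $q=v^2$, $z_1=q^{\la_1-\la_2+1}$, $z_2=q^{\la_2-\la_3+1}$ yields $\zeta=q^{-\la_2}(1+z_1^{-1}+z_2)$. Since $Z$ preserves the weight grading, $Z\omega_{d_1,d_2}=\zeta\,\omega_{d_1,d_2}$, and pairing with $\bar\omega_{d_1,d_2}$ gives
\[
(Z\omega_{d_1,d_2},\bar\omega_{d_1,d_2})=\zeta\,f_{d_1,d_2},\qquad f_{d_1,d_2}:=(\omega_{d_1,d_2},\bar\omega_{d_1,d_2}).
\]

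Next I would compute the same pairing by expanding $Z$ into its six summands. The three diagonal $K$-terms contribute $q^{-\la_2}(z_1^{-1}q^{d_1}+q^{d_2-d_1}+z_2q^{-d_2})f_{d_1,d_2}$, so that $(\zeta-\mathrm{diag})f_{d_1,d_2}$ equals $-q^{-\la_2}$ times the left-hand side of the Toda recursion. For each $F_iE_i$ summand ($i=1,2$), I would apply the Whittaker relation (inserting $K_iK_i^{-1}$) to compute $E_i\omega_{d_1,d_2}$, then shift $F_i$ to the dual side via $(F_iw_1,\bar w_2)=(w_1,\bar E_i\bar w_2)$ using $(\sigma\circ\tau)(F_i)=\bar E_i$, and apply the dual Whittaker relation to $\bar\omega$. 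After folding in the $K$-eigenvalues from $Z$ and the identity $(v-v^{-1})^2/((1-v^2)(1-v^{-2}))=-1$, these contribute exactly $-q^{-\la_2}q^{d_2-d_1}f_{d_1-1,d_2}$ and $-q^{-\la_2}z_2q^{-d_2}f_{d_1,d_2-1}$, matching the right-hand side of the Toda recursion.

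The heart of the argument, and the main obstacle, is showing that the $F_{13}E_{13}$ summand contributes zero to the pairing; otherwise an unwanted $f_{d_1-1,d_2-1}$ term would spoil the recursion. The key point is that $\sigma$ is $\C(v)$-linear and hence fixes $v$ as a scalar, so $(\sigma\circ\tau)(F_{13})=\sigma(E_{13})=\bar E_1\bar E_2-v\,\bar E_2\bar E_1$; this is \emph{not} the element $\bar E_{13}=\bar E_1\bar E_2-v^{-1}\bar E_2\bar E_1$ of $\Uvv$, because $\Uvv$ has parameter $v^{-1}$. Direct computation from the dual Whittaker relations $\bar E_1\bar\omega_{d_1,d_2}=\frac{v}{1-v^{-2}}\bar\omega_{d_1-1,d_2}$ and $\bar E_2\bar K_2\bar\omega_{d_1,d_2}=\frac{v}{1-v^{-2}}\bar\omega_{d_1,d_2-1}$ will show that $\bar E_1\bar E_2\bar\omega_{d_1,d_2}$ and $\bar E_2\bar E_1\bar\omega_{d_1,d_2}$ differ by precisely the factor $v$, so $(\sigma\circ\tau)(F_{13})\bar\omega_{d_1,d_2}=0$. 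Equating the two expressions for $(Z\omega_{d_1,d_2},\bar\omega_{d_1,d_2})$ and canceling the common $q^{-\la_2}$ then yields exactly the Toda recursion of Proposition~\ref{prop:Toda}.
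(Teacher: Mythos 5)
Your proposal is correct and follows essentially the same route as the paper: compute the pairing of $Z\omega_{d_1,d_2}$ with $\bar\omega_{d_1,d_2}$ two ways, using the Whittaker and dual Whittaker relations for the $F_iE_i$ terms and the vanishing of $(\sigma\circ\tau)(F_{13})=\bar E_1\bar E_2-v\bar E_2\bar E_1$ on $\bar\omega_{d_1,d_2}$ (the paper's ``$\bar E_{13}\bar\omega_{d_1,d_2}=0$'') to kill the $F_{13}E_{13}$ term. Your verification of that vanishing, and of the identity $(v-v^{-1})^2/((1-v^2)(1-v^{-2}))=-1$, correctly fills in the details the paper leaves implicit.
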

\begin{proof}
The Casimir operator $Z\in\Uv$ acts in the cyclic module $\V_v$
by a constant which is readily computed on the highest weight vector.
Therefore, we have
\be
(Z\omega_{d_1,d_2},\bar \omega_{d_1,d_2})=
(q^{-\la_1-1}+q^{-\la_2}+q^{-\la_3+1})(\omega_{d_1,d_2},\bar \omega_{d_1,d_2}).
\en
On the other hand,
\be
(Z\omega_{d_1,d_2},\bar \omega_{d_1,d_2})=
((v^{-2}K_1^{-4/3}K_2^{-2/3}+K_1^{2/3}K_2^{-2/3}+v^2 K_1^{2/3}K_2^{4/3}
)\omega_{d_1,d_2},\bar \omega_{d_1,d_2})\\
+(v-v^{-1})^2\left(v^{-1}
(E_1K_1^{-1/3}K_2^{-2/3}\omega_{d_1,d_2},\bar E_1\bar \omega_{d_1,d_2})
+v(E_2K_1^{2/3}K_2^{1/3}\omega_{d_1,d_2},\bar E_2\bar \omega_{d_1,d_2})\right)\\
=(q^{-\la_1+d_1-1}+q^{-\la_2-d_1+d_2}+q^{-\la_3-d_2+1})
(\omega_{d_1,d_2},\bar \omega_{d_1,d_2})\\ -q^{-\la_2-d_1+d_2}
(\omega_{d_1-1,d_2},\bar \omega_{d_1-1,d_2})-
q^{-\la_3-d_2+1}(\omega_{d_1,d_2-1},\bar \omega_{d_1,d_2-1}).
\en
In this computation we used $\bar E_{13}\bar \omega_{d_1,d_2}=0$.

The lemma follows.
\end{proof}

\begin{cor}
We have $I_{d_1,d_2}(z_1,z_2)=(\omega_{d_1,d_2},\bar \omega_{d_1,d_2})$,
where
$I_{d_1,d_2}(z_1,z_2)$ is given by \eqref{Idd}.
\end{cor}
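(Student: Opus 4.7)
The plan is to combine three ingredients: both sides of the claimed equality satisfy the same two-index recursion, they agree at the base point, and the recursion is uniquely solvable given the base value. Concretely, by Proposition \ref{prop:Toda} the function $I_{d_1,d_2}(z_1,z_2)$ satisfies the Toda recursion, while by Lemma \ref{w-t} so does the pairing $(\omega_{d_1,d_2},\bar\omega_{d_1,d_2})$. Therefore the content of the corollary is reduced to matching initial data and proving uniqueness.

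For the base case I would read off from formula \eqref{Idd} that $I_{0,0}(z_1,z_2)=1$, using $(a)_0=1$. On the Whittaker side, Theorem \ref{whit vector} and the explicit formulas for $\omega_{d_1,d_2,n}$, $\bar\omega_{d_1,d_2,n}$ give $\omega_{0,0}=\omega_{0,0,0}=w$ and $\bar\omega_{0,0}=\bar\omega_{0,0,0}=\bar w$ (the normalizations reduce to $r(0,0,0)=s(0,0)=0$ and $c(0,0,0)=1$). Hence $(\omega_{0,0},\bar\omega_{0,0})=(w,\bar w)=1$, matching $I_{0,0}$.

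For the inductive step on $d_1+d_2$ I would adopt the convention $I_{-1,d_2}=I_{d_1,-1}=0$, which is consistent on both sides: the Verma modules $\V_v$ and $\V_{v^{-1}}$ have no vectors of the corresponding weights, so $\omega_{d_1,d_2}=\bar\omega_{d_1,d_2}=0$ when an index is negative. Solving the Toda recursion for $I_{d_1,d_2}$ requires the coefficient
\[
A(d_1,d_2)=z_1^{-1}(q^{d_1}-1)+(q^{d_2-d_1}-1)+z_2(q^{-d_2}-1)
\]
to be nonzero for $(d_1,d_2)\neq(0,0)$. This is the one technical point and the main obstacle: when $d_2=0<d_1$ it factors as $(q^{d_1}-1)(z_1^{-1}-q^{-d_1})$; when $d_1=0<d_2$ as $(q^{d_2}-1)(1-q^{-d_2}z_2)$; and when $d_1,d_2>0$ the three summands are visibly linearly independent over $\C[q^{\pm1}]$, so $A(d_1,d_2)\neq0$. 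Consequently the recursion determines $I_{d_1,d_2}$ uniquely from $I_{d_1-1,d_2}$ and $I_{d_1,d_2-1}$, and since both $I_{d_1,d_2}(z_1,z_2)$ and $(\omega_{d_1,d_2},\bar\omega_{d_1,d_2})$ satisfy the same recursion with the same initial datum and the same boundary convention, they coincide for all $(d_1,d_2)\in\Z_{\ge0}^2$.
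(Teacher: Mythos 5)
Your proposal is correct and follows essentially the same route as the paper: the paper's proof also cites Lemma \ref{w-t}, Proposition \ref{prop:Toda} (via formula \eqref{Idd}), and the uniqueness of the solution of the Toda recursion. You simply make explicit the details the paper leaves implicit — the matching base value $I_{0,0}=(\omega_{0,0},\bar\omega_{0,0})=1$, the vanishing convention at negative indices, and the nonvanishing of the coefficient $z_1^{-1}(q^{d_1}-1)+(q^{d_2-d_1}-1)+z_2(q^{-d_2}-1)$ that guarantees uniqueness.
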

\begin{proof}
The corollary follows from the uniqueness of the solution of the Toda
recursion, Lemma \ref{w-t} and \eqref{Idd}.
\end{proof}

Recall the functions $I_{d_1,d_2,n}(z_1,z_2)$ given by \eqref{Iddn}.
The following theorem explains the meaning of these functions from the
point of view of Whittaker vectors.

\begin{thm}\label{terms}
We have
\be
I_{d_1,d_2,n}(z_1,z_2)=(\omega_{d_1,d_2,n},\bar \omega_{d_1,d_2,n}).
\en
\end{thm}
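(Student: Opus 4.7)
The plan is to prove Theorem \ref{terms} by direct computation from the explicit formulas for $\omega_{d_1,d_2,n}$ and $\bar\omega_{d_1,d_2,n}$ given in Section 8.3, using only the orthonormality of the Gelfand--Tsetlin bases. Unlike the identity $I_{d_1,d_2}=(\omega_{d_1,d_2},\bar\omega_{d_1,d_2})$, which was established through the Toda recursion of Proposition \ref{prop:Toda}, here the recursion is not available at the level of individual summands, so one has to evaluate the pairing term by term.

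First I would pair the vectors
\be
\omega_{d_1,d_2,n}=\frac{v^{r(d_1,d_2,n)}\sqrt{c(d_1,d_2,n)}}{(1-v^2)^{d_1+d_2}}\,m_{d_1,d_2,n},\quad
\bar\omega_{d_1,d_2,n}=\frac{v^{-r(d_1,d_2,n)+s(d_1,d_2)}\sqrt{c(d_1,d_2,n)}}{(1-v^{-2})^{d_1+d_2}}\,\bar m_{d_1,d_2,n}.
\en
By orthonormality $(m_{d_1,d_2,n},\bar m_{d_1,d_2,n})=1$, the $v^{\pm r}$ factors cancel and $\sqrt{c}\cdot\sqrt{c}=c$, giving
\be
(\omega_{d_1,d_2,n},\bar\omega_{d_1,d_2,n})
=\frac{v^{s(d_1,d_2)}\,c(d_1,d_2,n)}{\bigl((1-v^2)(1-v^{-2})\bigr)^{d_1+d_2}}
=\frac{(-1)^{d_1+d_2}\,v^{s(d_1,d_2)}\,c(d_1,d_2,n)}{(v-v^{-1})^{2(d_1+d_2)}},
\en
using the elementary identity $(1-v^2)(1-v^{-2})=-(v-v^{-1})^2$.

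Next I would translate the right-hand side from $v$-language into $q$-language. Under $q=v^2$ and the substitutions $z_1=q^{\lambda_1-\lambda_2+1}$, $z_2=q^{\lambda_2-\lambda_3+1}$, each factor $[a]_b$ in $c(d_1,d_2,n)$ becomes
\be
[a]_b=\frac{v^{b(1-a)-b(b-1)/2}}{(1-q)^b}\,(q^a)_b,
\en
so that the denominators $[d_1-n]!\,[d_2-n]!\,[n]!$ yield $(q)_{d_1-n}(q)_{d_2-n}(q)_n$ up to explicit powers of $v$ and of $1-q$, while the shifted Pochhammer factors produce precisely the factors $(qz_1^{-1})_{d_1-n}$, $(qz_1^{-1}z_2^{-1})_n$, $(q^{d_1-2n+1}z_2)_\infty$, $(q^{-d_1+2n+1}z_2^{-1})_{d_2-n}$, $(qz_2)_{d_1-n}$, $(qz_2^{-1})_n$ appearing in \eqref{Iddn} (together with the $(qz_2)_\infty/[\lambda_2-\lambda_3+2]_\infty$-type quotient that matches the numerator $(qz_2)_\infty$). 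All powers of $(1-q)$ collecting from these substitutions exactly cancel the factor $(v-v^{-1})^{2(d_1+d_2)}$ in the denominator, since $v-v^{-1}=-v^{-1}(1-q)$ contributes $(1-q)^{2(d_1+d_2)}$ with a controllable power of $v$.

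The main obstacle is the monomial bookkeeping: one must check that the total power of $v$ accumulated from
the prefactor $v^{s(d_1,d_2)}$, from the sign $(-1)^{d_1+d_2}$ absorbed into the factors $1-q^a$, and from all the $v^{b(1-a)-b(b-1)/2}$ contributions coming from the $[a]_b$'s, collapses to exactly $0$ (so that the result is a rational function of $q,z_1,z_2$ only, matching $I_{d_1,d_2,n}(z_1,z_2)$, which has no bare $v$ in it). This is a purely mechanical but error-prone verification; I would organize it by grouping the $[a]_b$'s of $c(d_1,d_2,n)$ into three blocks according to whether they arise from the $d_1-n$, $d_2-n$, or $n$ part, compute the $v$-exponent of each block, sum them with $s(d_1,d_2)$, and confirm the total vanishes. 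Once this balancing check is done, comparison with \eqref{Iddn} is term-by-term and proves the theorem.
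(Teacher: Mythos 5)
Your proposal is correct and takes essentially the same route as the paper: pair the explicit Gelfand--Tsetlin expansions of $\omega_{d_1,d_2,n}$ and $\bar\omega_{d_1,d_2,n}$, use $(m_{d_1,d_2,n},\bar m_{d_1,d_2,n})=1$, and verify by direct computation that the resulting coefficient $v^{s(d_1,d_2)}c(d_1,d_2,n)\bigl((1-v^2)(1-v^{-2})\bigr)^{-(d_1+d_2)}$ equals $I_{d_1,d_2,n}(z_1,z_2)$ after the substitution $q=v^2$, $z_1=q^{\lambda_1-\lambda_2+1}$, $z_2=q^{\lambda_2-\lambda_3+1}$. The paper only streamlines your monomial bookkeeping by writing the coefficients as $v^{\mathrm{integer}}\sqrt{I_{d_1,d_2,n}}$ and $v^{-\mathrm{integer}+s(d_1,d_2)}\sqrt{\bar I_{d_1,d_2,n}}$ and invoking $v^{s(d_1,d_2)}\sqrt{\bar I_{d_1,d_2,n}}=\sqrt{I_{d_1,d_2,n}}$ together with the $v\mapsto v^{-1}$, $v^{\lambda_i}\mapsto v^{-\lambda_i}$ invariance of $c(d_1,d_2,n)$; the one caveat for your direct version is to combine the two infinite products $[\lambda_2-\lambda_3+2]_\infty$ and $[\lambda_2-\lambda_3+d_1-2n+2]_\infty$ into a finite ratio before applying the $[a]_b$-conversion, since their individual $v$- and $(1-q)$-contributions are not separately defined.
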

\begin{proof}
It is easy to see from the explicit formulas that there exist
integers $r(d_1,d_2,n)$ such that
\be
v^{r(d_1,d_2,n)}\sqrt{I_{d_1,d_2,n}(z_1,z_2)}\ m_{d_1,d_2,n}=\omega_{d_1,d_2,n}.
\en
Let $\bar I_{d_1,d_2,n}(z_1,z_2)$ be obtained from
$I_{d_1,d_2,n}(z_1,z_2)$ by the change
$v\mapsto v^{-1}$, $v^{\la_1}\mapsto v^{-\la_1}$,
$v^{\la_2}\mapsto v^{-\la_2}$. Then we have
\be
v^{-r(d_1,d_2,n)+s(d_1,d_2)}\sqrt{\bar I_{d_1,d_2,n}(z_1,z_2)}\
\bar m_{d_1,d_2,n}=\bar \omega_{d_1,d_2,n}.
\en
It is also easy to check explicitly that
\be
v^{s(d_1,d_2)}\sqrt{\bar I_{d_1,d_2,n}(z_1,z_2)}=
\sqrt{I_{d_1,d_2,n}(z_1,z_2)}.
\en
The theorem follows.
\end{proof}

\begin{rem}
In fact, we guessed the formulas for the Whittaker vectors in
Theorem \ref{whit vector}  expecting that Theorem \ref{terms} is true.
\end{rem}

We recover the result of Proposition \ref{prop:I=I}.
\begin{cor}
We have $I_{d_1,d_2}(z_1,z_2)=
\sum_{n=0}^{\min(d_1,d_2)} I_{d_1,d_2,n}(z_1,z_2)$.
\end{cor}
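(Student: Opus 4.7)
The plan is to combine four pieces of the preceding development: (a) the identification $I_{d_1,d_2}(z_1,z_2)=(\omega_{d_1,d_2},\bar\omega_{d_1,d_2})$ proved in the corollary just before Theorem \ref{terms}; (b) the Gelfand--Tsetlin decompositions $\omega_{d_1,d_2}=\sum_{n=0}^{\min(d_1,d_2)}\omega_{d_1,d_2,n}$ and $\bar\omega_{d_1,d_2}=\sum_{n=0}^{\min(d_1,d_2)}\bar\omega_{d_1,d_2,n}$ furnished by Theorem \ref{whit vector}; (c) the orthonormality relation $(m_{d_1,d_2,n},\bar m_{d_1',d_2',n'})=\delta_{d_1,d_1'}\delta_{d_2,d_2'}\delta_{n,n'}$; and (d) Theorem \ref{terms} itself, which identifies each diagonal pairing $(\omega_{d_1,d_2,n},\bar\omega_{d_1,d_2,n})$ with $I_{d_1,d_2,n}(z_1,z_2)$.

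Concretely, I would first invoke (a) to write $I_{d_1,d_2}(z_1,z_2)=(\omega_{d_1,d_2},\bar\omega_{d_1,d_2})$. Next, I would substitute the expansions from (b) into both arguments of the bilinear pairing, obtaining a double sum $\sum_{n,n'}(\omega_{d_1,d_2,n},\bar\omega_{d_1,d_2,n'})$. Since $\omega_{d_1,d_2,n}$ is a scalar multiple of $m_{d_1,d_2,n}$ and $\bar\omega_{d_1,d_2,n'}$ is a scalar multiple of $\bar m_{d_1,d_2,n'}$, the orthonormality in (c) collapses the double sum to the diagonal $\sum_n(\omega_{d_1,d_2,n},\bar\omega_{d_1,d_2,n})$. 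Finally, applying (d) term by term yields $\sum_{n=0}^{\min(d_1,d_2)}I_{d_1,d_2,n}(z_1,z_2)$, which is the desired identity.

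There is no real obstacle, since all the work has been carried out in the earlier statements; the corollary is merely a synthesis. The only point that deserves a word of care is the well-definedness of the individual pairings $(\omega_{d_1,d_2,n},\bar\omega_{d_1,d_2,n'})$, but this is immediate because $\omega_{d_1,d_2}$ and $\bar\omega_{d_1,d_2}$ are finite linear combinations (of length $\min(d_1,d_2)+1$) of Gelfand--Tsetlin vectors within the single weight spaces $\V_v(d_1,d_2)$ and $\V_{v^{-1}}(d_1,d_2)$, so the pairing splits into a finite sum without convergence issues. Thus the argument is essentially a one-line computation.
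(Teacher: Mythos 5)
Your synthesis is correct and is exactly how the paper obtains this corollary: it is stated as an immediate consequence of the identification $I_{d_1,d_2}=(\omega_{d_1,d_2},\bar\omega_{d_1,d_2})$, the Gelfand--Tsetlin expansions of the Whittaker vectors, the orthonormality $(m_{d_1,d_2,n},\bar m_{d_1',d_2',n'})=\delta_{d_1,d_1'}\delta_{d_2,d_2'}\delta_{n,n'}$, and Theorem \ref{terms}, thereby ``recovering'' Proposition \ref{prop:I=I}. No gaps; your remark on finiteness of the sums within a single weight space is all the care needed.
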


\begin{rem}
In \cite{IS} the classical limit (corresponding to $v\to 1$) of the
function $I_{d_1,d_2}(z_1,z_2)$ is written as a sum of $\min(d_1,d_2)$
factorized terms. The classical limits of our terms differ from the
terms in \cite{IS}.
\end{rem}


\noindent
{\it Acknowledgments.}\quad
Research of BF is partially supported by RFBR Grants 04-01-00303 and
05-01-01007, INTAS 03-51-3350, NSh-2044.2003.2 and RFBR-JSPS Grant 05-01-02934YaFa.
Research of EF is partially supported by the
RFBR Grants 06-01-00037, 07-02-00799.
Research of MJ is supported by
the Grant-in-Aid for Scientific Research B--18340035.
Research of TM is supported by
the Grant-in-Aid for Scientific Research B--17340038.
Research of EM is supported by NSF grant DMS-0601005.

Most of the present work has been carried out during the visits
of BF, EF and EM to Kyoto University,
they wish to thank the University for hospitality.
MJ is grateful to J. Shiraishi and T. Oda for discussions
and information concerning Whittaker functions.
Last but not least, the authors
are indebted to Tambara Institute for Mathematical Sciences,
the University of Tokyo, for offering
an excellent opportunity for concentration in a beautiful
environment.

\end{document}